\documentclass[10pt]{article}
\usepackage{amssymb,amsmath,amsthm, amscd, mathrsfs, eucal}
\usepackage[colorlinks=true]{hyperref}
\usepackage{mathtools}
\usepackage{enumitem}
\usepackage {hyperref}
\usepackage[all]{xy}
\usepackage{xcolor}
\usepackage{soul}
\usepackage{faktor}
\usepackage[normalem]{ulem}
\usepackage[inkscapelatex=false]{svg}
\usepackage{subfigure}
\usepackage{cleveref}
\crefformat{section}{\S#2#1#3}
 \usepackage{import}
\usepackage{caption}
\usepackage{subcaption, pinlabel}

\theoremstyle{plain}
\newtheorem{thm}{Theorem}[section]
\newtheorem{question}{Question}

\newtheorem{lem}[thm]{Lemma}
\newtheorem{cor}[thm]{Corollary}
\newtheorem{prop}[thm]{Proposition}

\theoremstyle{definition}

\newtheorem{defn}[thm]{Definition}
\newtheorem{rmk}[thm]{Remark}
\newtheorem{rem}[thm]{Remark}

\newtheorem{example}[thm]{Example}

\newtheorem{claim}{Claim}

\newcommand{\mc}{\mathcal}
\newcommand{\geodto}{\xrightarrow{\mathrm{geod}}}
\newcommand{\CCto}{\xrightarrow{CC}}
\newcommand{\Cto}{\xrightarrow{C}}

\newcommand{\ignore}[1]{}
\newcommand{\supp}{\mathrm{supp}}

\newlist{casenv}{enumerate}{4}
\setlist[casenv]{leftmargin=*,align=left,widest={iiii}}
\setlist[casenv,1]{label={{\itshape\ \casename} \arabic*.},ref=\arabic*}
\setlist[casenv,2]{label={{\itshape\ \casename} \roman*.},ref=\roman*}
\setlist[casenv,3]{label={{\itshape\ \casename\ \alph*.}},ref=\alph*}
\setlist[casenv,4]{label={{\itshape\ \casename} \arabic*.},ref=\arabic*}

\providecommand{\casename}{Case}

\newcounter{ccomments}

\newcounter{BNcomments}

\newcommand{\rom}[1]{\text{\uppercase\expandafter{\romannumeral #1\relax}}}

\def\Z{\mathbb Z}

\def\Rcal{\mathcal{R}}
\def\winf{\widetilde{\infty}}
\def\defeq{\vcentcolon=}

\makeatletter
\newsavebox{\@brx}
\newcommand{\llangle}[1][]{\savebox{\@brx}{\(\m@th{#1\langle}\)}%
	\mathopen{\copy\@brx\kern-0.5\wd\@brx\usebox{\@brx}}}
\newcommand{\rrangle}[1][]{\savebox{\@brx}{\(\m@th{#1\rangle}\)}%
	\mathclose{\copy\@brx\kern-0.5\wd\@brx\usebox{\@brx}}}

\newcommand{\appauthor}[1]{%
  \vspace{-0.5em}\textit{By #1}\vspace{0.5em}%
}

\title{Gromov boundary of the Grand Arc Graph}
\author{Carolyn Abbott \and Assaf Bar-Natan \and Arya Vadnere \and  \large{\textit{With an appendix by} Lvzhou Chen}}
\date{}

\begin{document}

\maketitle

\begin{abstract}
We describe a dense subset of the Gromov boundary of the grand arc graph of an infinite-type surface as a space of geodesic laminations, analogous to Klarreich's description of the Gromov boundary of the curve complex. After showing that the grand arc graph satisfies a bounded geodesic image theorem, we also prove that the boundary is not compact.
\end{abstract}


\section{Introduction}


A fruitful technique for studying the mapping class group of a surface $\Sigma$ is to study its isometric action on various combinatorial models associated to $\Sigma$.  When $\Sigma$ is finite-type, that is, has finitely generated fundamental group, two common combinatorial models are the \textit{curve graph $\mathcal{C}(\Sigma)$} and the \textit{arc graph $\mathcal{A}\left(\Sigma\right)$}, both of which are infinite-diameter hyperbolic graphs.  The study of these actions has led to advances in our understanding of the mapping class group (for example, \cite{MM2, ivanov, BF}).  Gromov showed that isometries of a hyperbolic space are encoded by the diameter of their orbits and their fixed points on the \textit{Gromov boundary} of the space, or the set of points ``at infinity" \cite{Gromov:HG}.  Thus, when studying groups acting on a hyperbolic space, it is useful to understand the boundaries of these spaces.  Klarreich described the boundary $\partial \mc C(\Sigma)$ of $\mc C(\Sigma)$ as the space of \textit{ending laminations} on $\Sigma$ \cite{Klar}, that is, minimal geodesic laminations that fill the surface.  Pho-On obtained a similar description of the boundary $\partial \mc A(\Sigma)$ as the space of laminations that fill a subsurface containing all of the punctures 
\cite{Pho-On} and gives an alternate proof of Klarreich's result\footnote{Pho-On notes that Schleimer independently obtained these results as well, citing personal communication.}.  More recently,  Bowden, Hensel and Webb also described the boundary of the fine curve graph as a space of laminations  \cite{BHW_boundary}. 

When $\Sigma$ is infinite-type, on the other hand, both the arc graph and the curve graph are finite diameter.  While the study of these graphs can still be fruitful \cite{HMV, BDR}, the graphs do not have an interesting coarse geometry and, in particular, their Gromov boundary is empty. 
There are several alternative candidates for combinatorial models for infinite-type surfaces, all of which are graphs of certain arcs and/or curves. For various classes of surfaces, these graphs are infinite-diameter and hyperbolic.  Such graphs include the relative arc graph \cite{AFP}, the omnipresent arc graph \cite{F-G-M}, the non-peripheral curve graph \cite{npcurve}, the non-separating curve graph \cite{Ras_nsep}, and the ray graph \cite{Calegari, Bav}. For these graphs, only the boundary of the relative arc graph (of which the ray graph is a special case) is understood.  Bavard and Walker described the boundaries of these graphs as a space of ``high-filling rays" \cite{B-W, B-W_2}; see Section~\ref{sec:SurjectivityIsHard} for further details.

In this paper, we focus on the \textit{grand arc graph} $\mc G(\Sigma)$, which was defined by the second author and Verberne \cite{B-NV}. The vertices of this graph are isotopy classes of \textit{grand arcs}, which, very roughly speaking, are arcs between certain maximal ends of the surface, and edges correspond to disjointness.  For a large collection of surfaces, 
this graph is also infinite diameter, uniformly $\delta$-hyperbolic, and admits an isometric action of the mapping class group with finitely many orbits. 
In particular, this holds for surfaces $\Sigma$ whose \textit{grand splitting} $\mc S(\Sigma)$ contains at least three but finitely many elements; see Section~\ref{sec:background_surfaces}.   
For these surfaces, we provide the first steps toward describing the boundary $\partial \mc G(\Sigma)$ of the grand arc graph in terms of \textit{geodesic laminations} on $\Sigma$.  In what follows, $\mc{GEL}(\Sigma)$ is the space of \textit{grand ending laminations}, which are minimal geodesic laminations that fill a witness of $\mc{G}(\Sigma)$; see Definition~\ref{def:GEL}.

\begin{thm}\label{thm_main}
    Let $\Sigma$ be an infinite-type surface with $3\leq |\mc S(\Sigma)|<\infty$.  There is a $\mathrm{MCG}(\Sigma)$-equivariant map $F\colon \mathcal{GEL}(\Sigma) \to \partial \mathcal G(\Sigma)$ that is a homeomorphism onto its image, which is dense in $\partial\mathcal G(\Sigma)$.
\end{thm}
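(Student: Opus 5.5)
The plan is to follow Klarreich's strategy, in the form given by Pho-On for arc graphs, using the structure of $\mc G(\Sigma)$ through its witnesses. A witness is an essential subsurface $W$ that every grand arc meets. Since $3\le|\mc S(\Sigma)|<\infty$, there are finite-type witnesses, and each has a subsurface projection $\pi_W\colon\mc G(\Sigma)\to\mc A(W)$ (or to an arc-and-curve graph of $W$) that is coarsely Lipschitz. I will treat the bounded geodesic image theorem for $\mc G(\Sigma)$ as available, since the abstract announces it. The central idea is that a sequence of grand arcs converges to a point of $\partial\mc G(\Sigma)$ realized by $\lambda\in\mc{GEL}(\Sigma)$ exactly when its projections to some witness $W$ filled by $\lambda$ converge in $\partial\mc A(W)$ to $\lambda$. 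Here I invoke Pho-On's description of $\partial\mc A(W)$.

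\textbf{Construction of $F$.} Given $\lambda$ filling a witness $W$, choose grand arcs $\alpha_n$ whose intersection with $W$ converges, in the coarse Hausdorff topology, to laminations containing $\lambda$. One way to get them is to take $\alpha_n$ that meet $W$ along arcs approximating $\lambda$, for instance arcs obtained by surgering a fixed grand arc along long leaves of $\lambda$. The steps are as follows.
\begin{enumerate}
\item Show that $d_{\mc G}(\alpha_0,\alpha_n)\to\infty$. Any grand arc disjoint from a grand arc close to $\lambda$ must project near $\lambda$ in $\mc A(W)$, and $\lambda$ is filling there, so projections become arbitrarily far apart.
\item Show that $(\alpha_n\cdot\alpha_m)_{\alpha_0}\to\infty$, using hyperbolicity together with the bounded geodesic image theorem: a geodesic from $\alpha_n$ to $\alpha_m$ that passed near $\alpha_0$ would force the projections of $\alpha_n$ and $\alpha_m$ to $W$ to be far apart.
\item Show that the limit point is independent of the chosen sequence. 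This follows by interleaving two such sequences.
\end{enumerate}
Equivariance is then immediate from naturality of the construction.

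\textbf{Injectivity, homeomorphism, and the converse.} If $\alpha_n\to F(\lambda)$ and $\alpha_n\to F(\mu)$, then the coarse Hausdorff limits of the $\alpha_n$ must contain both $\lambda$ and $\mu$. Minimality and the filling property then give $\lambda=\mu$. For the converse, suppose $\alpha_n\to\xi=F(\lambda)$. Pass to a Hausdorff-convergent subsequence and show that the limit contains $\lambda$; the argument is that otherwise some grand arc disjoint from a sublamination would give bounded distances. Continuity of $F$ and of $F^{-1}$ on the image then follow from this characterization, with the standard argument: compare neighborhoods defined by Gromov products against neighborhoods defined by coarse Hausdorff convergence in $\mc{GEL}$.

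\textbf{Density.} Use a pseudo-Anosov $\phi$ supported on a finite-type witness $W$. Its stable lamination lies in $\mc{GEL}(\Sigma)$, and the bounded geodesic image theorem shows $\phi$ is loxodromic on $\mc G(\Sigma)$. Since $\mathrm{MCG}(\Sigma)$ acts on $\mc G(\Sigma)$ with finitely many orbits and is non-elementary, the fixed points of loxodromics are dense in the limit set, and the limit set is all of $\partial\mc G(\Sigma)$ because the action is cobounded. So the density statement reduces to showing that every loxodromic's attracting point is approximable by such fixed points, which is standard.

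\textbf{Main obstacle.} The hardest step will be the converse direction: showing that if $\alpha_n\to F(\lambda)$ then the $\alpha_n$ limit onto $\lambda$, and more generally controlling sequences whose projections to every witness stay bounded. This is where non-compactness of the surface intervenes: Hausdorff limits may escape through the ends, and infinitely many witnesses can carry the divergence. That is presumably why only a dense image, rather than surjectivity, is claimed. I would first try to reduce to a single finite-type witness by using the grand splitting, which gives finitely many maximal end classes, together with the bounded geodesic image theorem.
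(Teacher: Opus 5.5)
Your framework has a structural gap. Everything runs through a witness $W$ filled by $\lambda$ and Pho-On's description of its boundary, which presupposes that $\mathrm{supp}(\lambda)$ is a compact (finite-type) witness. But $\mathcal{GEL}(\Sigma)$ consists of all minimal laminations whose support is \emph{some} witness, and that support can be of infinite type. This is why the paper singles out $\mathcal{GEL}_{\mathrm{cpct}}(\Sigma)$, and why the proof of Theorem~\ref{thm:WWPD_classif} has to show that $\mathrm{supp}(L)$ is finite-type. For such $\lambda$, the arc graph of $\mathrm{supp}(\lambda)$ has finite diameter, and a grand arc can meet $\mathrm{supp}(\lambda)$ in infinitely many components. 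So there is no hyperbolic projection target and your Steps 1--3 have nothing to run on. Your proposed reduction to a single finite-type witness is also impossible, since $\lambda$ lies in no compact subsurface.

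The missing idea is a projection-free mechanism, which the paper supplies with bicorn arcs.
\begin{itemize}
\item Bicorns $a_1\cup l_1\cup a_2$ of a fixed grand arc $a$ with a non-boundary leaf $l$, both of whose half-leaves are dense (Lemma~\ref{lem:both_half_leaves_dense}, Proposition~\ref{prop:bicorns_geodesic_conv}), follow $l$ only along compact segments, so they are grand arcs.
\item Finite bicorn sets are uniformly Hausdorff-close to geodesics (Corollary~\ref{cor:unicorn_hausdorff_dist}). Together with the nesting $\mathcal{B}(a,b_i)\subseteq\mathcal{B}(a,b_j)$, this gives the Gromov product estimate of Lemma~\ref{lem:defines_a_point}. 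Your Step 2 instead invokes the bounded geodesic image theorem, which concerns partial witnesses and is false for witnesses.
\item Divergence comes from a Chabauty chain argument. If $x_n\cap y_n=\emptyset$, $x_n\xrightarrow{C}M$ and $y_n\xrightarrow{C}L$ with $L$ minimal and witness-filling, then $L\subseteq M$ (Corollary~\ref{cor:disjointarcTOsublamination}). Iterating this along bounded-length paths back to $a$ gives the contradiction $L\subseteq\{a\}$ in Lemma~\ref{lem:grand_bicorns_go_to_infty}, whatever the topology of $\mathrm{supp}(L)$.
\end{itemize}
For compactly supported $\lambda$ your projection idea can be made to work. You would use arcs meeting $W$ once, the relative arc graph $\mathcal{A}(W,P)$ rather than $\mathcal{A}(W)$, and Lemma~\ref{lem:presc_arc_proj}; the paper uses exactly this in proving Theorem~\ref{thm:dense}.

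Second, you defer injectivity and continuity of $F^{-1}$ to a Klarreich-type converse: any sequence $\alpha_n\to F(\lambda)$ has all Chabauty limits containing $\lambda$. You give only a heuristic for this and flag it yourself as the main obstacle. The paper never proves it for arbitrary sequences; it circumvents it.
\begin{itemize}
\item For injectivity, two ending bicorn sequences with the same endpoint lie at bounded Hausdorff distance (Lemma~\ref{lem:QpathNearQgeo} and Proposition~\ref{prop:map_well_def}). Bounded-length chains between them, combined with the chain argument, then force $L=L'$.
\item For continuity of $F^{-1}$, the paper chooses a base arc whose angles of intersection with the leaves $l_n$ are bounded below (Lemma~\ref{lem:angles}). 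It then takes a diagonal sequence $b_{n_r,i_r}$ within bounded distance of the ending bicorn sequence of $L_0$, which coarse Chabauty converges to $L_0$ by the chain argument. The angle bound makes $b_{n_r,i_r}$ fellow-travel $l_{n_r}$ on long segments of $\Sigma$, which transfers the convergence to leaves of $L_{n_r}$.
\end{itemize}
Also, on $\Sigma$ you need Chabauty limits rather than Hausdorff limits, since the Hausdorff topology is not compact there.

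Your density argument, by contrast, is a legitimate alternative to the paper's, which truncates a quasi-geodesic, projects to compact witnesses, extends via Lemma~\ref{lem:no-dead-ends}, and applies Pho-On. It needs equivariance and $F(L^+)=\varphi^+$ (Example~\ref{ex:agrees_with_F}). Then north--south dynamics puts every loxodromic fixed point in $\overline{\mathrm{MCG}(\Sigma)\cdot\varphi^+}$, after first moving $\varphi^+$ by some mapping class if it equals the repelling point $h^-$. These fixed points are dense in the limit set (consider $g_nhg_n^{-1}$ with $g_no\to\xi$), and the limit set is all of $\partial\mathcal{G}(\Sigma)$ by coboundedness. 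You should write out this argument in place of ``which is standard''. Finally, loxodromicity of $\varphi$ comes from Lemma~\ref{lem:presc_arc_proj}, not from the bounded geodesic image theorem.
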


Graphs of arcs and curves are often studied via their \textit{witness subsurfaces}; see Definition~\ref{defn:witness}.  We say that a geodesic lamination $L$ \textit{fills} a witness if its support, in the sense of Defintion~\ref{def:support}, is a witness.  Our definition of grand ending laminations draws inspiration from Pho-On's definition of \textit{peripherally ending laminations} \cite{Pho-On}. We equip the space $\mc{GEL}(\Sigma)$ with the \textit{coarse Chabauty topology}, defined in Section \ref{subsec:chabauty}, which makes it Hausdorff and first-countable. 

Theorem~\ref{thm_main} does not prove that the map $F$ is a homeomorphism: it is unclear whether $F$ is surjective.  One obstruction to proving surjectivity is that geodesic laminations on infinite-type surfaces can behave quite differently than in the finite-type setting and are not well-understood in general.  See Section~\ref{sec:SurjectivityIsHard} for a further discussion.

Our approach to proving Theorem \ref{thm_main} is inspired by Pho-On, who uses \textit{unicorn arcs} to describe the boundary of the arc graph and the curve graph of a finite-type surface; unicorn arcs were also used by the second author and Verberne to prove hyperbolicity of $\mc G(\Sigma)$.  To understand the boundary $\partial \mc G(\Sigma)$, we follow Pho-On's approach, but using \textit{bicorn arcs}; see Definitions~\ref{def:bicorn} and \ref{def:bicorn_with_geodesic}.  The key observation that makes bicorn arcs better suited for studying the boundary than unicorn arcs is that a bicorn between a grand arc and \textit{any} geodesic on $\Sigma$ will again be a grand arc, while a unicorn arc may not.  
Given a grand ending lamination $L$ and a grand arc $a$, we construct an \textit{ending bicorn sequence}, which is a sequence of bicorns between $a$ and $L$.  We prove that this sequence: (1) forms a quasi-path in $\mc{G}(\Sigma)$ that converges to a point in $\partial\mc{G}(\Sigma)$; and (2) geodesically converges to a leaf of $L$. 
The map $F$ then sends $L$ to (the equivalence class of) its ending bicorn sequence.

To further understand $\mc{G}(\Sigma)$ and its boundary, we prove a \textit{bounded geodesic image theorem} for the grand arc graph, which is analogous to Masur and Minsky's bounded geodesic image theorem for the curve graph of a closed surface \cite{MM2}.  A \textit{partial witness} is a subsurface $Y$ of $\Sigma$ that is not a witness of $\mc{G}(\Sigma)$ but essentially intersects every witness of $\mc{G}(\Sigma)$; see Definition~\ref{def:partialwitness}. 


\begin{thm}[Theorem \ref{thm:BGI}]\label{thm:BGI_Intro}
    Let $\Sigma$ be an infinite-type surface with $3\leq |\mc S(\Sigma)|<\infty$. There exists a constant $B>0$ such that the following holds. If $Y\subset\Sigma$ is a partial witness for $\mathcal{G}(\Sigma)$ and $g=\left(\gamma_{i}\right)_{i\in I}$ is a geodesic in $\mathcal{G}\left(\Sigma\right)$ such that each $\gamma_{i}$ intersects $Y$ transversely, then 
\[
\mathrm{diam}_{\mathcal{A}(Y)}\left(g\right)\leq B.
\]
\end{thm}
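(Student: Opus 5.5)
We follow the standard strategy for bounded geodesic image theorems via unicorn/bicorn paths, in the style of Webb's uniform proof for curve graphs, using the hyperbolicity machinery of \cite{B-NV}. The idea is to replace the given geodesic by bicorn paths, which are quasi-geodesics in $\mathcal{G}(\Sigma)$, and to show directly that bicorn paths between arcs meeting $Y$ have bounded projection to $\mathcal{A}(Y)$ whenever every bicorn along the path meets $Y$.

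\emph{Step 1: projections of bicorns.} Let $a,b$ be grand arcs, both meeting $Y$ essentially, and let $c$ be a bicorn between $a$ and $b$. Then $c$ is the union of a subarc $a'\subset a$ and a subarc $b'\subset b$. If $c$ meets $Y$, then $c\cap Y$ consists of subarcs of $a\cap Y$ and of $b\cap Y$, together with possibly one arc through the corner point. Surgery at the corner therefore shows that $\pi_Y(c)$ lies within uniformly bounded distance of $\pi_Y(a)\cup\pi_Y(b)$ in $\mathcal{A}(Y)$. Applying this along a bicorn sequence from $a$ to $b$ (each consecutive pair disjoint and each meeting $Y$), we aim to prove a stronger \emph{bicorn BGI}. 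If every bicorn in the path $P(a,b)$ meets $Y$, then $d_Y(a,b)$ is bounded. Consider the bicorns in the order in which the $b$-subarc grows. The key point is that if $d_Y(a,b)$ is large, then $a\cap Y$ and $b\cap Y$ intersect. In that case some bicorn has both of its subarcs contained in a single component of $\Sigma\setminus Y$ plus short pieces, and hence misses $Y$. This is where we use that $Y$ is a partial witness: $Y$ is not a witness, so $\Sigma\setminus Y$ contains a witness $W$, and grand arcs can be surgered into $W$. This is the argument of \cite{Pho-On}/Webb, adapted to bicorns.

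\emph{Step 2: from geodesics to bicorn paths.} Let $g$ be a geodesic from $\gamma_0$ to $\gamma_n$ with every vertex meeting $Y$. By the slim-bicorn lemma used in \cite{B-NV} to prove hyperbolicity, the bicorn path between $\gamma_i$ and $\gamma_j$ lies in a uniform neighbourhood of the geodesic segment $[\gamma_i,\gamma_j]$, and conversely. The difficulty is that bicorns near $g$ may themselves miss $Y$, even though every vertex of $g$ meets $Y$. To handle this, we use the standard trick. Suppose $d_Y(\gamma_0,\gamma_n)$ is large. By Step 1, some bicorn $c$ in $P(\gamma_0,\gamma_n)$ is disjoint from $Y$, and hence lies within distance $R$ of some $\gamma_k$. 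Now use the partial witness property: every grand arc disjoint from $Y$ lies in $\Sigma\setminus Y$, and $Y$ meets every witness. From this we would like to deduce that a path of length $\le R$ from $c$ to $\gamma_k$ stays a bounded distance in $\mathcal{A}(Y)$. However, there is no Lipschitz projection in general, since intermediate vertices may miss $Y$.

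So we instead argue by recursion on distance, as in Webb. Split $g$ at $\gamma_k$ and apply Step 1 to short subsegments, where $d_{\mathcal{G}}\le 2R$. For these we need a \emph{local} bound: if $d_{\mathcal{G}}(\alpha,\beta)\le 2R$ and every vertex of a geodesic between them meets $Y$, then $d_Y(\alpha,\beta)\le 2R\cdot C$. This holds because disjoint grand arcs that both meet $Y$ have projections at distance at most $2$. This local bound is straightforward. Combining the pieces, we get $d_Y(\gamma_0,\gamma_n)\le d_Y(\gamma_0,\gamma_k)+d_Y(\gamma_k,\gamma_n)$, and a pigeonhole/induction argument shows the total stays uniformly bounded, giving the constant $B$.

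The main obstacle is Step 1: showing that large $d_Y(a,b)$ forces some bicorn to miss $Y$. This requires controlling how the bicorn subarcs cross $\partial Y$, and requires using that the complement of a partial witness contains a witness. I would first try to prove it by taking a subarc of $b\cap Y$ that intersects $a\cap Y$ minimally and closing it up outside $Y$.
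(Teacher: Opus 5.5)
There is a genuine gap. Step 1 is false, and Step 2 rests on it.

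\textbf{Step 1 fails.} The statement you aim for is: if $a,b$ meet $Y$ and every bicorn between them meets $Y$, then $d_Y(a,b)$ is bounded. Here is a counterexample.
\begin{itemize}
\item Let $Y$ be a compact partial witness of large complexity (these exist when $|\mathcal{S}(\Sigma)|\ge 4$).
\item Let $a$ be a grand arc with $a\cap Y=\alpha$ a single arc joining distinct components of $\partial Y$.
\item Let $\phi$ be supported on $Y$ and pseudo-Anosov there, and set $b=\phi^n(a)$, $\beta=\phi^n(\alpha)$.
\end{itemize}
Since $a$ and $b$ agree outside $Y$, they can be put in minimal position disjoint outside $Y$, so all of $a\cap b$ lies in $Y$. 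Since $b\cap Y=\beta$ is connected, every bicorn $(\dots P)_a\cup(PQ)_b\cup(Q\dots)_a$ meets $Y$ in the single essential arc $(\dots P)_\alpha\cup(PQ)_\beta\cup(Q\dots)_\alpha$. These are the bicorns of $\alpha,\beta$ inside $Y$, and some of them lie within bounded distance of $\beta$ in $\mathcal{A}(Y)$ (compare with the unicorn path from $\alpha$ to $\beta$). Meanwhile $d_Y(\alpha,\beta)\to\infty$ as $n$ grows.

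The same example refutes your preliminary claim that $\pi_Y(c)$ stays near $\pi_Y(a)\cup\pi_Y(b)$: the arc of $c\cap Y$ through the corners is genuinely new. The underlying reason is that consecutive bicorns need not be disjoint, so a bicorn set is only a coarse path. The arcs missing $Y$ that any geodesic from $a$ to $b$ must cross lie \emph{near} the bicorns, not among them.

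\textbf{The hypothesis is used backwards.} Because $Y$ is a partial witness, $\Sigma\setminus Y$ contains \emph{no} witness. What ``$Y$ is not a witness'' gives you is a grand arc disjoint from $Y$.

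\textbf{Step 2 cannot give a uniform constant.} Splitting $g$ at $\gamma_k$ and summing with the triangle inequality gives a bound that grows with the number of pieces. Nothing in the sketch ties distant vertices of $g$ to a fixed reference.

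\textbf{The missing idea (Webb's, and the paper's).} Fix a grand arc $a$ \emph{disjoint} from $Y$ as a basepoint, playing the role of $\partial Y$.
\begin{itemize}
\item Two grand arcs missing $Y$ lie in a common component of $\Sigma\setminus Y$. That component is not a witness, so the two arcs are at distance at most $2$. Hence every $b$ with $d_{\mathcal{G}}(a,b)\ge 3$ meets $Y$.
\item Use the \emph{asymmetric} bicorns $\mathcal{B}(b,a)$, whose $a$-part is a finite segment lying outside $Y$. If such a bicorn $x$ meets $Y$, then $x\cap Y$ consists of pieces of $b\cap Y$ (up to bigons with $\partial Y$), so $d_Y(x,b)\le 2$.
\item Since $\mathcal{B}(b,a)$ is uniformly Hausdorff-close to $[a,b]$, every vertex of $[a,b]$ at distance at least $3$ from $a$ projects within a uniform $D$ of $b$.
\item For $\gamma_i,\gamma_j\in g$, take the triangle with third vertex $a$. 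The portion of $g$ near $a$ has bounded length and all its vertices meet $Y$, so its projection is bounded. The rest of $g$ fellow-travels $[a,\gamma_i]$ and $[a,\gamma_j]$ away from $a$, where the previous bound applies. The short connecting geodesics there stay at distance at least $3$ from $a$, so their vertices meet $Y$ and their projections are bounded as well.
\end{itemize}
This yields $B$.
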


Theorem~\ref{thm:BGI_Intro} can be used to show that $\partial \mc{G}(\Sigma)$ is not compact.

\begin{cor}[Corollary \ref{cor:non-compactness}]
    If $\Sigma$ is an infinite-type surface with $4\leq |\mc S(\Sigma)|<\infty$, then $\partial \mc G(\Sigma)$ is not compact.
\end{cor}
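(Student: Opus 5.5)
The plan is to build a sequence of boundary points of $\mathcal G(\Sigma)$ with no convergent subsequence, and to detect divergence with Theorem~\ref{thm:BGI_Intro}. Since $|\mc S(\Sigma)|\geq 4$, there is a subsurface $Y\subset\Sigma$ which is a partial witness and whose arc graph $\mathcal A(Y)$ has infinite diameter. For instance, take $Y$ separating off at least two elements of the grand splitting on one side, so that $\Sigma\setminus Y$ still carries grand arcs joining the remaining maximal ends. Then $Y$ is not a witness, yet it essentially meets every witness. I would fix a mapping class $\phi$ supported on $Y$ that acts loxodromically on $\mathcal A(Y)$, such as a partial pseudo-Anosov on a finite-type piece of $Y$ that contains the relevant ends or punctures. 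I expect the existence of such a $Y$ and $\phi$ under the hypothesis $|\mc S(\Sigma)|\ge 4$ to be a direct check from the definitions of witnesses and partial witnesses. This is presumably where the hypothesis $4$ rather than $3$ enters: with only three elements, every essential partial witness would be too small to have unbounded arc graph.

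Next, fix a boundary point $\xi\in\partial\mathcal G(\Sigma)$, for instance $F(L)$ for some grand ending lamination $L$ from Theorem~\ref{thm_main}, and a base grand arc $a$. Set $\xi_n=\phi^n\xi$. By compactness I only need to show that no subsequence of $(\xi_n)$ converges in $\partial\mathcal G(\Sigma)$. Suppose instead that $\xi_{n_k}\to\eta$. Then for large $k$ and $l$ the Gromov product $(\xi_{n_k}\cdot\xi_{n_l})_a$ is large, so geodesic rays from $a$ to $\xi_{n_k}$ and to $\xi_{n_l}$ fellow-travel for a long initial segment. Concatenating, I obtain a quasi-geodesic, and by hyperbolicity a bi-infinite geodesic $g$ from $\xi_{n_k}$ to $\xi_{n_l}$ that passes within bounded distance of $a$.

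The key estimate is on the arc-graph projection of the tails of $g$. The ray from $a$ toward $\phi^n\xi$ is $\phi^n$ applied to a ray from $\phi^{-n}a$ toward $\xi$. Its far tail therefore projects to $\mathcal A(Y)$ near $\phi^n\pi_Y(\xi)$, where $\pi_Y(\xi)$ is a coarse projection defined using a tail of the ending bicorn sequence; this requires $L$ to meet $Y$ essentially, which holds because $L$ fills a witness. So $d_Y$ between the tails for $n_k$ and $n_l$ grows linearly in $|n_k-n_l|$. On the other hand, if every vertex of $g$ cut $Y$, Theorem~\ref{thm:BGI_Intro} would bound $\mathrm{diam}_{\mathcal A(Y)}(g)$ by $B$, which is a contradiction once $|n_k-n_l|$ is large. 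Hence $g$ contains a vertex disjoint from $Y$.

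I expect the main obstacle to be turning this into a contradiction. My first attempt would be to choose $\xi$ and the base point so that the portion of $g$ near $a$ is forced to meet $Y$. The trouble is that arcs disjoint from $Y$ exist everywhere, so instead I would use the bounded geodesic image theorem on each sub-segment between consecutive arcs missing $Y$. Combined with the fact that the set of grand arcs disjoint from $Y$ has bounded diameter, or at least stays near the same region of $\mathcal G(\Sigma)$, this shows that the Gromov product $(\xi_{n_k}\cdot\xi_{n_l})_a$ stays bounded. That contradicts convergence. The delicate point is therefore controlling where arcs avoiding $Y$ sit in $\mathcal G(\Sigma)$. The alternative is to exhibit an explicit open cover of $\partial\mathcal G(\Sigma)$, indexed by the $d_Y$-position of the endpoint, that admits no finite subcover.
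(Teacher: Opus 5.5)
Your overall strategy is the paper's: take a pseudo-Anosov $\phi$ on a partial witness $Y$, look at $\phi^n\xi$, and use Theorem~\ref{thm:BGI} to force a (quasi-)geodesic between far-apart terms through an arc that misses $Y$. As written, however, the argument does not close, for two concrete reasons.

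\textbf{The step from a large Gromov product is backwards.} You derive a bi-infinite geodesic $g$ ``that passes within bounded distance of $a$'' from a \emph{large} Gromov product, but the implication runs the other way. A large $(\xi_{n_k}\cdot\xi_{n_l})_a$ means every bi-infinite quasi-geodesic joining $\xi_{n_k}$ to $\xi_{n_l}$ stays far from $a$. Moreover, concatenating two long fellow-travelling rays backtracks, so it is not a quasi-geodesic. If $g$ really passed near $a$, you would already have your contradiction. The correct logic is: Theorem~\ref{thm:BGI} forces $g$ near a fixed bounded set, and that is what bounds the Gromov product. Since $\mathcal G(\Sigma)$ is not proper, take $g$ to be a bi-infinite quasi-geodesic; Theorem~\ref{thm:BGI} covers that case.

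\textbf{The step you call the main obstacle is left open and misdescribed.} Arcs missing $Y$ do \emph{not} ``exist everywhere.'' Suppose $\alpha,\beta$ are grand arcs disjoint from a partial witness $Y$.
\begin{itemize}
\item If they lie in different components of $\Sigma\setminus Y$, they are disjoint.
\item If they lie in one component $C$, then $C$ is not a witness, because $Y$ is disjoint from no witness. So some grand arc misses $C$, and $d_{\mathcal G}(\alpha,\beta)\le 2$.
\end{itemize}
This is Lemma~\ref{lem:partial_witnesses}. Choose the base arc $a$ disjoint from $Y$. Then any arc of $g$ missing $Y$ is within $2$ of $a$, which bounds $(\xi_{n_k}\cdot\xi_{n_l})_a$ uniformly. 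The paper builds this in by taking $Y=W\setminus a$ for a compact witness $W$ meeting $a$ in a single arc. Then arcs missing $Y$ agree with $a$ on $W$, and Lemma~\ref{lem:presc_arc_proj} puts them within $2$ of $a$.

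The same fact is what makes your coarse projection $\pi_Y(\xi)$ well defined. Far tails of any sequence converging to $\xi$ avoid this bounded set, so every vertex in such a tail cuts $Y$, and Theorem~\ref{thm:BGI} bounds the tail's projection. The fact that $L$ meets $Y$ is not the relevant point here.

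\textbf{Your suggested $Y$ does not work.} One side of a separating curve is of infinite type, so $\mathcal A(Y)$ has finite diameter and no $\phi$ can act loxodromically on it. You need a \emph{compact} partial witness of sufficient complexity. These exist exactly when $|\mathcal S(\Sigma)|\ge 4$; with three elements every partial witness is non-compact (too big, not too small). The paper takes $Y=W\setminus a$, enlarges $W$ so that $Y\not\cong S_{0,3}$, and checks that $Y$ is a partial witness via Lemma~\ref{lem:detect_partial_witnesses}.

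\textbf{Working with tails is the right call.} You chose to use tails of sequences converging to $\phi^n\xi$ rather than $\phi^n(x)$ for a fixed arc $x$, as the paper's write-up does. In the paper, $\phi$ is supported on $Y$ and hence fixes $a$, so $(\phi^{n_{k_0}}(x),\phi^{n_k}(x))_a\le d_{\mathcal G}(a,x)$, and \eqref{eqn:Gprod} fails for $R\ge d_{\mathcal G}(a,x)$. To run the BGI contradiction rigorously, one must replace $x$ by points far along sequences converging to $p$ and keep $d_Y>B$ between them. That is exactly what your bounded tail projection provides, once it is justified as above.
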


The description of (part of) $\partial\mc{G}(\Sigma)$ as geodesic laminations allows us to use techniques from \cite{RasWWPD} to understand the \textit{WWPD elements} of $\mathrm{MCG}(\Sigma)$ in the action on $\mc{G}(\Sigma)$; see Definition~\ref{def:WWPD}. 

\begin{thm}[Theorem \ref{thm:WWPD_classif}]
Suppose $\Sigma$ is an infinite-type surface with $3\leq |\mc S(\Sigma)|<\infty$. Suppose $\varphi\in\mathrm{MCG}(\Sigma)$ is loxodromic, and the fixed points of $\varphi$ on $\partial\mc{G}(\Sigma)$ are in the image $F(\mc{GEL}(\Sigma))$. Then $\varphi$ is WWPD if and only if $\varphi$ is pseudo-Anosov on a compact witness.
\end{thm}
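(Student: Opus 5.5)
The plan follows the strategy of Rasmussen's classification of WWPD elements for the non-separating curve graph \cite{RasWWPD}, adapted to $\mc G(\Sigma)$ using the description of part of the boundary given by Theorem~\ref{thm_main}. Since $\varphi$ is loxodromic, it has exactly two fixed points $\xi^\pm\in\partial\mc G(\Sigma)$. By hypothesis these lie in $F(\mc{GEL}(\Sigma))$, so $\xi^\pm=F(L^\pm)$ for grand ending laminations $L^\pm$. Because $F$ is $\mathrm{MCG}(\Sigma)$-equivariant and injective, $\varphi(L^\pm)=L^\pm$. It follows that the supports $W^\pm$ of $L^\pm$, which are witnesses, are $\varphi$-invariant.

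\emph{($\Leftarrow$)} Suppose $\varphi$ restricts to a pseudo-Anosov $\psi$ on a compact witness $W$, and is (up to isotopy) anything on $\Sigma\setminus W$. I will use the characterization of WWPD as follows: the stabilizer of the pair $\{\xi^+,\xi^-\}$ acts properly discontinuously on the set of pairs of distinct boundary points, away from the diagonal, near $(\xi^+,\xi^-)$. The fixed laminations of $\varphi$ are then the stable and unstable laminations of $\psi$ in $W$. The argument proceeds in three steps.
\begin{enumerate}
\item Any $h\in\mathrm{MCG}(\Sigma)$ that nearly fixes $(\xi^+,\xi^-)$, i.e.\ moves a long segment of a $\varphi$-quasi-axis only a bounded amount, must nearly preserve the ending bicorn sequences. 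Using geodesic convergence of bicorns to leaves of $L^\pm$ (the coarse Chabauty topology), $h$ must map $W$ to a witness whose projection to $\mc A(W)$ stays close.
\item I will project to the arc graph $\mc A(W)$. A quasi-axis of $\varphi$ in $\mc G(\Sigma)$ projects to a quasi-axis of $\psi$ in $\mc A(W)$, which is hyperbolic with $\psi$ acting as a WPD element (Masur--Minsky, Bestvina--Fujiwara).
\item Elements moving a long subsegment of the axis by a bounded amount in $\mc G$ also do so in $\mc A(W)$, by Lipschitz-ness of subsurface projection for arcs crossing $W$. By WPD in $\mc A(W)$, their restrictions to $W$ lie in finitely many classes modulo elements supported on $\Sigma\setminus W$. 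Those elements fix $\xi^\pm$ pointwise, so they are harmless for WWPD, which only counts orbits of the pair modulo the stabilizer.
\end{enumerate}

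\emph{($\Rightarrow$)} Suppose $\varphi$ is not pseudo-Anosov on any compact witness. There are two cases.
\begin{itemize}
\item \emph{Case 1: the supports $W^\pm$ are non-compact.} Then the laminations live on an infinite-type witness. I will find infinitely many mapping classes $h_n$ supported on compact subsurfaces disjoint from, or deep in the ends of, $W^\pm$. These $h_n$ move a fixed long axis segment boundedly but send $\{\xi^+,\xi^-\}$ to pairwise distinct pairs, since they alter $L^\pm$ far out while remaining Chabauty-close. This contradicts WWPD.
\item \emph{Case 2: $W^\pm$ is compact but $\varphi|_W$ is not pseudo-Anosov.} Then $\varphi|_W$ is reducible or periodic, and a power of it fixes a curve in $W$. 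This contradicts $L^\pm$ being minimal and filling $W$, so this case is excluded.
\end{itemize}

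The main obstacle is Case 1. One must produce the $h_n$ and verify that they genuinely change the boundary pair while moving a quasi-axis segment only boundedly. This needs the bounded geodesic image theorem (Theorem~\ref{thm:BGI_Intro}): choose $h_n$ supported on partial witnesses $Y$ crossed by the axis. Bounded projection to $\mc A(Y)$ along geodesics then controls how far $h_n$ moves the segment, while injectivity of $F$ guarantees $h_n(\xi^\pm)\neq\xi^\pm$.
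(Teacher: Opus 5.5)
There are genuine gaps in both directions. The first is the missing key lemma in ($\Leftarrow$).

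\textbf{($\Leftarrow$).} Your Step~3 only makes sense for mapping classes $h$ with $h(W)=W$. Otherwise $h$ does not act on $\mathcal{A}(W)$, ``the restriction of $h$ to $W$'' is undefined, and the WPD property of $\psi$ on $\mathcal{A}(W)$ says nothing about $h$. Step~1 does not supply this: ``$h$ maps $W$ to a witness whose projection to $\mathcal{A}(W)$ stays close'' is much weaker than $h(W)=W$.

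The missing idea, which is the core of the paper's argument, runs as follows. If $h_n(\xi^\pm)\to\xi^\pm$, then continuity of $F^{-1}$ (Proposition~\ref{prop:inverse-cont}) and equivariance give $h_n(L^\pm)\CCto L^\pm$. Rasmussen's \cite[Lemma~4.2]{RasWWPD} says that $\tau_n(L)\CCto L$ forces $\tau_n(\mathrm{supp}(L))=\mathrm{supp}(L)$ for $n\gg 0$, so $h_n(W)=W$ eventually. Only then can you pass to $\mathrm{MCG}(W)$, where the pseudo-Anosov $\psi$ is WWPD; the paper uses the curve graph of $W$ for this.

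Your reformulation of WWPD is also off. Under Definition~\ref{def:WWPD}, $h_n(\xi^\pm)\to\xi^\pm$ does not mean $h_n$ moves a long axis segment a bounded amount, since powers of $\varphi$ fix $\xi^\pm$ while translating arbitrarily far. A WPD-style displacement count would therefore need renormalization by powers of $\varphi$. Arguing with sequences and $F$ avoids this entirely.

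\textbf{($\Rightarrow$), Case~1.} You have the right germ (perturb $L^\pm$ far out, use injectivity of $F$), but the mechanism you propose for the ``main obstacle'' does not work.

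Mapping classes supported disjoint from $W^\pm$ fix $L^\pm$, hence fix $\xi^\pm$, so they cannot contradict WWPD. The ``deep in the ends'' option is the right one: $h_n$ should be supported on subsurfaces that exit every compact set yet meet $\mathrm{supp}(L^+)$ essentially, so that $h_n(L^+)\neq L^+$.

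Such supports are eventually disjoint from a fixed compact witness. They are therefore \emph{not} partial witnesses, and the bounded geodesic image theorem does not apply to them. In any case that theorem bounds $\mathcal{A}(Y)$-projections of geodesics, not $\mathcal{G}$-displacements.

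No displacement estimate is needed. Since $h_n$ is eventually the identity on any compact set, $h_n(L^\pm)\CCto L^\pm$. Continuity and equivariance of $F$ then give $h_n(\xi^\pm)\to\xi^\pm$, while injectivity gives $h_n(\xi^+)\neq\xi^+$, contradicting Definition~\ref{def:WWPD}. This is the argument of \cite[Theorem~5.2]{RasWWPD} that the paper invokes.

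\textbf{($\Rightarrow$), Case~2.} The periodic subcase is not excluded by minimality and filling: some power of a periodic $\varphi|_W$ fixes every curve in $W$ and fixes $L^\pm$. What excludes it is loxodromicity. If $\varphi^k$ acts trivially on $\mathcal{A}(W)$, then $\pi_W(\varphi^{kn}(a))=\pi_W(a)$ for any grand arc $a$ meeting $W$ once. Lemma~\ref{lem:presc_arc_proj} then gives $d_{\mathcal{G}}(a,\varphi^{kn}(a))\leq 3$ for all $n$, which is impossible for a loxodromic $\varphi$.
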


This allows us to recover a result by Horbez, Qing and Rafi \cite{HQR} that the second bounded cohomology of $\mathrm{MCG(\Sigma)}$ is infinite dimensional; see Corollary~\ref{cor:BddCohom}.

\subsection{Surjectivity and comparison with Bavard--Walker}\label{sec:SurjectivityIsHard}
The main difficulty in proving surjectively of the map $F$ is that laminations on infinite-type surfaces are not well understood.  Suppose $p\in\partial\mc{G}(\Sigma)$, and let $(x_n)_{n\in\mathbb{N}}$ be a quasi-geodesic sequence of grand arcs Gromov-converging to $p$. Treating each $x_i$ as a lamination with a single leaf, by the compactness of the space of geodesic laminations with the Chabauty topology, the sequence of laminations $\{x_n\}$ converges up to subsequence in the Chabauty topology to some lamination $L$. If $L$ has a minimal sublamination $L'$ that fills a witness of $\mc{G}(\Sigma)$, then this minimal componenet would be the candidate for $F^{-1}(p)$. 

When $\Sigma$ is finite-type, any geodesic lamination $L$ on $\Sigma$ can be decomposed into minimal sublaminations connected with isolated leaves; see, for example, \cite{CEG06}. In that setting, surjectivity is proven by arguing that exactly one of the minimal components of $L$ must fill a witness.  
However, when $\Sigma$ is  infinite-type, there exist geodesic laminations that do not have any minimal sublamination, that is, it is possible to construct an infinite nested sequence of (non-empty) sublaminations $L_1\supsetneq L_2\supsetneq\dots$ such that no $L_i$ is minimal. Examples of such ``infinitely derivable" laminations were constructed by Bestvina and Rasmussen \cite[Section 8.1]{BR}. It is unclear whether such laminations can arise in this context. If they do, minimality may need to be replaced with an appropriate weaker condition to completely describe $\partial \mc G(\Sigma)$.

\begin{question}
    Suppose $(x_n)_{n\in \mathbb N}$ is a sequence of grand arcs that converges the boundary in $\mc G(\Sigma)$.  Let $L$ be the limit of (a subsequence of) the sequence in the space of geodesic laminations equipped with the Chabauty topology.  Could $L$ be infinitely derivable?
\end{question}

In the special case when $\Sigma$ is a once-punctured Cantor tree, the grand arc graph is the same as the ray graph, which was defined by Calegari and proven to by infinite-diameter and hyperbolic by Bavard \cite{Bav}.  Bavard and Walker  prove that the Gromov boundary of the ray graph is homeomorphic to the space $\mc{E}$ of \textit{cliques of high-filling rays} \cite{B-W}. They later generalized their results to describe the boundary of the relative arc graph for other surfaces with an isolated puncture \cite{B-W_2}; however, the relative arc graph for those surfaces is no longer the same as (nor quasi-isometric to) the grand arc graph. To prove their result, they construct a \textit{completed ray graph}, which is a larger graph containing the ray graph, with the property that the connected component containing the ray graph is quasi-isometric to the ray graph. The ``cliques of high-filling rays" are exactly the other connected components of this graph. A key ingredient in their arguments is infinite unicorn paths as constructed in \cite{Pho-On}.  

\begin{question}
    Is there a general combinatorial description of $\partial\mc{G}(\Sigma)$ analogous to $\mc{E}$?
\end{question}

When $\Sigma$ is the plane minus a Cantor set, so that the grand arc graph and the ray graph coincide, there is a relationship between $\mc{E}$ and $\mc{GEL}(\Sigma)$.  Let $\mc E_{\textrm{min}}\subseteq \mc E$ be the subspace of $\mc E$ consisting of high-filling rays whose limit set is a minimal lamination. Rasmussen's techniques from \cite[Section~3]{RasWWPD} work verbatim to show that $\mc E_{\textrm{min}}$ is homeomorphic to $\mc{GEL}(\Sigma)$. It is possible that $\mc E_{\textrm{min}}=\mc E$, in which case the map $F$ from Theorem~\ref{thm_main} would be a homeomorphism when $\Sigma$ is the plane minus a Cantor set.


There are several similarities between the structure of our argument to describe (part of) the boundary of the grand arc graph and Bavard--Walker's argument describing the boundary of the relative arc graph of an infinite-type surface with an isolated puncture. Thus it is reasonable to wonder whether the ideas in their proof of the surjectivity of their map could be used to prove surjectivity of the map $F$.  Unfortunately, their proof of surjectivity has a small gap, which has been filled by Lvzhou Chen in Appendix~\ref{sec:appendix}. Chen's proof heavily relies on the Bavard-Walker machinery, and, in particular, on the fact that all arcs converge to a fixed puncture.  It is precisely this piece of the proof of surjectivity that does not seem to generalise to grand arcs. 



\paragraph*{Organization of the paper.}  After gathering background on hyperbolic spaces, infinite-type surfaces, and the grand arc graph in Section~\ref{sec:background}, we define bicorn arcs between grand arcs and investigate their properties in Section~\ref{sec:finite_bicorns}.  In Section~\ref{sec:BGI}, we prove the bounded geodesic image theorem for the grand arc graph and use it to show that the boundary of the grand arc graph is not compact in Theorem~\ref{thm:BGI} and Corollary~\ref{cor:non-compactness}, respectively.  Section~\ref{sec:topology} focuses on laminations on surfaces, defining the (coarse) Chabauty topology on the space of laminations and grand ending laminations.  In Section~\ref{sec:bicorns_with_generic_geod}, we define bicorn arcs between a grand arc and an arbitrary geodesic.  In Section~\ref{sec:GEL_boundary}, we define the map $F\colon \mc{GEL}(\Sigma)\to \partial\mc G(\Sigma)$ and prove Theorem~\ref{thm_main}, which we then use to describe WWPD elements in Theorem~\ref{thm:WWPD_classif}.

\paragraph*{Acknowledgements.} The authors would like to thank Mladen Bestvina, Lvzhou Chen, George Domat, Johanna Mangahas,  and Alexander Rasmussen for guidance and helpful discussions. We would especially like to thank Kasra Rafi for suggesting Definition~\ref{def:support}, Mladen Bestvina, Federica Fanoni, Kasra Rafi, and Jing Tao for the proof of Lemma~\ref{lem:both_half_leaves_dense}, and Yvon Verberne for useful discussions at the inception of this project.  The first author was partially supported by NSF grants DMS-2106906 and DMS-2340341. The third author was partially supported by the Simons Foundation (965204, JM).

\section{Background}\label{sec:background}

Let $X$ be a metric space.  If $Y\subseteq X$, we let $N_\kappa(Y)$ denote the (closed) $\kappa$--neighbourhood of $Y$ in $X$.

A map $\gamma\colon I\to X$ is a \textit{geodesic} if it is an isometric embedding of the interval $I\subseteq R$ into $X$.  It is a \textit{$(K,C)$--quasi-geodesic} if, for all $a,b\in I$, 
        \[
        \frac1K|b-a|-c\leq d_X(\gamma(a),\gamma(b))\leq k|b-a|+C.
        \]
We often conflate a (quasi-)geodesic with its image in $X$.

Given a metric space $X$ and subsets $A,B\subseteq X$, we denote the Hausdorff distance between $A$ and $B$ by $d_{\rm{Haus}}^X(A,B)$.

\subsection{$\delta$-hyperbolic spaces and their boundaries}

We refer the reader to \cite[Section III-H]{BH} for a detailed exposition of $\delta$-hyperbolic spaces. In this subsection, we outline some of their key properties. 

\begin{defn}
    A geodesic metric space $X$ is \textit{$\delta$--hyperbolic} for some constant $\delta\geq 0$ if for every geodesic triangle $\alpha_1\cup \alpha_2\cup \alpha_3$, we have $\alpha_3\subseteq N_\delta (\alpha_1\cup \alpha_2)$.
\end{defn}

A key feature of $\delta$--hyperbolic spaces is that all quasi-geodesics with the same endpoints are contained in a uniform neighbourhood of each other.

\begin{lem}[Morse Lemma]\label{lem:Morse_lemma}
    Let $X$ be a $\delta$-hyperbolic space. Then for every $\lambda\geq 1,\epsilon \geq 0$, there exists a constant $m_0=m_0(\delta,\lambda,\epsilon)$ such that for any $(\lambda,\epsilon)$-quasigeodesic $\gamma$ in $X$ connecting $x,y\in X$, we have 
    \begin{align}
        d_\mathrm{Haus}([x,y],\gamma)<m_0. 
    \end{align} 
\end{lem}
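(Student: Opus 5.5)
The Morse Lemma is a standard fact. The plan is to prove it first for geodesics against \emph{continuous, tame} quasi-geodesics and then reduce the general case to that one. Throughout, $[x,y]$ is a geodesic and $\gamma\colon[a,b]\to X$ is a $(\lambda,\epsilon)$-quasi-geodesic with $\gamma(a)=x$, $\gamma(b)=y$.

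\textbf{Step 1: taming.} Replace $\gamma$ by a continuous path $\gamma'$. Restrict $\gamma$ to the integer points of $[a,b]$ together with the endpoints, and join consecutive images by geodesic segments. Consecutive points are at distance at most $\lambda+\epsilon$, so $\gamma'$ is a $(\lambda,\epsilon')$-quasi-geodesic with $\epsilon'$ depending only on $(\lambda,\epsilon)$. Moreover $d_{\mathrm{Haus}}(\gamma,\gamma')\le \lambda+\epsilon$, and the length of any subpath of $\gamma'$ is bounded by a linear function $k_1 d(\text{endpoints})+k_2$ of the distance between its endpoints.

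\textbf{Step 2: $[x,y]$ lies near $\gamma'$.} This uses the standard lemma that in a $\delta$-hyperbolic space, for any rectifiable path $c$ from $x$ to $y$ and any $p\in[x,y]$,
\[
d(p,c)\le \delta\,|\log_2 \ell(c)|+1 .
\]
This lemma is proved by repeated bisection of $c$ and thinness of triangles (BH III.H.1.6).

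Set $D=d(p,\gamma')$ for the point $p\in[x,y]$ farthest from $\gamma'$. Take points $y',z'$ on $[x,y]$ at distance $2D$ on either side of $p$ (or the endpoints of $[x,y]$ if these are closer). Choose points of $\gamma'$ within $D$ of $y'$ and of $z'$, and build the path
\[
c = [y',\text{near point}]\ *\ \text{subpath of }\gamma'\ *\ [\text{near point},z'].
\]
Its length is at most a linear function of $D$ by tameness. Every point of $c$ is at distance at least $D$ from $p$, so
\[
D \le \delta\log_2(\text{linear in } D) + 1 ,
\]
which bounds $D$ by a constant depending only on $(\delta,\lambda,\epsilon)$.

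\textbf{Step 3: $\gamma'$ lies near $[x,y]$.} Consider a maximal subinterval of $\gamma'$ that lies outside the $D$-neighbourhood of $[x,y]$. Every point of $[x,y]$ is within $D$ of $\gamma'$, so by connectedness some point $w\in[x,y]$ is within $D$ of points of $\gamma'$ both before and after this excursion. Those two points of $\gamma'$ are then at distance at most $2D$ from each other. The quasi-geodesic inequality therefore bounds the length of the excursion, and hence its distance from $[x,y]$, by a constant depending only on $(\delta,\lambda,\epsilon)$.

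Combining Steps 2 and 3 with $d_{\mathrm{Haus}}(\gamma,\gamma')\le\lambda+\epsilon$ produces the constant $m_0$.

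The main obstacle is Step 2. It is where hyperbolicity genuinely enters, through the logarithmic estimate, and where the tameness from Step 1 is needed to make the resulting inequality close up. Alternatively, the lemma may simply be cited from BH III.H.1.7, which is presumably what the paper does.
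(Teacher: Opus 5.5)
Your proof is correct and is the standard argument of Bridson--Haefliger, Theorem III.H.1.7: tame $\gamma$ (Lemma III.H.1.11), bound $d(p,\gamma')$ via the logarithmic estimate of Proposition III.H.1.6, then control the excursions by connectedness. The paper gives no proof of its own; it states the Morse Lemma as background with a reference to \cite{BH}, so your route is exactly what it relies on. One detail should be made explicit: when $y'=x$ (resp.\ $z'=y$), take the nearby point of $\gamma'$ to be $x$ (resp.\ $y$) itself, so that every point of $c$ really stays at distance at least $D$ from $p$.
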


Let $X$ be a $\delta$-hyperbolic space, and let $a\in X$ be a fixed base point. The \textit{Gromov product} between two points $x,y\in X$ is 
\[
\left(x,y\right)_{a}=\frac{1}{2}\left(d\left(a,x\right)+d\left(a,y\right)-d\left(x,y\right)\right).
\]
Intuitively, the Gromov product inversely corresponds to the ``angle" between $x$ and $y$ from the point of view of $a$.  If the Gromov product is large, then $x$ and $y$ are closer to each other than they are to $a$. More precisely, the Gromov product is coarsely equal to the distance from $a$ to a geodesic from $x$ to $y$:

\begin{lem}\label{lem:Gromov_product_vs_dist_to_geod}
    Let $X$ be a $\delta$--hyperbolic space, let $x,y,a\in X$, and let $[x,y]$ be a geodesic from $x$ to $y$.  Then
    \[
    \left|d\left(a,\left[x,y\right]\right)-\left(x,y\right)_{a}\right|\leq 2\delta.
    \]
\end{lem}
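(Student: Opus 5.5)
Let $p\in[x,y]$ be a point realizing $d(a,[x,y])$ (the geodesic is compact, so a closest point exists), and write $D=d(a,p)$. I will prove the two inequalities $(x,y)_a\le D$ and $D\le (x,y)_a+2\delta$ separately.

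\textbf{Upper bound on the Gromov product.} This step uses only the triangle inequality. Since $p$ lies on the geodesic $[x,y]$, we have $d(x,y)=d(x,p)+d(p,y)$. The triangle inequality gives $d(a,x)\le D+d(p,x)$ and $d(a,y)\le D+d(p,y)$. Substituting into the definition,
\[
(x,y)_a=\tfrac12\bigl(d(a,x)+d(a,y)-d(x,p)-d(p,y)\bigr)\le D .
\]

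\textbf{Lower bound on the Gromov product.} Here we use $\delta$-hyperbolicity. Choose geodesics $[a,x]$ and $[a,y]$, so that $[x,y]$ is the third side of a geodesic triangle. Each point of $[x,y]$ lies within $\delta$ of $[a,x]\cup[a,y]$. Let $S_x\subseteq[x,y]$ be the set of points within $\delta$ of $[a,x]$, and define $S_y$ likewise. Both sets are closed and they cover $[x,y]$. Moreover $x\in S_x$ and $y\in S_y$. Since $[x,y]$ is connected, $S_x$ and $S_y$ intersect, so there is a point $q\in[x,y]$ together with points $u\in[a,x]$ and $v\in[a,y]$ satisfying $d(q,u)\le\delta$ and $d(q,v)\le\delta$.

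We then estimate the three distances. Because $u$ lies on $[a,x]$,
\[
d(a,x)=d(a,u)+d(u,x)\ge \bigl(d(a,q)-\delta\bigr)+\bigl(d(q,x)-\delta\bigr),
\]
and similarly $d(a,y)\ge d(a,q)+d(q,y)-2\delta$. Because $q$ lies on $[x,y]$, we have $d(x,y)=d(x,q)+d(q,y)$. Summing these and halving gives
\[
(x,y)_a\ge d(a,q)-2\delta\ge D-2\delta .
\]
Combining this with the upper bound yields $|D-(x,y)_a|\le2\delta$.

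The only step that needs care is producing the point $q$ that is simultaneously $\delta$-close to both other sides. This comes from the closed-cover and connectedness argument above, which is valid because $[x,y]$ is an interval. In a graph setting, where the geodesic may be taken through vertices, one applies the same argument to the continuous geodesic in the metric graph, so no extra constant appears.
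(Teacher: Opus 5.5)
Your proof is correct. The triangle inequality at a nearest point $p\in[x,y]$ gives $(x,y)_a\le d(a,[x,y])$. Your closed-cover/connectedness argument produces a point $q\in[x,y]$ that is $\delta$-close to both $[a,x]$ and $[a,y]$, which yields $d(a,[x,y])\le d(a,q)\le (x,y)_a+2\delta$; you in fact get the sharper one-sided bound $0\le d(a,[x,y])-(x,y)_a\le 2\delta$.

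The paper gives no proof of this lemma: it is quoted as standard background from Bridson--Haefliger. Your argument is the standard one.
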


Another feature of hyperbolic spaces is the notion of its \textit{boundary at infinity}, which is quasi-isometry invariant. It is defined in terms of convergent sequences of points.

\begin{defn}
A sequence $\left(x_{n}\right)_{n\in\mathbb{N}}\subset X$ \textit{converges to infinity} in $X$ if 
\[
\liminf_{i,j\to\infty}\left(x_{i},x_{j}\right)_{a}=\infty.
\]
Two sequences $\left(x_{n}\right)$ and $\left(y_{n}\right)$ are equivalent if 
\[
\liminf_{i,j\to\infty}\left(x_{i},y_{j}\right)_{a}=\infty.
\]
The \textit{Gromov boundary} $\partial_\infty X$ of $X$ is the collection of equivalence classes of sequences $\left(x_{n}\right)$ that converge to infinity in $X$. 
\end{defn}

This is a topological space, with basic open sets:
\[
U_{R}\left(p\right)=\left\{ q\in\partial X\mid\exists\left(x_{n}\right)\in p\text{ and }\left(y_{n}\right)\in q\text{ such that }\liminf_{i,j}\left(x_{i},y_{j}\right)_{a}>R\right\} .
\]

If $X$ is a proper geodesic metric space, then the Gromov boundary also corresponds to the set of equivalence classes of geodesic rays emanating from a fixed base point,  where two rays are equivalent if they are at finite Hausdorff distance.  While this is useful for intuition, it will be more convenient in this paper to work with convergent sequences. 


We end with two technical lemmas that will be useful in this paper.  The first is well-known, and can be found, for example, in \cite[Remark~2.6]{BenakliKapovich}.

\begin{lem}\label{lem:Qgeo_to_boundary}
If $X$ is a $\delta$--hyperbolic space, then given any point $\xi\in \partial X$ and $x_0\in X$, there exists a $(1,10\delta)$--quasi-geodesic $\gamma\colon \mathbb R_{\geq 0} \to X$ with $\gamma(0)=x_0$ such that $[(\gamma(n))_{n\in \mathbb N}]=\xi$.
\end{lem}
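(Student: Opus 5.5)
We construct $\gamma$ by joining the base point to a sequence of points marching out toward $\xi$ with geodesics, and then choosing a single chain of such segments carefully. Nothing here needs $X$ to be proper. Only $\delta$-hyperbolicity and geodesicity are used, together with Lemma~\ref{lem:Gromov_product_vs_dist_to_geod}.

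\textbf{Step 1 (setup).} Fix a sequence $(x_n)\in\xi$. Since $(x_i,x_j)_{x_0}\to\infty$, pass to a subsequence with $(x_n,x_m)_{x_0}\geq n$ for all $m\geq n$. Choose geodesics $\sigma_n=[x_0,x_n]$.

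By Lemma~\ref{lem:Gromov_product_vs_dist_to_geod} and thinness of the triangle $x_0x_nx_m$, the initial subsegments of $\sigma_n$ and $\sigma_m$ of length about $(x_n,x_m)_{x_0}-O(\delta)$ fellow-travel. Concretely, the points of $\sigma_n$ and $\sigma_m$ at the same distance $t\leq n-2\delta$ from $x_0$ are within $4\delta$ of each other. This is the standard insize/tripod estimate.

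\textbf{Step 2 (define $\gamma$ piecewise).} Set $t_k=k$ for $k\in\mathbb N$. Let $\gamma|_{[k,k+1]}$ be a geodesic from $\sigma_{N_k}(k)$ to $\sigma_{N_{k+1}}(k+1)$, where $N_k$ is chosen large, say $N_k=k+10$.

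First, consecutive points satisfy
\[
d\bigl(\sigma_{N_k}(k),\sigma_{N_{k+1}}(k+1)\bigr)\leq 1+4\delta.
\]
Second, for $k<l$, comparing both points to the single geodesic $\sigma_{N_l}$ gives
\[
\bigl|d(\gamma(k),\gamma(l))-(l-k)\bigr|\leq 8\delta.
\]
Interpolating over unit-length pieces then yields a $(1,C\delta)$-quasi-geodesic. Tracking constants gives $C\leq 10$ once the segments are parametrized by arc length; alternatively, one can simply take $\gamma(t)=\sigma_{N_{\lceil t\rceil}}(t)$ and accept the jumps. Finally, $\gamma(0)=x_0$.

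\textbf{Step 3 (convergence to $\xi$).} Compute
\[
(\gamma(n),x_m)_{x_0}\geq n-O(\delta)
\]
for $m\geq N_n$, because $\gamma(n)$ lies within $4\delta$ of $\sigma_{N_n}$, at distance $n$ from $x_0$. Hence $\liminf_{n,m}(\gamma(n),x_m)_{x_0}=\infty$, so $(\gamma(n))$ is equivalent to $(x_m)$ and $[(\gamma(n))]=\xi$.

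\textbf{Main obstacle.} The hard part is the bookkeeping that brings the additive constant down to exactly $10\delta$ rather than some larger multiple of $\delta$. I would follow the argument of \cite[Remark~2.6]{BenakliKapovich}, which uses sharp tripod estimates. Any uniform constant suffices for the purposes of this paper.
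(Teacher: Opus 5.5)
The paper gives no proof of this lemma. It only cites \cite[Remark~2.6]{BenakliKapovich}. Your diagonal construction is the standard argument behind such statements: geodesics $\sigma_n=[x_0,x_n]$ along a subsequence with $(x_n,x_m)_{x_0}\ge n$ for $m\ge n$, then reading off the point at distance $t$ from $x_0$. It is correct in substance, self-contained, and correctly avoids any properness assumption.

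The ``main obstacle'' you name is not real. Your own alternative $\gamma(t)=\sigma_{N_{\lceil t\rceil}}(t)$ already proves the statement with a better constant. For $0\le s<t$, put $n=N_{\lceil s\rceil}\le m=N_{\lceil t\rceil}$. Then $s\le n\le (x_n,x_m)_{x_0}$, so the $4\delta$--thinness you invoke gives $d(\gamma(s),\sigma_m(s))\le 4\delta$, while $\gamma(t)=\sigma_m(t)$ exactly. Hence $\bigl|d(\gamma(s),\gamma(t))-(t-s)\bigr|\le 4\delta$ for all real $s,t$. The paper's definition of a quasi-geodesic imposes no continuity, so $\gamma$ is a $(1,4\delta)$--quasi-geodesic with $\gamma(0)=x_0$, and no further bookkeeping is needed.

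Your remark that the interpolated path works ``once the segments are parametrized by arc length'' is wrong. Consecutive points $\gamma(k),\gamma(k+1)$ can be nearly $1+4\delta$ apart while the net progress is only $1$ per step. For example, the points can zigzag across a strip $\mathbb R\times[0,h]$, inside the $4\delta$--neighbourhood of a geodesic. So the arc-length parametrization can have multiplicative constant strictly larger than $1$, and is then not a $(1,C)$--quasi-geodesic for any $C$. The constant-speed parametrization on each $[k,k+1]$, as in your original definition, does give a $(1,C\delta)$--quasi-geodesic. However, the direct triangle-inequality estimate gives $C=12$, and you have not justified $C\le 10$.

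Two smaller fixes:
\begin{itemize}
\item With $N_k=k+10$, your stated range $t\le n-2\delta$ tacitly requires $\delta\le 5$. Use instead that thinness holds for all $t\le (x_n,x_m)_{x_0}$.
\item Step~3 only treats pairs with $m\ge N_n$. To get the $\liminf$ over all pairs, note that $(\gamma(n),\gamma(n'))_{x_0}\ge n-2\delta$ for $n<n'$ and $(\gamma(n),x_{N_n})_{x_0}=n$. Then apply $(y,z)_{x_0}\ge\min\{(y,w)_{x_0},(w,z)_{x_0}\}-O(\delta)$.
\end{itemize}
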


\begin{lem}\label{lem:QpathNearQgeo}
Let $X$ be a $\delta$--hyperbolic space, let $(x_n)_{n\in \mathbb N}$ be a sequence that converges to infinity, and let $\gamma$ be a $(1,10\delta)$--quasi-geodesic such that $[(\gamma(m))_{m\in \mathbb N}]=[(x_n)_{n\in \mathbb N}]$.  If there exists $K\geq 0$ such that $(x_n)$ is a $K$--quasi-path and $d_{\mathrm{Haus}}(\{x_0,\dots, x_k\},[x_0,x_k])\leq K$ for all $k$, then there exists a constant $C=C(K,\delta)$ such that $d_{\mathrm{Haus}}((x_n)_{n\in \mathbb N},\gamma)\leq C$.
\end{lem}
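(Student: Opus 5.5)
Let $\xi=[(x_n)]$ and write $\Gamma$ for the image of $\gamma$. The plan is to show $(x_n)\subseteq N_{C}(\Gamma)$ and $\Gamma\subseteq N_C((x_n))$ by comparing both with the family of geodesics $[x_0,x_k]$. Since $\gamma(0)$ need not equal $x_0$, I first record the additive error $D=d(x_0,\gamma(0))$. I expect $D$ is intended to be bounded, e.g.\ by taking $\gamma(0)=x_0$ via Lemma~\ref{lem:Qgeo_to_boundary}. If it is not, the argument still works with $C$ depending on $D$, and I would remark that in applications one normalizes $\gamma(0)=x_0$.

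\textbf{Step 1: fellow-travelling of $[x_0,x_k]$ and $\gamma$.} Fix $k$. By hypothesis $\{x_0,\dots,x_k\}$ is within $K$ of $[x_0,x_k]$ in Hausdorff distance. Since $(x_n)$ converges to infinity and is equivalent to $(\gamma(m))$, I choose $m=m(k)$ with $(x_k,\gamma(m))_{x_0}\ge d(x_0,x_k)+10\delta$. This is possible because $\liminf_{m}(x_k,\gamma(m))_{x_0}$ is large once $k$ is large; for small $k$ the estimates are trivial, since $d(x_0,x_k)\le Kk$ is bounded. Then consider the geodesic triangle with vertices $x_0,x_k,\gamma(m)$. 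By Lemma~\ref{lem:Gromov_product_vs_dist_to_geod}, the point $x_k$ lies within roughly $2\delta$ of $[x_0,\gamma(m)]$. By thinness of triangles, all of $[x_0,x_k]$ is then within $O(\delta)$ of $[x_0,\gamma(m)]$. By the Morse Lemma~\ref{lem:Morse_lemma}, applied to the $(1,10\delta)$--quasi-geodesic $\gamma|_{[0,m]}$ (whose endpoints are within $D$ of $x_0,\gamma(m)$), this segment is within $m_0+D+O(\delta)$ of $\gamma$. Hence every $x_j$ with $j\le k$ lies within $K+m_0+D+O(\delta)$ of $\Gamma$. Letting $k\to\infty$ gives $(x_n)\subseteq N_{C_1}(\Gamma)$.

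\textbf{Step 2: the reverse inclusion.} Take $t\ge 0$ and set $y=\gamma(t)$. Because $(x_n)$ converges to infinity, $d(x_0,x_k)\to\infty$. Because $(x_n)$ is a $K$--quasi-path, consecutive points are within $K$, and I use this to pick $k$ with $d(x_0,x_k)$ much larger than $d(x_0,y)$. Applying Step 1 with this $k$, the quasi-geodesic $\gamma$ fellow-travels $[x_0,\gamma(m)]$. The segment $[x_0,x_k]$ is a subpath of that fellow-travelling geodesic, up to $O(\delta)$ error, of length exceeding $d(x_0,y)+O(\delta+D)$. So $y$ lies within $O(m_0+\delta+D)$ of some point of $[x_0,x_k]$, and hence within an additional $K$ of some $x_j$. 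This gives $\Gamma\subseteq N_{C_2}((x_n))$, and we take $C=\max(C_1,C_2)$.

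\textbf{Main obstacle.} The delicate step is choosing $m(k)$ in Step 1 so that $x_k$ projects near $[x_0,\gamma(m)]$ and not past its endpoint. This requires converting the $\liminf$ condition in the definition of equivalent sequences into a concrete inequality for a single pair $(k,m)$. I would handle it by first passing to $k$ large, using $\liminf_{i,j}(x_i,\gamma(j))_{x_0}=\infty$, and then bounding the finitely many small $k$ trivially by $Kk$. Everything else is standard thin-triangle bookkeeping, with constants depending only on $K$, $\delta$, and (via $m_0(\delta,1,10\delta)$) nothing else.
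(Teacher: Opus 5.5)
Your Step~1 breaks at the choice of $m$. By the triangle inequality, $(x_k,\gamma(m))_{x_0}\le d(x_0,x_k)$ always holds, so the inequality $(x_k,\gamma(m))_{x_0}\ge d(x_0,x_k)+10\delta$ can never be arranged. Correcting the sign does not help. The claim ``$x_k$ lies within $O(\delta)$ of $[x_0,\gamma(m)]$'' requires $(x_0,\gamma(m))_{x_k}=d(x_0,x_k)-(x_k,\gamma(m))_{x_0}=O(\delta)$. Equivalence of the two sequences only gives $(x_k,\gamma(m))_{x_0}\to\infty$, not that it comes within $O(\delta)$ of $d(x_0,x_k)$. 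In fact the conclusion you draw (all of $[x_0,x_k]$ is $O(\delta)$-close to $[x_0,\gamma(m)]$) is false in general. In a tree, take $\gamma$ a geodesic ray from $x_0=\gamma(0)$, and let $x_n$ for $n\ge1$ be the tip of a spike of length $K/3$ attached at $\gamma(n)$. For $K\ge 3$ all hypotheses hold, but $x_k$ and the last stretch of $[x_0,x_k]$ sit at distance up to $K/3$ from $\gamma$ for every $m$. The fallback ``small $k$ are trivial since $d(x_0,x_k)\le Kk$'' does not rescue this either. How large $k$ must be before the Gromov products become large depends on the particular sequence, not on $K$ and $\delta$, so the resulting bound is not uniform.

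The missing idea, which is exactly the paper's argument, is to fix the point you want to control first and only use an initial subsegment of $[x_0,x_k]$. Given $j$, choose $k,m$ with $(x_k,\gamma(m))_{x_0}\ge d(x_0,x_j)+K+1$. A point $y\in[x_0,x_k]$ with $d(y,x_j)\le K$ then has $d(x_0,y)<(x_k,\gamma(m))_{x_0}$. By the insize (tripod) property of the triangle $x_0,x_k,\gamma(m)$, $y$ is $O(\delta)$-close to the point $z\in[x_0,\gamma(m)]$ with $d(x_0,z)=d(x_0,y)$. The Morse Lemma then gives $d(x_j,\gamma)\le K+O(\delta)+m_0$; the endpoint $x_k$ is never used. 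Your Step~2 can be repaired the same way: for $y=\gamma(t)$, take $m\ge t$ and $k$ with $(x_k,\gamma(m))_{x_0}>d(x_0,y)+m_0$. Move $y$ to $[x_0,\gamma(m)]$ by Morse, across to $[x_0,x_k]$ by insize, then to some $x_j$ by hypothesis. With that repair, Step~2 is a valid alternative to the paper's second paragraph, which instead uses that nearest-point projection to $\gamma$ is coarsely Lipschitz together with the $K$--quasi-path property. Your route avoids projections and does not really need the quasi-path hypothesis. Your remark about $D=d(x_0,\gamma(0))$ is correct. The paper's proof tacitly takes $\gamma(0)=x_0$, as produced by Lemma~\ref{lem:Qgeo_to_boundary}, and without that normalization the constant cannot depend only on $K$ and $\delta$.
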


\begin{proof}
    Fix $i\in \mathbb N$.  Since $x_n\to [\gamma(m)]$, for each $r\in \mathbb N$ there exists $N_r\in \mathbb N$ such that $(x_n,[\gamma(m)])_{x_0} \geq r$ for all $n\geq N_r$.   Choose $r$ large enough so that if $y\in [a,b_{N_r}]$ satisfies $d(y,x_i)\leq K$, then $d(x_0,y)\leq r-1$.  Fix $0<\varepsilon<1$.  Since $(x_{N_r},[\gamma(m)])_{x_0}\geq r$, there exists $I\in \mathbb N$ such that $(x_{N_r},\gamma(I))_{x_0}\geq r-\varepsilon$.  Let $y\in [a,b_{N_r}]$ satisfy $d(y,x_i)\leq K$, and let $z\in [x_0,\gamma(I)]$ satisfy $d(x_0,y)=d(x_0,z)$.  By our choice of $r$, we have $d(x_0,y)=d(x_0,z)\leq r-\varepsilon$, and so $d(y,z)\leq 2\delta$.  Since $\gamma$ is a $(1,10\delta)$--quasi-geodesic, Lemma~\ref{lem:Morse_lemma} and our choice of $y$ imply that $d(x_i,\gamma)\leq K+2\delta + M(1,10\delta)$.  Therefore $(x_n)_{n\in \mathbb N}\subseteq N_{K+2\delta+M(1,10\delta)}(\gamma)$.  

    We now prove the other containment.   Since $X$ is $\delta$--hyperbolic, there exists a constant $\kappa$ depending only on $\delta$ such that the nearest point projection $\pi_\gamma$  onto the $(1,10\delta)$--quasi-geodesic $\gamma$ is $\kappa$--Lipschitz.  Combining this with the fact that $(x_n)_{n\in \mathbb N}$ is a $K$--quasi-path yields that every point $z\in \gamma$ is at distance at most $\kappa K$ from a point in $\pi_\gamma((x_n)_{n\in \mathbb N})$.  by the previous paragraph, for each $i$ we hve $d(x_i,\pi_\gamma(x_i))\leq K + 2\delta + M(1,10\delta)$.  Hence there exists $i$ such that $d(z,x_i)\leq \kappa K + K +2\delta+M(1,10\delta)$.  Therefore, $(x_n)_{n\in \mathbb N}$ and $\gamma$ are at Hausdorff distance at most $\kappa K + K+2\delta+M(1,10\delta)$.
\end{proof}

\subsection{Infinite-type surfaces and grand arc graphs}\label{sec:background_surfaces}

Let $\Sigma$ be an infinite-type surface.  The \textit{space of ends} $E\left(\Sigma\right)$ of $\Sigma$ is the inverse limit $\varprojlim_{K\subseteq \Sigma} \pi_0(\Sigma\setminus K)$, where $K$ ranges over all compact subsets of $\Sigma$ and, if $K_i\subseteq K_j$, then the map in the inverse limit is the inclusion $\phi_{ij}\colon \pi_0(\Sigma\setminus K_j)\to \pi_0(\Sigma\setminus K_i)$. Endowing each $\pi_0(\Sigma\setminus K)$ with the discrete topology ensures that $E(\Sigma)$ is a totally disconnected, separable, metrisable topological space. A subset $U\subseteq E(\Sigma)$ is \textit{self-similar} if for any decomposition $U=U_1\sqcup U_2$ into disjoint clopen sets $U_1,U_2$, either $U$ is homeomorphic to $U_1$ or $U$ is homeomorphic to $U_2$. A subsurface $S\subseteq \Sigma$ \textit{separates} two ends $x,y\in E(\Sigma)$ if $x$ and $y$ are in distinct connected components of $\Sigma\setminus S$.

In \cite{Mann-Rafi}, Mann and Rafi define a partial order $\preceq$ on the space of ends of $\Sigma$ by setting $x\preceq y$ for $x,y\in E\left(\Sigma\right)$ if every clopen neighbourhood of $y$ contains a clopen set homeomorphic to a neighbourhood of $x$. Two ends $x,y$ are \textit{equivalent}, or of the \textit{same type}, if $x\preceq y$ and $y\preceq x$. With respect to this partial order, $E(\Sigma)$ contains maximal elements, which we call \textit{maximal ends} of $\Sigma$.  Mann and Rafi show that if the mapping class group of $\Sigma$ is locally coarsely bounded\footnote{A subset $X$ of a topological group $G$ is \textit{coarsely bounded} if $X$ has finite diameter in every compatible left-invariant metric on $G$.}, then the space of maximal ends of $\Sigma$ is a finite disjoint union of equivalence classes of self-similar ends. In particular, we can write the space $\mathcal{M}\left(\Sigma\right)$ of maximal ends of $\Sigma$ as 
\begin{equation}\label{eqn:GrandSplitting}
\mathcal{M}\left(\Sigma\right)=\sqcup_{i=1}^{n}E_{i} 
\end{equation}
where each $E_{i}$ is self-similar, and so is either a singleton or a Cantor set, and all $e_{i}\in E_{i}$ are equivalent. 
Verberne and the second author call the decomposition in \eqref{eqn:GrandSplitting} the \textit{grand splitting} of $\Sigma$, denoted $\mc{S}(\Sigma)$ \cite{B-NV}. Such a splitting is unique for a given surface. 

\begin{defn}
A simple arc $a$ \textit{converges to an end} $e$ if for any neighbourhood $U$ of $e$, the arc $a$ eventually never leaves this neighbourhood. A simple arc is \textit{grand} if it converges to exactly two maximal ends that lie in different elements of the grand splitting.
\end{defn}

In particular,  if $\Sigma$ has exactly one element in the grand splitting, then it has no grand arcs.  The second author and Verberne use grand arcs to define the grand arc graph \cite[Definition~2.2]{B-NV}.
\begin{defn}\label{def:grand_arcs}
The \textit{grand arc graph }$\mathcal{G}\left(\Sigma\right)$ of an infinite-type surface $\Sigma$ is the graph whose vertices are in bijection with isotopy classes of grand arcs on $\Sigma$ and with an edge between two vertices if the corresponding isotopy classes have disjoint representatives. We denote the combinatorial metric on $\mc G(\Sigma)$ by $d_{\mc G}$.
\end{defn}

A key tool for studying the geometry of graphs of arcs and curves, such as the grand arc graph, is witness subsurfaces.  

\begin{defn}\label{defn:witness}
    Let $S$ be a surface, and let $G$ be a graph whose vertices correspond to isotopy classes of simple closed curves or simple proper arcs on $S$. A subsurface $W$ is a \textit{witness} for $G$ if it essentially intersects representatives of every vertex of $G$.
\end{defn}

A subsurface $S$ of $\Sigma$ is a witness for $\mc G(\Sigma)$ exactly when $S$ separates the components of the grand splitting of $\Sigma$, that is, if, in the notation of \eqref{eqn:GrandSplitting}, the ends in $E_i$ lie in a different connected components of $\Sigma\setminus S$ than those in which the ends in $E_j$ lie for $i\neq j$.  Much like other graphs of arcs and curves, the geometry of the grand arc graph is characterized by its witnesses. Russell and Vokes \cite{RV} and Kopreski \cite{kop_multiarc} study similar results for general arc and curve graphs on finite-type surfaces.
\begin{thm}[{\cite[Theorems~1.1 \& 1.3]{B-NV}}]\label{thm:hyp_grand_arc_graph}
For an infinite-type surface $\Sigma$ with $|\mc S(\Sigma)|<\infty$, the following conditions are equivalent:
\begin{enumerate}
\item $\mathcal{G}\left(\Sigma\right)$ is $\delta$-hyperbolic for some $\delta$ independent of $\Sigma$;
\item $\Sigma$ does not have disjoint witnesses; and 
\item either $|\mc{S}(\Sigma)|\geq 3$ or $\Sigma$ is the once-punctured Cantor tree. 
\end{enumerate}
In this case, $\mathrm{MCG}\left(\Sigma\right)$ acts quasi-continuously on $\mathcal{G}\left(\Sigma\right)$, and the induced action on $\partial\mathcal{G}\left(\Sigma\right)$ is continuous.
\end{thm}
We note that the case of the once-punctured Cantor tree is addressed using different techniques and is due to Bavard \cite{Bav}. Henceforth, unless stated otherwise, we shall assume that either $3\leq |\mc{S}(\Sigma)|< \infty$ or $\Sigma$ is the once-punctured Cantor tree.

The following two lemmas will be used frequently in this paper.  
\begin{lem}[{\cite[Lemma~6.6]{B-NV}}]\label{lem:single_arc_witness}
    For any grand arc $a$, there exists a witness $S\subseteq \Sigma$ such that $a\cap S$ is connected.
\end{lem}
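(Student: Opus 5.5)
Given a grand arc $a$, the plan is to build the witness $S$ by hand as a regular neighbourhood of $a$ together with enough extra material to separate the elements of the grand splitting. Write $\mc S(\Sigma)=\{E_1,\dots,E_n\}$ and let $a$ converge to $e_1\in E_1$ and $e_2\in E_2$. Since the $E_i$ are pairwise disjoint closed subsets of $E(\Sigma)$ (each is a singleton or a Cantor set, and $\mc M(\Sigma)$ is closed), there are pairwise disjoint clopen neighbourhoods $U_1,\dots,U_n$ of $E_1,\dots,E_n$ in $E(\Sigma)$. Each clopen set $U_i$ is cut off by a compact separating curve, i.e.\ there is a compact-boundary subsurface $V_i\subseteq\Sigma$ whose end space is exactly $U_i$, and we may take the $V_i$ pairwise disjoint. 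The complement $S_0=\Sigma\setminus\bigcup_i V_i$ then separates $E_i$ from $E_j$ for $i\neq j$, because $S_0$ is connected and each component of $\Sigma\setminus S_0$ lies in a single $V_i$. By the characterization following Definition~\ref{defn:witness}, $S_0$ is a witness.

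The difficulty is that $a\cap S_0$ need not be connected. The arc $a$ eventually stays in $V_1$ near $e_1$ and in $V_2$ near $e_2$, but in between it may cross $\partial V_i$ many times. To fix this, we shrink the $V_1,V_2$ ends of $a$ and enlarge the middle. Choose a compact subarc $a_0\subseteq a$ such that $a\setminus a_0$ consists of two rays $r_1\subseteq V_1'$ and $r_2\subseteq V_2'$, where $V_1',V_2'$ are chosen small enough that $a$ enters each exactly once and never returns. This is possible because $a$ converges to $e_1,e_2$: take a nested exhaustion and a neighbourhood deep enough that the last exit time exists, then cut along a boundary curve through that point. We then use $V_1',V_2'$ in place of $V_1,V_2$, but only after checking that $V_1'$ still contains all of $E_1$. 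This is exactly where the self-similarity and disjointness of the $E_i$ enter.

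I expect this to be the main obstacle: shrinking around $e_1$ loses the rest of $E_1$ when $E_1$ is a Cantor set. My first attempt at a fix is not to require a single neighbourhood of $E_1$. Instead, set $S=\Sigma\setminus\big(\bigcup_{i\geq3}V_i\cup W_1\cup W_2\big)$, where $W_1$ consists of a small neighbourhood $V_1'$ of $e_1$ meeting $a$ in a final ray, together with further disjoint compact-boundary pieces covering $E_1\setminus\{e_1\}$ that are disjoint from the compact arc $a_0$ (possible since $a_0$ is compact and avoids all ends, so finitely many small clopen pieces avoiding a compact set suffice). We treat $E_2$ in the same way.

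The trouble is that $S$ now has complementary components containing ends of the same $E_1$ that are separated from one another. That is harmless, since the witness condition only asks that ends in different $E_i$ lie in different components. We then check two things. First, $S$ is connected: take $S$ to be a regular neighbourhood of $a_0\cup\partial$-pieces joined by arcs avoiding $a$, or note that removing disjoint discs-with-ends from a connected surface keeps it connected. Second, $a\cap S=a_0$ minus small collars, which is connected because $a$ meets the removed pieces only in the two terminal rays. Finally, $S$ still separates $E_i$ from $E_j$, since each complementary component has end space inside a single $U_i$.
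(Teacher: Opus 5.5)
\textbf{This proposal has a genuine gap.} The paper gives no proof of this lemma; it quotes \cite[Lemma~6.6]{B-NV}. The gap sits in the step that carries the content of the lemma: the existence of end-neighbourhoods $V_1',V_2'$ with compact boundary such that $a\cap V_i'$ is a single terminal ray.

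\emph{Main gap.} Taking the neighbourhoods ``small enough'' does not achieve this. Convergence of $a$ to $e_1$ only says that the tail of $a$ eventually stays in each neighbourhood. It does not stop $a$ from entering a neighbourhood, leaving, and re-entering before its final entry, and this can happen at every depth. Take a nested basis $V_1\supset V_2\supset\cdots$ of neighbourhoods of $e_1$ with some topology in each $V_k\setminus V_{k+1}$. A simple proper arc can, for every $k$, cross $\partial V_{k+1}$, go around topology in $V_{k+1}$, return to $V_k\setminus V_{k+1}$, and re-enter. Then $a\cap V_k$ is disconnected for every $k\ge 2$, however deep you go. ``Cutting along a boundary curve through the last exit point'' does not say how to find a curve avoiding these earlier excursions, and that is the whole difficulty.

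\emph{Secondary gap.} The $V_i$ with $i\ge 3$ were fixed before $a_0$ and may meet $a_0$. So your final claim that $a$ meets the removed pieces only in the two terminal rays fails as written. These $V_i$ must be re-chosen or modified away from $a_0\cup V_1'\cup V_2'$.

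\emph{Not a real obstacle.} The difficulty you flagged, losing the rest of $E_1$ when shrinking, does not arise, and self-similarity plays no role.

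\emph{Fix: modify the neighbourhoods rather than shrink them.} Let $V$ be any subsurface with compact boundary whose end set contains $e_1$ but not $e_2$. Since $a$ is proper and $\partial V$ is compact, $a\cap V$ is one terminal ray together with finitely many compact arcs $\alpha_1,\dots,\alpha_k$. Delete a thin regular neighbourhood of $\alpha_1\cup\dots\cup\alpha_k$ from $V$. What remains has compact boundary, the \emph{same} end set as $V$, and meets $a$ only in the ray. For $i\ge 3$, delete neighbourhoods of all of $a\cap V_i$ in the same way.

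\emph{Equivalent, cleaner version.} Start from a compact connected witness $W_0$. Let $\beta_1,\dots,\beta_m$ be the finitely many bounded components of $a\setminus W_0$, and set $W=W_0\cup N(\beta_1\cup\dots\cup\beta_m)$. Then $W$ is compact and connected. It contains $W_0$, so it is a witness. And $a\cap W$ is the single arc of $a$ from its first entry into $W_0$ to its last exit.

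\emph{On the statement itself.} Read literally, the statement is already satisfied by $S=\Sigma$. What the paper actually uses, for example in Corollary~\ref{cor:non-compactness}, is the compact version with $W$ as large as desired. The construction above gives this, since $W_0$ can be chosen to contain any prescribed compact set.
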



\begin{lem}\label{lem:grand_arc_dist_2}
Let $a,b\in\mc{G}(\Sigma)$ such that $|a\cap b|=1$. Then $d_\mc{G}(a,b)\leq 2$.
\end{lem}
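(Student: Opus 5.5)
The plan is to build a third grand arc $c$ that is disjoint from both $a$ and $b$ after isotopy; then $a$--$c$--$b$ is a path in $\mc G(\Sigma)$ and $d_{\mc G}(a,b)\leq 2$. Put $a$ and $b$ in minimal position, so they meet in a single transverse point $p$. The point $p$ cuts $a$ into two rays $a_1,a_2$, converging to maximal ends $e_1,e_2$ lying in different elements of $\mc S(\Sigma)$. Likewise $p$ cuts $b$ into rays $b_1,b_2$, converging to maximal ends $f_1,f_2$ lying in different elements of $\mc S(\Sigma)$.

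The candidate for $c$ is the boundary of a regular neighbourhood $N$ of $a\cup b$. This set is a cross with four rays emanating from $p$, in cyclic order $a_1,b_1,a_2,b_2$ around $p$. Its regular neighbourhood is a thickened cross, and its frontier consists of four proper arcs. Each frontier arc runs parallel to two cyclically adjacent rays, namely $a_1\cup b_1$, $b_1\cup a_2$, $a_2\cup b_2$, and $b_2\cup a_1$. Each such arc is disjoint from $a$ and $b$, and it converges to the two ends that its two rays converge to. For example, the arc parallel to $a_1\cup b_1$ converges to $e_1$ and $f_1$.

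It remains to show that at least one of these four arcs is grand. Write $[x]$ for the element of $\mc S(\Sigma)$ containing $x$. We know $[e_1]\neq[e_2]$ and $[f_1]\neq[f_2]$. Suppose none of the four frontier arcs is grand. Then $[e_1]=[f_1]$ and $[f_1]=[e_2]$, which forces $[e_1]=[e_2]$, a contradiction. So one of the four pairs of adjacent rays ends in two different elements of the grand splitting, and the corresponding frontier arc converges to maximal ends in distinct elements. Call that arc $c$.

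The main obstacle is checking that $c$ is a genuine grand arc rather than a degenerate one. First, $c$ must be simple and proper, which holds because it is a frontier component of a regular neighbourhood. Second, $c$ must be essential and not isotopic into an end, and it must converge to exactly the two ends claimed. Convergence follows because $c$ stays in a small neighbourhood of $a_i\cup b_j$, and each ray converges to its end. The endpoints of $c$ are distinct maximal ends in different grand splitting classes, so $c$ cannot be inessential.

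A further subtlety arises if two adjacent rays converge to the same end. This cannot happen for the pair we chose, since its two ends lie in different classes. Finally, $c$ is disjoint from both $a$ and $b$, so $d_{\mc G}(a,c)\leq 1$ and $d_{\mc G}(c,b)\leq 1$, which gives the lemma.
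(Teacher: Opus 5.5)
Your proposal is correct and follows the paper's proof: the paper concatenates a ray of $a\setminus\{P\}$ with a ray of $b\setminus\{P\}$ that converge to distinct elements of the grand splitting and (implicitly) pushes this off $a\cup b$. That pushed-off arc is exactly your frontier arc of a regular neighbourhood of $a\cup b$; your argument with $[e_1]=[f_1]=[e_2]$ just spells out why such a pair of rays exists.
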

\begin{proof}
    Let $a\cap b = {P}$. Since each of $a,b$ converge to two different elements of the grand splitting, we can choose one component of $a\backslash P$ and one of $b\backslash P$ which converge to distinct elements of the grand splitting. The concatenation of these two components at $P$ defines a grand arc that is disjoint from $a$ and $b$, and so $d_\mc{G}(a,b)\leq 2$.
\end{proof}

We often measure the Hausdorff distance between subsets of $\mc G(\Sigma)$.  When $\Sigma$ is clear from context, we denote this distance by $d_{\rm{Haus}}^{\mc G}$.

\subsection{Prescribed arc graphs} \label{subsec:intro_presc_arcs}

Prescribed arc graphs were introduced in \cite{B-NV} 
to study the projections of grand arcs to witnesses. Kopreski studied these graphs more generally in \cite{Kop}. 
\begin{defn}
Let $S=S_{g,n}$ be a surface with genus $g$ and $n$ boundary components. (We shall not make a distinction between marked points and boundary circles.) Let $\Gamma$ be a graph whose vertices correspond to the $n$ boundary components on $S$. A simple arc $\alpha$ on $S$ is $\Gamma$\textit{-allowed} if $\alpha$ terminates on boundary components $e_{i}$ and $e_{j}$ such that $\left(e_{i},e_{j}\right)\in E\left(\Gamma\right)$. The \textit{$\Gamma$}-\textit{prescribed arc graph} $\mathcal{A}\left(S,\Gamma\right)$ for $S$ is the graph whose vertices correspond to isotopy classes of unoriented $\Gamma$-prescribed arcs, with an edge between two vertices if the corresponding isotopy classes have disjoint representatives. 
\end{defn}

As a special case of a prescribed arc graph, we define the \textit{arc graph} of a subsurface $Y$ of a (possibly infinite-type) surface $\Sigma$.

\begin{defn}
Let $Y\subset\Sigma$ be a subsurface; we allow $\partial Y$ to contain arcs. The \textit{arc graph} $\mathcal{A}\left(Y\right)$ is the graph whose vertices correspond to isotopy classes essential simple arcs such that each endpoint is either on $\partial Y$ or converges to an end of $Y$, with an edge between two vertices if the corresponding isotopy classes can be realised disjointly. Equivalently, $\mathcal{A}(Y) = \mathcal{A}(Y,\Gamma)$ where $\Gamma$ is the complete graph with loops whose vertices correspond to the union of the ends and the boundary components of $Y$.
\end{defn}

\begin{defn}
Let $\alpha\subset\Sigma$ be a simple arc, and suppose $\alpha\cap Y\neq\emptyset$. Then $\alpha\cap Y$ consists of a pairwise disjoint collection of simple arcs, some of which may not be essential. The \textit{projection of $\alpha$ to $\mc A(Y)$}, denoted $\pi_{Y}\left(\alpha\right)\subset\mathcal{A}(Y)$, is the set of isotopy classes of essential arcs in $\alpha\cap Y$. Similarly, we can define projections to general prescribed arc graphs of subsurfaces, assuming that the original arc intersects the subsurface in prescribed arcs.
\end{defn}

If an endpoint of an arc is on a boundary component $\alpha$ of $Y$ which is itself an arc, then one can homotope the endpoint of the arc along $\alpha$ to one of its ends. Thus, up to homotopy, we can and will assume that every arc in $Y$ has endpoints on boundary circles or ends of $Y$. 

In this work, we do not study prescribed arc graphs in general, but specifically, prescribed arc graphs that arise from projections of the grand arc graph. If $S$ is a not necessarily proper subsurface of $\Sigma$,  there is a natural prescribing graph $\Gamma=\Gamma_{S}$ on $S$ coming from the grand arc graph.  When $S$ is finite-type, $\Gamma_S$ is the graph whose vertices correspond to components of $\partial S$ with an edge between boundary components $b_{1}$ and $b_{2}$ if they separate different elements of the grand splitting. In particular, if some component cuts off only non-maximal ends of $\Sigma$, then there is no edge to it in $\Gamma_{S}$. When $S$ is an infinite-type subsurface of $\Sigma$, we extend this definition by including vertices corresponding to the ends of $\Sigma$ contained in $S$, with extra edges between vertices corresponding to different elements of the grand splitting and between an element of the grand splitting and a boundary component of $S$ separating other elements of the grand splitting. Observe that in this case,
\[
\mathcal{A}\left(S,\Gamma_{S}\right)=\left\{ a\in\mathcal{A}\left(S\right)\mid\exists\alpha\in\mathcal{G}\left(\Sigma\right)\text{ such that }\pi_{S}\left(\alpha\right)=\left\{ a\right\} \right\} .
\] In particular, $\mathcal{G}(S) = \mathcal{A}\left(\Sigma,\Gamma_{\Sigma}\right)$; this will be used in Section \ref{sec:BGI}.

From here on, whenever we refer to prescribed arc graphs, we assume that $\Gamma$ has a connnected component which is a complete multipartite graph with at least $3$ (non-empty) partite sets, and every other connected component of $\Gamma$ (if any) is an isolated vertex. Theorems 1.2 and 1.3 from \cite{Kop} thus imply that $\mathcal{A}\left(S,\Gamma\right)$ is connected, infinite-diameter and hyperbolic, when $S$ is finite-type. Moreover, if $\Sigma$ has at least 3 elements in the grand splitting, the prescribed arc graphs induced from $\mathcal{G}\left(\Sigma\right)$ on its witnesses, including $\mathcal{G}\left(\Sigma\right)$ itself, all satisfy this criterion, where the partite sets exactly correspond to elements of the grand splitting.

If $W\subset S$ is a witness of $\mathcal{A}\left(S,\Gamma\right)$, then there is a prescribed arc graph $\mathcal{A}\left(W,\Gamma_{W}\right)$ where the prescribing graph $\Gamma_W$ is essentially the restriction of $\Gamma$ to $W$. More precisely, $\Gamma_{W}$ is the graph whose vertices correspond to elements of $\partial W$ and with an edge between two vertices $v,w\in\Gamma_{W}$ if there is a prescribed arc in $\mathcal{A}\left(S,\Gamma\right)$ which intersects $W$ in a single arc connecting $v$ and $w$, considered as elements of $\partial W$. The usual subsurface projection map will not project all of $\mathcal{A}\left(S,\Gamma\right)$ into $\mathcal{A}\left(W,\Gamma_{W}\right)$, as there may be arcs in $\mathcal{A}\left(S,\Gamma\right)$ which intersect $W$ in multiple components.  However, $\mathcal{A}\left(W,\Gamma_{W}\right)$ coarsely approximates the arcs in $\mathcal{A}\left(S,\Gamma\right)$ which intersect $W$ in a single component, in the following sense.

\begin{lem}\label{lem:presc_arc_proj}
Suppose $x,y\in\mathcal{A}\left(S,\Gamma\right)$, and let $W$ be a compact witness for $\mathcal{A}\left(S,\Gamma\right)$ such that $x\cap W$ and $y\cap W$ are each connected. Then 
\[
d_{W}\left(\pi_{W}(x),\pi_{W}(y)\right)\leq d_{\mathcal{A}\left(S,\Gamma\right)}\left(x,y\right)\leq d_{W}\left(\pi_{W}(x),\pi_{W}(y)\right)+3,
\]
where $\pi_{W}(x)$ is the projection of $x$ to the prescribed arc graph $\mathcal{A}\left(W,\Gamma_{W}\right)$ of $W$.  Moreover, if $\Gamma$ has at least $4$ non-empty partite sets, then the second inequality can be strengthened to 
\[
d_{\mathcal{A}\left(S,\Gamma\right)}\left(x,y\right)\leq d_{W}\left(\pi_{W}(x),\pi_{W}(y)\right)+2.
\]
\end{lem}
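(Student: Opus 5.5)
The lower bound comes from a Lipschitz projection, and the upper bound from surgering the endpoints of a geodesic in $\mathcal{A}(W,\Gamma_W)$ back to prescribed arcs of $S$.

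\emph{Lower bound.} Take a geodesic $x=x_0,x_1,\dots,x_n=y$ in $\mathcal{A}(S,\Gamma)$. Since $W$ is a witness, every $x_i$ meets $W$ essentially, so $\pi_W(x_i)\neq\emptyset$. Disjoint arcs have disjoint intersections with $W$, so every essential component of $x_i\cap W$ is adjacent to every essential component of $x_{i+1}\cap W$ in $\mathcal{A}(W)$. The catch is that $\pi_W(x_i)$ need not lie in $\mathcal{A}(W,\Gamma_W)$ when $x_i\cap W$ is disconnected. To handle this, I would replace each intermediate $x_i$ by a \emph{prescribed component} of $x_i\cap W$: an essential arc of $x_i\cap W$ whose endpoints lie on boundary components joined by an edge of $\Gamma_W$. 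Such a component should exist because $x_i$ runs between ends lying in different partite sets, so some sub-arc of $x_i\cap W$ must cross between complementary regions carrying different partite classes. Consecutive replacements are disjoint, and the endpoints are $\pi_W(x)$ and $\pi_W(y)$, which are single prescribed arcs by the connectedness hypothesis. This gives a path of length $n$ in $\mathcal{A}(W,\Gamma_W)$, hence $d_W\le d_{\mathcal{A}(S,\Gamma)}$. The main care needed is defining $\Gamma_W$-allowed components so that the existence claim is justified; I expect this to follow directly from the definition of $\Gamma_W$.

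\emph{Upper bound.} Take a geodesic $\pi_W(x)=a_0,a_1,\dots,a_m=\pi_W(y)$ in $\mathcal{A}(W,\Gamma_W)$. By the definition of $\Gamma_W$, each $a_i$ joins boundary components $v,w$ of $W$, and there is a prescribed arc of $S$ meeting $W$ in a single arc between $v$ and $w$. So I extend $a_i$ by choosing rays in $S\setminus W$ from $v$ and from $w$ out to ends (or allowed boundary) in distinct partite classes. The key point is to extend consecutive arcs \emph{compatibly}, so that disjointness survives: choose, once and for all, a fixed ray $r_c$ in each complementary component $c$ of $W$, with parallel disjoint copies to accommodate several arcs ending on the same boundary component. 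Then $\hat a_i=r_v\cup a_i\cup r_w$, and disjoint $a_i,a_{i+1}$ give disjoint $\hat a_i,\hat a_{i+1}$.

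The hard step is the two ends of the path. The arcs $x$ and $y$ have their own exterior rays, which may intersect the chosen $r_c$. First I would try taking $\hat a_0$ to agree with $x$ outside $W$, and likewise $\hat a_m$ with $y$. The transition from $\hat a_0$ to $\hat a_1$ can then fail to be disjoint only in the complementary components. I plan to repair it with at most a constant number of extra vertices, using a grand arc supported entirely in the complement of $W$. Such an arc exists since complementary components of a witness contain whole partite classes. Alternatively, I could use an arc through $W$ that is disjoint from both $\hat a_0$ and $\hat a_1$, in the spirit of Lemma~\ref{lem:grand_arc_dist_2}. Budgeting the extra vertices at both ends should give the additive constant $3$.

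With at least $4$ partite classes, I expect the repair to be cheaper: there is room to pick an exterior ray class avoiding both endpoint classes of $x$ (respectively $y$) and of $a_1$, which saves one step and gives the constant $2$. I expect the careful accounting of this endpoint surgery to be the main obstacle.
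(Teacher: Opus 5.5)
Your upper-bound scheme matches the paper's: extend an $\mathcal{A}(W,\Gamma_W)$-geodesic by fixed exterior rays, then fix the ends. The repair step, however, fails as written. There is no prescribed arc supported in $S\setminus W$. Since $W$ is a witness, every vertex of $\mathcal{A}(S,\Gamma)$ meets it essentially, and each complementary component of $W$ contains ends of at most one partite class. Your fallback is not carried out, and repairing \emph{both} ends with no per-end bound does not yield the constant $3$. Two ideas are missing.
\begin{itemize}
\item Choose the fixed rays so that, on the two boundary components met by $x$, they are the exterior rays of $x$. Then $\pi_W(x)$ extends to $x$ itself, and only the $y$-end needs repair.
\item The extension $y'$ of $\pi_W(y)$ satisfies $y'\cap W=y\cap W$. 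Lemma~\ref{lem:agree-inside-witness} then gives $d_{\mathcal{A}(S,\Gamma)}(y',y)\le 3$ via an explicit path $y'-c-d-y$. Suppose $y$ runs between classes $S_1$ and $S_2$. Take arcs $c',d'\subset W$, disjoint from each other and from $y$, joining a boundary component of a third class $S_3$ to the $S_1$-, respectively $S_2$-, component met by $y$. Let $c$ follow $y'$ in the $S_1$-region and $d$ follow $y$ in the $S_2$-region, with both using a common ray into the $S_3$-region.
\end{itemize}
With four classes, a single arc in $W$ joining boundary components of the two classes unused by $y$ extends to a vertex disjoint from both $y$ and $y'$. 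This gives $+2$.

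The lower bound has a gap at exactly the step you flagged: a component of $x_i\cap W$ with $\Gamma_W$-allowed endpoints need not exist. Suppose a boundary component $\gamma$ of $W$ cuts off a region containing no maximal end, such as a one-holed torus or a region with only non-maximal ends. Then $\gamma$ is an isolated vertex of $\Gamma_W$. A prescribed arc can enter $W$ from the $S_1$-region, cross to $\gamma$, and run essentially through the region beyond $\gamma$. It can then re-enter through $\gamma$ and cross to the $S_2$-boundary. Such an arc is in minimal position, and both components of its intersection with $W$ end on $\gamma$. So its projection contains no vertex of $\mathcal{A}(W,\Gamma_W)$, and your path breaks there. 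Your argument only shows that the class labels of the complementary regions visited by $x_i$ must change. They can change across an unlabeled region rather than across a single component of $x_i\cap W$. The paper does not prove this inequality itself; it quotes it from Kopreski \cite{Kop}. You would need that input or a genuine surgery argument. Banding the two components together along $\gamma$ does not obviously keep consecutive projections disjoint, since other vertices of the path may end on $\gamma$.
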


Before we prove this, we shall prove a useful lemma.

\begin{lem}\label{lem:agree-inside-witness}
Let $W\subset S$ be a compact witness for $\mathcal{A}(S,\Gamma)$. Let $a,b\in \mathcal{A}(S,\Gamma)$ such that $a$ and $b$ intersect $W$ in a single proper arc, and $a\cap W = b\cap W$. Then $d_{\mathcal{A}(S,\Gamma)}(a,b)\leq 3$ if $\Gamma$ has at least 3 partite sets, and $d_{\mathcal{A}(S,\Gamma)}(a,b)\leq 2$ if $\Gamma$ has at least 4 partite sets.
\end{lem}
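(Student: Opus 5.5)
We prove Lemma~\ref{lem:agree-inside-witness} by building a short path between $a$ and $b$ in $\mathcal{A}(S,\Gamma)$ using arcs disjoint from the common piece $c = a\cap W = b\cap W$. Since $W$ is a compact witness, every boundary component of $W$ cuts off a complementary region of $S\setminus W$. The ends of $\Gamma$ (boundary components or ends of $S$ corresponding to vertices of the multipartite component) are distributed among these regions. Because $W$ separates the partite sets, each complementary region contains ends from at most one partite set, or from none.

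Orient $a$ so that $a = a_- \cup c \cup a_+$, where $a_\pm$ are the two components of $a\setminus W$. These lie in complementary regions $R_-$ and $R_+$ adjacent to the boundary components $\partial_-$ and $\partial_+$ of $W$ where $c$ starts and ends. Similarly $b = b_-\cup c\cup b_+$. Since $a\cap W = b\cap W$, after isotopy $b_\pm$ also leave $W$ through $\partial_\pm$ and lie in the same regions $R_\pm$. The arc $a$ terminates at ends in partite sets $P_-\neq P_+$ lying in $R_-$ and $R_+$, and likewise for $b$. So the partite labels of $R_\pm$ agree for $a$ and $b$.

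\emph{Case of at least 4 partite sets.} Pick a partite set $Q\notin\{P_-,P_+\}$ and another $Q'\notin\{P_-,P_+,Q\}$. Since $W$ is a witness, these lie in complementary regions $R_Q$ and $R_{Q'}$ different from $R_\pm$. Take an arc $d$ that runs from an end of $Q$ in $R_Q$, crosses $W$ in an arc disjoint from $c$, and reaches an end of $Q'$ in $R_{Q'}$. Then $d$ is disjoint from $a$ and from $b$, since $a$ and $b$ meet $W$ only in $c$ and outside $W$ live only in $R_\pm$. This gives $d(a,b)\le 2$.

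The main obstacle is showing that such a $d\cap W$ exists: an arc in $W$ from $\partial R_Q$ to $\partial R_{Q'}$ avoiding $c$. Since $W\setminus c$ may be disconnected, I would argue as follows. Cutting $W$ along the single arc $c$ joining $\partial_-$ to $\partial_+$ leaves a connected surface when $\partial_-\neq\partial_+$. If $\partial_-=\partial_+$ it may split into two pieces. If the two pieces separate $R_Q$ from $R_{Q'}$, I would instead choose $Q'$ (or $Q$) in the same piece, or route through the boundary curve of $R_-$, which is disjoint from $c$ except at its endpoints and can be pushed off.

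\emph{Case of exactly 3 partite sets.} Only one extra set $Q$ is available. I would take $d$ from an end of $Q$ through $W$ (avoiding $c$) to an end of $P_-$ in $R_-$, travelling inside $R_-$ disjoint from $a_-$. Such a $d$ is disjoint from $a$, so $d(a,d)=1$.

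Next I would take $d'$ from $Q$ to $P_-$, now chosen disjoint from $b_-$. The arcs $d$ and $d'$ coincide in $W$ and outside $R_-$, and differ only inside $R_-$. To make $d(d,d')=1$, I would push the $R_-$ portion of $d'$ to the $Q$-side near $R_Q$. Alternatively, I would take $d'$ from $Q$ to $P_+$, disjoint from $b$ and from $d$ by keeping the $W$-portions of $d$ and $d'$ parallel and disjoint. This gives the path $a, d, d', b$ of length $3$.

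The delicate point in this case is keeping $d$ and $d'$ disjoint inside $W$ while both avoid $c$. Using the $P_+$ variant, $d$ heads to $\partial_-$ and $d'$ to $\partial_+$, and these can be realised by parallel pushoffs of the two halves of a boundary-parallel arc alongside $c$.
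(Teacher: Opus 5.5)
Your proof is correct and follows the paper's argument. For $\ge 4$ partite sets you join two other partite sets by an arc through $W$ avoiding $c$, and your cut-along-$c$ justification is more explicit than the paper's; for exactly 3 partite sets your ``$P_+$ variant'' $a,d,d',b$, with $d,d'$ sharing a parallel arc into $Q$ and then following $a_-$ and $b_+$, is exactly the paper's path $a-c-d-b$. Two tidy-ups: the case $\partial_-=\partial_+$ cannot occur, since $R_-\neq R_+$ (they contain the distinct partite sets $P_\pm$), so $W$ cut along $c$ is always connected and that hedge can be deleted; and the first variant for $d'$, following $b_-$ inside $R_-$, should be dropped, since parallel copies of $a_-$ and $b_-$ may cross.
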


\begin{proof}
Since $a\cap W = b\cap W$, the arcs $a$ and $b$ must intersect the same boundary components of $W$. Suppose $\Gamma$ has exactly $3$ partite sets $S_1, S_2, S_3$, and suppose $a$ converges to $v_1\in S_1$ and $w_1 \in S_2$ and $b$ converges to $v_2 \in S_1$ and $w_2 \in S_2$. As $W$ separates the three partite sets, $W$ must have at least 3 boundary components. Pick an arc $c'$, respectively $d'$, in $W$ that joins a boundary component separating $S_3$ to the boundary component separating $S_1$, respectively $S_2$, that $a$, respectively $b$, intersects. Extend $c'$ and $d'$ to arcs $c,d\in \mathcal{A}(S,\Gamma)$ in such a way that $c$ agrees with $a$ in a component of $S\backslash W$, $d$ agrees with $b$ in a different component of $S\backslash W$, and $c,d$ are disjoint.  For example, one could extend $c'$ and $d'$ by the same arc to $S_3$. Then $a-c-d-b$ is a path of length $3$ in $\mathcal{A}(S,\Gamma)$, as desired.

Suppose now that $\Gamma$ has at least 4 partite sets, so that $W$ has at least 4 boundary components. Then $\mathcal{A}\left(W,\Gamma _W\right)$ is infinite-diameter, so there is an arc in $W$ connecting boundary components corresponding to partite sets other than the ones $a,b$ converge to. Any extension of such an arc to an arc in $\mathcal{A}(S,\Gamma)$ results in an arc disjoint both from $a,b$, proving the second bound.
\end{proof}

\begin{proof}[Proof of Lemma \ref{lem:presc_arc_proj}]
The first inequality is proved in \cite{Kop} and is true for any witness $W$. For the second inequality, fix any geodesic path in $\mathcal{A}\left(S,\Gamma\right)$ between $\pi_{W}(x)$ and $\pi_{W}(y)$, which are both single arcs by hypothesis. 
For each component $C$ of $\partial W$ which is not in $\partial S$, choose an arc $\alpha_C$ joining it to the vertex of $\Gamma$ representing the corresponding partite set that it separates.  Each vertex $v$ of the  geodesic path between $\pi_{W}(x)$ and $\pi_{W}(y)$ corresponds to an arc $v$ with endpoints on  components $C^+,C^-$ of $\partial W$.  Extend $v$ by concatenating with the arcs $\alpha_{C^+}$ and $\alpha_{C^-}$ to obtain a vertex of $\mathcal{A}\left(S,\Gamma\right)$. Without loss of generality, we can assume that the arcs are chosen in such a way that $\pi_{W}(x)$ extends to $x$. Then $\pi_{W}(y)$ extends to some arc $y'\in\mathcal{A}\left(S,\Gamma\right)$ which agrees with $x$ outside $W$. By Lemma \ref{lem:agree-inside-witness}, $d_{\mathcal{A}\left(S,\Gamma\right)}\left(y',y\right)\leq3$ if $\Gamma$ has 3 partite sets, and $d_{\mathcal{A}\left(S,\Gamma\right)}\left(y',y\right)\leq2$ if $\Gamma$ has at least 4 partite sets. Thus, we have constructed a path of length at most $d_{W}\left(\pi_{W}(x),\pi_{W}(y)\right)+3$ or $d_{W}\left(\pi_{W}(x),\pi_{W}(y)\right)+2$ between $x$ and $y$ in $\mathcal{A}\left(S,\Gamma\right)$.
\end{proof}

Lemma~\ref{lem:presc_arc_proj} immediately implies the following corollary.
\begin{cor}\label{cor:quasi_geod_projection}
For any $k\geq 1$ and $c\geq 0$, there exist $k'\geq 1$ and $c'\geq 0$ such that that following holds.  If $\left(x_{i}\right)_{i\in\mathbb{N}}$ be a $(k,c)$--quasi-geodesic sequence of grand arcs such that there exists a compact witness $W\subset\Sigma$ with $x_{i}\cap W$ connected for each $i\in\mathbb{N}$, then  $\left(\pi_{W}\left(x_{i}\right)\right)_{i\in\mathbb{N}}$ is a $(k',c')$--quasi-geodesic sequence in $\mathcal{A}\left(W,\Gamma_{W}\right)$.
\end{cor}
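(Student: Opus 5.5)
The plan is to apply Lemma~\ref{lem:presc_arc_proj} directly, with $S=\Sigma$ and $\Gamma=\Gamma_\Sigma$, so that $\mathcal{A}(S,\Gamma)=\mc G(\Sigma)$. Since $x_i\cap W$ is connected for every $i$, each $\pi_W(x_i)$ is a single vertex of $\mathcal{A}(W,\Gamma_W)$. Moreover, $W$ is a compact witness, so for all $i,j$ the lemma gives
\[
d_{\mc G}(x_i,x_j)-3\leq d_W\bigl(\pi_W(x_i),\pi_W(x_j)\bigr)\leq d_{\mc G}(x_i,x_j).
\]
In other words, the map $x_i\mapsto \pi_W(x_i)$ is a $(1,3)$--quasi-isometric embedding when restricted to the set $\{x_i\}$.

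Composing with the quasi-geodesic inequalities for $(x_i)$ then gives, for all $i,j\in\mathbb N$,
\[
\tfrac1k|i-j|-c-3\leq d_W\bigl(\pi_W(x_i),\pi_W(x_j)\bigr)\leq k|i-j|+c.
\]
Hence we may take $k'=k$ and $c'=c+3$, and these constants depend only on $k$ and $c$. If the sequence is only required to be coarsely Lipschitz as a map from $\mathbb N$, the same two inequalities still give the conclusion. There is essentially no obstacle beyond this.

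The only point to check is that the hypotheses of Lemma~\ref{lem:presc_arc_proj} really hold in this setting. First, $\Gamma_\Sigma$ must be a prescribing graph of the assumed type. This follows because $|\mc S(\Sigma)|\geq 3$, so $\Gamma_\Sigma$ has at least three partite sets, and the paper has already noted that $\mc G(\Sigma)=\mathcal A(\Sigma,\Gamma_\Sigma)$. Second, $\pi_W$ must be interpreted as the projection into $\mathcal{A}(W,\Gamma_W)$. This is legitimate because each $x_i$ meets $W$ in a single arc, and that arc is $\Gamma_W$-allowed by the definition of $\Gamma_W$.
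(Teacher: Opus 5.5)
Your proposal is correct and is essentially the paper's argument: the paper simply states that Lemma~\ref{lem:presc_arc_proj} immediately implies the corollary. Your application of that lemma with $S=\Sigma$ and $\Gamma=\Gamma_\Sigma$ makes the constants explicit, namely $k'=k$ and $c'=c+3$.
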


\section{Bicorn arcs in $\mathcal{G}(\Sigma)$} \label{sec:finite_bicorns}

Given two proper geodesics $a,b$ on $\Sigma$, their intersection $a\cap b$ is a discrete (possibly infinite) collection of points.  In this section, we will use these intersection points to build new arcs and investigate their properties.

Previously, Bar-Natan--Verberne and Kopreski used \textit{unicorn arcs} to study the geometry of the grand arc graph and prescribed arc graphs, respectively \cite{B-NV, Kop}. Unicorn arcs were defined in the finite-type setting by \cite{HPW} and extended to the infinite-type setting by \cite{F-G-M}. A unicorn arc between $a$ and $b$ is a simple arc of the form $x=a'\cup b'$, where $a', b'$ are the closures of components of $a\setminus\{P\}$ and $b\setminus \{P\}$ for a point $P\in a\cap b$.  The arc $x$ is only  simple when $a'\cap b'=\{P\}$, hence not every point of intersection of $a$ and $b$ will define a unicorn arc. We denote the set of unicorn arcs between $a$ and $b$ by $\mc U(a,b)$. Note that $a,b\in \mc U(a,b)$ and $\mc U(a,b)=\mc U(b,a)$. A unicorn arc in $\mc U(a,b)\setminus\{a,b\}$ is called a \textit{proper unicorn arc}. 

Bar-Natan and Verberne prove that for any $a,b\in \mc{G}(\Sigma)$, when $|\mc{S}(\Sigma)|\geq 3$, the 1-neighbourhood of $\mc{U}(a,b)$ is a connected, unparametrised quasi-geodesic set \cite[Proposition~4.5]{B-NV}. We will prove a similar result for the collection of \textit{bicorn arcs} between two grand arcs, which we define below.

\begin{defn}\label{def:bicorn}
A \textit{bicorn arc between grand arcs $a$ and $b$} is a grand arc of the type $x=a_{1}\cup b_{1}\cup a_{2}$, where $a_{1},a_{2}\subset a$ are the closures of the unbounded (non-empty) components of $a\setminus\{P,Q\}$ and $b_{1}\subset b$ is the closure of the bounded segment in $b\setminus \{P,Q\}$, where $P,Q\in a\cap b$.   As for unicorn arcs, this is only a simple arc if $(a_1\cup a_2)\cap b_1=\{P,Q\}$.  Let $\mathcal{B}\left(a,b\right)$ be the collection of bicorns arcs between $a$ and $b$. Note that $a\in\mathcal{B}\left(a,b\right)$,  but $b\notin \mc{B}(a,b)$.  If $x\in \mc{B}(a,b)\setminus \{a\}$, then $x$ is a \textit{proper bicorn arc}.
\end{defn} 

If $x_1$ is a compact subsegment of an arc $x$ and the endpoints of $x_1$ are $P,Q$, we write $x_1\equiv (PQ)_x$, meaning that $x_1$ is formed by traveling from the point $P$ to the point $Q$ along the arc $x$.  If $x_1$ is a subsegment of $x$ starting at an end of $x$ and ending at a point $P\in x$, then we write $x_1\equiv (\dots P)_x$.  In particular, if $x\in \mc B(a,b)$, then $x\equiv (\dots, P)_a\cup (PQ)_b\cup (Q\dots)_a$ for some points $P,Q\in a\cap b$.  Note that $\mathcal{B}\left(a,b\right)$ might be an infinite set; for example, see Figure \ref{econnected}.

\begin{figure}[h]
    \centering
    \def\svgwidth{10 cm}
\begingroup%
  \makeatletter%
  \providecommand\color[2][]{%
    \errmessage{(Inkscape) Color is used for the text in Inkscape, but the package 'color.sty' is not loaded}%
    \renewcommand\color[2][]{}%
  }%
  \providecommand\transparent[1]{%
    \errmessage{(Inkscape) Transparency is used (non-zero) for the text in Inkscape, but the package 'transparent.sty' is not loaded}%
    \renewcommand\transparent[1]{}%
  }%
  \providecommand\rotatebox[2]{#2}%
  \newcommand*\fsize{\dimexpr\f@size pt\relax}%
  \newcommand*\lineheight[1]{\fontsize{\fsize}{#1\fsize}\selectfont}%
  \ifx\svgwidth\undefined%
    \setlength{\unitlength}{323.63534041bp}%
    \ifx\svgscale\undefined%
      \relax%
    \else%
      \setlength{\unitlength}{\unitlength * \real{\svgscale}}%
    \fi%
  \else%
    \setlength{\unitlength}{\svgwidth}%
  \fi%
  \global\let\svgwidth\undefined%
  \global\let\svgscale\undefined%
  \makeatother%
  \begin{picture}(1,0.40245707)%
    \lineheight{1}%
    \setlength\tabcolsep{0pt}%
    \put(0,0){\includegraphics[width=\unitlength,page=1]{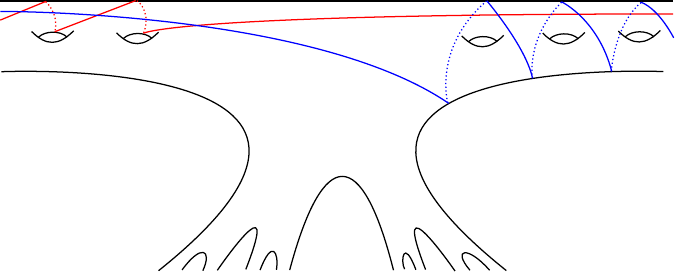}}%
    \put(0.38849746,0.3821695){\color[rgb]{1,0,0}\makebox(0,0)[lt]{\lineheight{1.25}\smash{\begin{tabular}[t]{l}$a$\end{tabular}}}}%
    \put(0.36673392,0.31949823){\color[rgb]{0,0,1}\makebox(0,0)[lt]{\lineheight{1.25}\smash{\begin{tabular}[t]{l}$b$\end{tabular}}}}%
  \end{picture}%
\endgroup%

    \captionsetup{margin=1.5 cm, justification=justified, singlelinecheck=false}
    \caption{Every bicorn arc between the grand arcs $a$ and $b$  intersects $b$ infinitely many times.}
    \label{fig:bicorns_disconnected}
\end{figure}

There are several reasons that bicorn arcs are better suited for this paper than unicorn arcs. First, in the context of the grand arc graph, when using unicorn arcs, one must be careful about the orientations on the two arcs so that the resulting unicorn arc converges to distinct ends. On the other hand, all bicorn arcs start and end like a single grand arc, so are automatically grand. The most important feature, however, is that bicorns between $a$ and $b$ only follow $b$ for a finite segment, as opposed to an infinite segment for unicorn arcs. This is the key property that allows us to generalise the definition of bicorn arcs in Section \ref{sec:bicorns_with_generic_geod} to the case when $b$ is an arbitrary geodesic, for example, a leaf of a geodesic lamination. 

For any grand arcs $a$ and $b$, we show in Corollary \ref{cor:general_unicorns_vs_bicorns} that the sets   
 $\mathcal{U}(a,b)$ and $\mc{B}(a,b)$ are within a finite Hausdorff distance of each other in $\mc{G}(\Sigma)$. We prove this via a sequence of lemmas. 

\begin{lem}\label{lem:unicorns_vs_bicorns}
Suppose $a,b\in\mathcal{G}\left(\Sigma\right)$ such that $\left|a\cap b\right|<\infty$. Then $d_{\mathrm{Haus}}^{\mathcal{G}}\left(\mathcal{B}\left(a,b\right),\mathcal{U}\left(a,b\right)\right)\leq2$.
\end{lem}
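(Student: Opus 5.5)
We must show two containments: every bicorn in $\mathcal{B}(a,b)$ lies within distance $2$ of some unicorn in $\mathcal{U}(a,b)$, and every unicorn lies within distance $2$ of some bicorn. Since $|a\cap b|<\infty$, both sets are finite and we can induct on intersection points along $b$ and $a$. Throughout, orient $a$ so that $a=(\dots A^-)\cup\dots\cup(A^+\dots)$ with ends $e^-$ and $e^+$ lying in distinct elements of the grand splitting, and similarly orient $b$ with ends $f^-,f^+$.

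\textbf{Unicorns near bicorns.} Let $x=(\dots P)_a\cup(PQ)_b\cup(Q\dots)_a$ be a proper bicorn, and orient $b$ so that $P$ precedes $Q$ along $b$.

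\begin{itemize}
\item Consider the points of $(PQ)_b$ meeting $(\dots P)_a$. Walking backward along $b$ from $Q$, let $P'$ be the last point of $(PQ)_b$ lying on $(\dots P)_a$; by simplicity of $x$ this is just $P$.
\item Now look at the ray $(Q\dots)_b$ (the half of $b$ beyond $Q$). Let $R$ be the first point after $Q$ at which this ray meets $x$; set $R=\infty$ if there is none.
\item If $R$ lies on $(\dots P)_a$, then $(\dots R)_a\cup(R\dots)_b$ is simple. So is $(\dots P)_a\cup(P\dots)_b$, provided $b$ beyond $P$ avoids $(\dots P)_a$ up to the relevant point.
\end{itemize}

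A cleaner route is to use Lemma~\ref{lem:grand_arc_dist_2} and disjointness. Take the unicorn $u=(\dots P)_a\cup(P\dots)_b$ chosen so that $(P\dots)_b$ is the maximal initial piece avoiding $(\dots P)_a$; more precisely, replace $P$ by the point of $a\cap b$ on $(\dots P]_a$ closest to the end that still yields a simple unicorn. Then show that $x$ and $u$ share the subarc $(\dots P)_a$ and otherwise meet in at most one point. We then build an explicit grand arc disjoint from both: a slight pushoff of $(\dots P)_a\cup(PQ)_b$ followed by a ray converging to a grand-splitting element different from $e^-$. The ray $(Q\dots)_a$ of $x$ itself or the ray of $u$ supplies this, because $e^+$ and the end of $u$'s $b$-ray cannot both equal $e^-$'s class issue, so a case split on which one differs works. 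This gives $d_{\mathcal G}(x,u)\le 2$.

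\textbf{Bicorns near unicorns.} Given a unicorn $u=(\dots P)_a\cup(P\dots)_b$ with $u\ne a,b$, the ray $(P\dots)_b$ meets $a$ only finitely often. If it meets $(P\dots)_a$, let $Q$ be the last such intersection along $b$. We then claim the bicorn $x=(\dots P)_a\cup(PQ)_b\cup(Q\dots)_a$ is simple, by the unicorn condition $(\dots P)_a\cap(P\dots)_b=\{P\}$. The arcs $u$ and $x$ agree on $(\dots P)_a\cup(PQ)_b$, and their remaining rays $(Q\dots)_b$ and $(Q\dots)_a$ are disjoint away from $Q$ by the choice of $Q$. So $|u\cap x|$ is essentially one point after pushing off, and Lemma~\ref{lem:grand_arc_dist_2} gives distance $\le 2$.

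If $(P\dots)_b$ misses $(P\dots)_a$, then $u$ and $a$ can be made to intersect only near $P$, and again Lemma~\ref{lem:grand_arc_dist_2} (with $a\in\mathcal{B}(a,b)$) bounds the distance. The same argument handles $u=b$: we pick the bicorn built from the last intersection points, or use $a$ when $|a\cap b|\le 1$.

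\textbf{Main obstacle.} The hard step is the first direction, which requires choosing the unicorn and verifying simplicity and near-disjointness. I would first attempt to mirror the second argument: given $x$, take $u=(\dots P)_a\cup(P\dots)_b$, truncated at the first return of $(Q\dots)_b$ to $(\dots P)_a$. If that return happens, swap the roles of the two $a$-rays using the unicorn $(\dots Q)_b\cup(Q\dots)_a$ reversed.
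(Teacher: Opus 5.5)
Your proposal has two genuine gaps.

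\textbf{The unicorn-to-bicorn direction uses the wrong $Q$.} Starting from $u=(\dots P)_a\cup(P\dots)_b$, you choose $Q$ as the last point of $(P\dots)_b\cap(P\dots)_a$ \emph{along $b$}. You then claim that $x=(\dots P)_a\cup(PQ)_b\cup(Q\dots)_a$ is simple by the unicorn condition.

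The unicorn condition only controls $(\dots P)_a\cap(PQ)_b$. Simplicity also needs $(Q\dots)_a\cap(PQ)_b=\{Q\}$, and your choice does not give this. Suppose two such points occur in the order $P,Q_1,Q_2$ along $b$ but $P,Q_2,Q_1$ along $a$. Your rule picks $Q=Q_2$, and then $Q_1\in(Q_2\dots)_a\cap(PQ_2)_b$, so $x$ passes through $Q_1$ twice.

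The paper orders along $a$ instead: $Q$ is the last point of $a\cap(P\dots)_b$ along $a$. Then $(Q\dots)_a\cap(P\dots)_b=\{Q\}$, which gives two things at once. First, $x$ is a simple bicorn. Second, $x$ and $u$ share $(\dots P)_a\cup(PQ)_b$ and then diverge at $Q$ without meeting again, so they can be isotoped to be disjoint and $d_{\mathcal G}(u,x)\le 1$.

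\textbf{The bicorn-to-unicorn direction is not proved.} Here you start from a bicorn $x=(\dots P)_a\cup(PQ)_b\cup(Q\dots)_a$. The bullet list breaks off, the proposed cleaner route does not form an argument, and you acknowledge this is the main obstacle. Keeping the $a$-ray $(\dots P)_a$ of $x$ and attaching a $b$-ray at some point of it starts from the wrong end. That $b$-ray may cross $(Q\dots)_a$ arbitrarily often, so there is no reason for it to meet $x$ in at most one point.

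The missing idea is to build the unicorn inward from an end of $b$:
\begin{enumerate}
\item Let $R$ be the first point, travelling along $b$ from a chosen end, of $b\cap\bigl((\dots P)_a\cup(Q\dots)_a\bigr)$. This exists because the set is finite and contains $P$ and $Q$.
\item Both $P$ and $Q$ occur at or after $R$ along $b$. Hence the initial segment $(\dots R)_b$ meets $x$ only at $R$; it cannot meet $(PQ)_b$ except possibly at $R$.
\item So $(\dots R)_b$, followed by the part of $x$'s $a$-ray running from $R$ out to the end of $a$, is a simple unicorn. It shares that ray with $x$ and otherwise touches $x$ only at $R$, so it is disjoint from $x$ after a small isotopy.
\end{enumerate}
This is the paper's argument.

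One further point: strictly, you must also check that this unicorn is a grand arc. That depends on which end of $b$ you start from. In some configurations neither end works, for instance when $a$ and $b$ both run from the same two classes and cross only at $P$ then $Q$. In that case one must use a different unicorn, such as $(\dots P)_a\cup(P\dots)_b$ with the $b$-ray through $Q$. The paper's proof does not address this.
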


\begin{proof}
If $\left|a\cap b\right|\leq1$, then $\mathrm{diam}_{\mathcal{G}}\left(\mathcal{U}\left(a,b\right)\right)\leq2$ and $\mathcal{B}\left(a,b\right)=\left\{ a\right\}\subseteq \mc U(a,b)$, so the result holds. Suppose $|a\cap b|\geq 2$, and let $u=a_{1}\cup b_{1}\in\mathcal{U}\left(a,b\right)$, where $a_{1}\subset a$ and $b_{1}\subset b$ are sub-arcs intersecting at a point $P$. If $a\cap b_{1}=\left\{ P\right\} $, then in fact $a\cap u=\emptyset$ and $d_{\mathcal{G}}\left(a,u\right)\leq1$. Else, oriented along $a_{1}$, let $Q$ denote the last intersection of $a$ with $b_{1}$; this point $Q$ exists as $|a\cap b|\geq 2$ and finite. Then $x=a_{1}\cup\left(PQ\right)_{b}\cup\left(Q\dots\right)_{a}$ is a bicorn arc between $a,b$ disjoint from $u$, and so $\mc U(a,b)\subseteq N_1(\mc B(a,b))$. See Figure \ref{fig:bicorn_disjt_from_unicorn}.

\begin{figure}[h]
    \centering
    \includegraphics[width=10 cm]{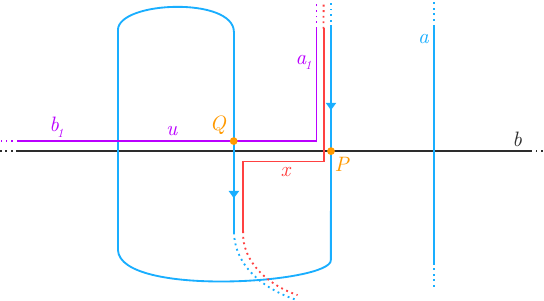}
    \caption{Constructing a bicorn arc disjoint from a given unicorn arc.}
    \label{fig:bicorn_disjt_from_unicorn}
\end{figure}

On the other hand, let $y=a_{1}'\cup b_{1}\cup a_{2}'$ be a bicorn arc between $a,b$. Orient $b$, and let $R$ denote the first point of $(a_1'\cup a_2')\cap b$ along $b$, possibly with $R=a_{1}'\cap b_{1}$; the point $R$ exists as $a\cap b$ is finite. Then $R$ defines a unicorn arc $y'\equiv(\dots R)_{b_1}\cup(R\dots)_a$ between $a$ and $b$ that is disjoint from $y$, and so $\mc B(a,b)\subseteq N_1(\mc U(a,b))$. See Figure \ref{fig:unicorn_disjt_from_bicorn}.
\end{proof}

\begin{figure}[h]
    \centering
    \includegraphics[width = 10 cm]{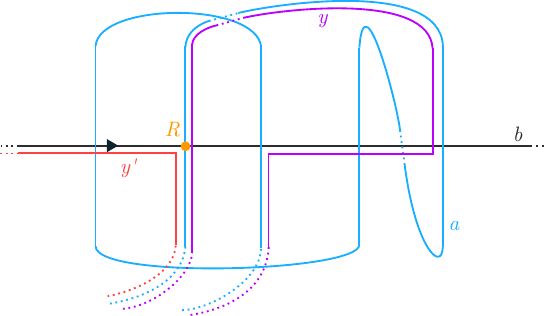}
    \caption{Constructing a unicorn arc disjoint from a given bicorn arc.}
    \label{fig:unicorn_disjt_from_bicorn}
\end{figure}

In these definitions, $(a,b)$ is an ordered pair, and a priori $\mathcal{B}(a,b)\neq \mathcal{B}(b,a)$. We show next that these two sets are not too far apart.

\begin{lem}\label{lem:bicorns_order_doesn't_matter}
If $a,b\in\mathcal{G}\left(\Sigma\right)$, then $d_{\mathrm{Haus}}^{\mathcal{G}}\left(\mathcal{B}\left(a,b\right),\mathcal{B}\left(b,a\right)\right)\leq10.$ In particular, $b\in N_{10}(\mc{B}(a,b))$.
\end{lem}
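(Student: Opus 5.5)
The plan is to prove the containment $\mathcal{B}(a,b)\subseteq N_{5}(\mathcal{B}(b,a))$ directly. The reverse containment then follows by exchanging the roles of $a$ and $b$, which gives a Hausdorff bound well within $10$. The second assertion is then immediate, because $b\in\mathcal{B}(b,a)$. Lemma~\ref{lem:unicorns_vs_bicorns} cannot be used here, since $|a\cap b|$ may be infinite. Instead, I will exploit the fact that a bicorn only follows the second arc along a \emph{compact} segment, and build nearby arcs explicitly, checking disjointness or single intersections and then applying Lemma~\ref{lem:grand_arc_dist_2}.

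Fix $x=(\dots P)_a\cup(PQ)_b\cup(Q\dots)_a\in\mathcal{B}(a,b)$, and write $a_1=(\dots P)_a$ and $a_2=(Q\dots)_a$. Let $\rho^-$ and $\rho^+$ be the two rays of $b$ leaving compact sets toward its two ends. Let $R$ be the first point of $x$ met along $\rho^-$, and $R'$ the first point of $x$ met along $\rho^+$.
\begin{itemize}
\item These points exist unless a ray is disjoint from $x$. In that degenerate case, the disjoint ray can be concatenated with a subarc of $x$ to give a grand arc that is at distance at most $1$ from $x$, and it is easily handled.
\item If $R$ or $R'$ lies on $(PQ)_b$, then it must equal $P$ or $Q$, because $\rho^\pm$ can only enter the subsegment $(PQ)_b$ of $b$ through its endpoints.
\end{itemize}
Consider the arc $z=(\dots R)_b\cup(RR')_x\cup(R'\dots)_b$. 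It is simple by the choice of $R$ and $R'$ as first hitting points. It converges to the two ends of $b$, so it is grand. It shares the segment $(RR')_x$ with $x$ and otherwise avoids $x$, so after a small push-off it is disjoint from $x$, giving $d_{\mathcal G}(x,z)\le 1$.

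\textbf{Case 1: $R$ and $R'$ both lie on the same $a_i$.} Then $(RR')_x=(RR')_a$, so $z\in\mathcal{B}(b,a)$ and we are done.

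\textbf{Case 2: $R$ and $R'$ lie on different $a_i$, or one of them equals $P$ or $Q$.} Then the middle segment of $z$ contains $(PQ)_b$, and $z$ is not yet a bicorn of $(b,a)$. Say $R\in a_1$. I would replace $z$ by $z'=(\dots R)_b\cup(RP)_a\cup(P\dots)_b$, following $b$ from $P$ in whichever direction leaves $(PQ)_b$, and truncating it at its first return to the constructed arc. I then continue with a bounded number of such surgeries, each time checking that consecutive arcs meet in at most one point. Lemma~\ref{lem:grand_arc_dist_2} then gives distance at most $2$ per step, and after a bounded number of steps the arc has only an $a$-segment in the middle, i.e.\ it lies in $\mathcal{B}(b,a)$.

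The main obstacle is Case~2. The truncated $b$-rays may re-enter the $a$-segments infinitely often, so simplicity of the intermediate arcs, and the fact that they meet in at most one point, must be arranged by always cutting at \emph{first} intersection points along compact pieces. The bookkeeping must also keep the total number of steps, with costs $1$ or $2$ each, at most $5$, so that the bound $10$ holds with margin. I would first try to make every surgery produce an arc sharing a segment with its predecessor, so that each step costs only $1$ after a push-off.
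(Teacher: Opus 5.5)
Your construction fails in exactly the case you set out to handle directly, namely when $a\cap b$ is infinite. The claim that $R$ and $R'$ exist ``unless a ray is disjoint from $x$'' is false in that case. Since $a\cap b$ is discrete and both arcs are proper, infinitely many intersection points escape to an end $e$ that $a$ and $b$ both converge to. All but finitely many of these points lie off the compact segments $(PQ)_a$ and $(PQ)_b$, so they lie on the tail of $a$ toward $e$, which is contained in $x$. Hence, travelling along the tail of $b$ from $e$, there is no first point of $x$, and $z$ is not defined.

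Worse, every arc of $\mathcal{B}(b,a)$ contains that whole tail of $b$, which crosses the tail of $a$ inside $x$ infinitely often. So Case~1 (an arc of $\mathcal{B}(b,a)$ sharing a segment with $x$ and otherwise avoiding it) can never occur. Any chain from $x$ to $\mathcal{B}(b,a)$ must then pass through auxiliary non-bicorn arcs, and Case~2 does not construct them. Truncating a ray of $b$ ``at its first return'' leaves an arc with an endpoint in the interior of $\Sigma$, not a grand arc. You also give no argument that the number of surgeries is bounded, let alone that it costs at most $5$.

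The missing idea is a reduction to finitely many intersections. The paper chooses a compact witness $W$ on which $a$ and $b$ each project to a single arc. It then modifies $b$ outside $W$ to an arc $b'$ with $b'\cap W=b\cap W$ that is disjoint from $a$ outside $W$, so $a\cap b'$ is finite. Since bicorns for $(a,b)$ and $(a,b')$ have the same projections to $W$, Lemma~\ref{lem:presc_arc_proj} (via Lemma~\ref{lem:agree-inside-witness}) moves each of $\mathcal{B}(a,\cdot)$ and $\mathcal{B}(\cdot,a)$ by Hausdorff distance at most $3$. For the finite pair $a,b'$ the paper goes through unicorns. Lemma~\ref{lem:unicorns_vs_bicorns} compares each bicorn set with $\mathcal{U}(a,b')=\mathcal{U}(b',a)$ at cost $2$, giving $3+2+2+3=10$.

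Even when $a\cap b$ is finite, your Case~2 is best replaced by this unicorn detour. A single first-hitting point along $b$ gives a unicorn disjoint from $x$. The symmetric argument then gives a bicorn of $\mathcal{B}(b,a)$ disjoint from that unicorn.
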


\begin{proof}
If $\#\left(a\cap b\right)\leq1$, then $\mathcal{B}\left(a,b\right)=\left\{ a\right\} $ and $\mathcal{B}\left(b,a\right)=\left\{ b\right\} $, so the result holds. Suppose $\#(a\cap b) >1$, and let $W$ be a compact witness of $\mathcal{G}\left(\Sigma\right)$ large enough such that both $\pi_{W}(a)$ and $\pi_{W}(b)$ are single arcs. Then by definition, every bicorn arc in $\mathcal{B}\left(a,b\right)$ or $\mathcal{B}\left(b,a\right)$ also projects to a single arc in $W$. 

If $a,b$ intersect outside $W$, then in particular they intersect a common component of $\Sigma\setminus W$. Then by replacing a subarc of $b\setminus W$ by a corresponding subarc of $a\setminus W$, we obtain an arc $b'\in\mathcal{G}\left(\Sigma\right)$ such that that $b'\cap W=b\cap W$ and $a$ and $b'$ are disjoint from each other outside $W$. Since $\pi_W(b)=\pi_W(b')$, we get that $\pi_W(\mc B(a,b))=\pi_W(\mc B(a,b'))$ and $\pi_W(\mc B(b,a))=\pi_W(\mc B(b',a))$. Thus Lemma~\ref{lem:presc_arc_proj} shows that $d_{\mathrm{Haus}}^{\mathcal{G}}\left(\mathcal{B}\left(a,b\right),\mathcal{B}\left(a,b'\right)\right)\leq3$ and  $d_{\mathrm{Haus}}^{\mathcal{G}}\left(\mathcal{B}\left(b,a\right),\mathcal{B}\left(b',a\right)\right)\leq3$. 

As $a,b'$ are disjoint outside $W$ and project to single arcs in $W$, the set $a\cap b'$ is finite. Thus Lemma \ref{lem:unicorns_vs_bicorns} implies that  $d_{\mathrm{Haus}}^{\mathcal{G}}\left(\mathcal{B}\left(a,b'\right),\mathcal{U}\left(a,b'\right)\right)\leq2$ and $d_{\mathrm{Haus}}^{\mathcal{G}}\left(\mathcal{B}\left(b',a\right),\mathcal{U}\left(b',a\right)\right)\leq2$. Since $\mathcal{U}\left(a,b'\right)=\mathcal{U}\left(b',a\right)$, we obtain 
\begin{align*}
d_{\mathrm{Haus}}^{\mathcal{G}}\left(\mathcal{B}\left(a,b\right),\mathcal{B}\left(b,a\right)\right) & \leq d_{\mathrm{Haus}}^{\mathcal{G}}\left(\mathcal{B}\left(a,b\right),\mathcal{B}\left(a,b'\right)\right)+d_{\mathrm{Haus}}^{\mathcal{G}}\left(\mathcal{B}\left(a,b'\right),\mathcal{B}\left(b',a\right)\right)\\ 
&+d_{\mathrm{Haus}}^{\mathcal{G}}\left(\mathcal{B}\left(b',a\right),\mathcal{B}\left(b,a\right)\right)\\
 & \leq3+d_{\mathrm{Haus}}^{\mathcal{G}}\left(\mathcal{B}\left(a,b'\right),\mathcal{U}\left(a,b'\right)\right)+d_{\mathrm{Haus}}^{\mathcal{G}}\left(\mathcal{U}\left(b',a\right),\mathcal{B}\left(b',a\right)\right)+3\\
 & \leq3+2+2+3=10.\qedhere
\end{align*}
\end{proof}

\begin{lem}\label{lem:finite_bicorn_subset}
If $a,b\in\mathcal{G}\left(\Sigma\right)$ and $x\in\mathcal{B}\left(a,b\right)$, then $\mathcal{B}\left(a,x\right)\subset\mathcal{B}\left(a,b\right)$. 
\end{lem}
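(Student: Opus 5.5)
The plan is to unwind the definitions directly. Write $x \equiv (\dots P)_a \cup (PQ)_b \cup (Q\dots)_a$ with $P,Q \in a\cap b$, and set $a_1 = (\dots P)_a$, $a_2 = (Q\dots)_a$, $b_1 = (PQ)_b$. If $x = a$ there is nothing to prove, since then $\mathcal{B}(a,x) = \mathcal{B}(a,a) = \{a\}$. So assume $x$ is proper, and let $y \in \mathcal{B}(a,x)$. Then
\[
y \equiv (\dots R)_a \cup (RS)_x \cup (S\dots)_a
\]
for some $R,S \in a\cap x$, with $(RS)_x$ meeting $(\dots R)_a \cup (S\dots)_a$ only at $R$ and $S$. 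The goal is to exhibit $y$ as $(\dots R')_a \cup (R'S')_b \cup (S'\dots)_a$ for suitable $R',S' \in a\cap b$.

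The first step is to observe that $x \cap a = a_1 \cup a_2 \cup (b_1 \cap a)$, and that the subsegment $(RS)_x$ begins and ends on $a$. The key claim is that $(RS)_x$ is in fact a subsegment of $b_1$. Suppose instead that $(RS)_x$ contains a nondegenerate piece of $a_1$ or of $a_2$. That piece is a segment of $a$, so it overlaps $(\dots R)_a \cup (S\dots)_a$ or the segment of $a$ between $R$ and $S$. The only way to avoid such an overlap would be for $(RS)_x$ to run along exactly that middle segment of $a$ from $R$ to $S$. In that case $y$ traverses $a$ in order from the $a_1$ end to the $a_2$ end, so $y = a$ as an arc, and $a \in \mathcal{B}(a,b)$ already. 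Otherwise $(RS)_x$ would not be transverse to $a$ away from its endpoints, contradicting the simplicity condition $(RS)_x \cap \big((\dots R)_a \cup (S\dots)_a\big) = \{R,S\}$. Here it helps that $a$ and $b$ are geodesics, so two distinct geodesic segments cannot share a subsegment.

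It follows that $R,S \in b_1 \cap a \subseteq a\cap b$, and that $(RS)_x = (RS)_b$. Hence
\[
y = (\dots R)_a \cup (RS)_b \cup (S\dots)_a .
\]
The simplicity condition required for membership in $\mathcal{B}(a,b)$ is exactly the one already satisfied by $y$ as a member of $\mathcal{B}(a,x)$, since the two sets of constituent pieces coincide. Therefore $y \in \mathcal{B}(a,b)$.

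The main obstacle is the case analysis showing that $(RS)_x$ cannot use pieces of $a_1$ or $a_2$. This includes the case where the segment of $x$ from $R$ to $S$ passes through $P$ or $Q$ and so runs partly along $a$. To handle it, I would argue by orientation. Orient $x$ from its $a_1$ end, and note that any portion of $(RS)_x$ lying in $a_1 \cup a_2$ coincides with a segment of $a$. Any such segment, other than the degenerate situation $y = a$, necessarily overlaps $(\dots R)_a$ or $(S\dots)_a$ in more than the points $R,S$, which is forbidden by simplicity.
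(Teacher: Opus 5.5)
\textbf{The key claim fails.} You claim that $(RS)_x$ is a subsegment of $b_1$, so that $R,S\in a\cap b_1$. This is false in the model you set up, where $x=a_1\cup b_1\cup a_2$ literally contains $a_1,a_2\subset a$ and $R,S$ range over all of $x\cap a=a_1\cup a_2\cup(b_1\cap a)$.

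For any proper bicorn $x$, take $R$ in the interior of $a_1$ and $S=Q$. Then $(RS)_x=(RP)_a\cup b_1$. Since $b_1\cap a_1=\{P\}$, $b_1\cap a_2=\{Q\}$, and $R<P<Q$ along $a$, this meets $(\dots R)_a\cup(Q\dots)_a$ only in $\{R,Q\}$. So $y\in\mathcal B(a,x)$ (in fact $y=x$), yet $R\notin b$ and $(RS)_x\not\subseteq b_1$.

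Similarly, suppose $b_1$ crosses $a$ in its interior, and let $S$ be the first such crossing after $P$ along $b_1$. With $R$ again in the interior of $a_1$, you get $y=(\dots P)_a\cup(PS)_b\cup(S\dots)_a$, with $R\neq P$.

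The error is your dichotomy: either $(RS)_x$ overlaps $(\dots R)_a\cup(S\dots)_a$, or it runs along exactly the middle segment of $a$ and $y=a$. The simplicity condition only forbids meeting the two unbounded pieces of $a\setminus\{R,S\}$. Overlapping part of the bounded segment of $a$ between $R$ and $S$ is perfectly allowed. Appealing to geodesicity of $a$ and $b$ does not help, because $x$ itself is not a geodesic.

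\textbf{The repair is short, and the conclusion survives.} There are two ways to fix it.
\begin{itemize}
\item Put $x$ in position transverse to $a$, which is what the paper's one-line proof implicitly does (``any point of $a\cap x$ is a point of $a\cap b_1$''). Push $a_1$ and $a_2$ slightly off $a$, to the side on which $b_1$ leaves $P$ and $Q$. Now every point of $a\cap x$ lies on $b_1$. Since $b_1$ is a connected subsegment of $x$ containing $R$ and $S$, you get $(RS)_x=(RS)_b$, and $y\in\mathcal B(a,b)$ follows at once.
\item Stay in your literal model and rewrite the decomposition. If $R\in a_1$, then $(\dots R)_a\cup(RP)_a=(\dots P)_a$, so $y$ can be written with $R'=P$. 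Symmetrically, take $S'=Q$ if $S\in a_2$. The remaining middle piece is then $(R'S')_b$. If $R,S$ both lie in $a_1$, or both in $a_2$, then $y=a$.
\end{itemize}
This is exactly why your stated goal needed ``suitable $R',S'$'' rather than $R,S$ themselves. Your argument then went on to assert $R'=R$ and $S'=S$ in all cases, which is where it breaks.
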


\begin{proof}
Suppose $x=a_{1}\cup b_{1}\cup a_{2}$ is a bicorn arc. Any point of $a\cap x$ is a point of $a\cap b_{1}\subseteq a\cap b$, and so any bicorn arc between $a$ and $x$ is necessarily a bicorn arc between $a$ and $b$.
\end{proof}

We next show that changing the grand arcs by a finite distance only moves the bicorn set a finite Hausdorff distance. 

\begin{lem} \label{lem:bicorn_changing_first_arc}
If $a,a'\in\mathcal{G}\left(\Sigma\right)$ are disjoint and $b,b'\in\mathcal{G}\left(\Sigma\right)$ are disjoint, then
\[
d^{\mc{G}}_{\mathrm{Haus}}\left(\mathcal{B}\left(a,b\right),\mathcal{B}\left(a',b\right)\right)\leq 2 \quad \mathrm{and} \quad d_{\mathrm{Haus}}^{\mc G}(\mc B(a,b),\mc B(a,b'))\leq 12.
\]
\end{lem}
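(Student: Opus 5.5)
For the first inequality, I will construct, for each bicorn in $\mathcal{B}(a,b)$, a bicorn in $\mathcal{B}(a',b)$ at distance at most $2$, and then argue symmetrically. Fix a proper bicorn $x=(\dots P)_a\cup(PQ)_b\cup(Q\dots)_a\in\mathcal{B}(a,b)$; the case $x=a$ is trivial since $d_{\mathcal G}(a,a')\le 1$. The segment $(PQ)_b$ is compact, so it meets $a'$ in finitely many points. If it misses $a'$, then $x$ is disjoint from $a'\in\mathcal{B}(a',b)$ (since $a\cap a'=\emptyset$), giving $d_{\mathcal G}(x,a')\le 1$.

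Otherwise, let $P'$ and $Q'$ be the first and last points of $a'\cap(PQ)_b$ along $(PQ)_b$ oriented from $P$ to $Q$. I would first try $y=(\dots P')_{a'}\cup(P'Q')_b\cup(Q'\dots)_{a'}$, but the subsegment $(P'Q')_b$ may cross $a'$ in its interior, so $y$ need not be simple. I will therefore choose the points more carefully. Pick consecutive points $P',Q'\in a'\cap(PQ)_b$ along $b$, so the open segment $(P'Q')_b$ avoids $a'$. Take the two unbounded components of $a'\setminus\{P',Q'\}$; when $P'\ne Q'$ the result is simple and is a bicorn in $\mathcal B(a',b)$. This $y$ meets $x$ only along $(P'Q')_b\subset(PQ)_b$ and in the points where the unbounded rays of $a'$ cross $(PQ)_b$; the latter are not controlled. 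So I need to choose the points so that $y$ and $x$ are close.

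A cleaner alternative is an innermost choice. Take the point $P'$ of $a'\cap(PQ)_b$ closest to $P$. The arc $(\dots P)_a\cup(PP')_b$ is disjoint from $a'$ except at $P'$. Split $a'$ at $P'$ into two rays $r_1,r_2$ converging to distinct grand-splitting classes. Since $a$ converges to two classes, one of $r_1,r_2$ converges to a class different from that of $(\dots P)_a$, say $r_1$. Then $z=(\dots P)_a\cup(PP')_b\cup r_1$ is a simple grand arc; it is a unicorn-type arc meeting $a'$ in the single point $P'$. Lemma~\ref{lem:grand_arc_dist_2}-style reasoning, or a direct pushoff, shows that $z$ is disjoint from $x$ after a small isotopy near the shared portion, and that $z$ is within $1$ of a bicorn of $\mathcal B(a',b)$. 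The main obstacle is producing this bicorn honestly: one needs $y=(\dots P')_{a'}\cup(P'Q')_b\cup(Q'\dots)_{a'}$ with $P',Q'$ the first and last intersections of $a'$ with $(PQ)_b$. The interior of $(P'Q')_b$ meets $a'$ only in points between them, so I will pass to a sub-bicorn using Lemma~\ref{lem:finite_bicorn_subset}-style nesting (take an innermost pair realising simplicity), and then check by the push-off argument that $y$ is disjoint from $x$, or from the intermediate arc $z$. This gives distance at most $2$. Swapping the roles of $a$ and $a'$ gives the reverse containment.

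For the second inequality, I reduce to the first via Lemma~\ref{lem:bicorns_order_doesn't_matter}. The chain is
\[
\mathcal B(a,b)\to\mathcal B(b,a)\to\mathcal B(b',a)\to\mathcal B(a,b'),
\]
whose steps cost at most $10$, then an application of the first inequality with the roles of the arcs exchanged (since $b,b'$ are disjoint), then $10$. This gives $22$, not $12$. To get $12$, I would instead argue directly as in the first part that changing $b$ to a disjoint $b'$ moves bicorns by at most $2$ after passing through $\mathcal B(b,a)$ once, and then pay the $10$ only once. Matching the constant $12=10+2$ in this way is the delicate bookkeeping step.
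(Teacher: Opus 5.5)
\textbf{Gap in the first inequality.} None of your three constructions produces an element of $\mathcal{B}(a',b)$ whose intersection with $x$ is controlled.
\begin{itemize}
\item Taking $P',Q'$ consecutive along $b$ gives a simple bicorn. However, as you note, its $a'$-rays may cross $(PQ)_b$ arbitrarily often.
\item The arc $z=(\dots P)_a\cup(PP')_b\cup r_1$ is \emph{not} disjoint from $x$ in general, because $r_1$ can pass through further points of $a'\cap(P'Q)_b\subset x$. The assertion that $z$ is within $1$ of some bicorn is also unsupported.
\item The closing ``innermost pair realising simplicity'' is never specified, and no push-off can make such a $y$ disjoint from $x$ in general.
\end{itemize}

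The missing idea is to order the finitely many points of $a'\cap(PQ)_b$ \emph{along $a'$} rather than along $b$.
\begin{itemize}
\item If $|a'\cap(PQ)_b|\le 1$, take $q=a'$; Lemma~\ref{lem:grand_arc_dist_2} gives distance at most $2$.
\item Otherwise let $X,Y$ be the first and last of these points along $a'$, and set $q=(\dots X)_{a'}\cup(XY)_b\cup(Y\dots)_{a'}$.
\item By extremality, the two rays of $q$ meet $(PQ)_b$ only at $X$ and $Y$. So $q$ is automatically simple and lies in $\mathcal{B}(a',b)$; any other crossings of $a'$ with $(XY)_b$ lie on the discarded middle segment of $a'$.
\item We have $a\cap a'=\emptyset$, $((\dots P)_a\cup(Q\dots)_a)\cap(PQ)_b=\{P,Q\}$, and $X,Y\notin\{P,Q\}$. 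Hence $x$ and $q$ share only the segment $(XY)_b$.
\item After pushing $q$ slightly off $b$, they meet in at most one point, and Lemma~\ref{lem:grand_arc_dist_2} gives $d_{\mathcal{G}}(x,q)\le 2$.
\end{itemize}
This is the paper's argument; no nesting or intermediate arc is needed. Exchanging $a$ and $a'$ gives the other inclusion.

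\textbf{The second inequality.} Your chain $\mathcal{B}(a,b)\to\mathcal{B}(b,a)\to\mathcal{B}(b',a)\to\mathcal{B}(a,b')$ is exactly the paper's one-line reduction via Lemma~\ref{lem:bicorns_order_doesn't_matter}. Its honest cost is $10+2+10=22$. The stated $12=10+2$ is what one gets for $\mathcal{B}(b',a)$, so the return swap appears to have been dropped in the paper.

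Your proposed repair does not fix this. After one swap and one application of the first inequality you are in $\mathcal{B}(b',a)$, not $\mathcal{B}(a,b')$. Nor does the first-part construction transfer to the second slot: the $b$-segment of a bicorn in $\mathcal{B}(a,b)$ is disjoint from $b'$, so it offers no intersection points with $b'$ to build from.

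You should simply record a uniform bound such as $22$. Only the first inequality is used later (in Corollary~\ref{cor:general_unicorns_vs_bicorns} and Lemma~\ref{lem:bicorns_change_base_pt}), so nothing downstream depends on the value $12$.
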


\begin{figure}[h]
    \centering
    \def\svgwidth{10 cm}
    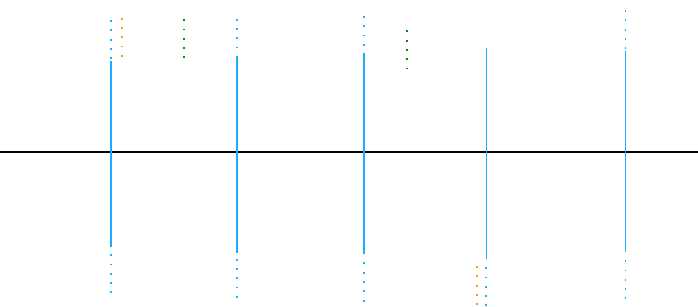
    \caption{Constructing a bicorn arc $q\in\mathcal{B}\left(a',b\right)$ intersecting $p$ in at most one point.}
    \label{fig:bicorns_first_change}
\end{figure}

\begin{proof}
We prove the first inequality.  This, along with Lemma~\ref{lem:bicorns_order_doesn't_matter}, proves the second inequality.  Let $p=a_{1}\cup b_{1}\cup a_{2}\in \mc B(a,b)$ be a bicorn arc. We will construct $q\in\mathcal{B}\left(a',b\right)$ such that $d_{\mathcal{G}}\left(p,q\right)\leq2$.  The analogous constrution with the roles of $a$ and $a'$ reversed will prove the other inclusion.

If $|a'\cap b_{1}|\leq1$, then setting $q=a'$ completes the proof by Lemma \ref{lem:grand_arc_dist_2}. If $|a'\cap b_1| \geq 2$, then pick an orientation on $a'$, and let $X,Y$ denote the first and last points of $a'\cap b_{1}$ along $a'$, respectively. Then $q=\left(\dots X\right)_{a'}\cup\left(XY\right)_{b_{1}}\cup\left(Y\dots\right)_{a'}$ is a bicorn arc in $\mathcal{B}\left(a',b\right)$ which intersects $p$ in at most one point; see Figure \ref{fig:bicorns_first_change}.  It follows that $d_{\mathcal{G}}\left(p,q\right)\leq2$ by Lemma~\ref{lem:grand_arc_dist_2}.
\end{proof}

There is an analogous statement for unicorn arcs, which is simpler as $\mc U(a,b)=\mc U(b,a)$.

\begin{lem} \label{lem:unicorns_changing_first_arc}
If $a,a'\in\mathcal{G}\left(\Sigma\right)$ are disjoint and $b\in\mathcal{G}\left(\Sigma\right)$, then \[d^{\mc{G}}_{\mathrm{Haus}}\left(\mathcal{U}\left(a,b\right),\mathcal{U}\left(a',b\right)\right)\leq1.\]
\end{lem}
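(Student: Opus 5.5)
By symmetry it is enough to show that $\mathcal{U}(a,b)\subseteq N_1(\mathcal{U}(a',b))$. The same argument with $a$ and $a'$ swapped then gives the reverse containment, since the hypotheses are symmetric in $a$ and $a'$. So fix $u\in\mathcal{U}(a,b)$; I will produce $v\in\mathcal{U}(a',b)$ that is disjoint from $u$ or equal to it. The model is the argument for Lemma~\ref{lem:bicorn_changing_first_arc}, adapted to unicorns.

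Write $u=a_1\cup b_1$, where $a_1\subseteq a$ and $b_1\subseteq b$ are closures of components of $a\setminus\{P\}$ and $b\setminus\{P\}$, meeting only at $P\in a\cap b$. When $u=a$, the arc $a'$ itself lies in $\mathcal{U}(a',b)$ and is disjoint from $a$, so we are done. When $u=b$, we have $b\in\mathcal{U}(a',b)$, so again we are done.

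In general, because $a'$ is disjoint from $a\supseteq a_1$, every point of $a'\cap u$ lies on $b_1$. If $a'\cap b_1=\emptyset$, then $a'$ is disjoint from $u$ and we take $v=a'$. Otherwise, orient $b_1$ so that it runs from its end at infinity towards $P$. Let $Q$ be the last point of $a'\cap b_1$ before $P$. Such a last point exists because $a'\cap b$ is discrete and the subarc of $b_1$ near $P$ is compact.

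Take $v=(Q\dots)_{a'}\cup(\dots Q)_{b_1}$, that is, the subray of $b_1$ from its end at infinity to $Q$, followed by one of the two rays of $a'$ starting at $Q$. The choice of ray of $a'$ must make $v$ simple, so we need a ray of $a'$ from $Q$ that meets the initial segment $(\dots Q)_{b_1}$ only at $Q$. This is the crux of the proof. The point $Q$ was chosen to be last along $b_1$ in the direction of $P$, and this is the wrong end for simplicity. So instead I take $Q$ to be the \emph{first} point of $a'\cap b_1$ along $b_1$ starting from $P$. Then $v=(\dots Q)_{a'}\cup(QP)_{b_1}$ is built from either ray of $a'$ at $Q$, and this is simple because $(QP)_{b_1}$ meets $a'$ only at $Q$.

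Now $v$ is a unicorn between $b$ and $a'$ up to the convention that unicorns use a ray of $b$, so I need to restate this correctly. A unicorn uses a full ray of $b$, not the segment $(QP)_{b_1}$. So the correct choice is to take the point $Q$ of $a'\cap b_1$ closest to $P$ along $b_1$, and to let $v$ consist of a ray of $a'$ from $Q$ together with the ray of $b$ from $Q$ that \emph{does not} contain $b_1\setminus (QP)_{b_1}$. That ray runs through $P$ and beyond, into $b\setminus b_1$. This needs care: that ray may hit $a'$ again beyond $P$. To repair this, pass to the first intersection point along $b$ of that ray with $a'$, and use $a'$-subarcs in the usual unicorn-shortening manner as in \cite{HPW}. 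The resulting arc's $b$-part leaves $u$ through $P$ and then stays in $b\setminus b_1$, which is disjoint from $u$ apart from possibly $P$.

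The main obstacle is therefore arranging simplicity while keeping $v$ disjoint from $u$, or with $|u\cap v|\le 1$. If only $|u\cap v|\le 1$ can be guaranteed, the fallback is Lemma~\ref{lem:grand_arc_dist_2}, which gives the weaker bound $2$. I would then verify carefully, following \cite{F-G-M}, that the intersection at $P$ can be removed by isotopy, which yields the claimed bound of $1$.
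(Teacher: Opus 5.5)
Your proposal has a genuine gap. It never produces a valid $v$, and the construction you finally settle on does not give disjointness.

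Your first attempt, $v=(Q\dots)_{a'}\cup(\dots Q)_{b_1}$ with $Q$ the point of $a'\cap b_1$ closest to $P$, has the right shape. You are also right that it may fail to be simple.

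Your ``instead'' point, however, is the same point. The first point of $a'\cap b_1$ met when walking from $P$ is exactly the last point before $P$. Moreover, $(\dots Q)_{a'}\cup(QP)_{b_1}$ ends at $P$, so it is not a proper arc.

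Your final choice uses the ray of $b$ from $Q$ through $P$ into $b\setminus b_1$. It rests on the claim that $b\setminus b_1$ meets $u$ at most at $P$, and that claim is false. Simplicity of $u$ only gives $a_1\cap b_1=\{P\}$. So every point of $a_1\cap b$ other than $P$ lies on $b\setminus b_1$, and a typical unicorn has many such points. Whatever the unspecified ``unicorn-shortening'' repair does, the resulting arc will in general cross $a_1$ essentially. You therefore get neither disjointness nor $|u\cap v|\le 1$. Your last paragraph in effect concedes this, and the fallback would only give the bound $2$.

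The missing idea has two parts.
\begin{itemize}
\item Keep the $b$-part of $v$ inside $u$: use the subray $(Q\dots)_{b_1}$ of $b_1$ pointing away from $P$, which can be pushed off $u$.
\item Choose $Q$ so that the $a'$-ray has interior disjoint from $b_1$. It automatically misses $a_1\subseteq a$, since $a\cap a'=\emptyset$.
\end{itemize}

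The paper does this by minimizing the distance to $P$ only over points that define unicorn arcs. Let $Q\in a'\cap b_1$ be closest to $P$ along $b_1$ among the points admitting a ray $\rho$ of $a'$ with $\rho\cap(Q\dots)_{b_1}=\{Q\}$. Such points exist once $a'\cap b_1\neq\emptyset$, and a closest one exists because $a'\cap b$ is discrete and $P\notin a'$.

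Suppose $\rho$ met the compact segment $(QP)_b$ at a point other than $Q$. Take the point $R$ of the finite set $\rho\cap(QP)_b$ farthest out along $\rho$. The subray of $\rho$ beyond $R$ meets $(R\dots)_{b_1}=(RQ)_b\cup(Q\dots)_{b_1}$ only at $R$. So $R$ defines a unicorn arc and is strictly closer to $P$ than $Q$, which is a contradiction.

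Hence $\rho$ meets $u$ only at $Q$. Then $v=\rho\cup(Q\dots)_{b_1}\in\mathcal{U}(a',b)$ is disjoint from $u$ after a small isotopy off $b_1$.

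So your instinct that the choice of $Q$ is the crux was correct. The fix is to restrict the minimization to unicorn-defining points, not to switch to the other ray of $b$.
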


\begin{proof}
Let $p=a_{1}\cup b_{1}=(\dots P)_{a}\cup (P\dots)_b$ be a unicorn arc in $\mathcal{U}\left(a,b\right)$. If $\mathcal{U}\left(a',b\right)= \{ a',b\} $, the result trivially holds, so suppose this is not the case.  We will construct a unicorn arc $q\in\mathcal{U}\left(a',b\right)$ disjoint from $p$. Any point of $a'\cap p$ is a point of $a'\cap b_{1}$. 
Let $Q\in a'\cap b_{1}$ be the closest point to $P$ along $b_{1}$ that defines a unicorn arc.  Such a point $Q$ exists as we have assumed that $\mathcal{U}\left(a',b\right) \neq \{ a',b\} $ By construction, $q=\left(\dots Q\right)_{a'}\cup\left(Q\dots\right)_{b_{1}}$ is a unicorn arc between $a'$ and $b$ that is disjoint from $p$. See Figure \ref{fig:unicorns_first_change}.
\end{proof}

\begin{figure}
    \centering
    \def\svgwidth{10 cm}
    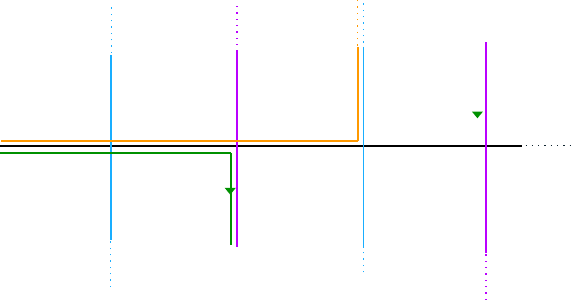
    \caption{Constructing a unicorn arc in $\mathcal{U}\left(a',b\right)$ disjoint from $p$.}
    \label{fig:unicorns_first_change}
\end{figure}

\begin{cor}
\label{cor:general_unicorns_vs_bicorns}
If $a,b\in\mathcal{\mathcal{G}}\left(\Sigma\right)$, then 
\[
d_{\mathrm{Haus}}^{\mathcal{G}}\left(\mathcal{B}\left(a,b\right),\mathcal{U}\left(a,b\right)\right)\leq11.
\]
\end{cor}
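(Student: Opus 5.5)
We follow the strategy of Lemma~\ref{lem:bicorns_order_doesn't_matter}: first replace $b$ by an arc $b'$ that meets $a$ in only finitely many points, and then use Lemma~\ref{lem:unicorns_vs_bicorns}.

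\textbf{Trivial case.} If $\#(a\cap b)\leq 1$, then $\mathcal{B}(a,b)=\{a\}\subseteq\mathcal{U}(a,b)$. Moreover $\mathcal{U}(a,b)\subseteq\{a,b\}\cup\{\text{unicorns at the single point}\}$, and Lemma~\ref{lem:grand_arc_dist_2} bounds the diameter of $\mathcal{U}(a,b)$ by $2$. So the bound holds. Assume from now on that $\#(a\cap b)\geq 2$.

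\textbf{Step 1: replace $b$ by $b'$.} By Lemma~\ref{lem:single_arc_witness}, or by enlarging as in Lemma~\ref{lem:bicorns_order_doesn't_matter}, choose a compact witness $W$ such that $\pi_W(a)$ and $\pi_W(b)$ are single arcs. Modify $b$ outside $W$, replacing its subarcs in the complementary components by subarcs parallel to $a$ there. This gives a grand arc $b'$ with $b'\cap W=b\cap W$, with $a\cap b'$ finite, and with $a$ and $b'$ disjoint outside $W$.

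\textbf{Step 2: compare bicorn sets.} As computed in the proof of Lemma~\ref{lem:bicorns_order_doesn't_matter}, every bicorn in $\mathcal{B}(a,b)$ and in $\mathcal{B}(a,b')$ meets $W$ in a single arc. The projections $\pi_W(\mathcal{B}(a,b))$ and $\pi_W(\mathcal{B}(a,b'))$ coincide, because bicorns are determined inside $W$ by intersection points in $W$ and agree with $a$ outside. By Lemma~\ref{lem:agree-inside-witness},
\[
d_{\mathrm{Haus}}^{\mathcal{G}}\left(\mathcal{B}(a,b),\mathcal{B}(a,b')\right)\leq 3.
\]
The same argument does not give this for unicorns, since unicorns follow $b$ for an infinite segment, so we proceed differently.

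\textbf{Step 3: compare unicorn sets.} Let $a\cap b$ be possibly infinite. Lemma~\ref{lem:unicorns_changing_first_arc} only handles disjoint changes, and $b$ and $b'$ need not be disjoint. So instead route through Lemma~\ref{lem:bicorns_order_doesn't_matter} and the symmetry $\mathcal{U}(a,b)=\mathcal{U}(b,a)$ in the following chain:
\[
\mathcal{B}(a,b)\ \xrightarrow{\ \leq 3\ }\ \mathcal{B}(a,b')\ \xrightarrow{\ \leq 2\ }\ \mathcal{U}(a,b')\ \xrightarrow{\ ?\ }\ \mathcal{U}(a,b).
\]
The first two bounds sum to $5$, so the last link must cost at most $6$.

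\textbf{Main obstacle: the last link.} Controlling $d_{\mathrm{Haus}}(\mathcal{U}(a,b'),\mathcal{U}(a,b))$ is the step I expect to be hardest. I would first try a direct construction in the style of Lemma~\ref{lem:unicorns_vs_bicorns}, going from a unicorn of $a$ and $b$ to a bicorn.

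Take $u=(\dots P)_a\cup(P\dots)_b$. The tail $(P\dots)_b$ eventually leaves $W$ and converges to an end. Let $Q$ be the last point of $a\cap(P\dots)_b$ along $a_1$ that lies in $W$. The bicorn $(\dots P)_a\cup(PQ)_b\cup(Q\dots)_a$ then meets $u$ only outside $W$, in a controlled way. Modifying it outside $W$, as in Lemma~\ref{lem:agree-inside-witness}, costs at most $3$, and passing through $\mathcal{B}(a,b')$ costs $3+2$. This gives $\mathcal{U}(a,b)\subseteq N_{11}(\mathcal{B}(a,b))$.

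For the reverse inclusion, take a bicorn $y\in\mathcal{B}(a,b)$ with middle segment $b_1$. Let $R$ be the first point of $b_1\cap a$ along $b_1$. If $b_1$ is finite this point exists; arrange for it to be the first point along $b$ from the appropriate end among intersections in $W$, using finiteness of $a\cap b\cap W$. Then the unicorn at $R$ is disjoint from $y$ up to modifications outside $W$. The constants would be tuned so that the total is at most $11$.
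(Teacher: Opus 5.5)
There is a genuine gap: the third link of your chain, $d_{\mathrm{Haus}}^{\mathcal{G}}(\mathcal{U}(a,b'),\mathcal{U}(a,b))$, is never bounded. The ``direct construction'' is not yet an argument, for three reasons.
\begin{itemize}
\item After choosing $Q$ as the last point in $W$, the tail $(Q\dots)_a$ can still cross $(P\dots)_b$ outside $W$, possibly infinitely often.
\item The unicorn $u$ need not meet $W$ in a single arc, so Lemma~\ref{lem:agree-inside-witness} cannot be applied to it.
\item The reverse inclusion and the final count (``the constants would be tuned'') are only asserted.
\end{itemize}
The tool you discarded would handle this link. You do not need $b$ and $b'$ to be disjoint: $b\cap W=b'\cap W$ gives $d_{\mathcal{G}}(b,b')\leq 3$ by Lemma~\ref{lem:agree-inside-witness}. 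Now chain Lemma~\ref{lem:unicorns_changing_first_arc} along a path $b=c_0,c_1,c_2,c_3=b'$ with consecutive arcs disjoint, using $\mathcal{U}(a,c)=\mathcal{U}(c,a)$. This gives $d_{\mathrm{Haus}}^{\mathcal{G}}(\mathcal{U}(a,b),\mathcal{U}(a,b'))\leq 3$.

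That alone does not rescue the proof, because Step~2 is not justified as written. When $a\cap b$ is infinite, a bicorn in $\mathcal{B}(a,b)$ can use intersection points outside $W$. For instance, take $P$ and $Q$ on the rays of $a$ and $b$ lying in two different components of $\Sigma\setminus W$. Then the middle segment $(PQ)_b$ runs through all of $b\cap W$, while $(\dots P)_a$ and $(Q\dots)_a$ miss $W$. Such a bicorn does not agree with $a$ outside $W$, contrary to your justification. It also need not have a partner in $\mathcal{B}(a,b')$ with the same projection to $W$. Every bicorn there other than $a$ is built from points of $a\cap b'\subset W$, so it meets $W$ in an arc beginning and ending with segments of $a$.

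The paper avoids all of this by modifying the \emph{first} arc instead of the second. It builds $a'$ agreeing with $a$ on $W$ and disjoint from $b$ outside $W$. Then $a'\cap b$ is finite and $d_{\mathcal{G}}(a,a')\leq 3$. From there it uses only ``change the first arc'' statements, which are local surgery arguments needing no projection to $W$:
\begin{itemize}
\item chaining Lemma~\ref{lem:bicorn_changing_first_arc} along a path from $a$ to $a'$ gives $d_{\mathrm{Haus}}^{\mathcal{G}}(\mathcal{B}(a,b),\mathcal{B}(a',b))\leq 6$;
\item Lemma~\ref{lem:unicorns_vs_bicorns} gives a bound of $2$ for the finite pair $(a',b)$;
\item chaining Lemma~\ref{lem:unicorns_changing_first_arc} gives $d_{\mathrm{Haus}}^{\mathcal{G}}(\mathcal{U}(a',b),\mathcal{U}(a,b))\leq 3$.
\end{itemize}
The total is $6+2+3=11$.
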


\begin{proof}
If $a,b$ intersect at most finitely many times, then this follows from Lemma \ref{lem:unicorns_vs_bicorns}. Now suppose $a\cap b$ is infinite. As in the proof of Lemma \ref{lem:bicorns_order_doesn't_matter}, we can construct a grand arc $a'$ which is disjoint from $b$ outside a compact witness (so that $a\cap b$ is finite), agrees with $a$ inside this compact witness, and $d_{\mathcal{G}}\left(a,a'\right)\leq3$. By Lemma \ref{lem:bicorn_changing_first_arc}, $d_{\mathrm{Haus}}^{\mathcal{G}}\left(\mathcal{B}\left(a,b\right),\mathcal{B}\left(a',b\right)\right)\leq2\cdot3=6.$
Moreover, by Lemma \ref{lem:unicorns_vs_bicorns} and the fact that $a'\cap b$ is finite, we have $d_{\mathrm{Haus}}^{\mathcal{G}}\left(\mathcal{B}\left(a',b\right),\mathcal{U}\left(a',b\right)\right)\leq2.$ Finally, $d_{\mathrm{Haus}}^{\mathcal{G}}\left(\mathcal{U}\left(a',b\right),\mathcal{U}\left(a,b\right)\right)\leq3$ by Lemma~\ref{lem:unicorns_changing_first_arc}. 
Combining these, we get 
\[
d_{\mathrm{Haus}}^{\mathcal{G}}\left(\mathcal{B}\left(a,b\right),\mathcal{U}\left(a,b\right)\right)\leq6+2+3=11.\qedhere
\]
\end{proof}

We conclude this section by showing that the subset of grand arcs in a uniform neighbourhood of $\mathcal{B}\left(a,b\right)$ is an unparametrised quasi-geodesic from $a$ to $b$. To see this, we invoke the Guessing Geodesics Lemma.

\begin{lem}[Guessing Geodesics Lemma, {\cite{M-S}}]
Suppose $G$ is a connected graph and for each $x,y\in V(G)$ there is an associated connected subgraph $A\left(a,b\right)$ with $a,b\in A\left(a,b\right)$. Suppose there is a uniform constant $h>0$ such that the following hold.
\begin{enumerate}
\item For any $a,b,c\in V(G)$, 
\[
A\left(a,b\right)\subset N_{h}\left(A\left(a,c\right)\cup A\left(c,b\right)\right).
\]
\item For any $a,b\in V(G)$ with $d\left(a,b\right)\leq1$, the diameter of $A\left(a,b\right)$ in $G$ is at most $h$. 
\end{enumerate}
Then $G$ is hyperbolic. Moreover, there is a uniform constant $m>0$ such that 
\[
d_{\mathrm{Haus}}\left(A\left(a,b\right),\left[a,b\right]\right)\leq m,
\]
where $\left[a,b\right]$ denotes any geodesic between $a$ and $b$ in $G$. 
\end{lem}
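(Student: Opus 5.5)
The plan is to follow the now-standard argument of Masur--Schleimer, as streamlined by Bowditch. First I obtain a \emph{logarithmic} control of $A(a,b)$ by arbitrary paths. Then I use it to deduce hyperbolicity. Finally I upgrade the logarithmic bounds to uniform ones.

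\textbf{Step 1 (logarithmic control).} Let $a=x_0,x_1,\dots,x_n=b$ be an edge path in $G$. I claim
\[
A(a,b)\subseteq N_{h(\lceil\log_2 n\rceil+1)}\bigl(\{x_0,\dots,x_n\}\bigr).
\]
The proof is by induction on $n$, bisecting the path at $c=x_{\lfloor n/2\rfloor}$. Condition (1) gives $A(a,b)\subseteq N_h(A(a,c)\cup A(c,b))$, and each half has length at most $\lceil n/2\rceil$. The base case $n\le 1$ is condition (2), since $x_i\in A(x_i,x_{i+1})$. Applying this to a geodesic shows that $A(a,b)$ lies in the $h\log_2 d(a,b)$--neighbourhood of $[a,b]$, up to additive constants.

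\textbf{Step 2 (the geodesic is near $A(a,b)$).} Fix $c\in[a,b]$. The set $A(a,b)$ is connected and contains $a$ and $b$. By condition (1), each of its vertices is within $h$ of $A(a,c)\cup A(c,b)$. Walking along $A(a,b)$ from $a$ to $b$, there is therefore a point $w$ within distance $h+1$ of both $A(a,c)$ and $A(c,b)$. By Step 1, $w$ is then within roughly $h\log_2 d(a,b)$ of both $[a,c]$ and $[c,b]$. Because $[a,c]\cup[c,b]$ is a geodesic, any point close to both subsegments must be close to $c$. Hence
\[
d(c,A(a,b))\lesssim h\log_2 d(a,b),
\]
so $A(a,b)$ and $[a,b]$ are at Hausdorff distance $O(\log d(a,b))$.

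\textbf{Step 3 (hyperbolicity and uniform bounds).} The families $A(\cdot,\cdot)$ now satisfy the hypotheses of Bowditch's criterion: connected sets, a slim-triangle condition (1), a bounded-diameter condition (2), and closeness to geodesics with sublinear error. That criterion yields hyperbolicity of $G$. Concretely, condition (1) combined with Step 2 shows that geodesic triangles are $O(\log)$--thin. A thin-triangle estimate that is sublinear in the side lengths forces uniform thinness, via the usual rescaling and subdivision argument giving a linear isoperimetric inequality. With $G$ now $\delta$--hyperbolic, I would rerun Steps 1 and 2 using the exponential divergence of paths in hyperbolic spaces in place of the bisection estimate. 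A connected set $A(a,b)$ whose points, by condition (1), lie uniformly close to the unions $A(a,c)\cup A(c,b)$ for $c\in[a,b]$ cannot stray far from $[a,b]$. Conversely, the transition-point argument of Step 2, with $\delta$--thin triangles, shows that $[a,b]$ lies uniformly close to $A(a,b)$. Together these give the uniform constant $m$.

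I expect the main obstacle to be Step 3, converting the logarithmic bounds into genuine hyperbolicity. This is the nontrivial content of the lemma. I would first try to invoke Bowditch's proposition that such path systems with logarithmic control imply hyperbolicity, rather than reproving the isoperimetric argument from scratch.
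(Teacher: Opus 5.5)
\textbf{Verdict: genuine gap in Step~3.} The paper does not prove this lemma; it only cites it. In the form stated here, with the Hausdorff bound, it is Bowditch's Proposition~3.1.

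Your Steps~1 and~2 are correct, but they give only $O(h\log d(a,b))$ control, and Step~3 does not upgrade this to uniform control.

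\begin{itemize}
\item If the criterion you invoke is Bowditch's Proposition~3.1, its hypotheses are exactly connectedness, (1) and (2). Citing it is citing the lemma itself.
\item The alternative route (sublinearly slim triangles imply hyperbolicity) is a true but heavy isoperimetric theorem, which you invoke rather than prove.
\item The real gap is the uniform bound $d_{\mathrm{Haus}}(A(a,b),[a,b])\le m$. This is the part the paper actually uses, through the constant $M_0$ in Corollary~\ref{cor:unicorn_hausdorff_dist}.
\end{itemize}

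Knowing that $G$ is $\delta$-hyperbolic does not give this bound in the way you suggest. Exponential divergence says a geodesic lies within about $\delta\log_2\ell$ of any path of length $\ell$. That is the wrong direction for putting $A(a,b)$ near $[a,b]$. Moreover, $A(a,b)$ contains no path of controlled length to which it could be applied. Your sentence that $A(a,b)$ ``cannot stray far from $[a,b]$'' restates the goal. Likewise, the transition-point argument for $[a,b]\subseteq N(A(a,b))$ needs uniform control of $A(a,c)$ near $[a,c]$, not thin triangles.

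\textbf{The missing idea is a self-improving estimate, and it needs no hyperbolicity.}
\begin{enumerate}
\item Let $s_n$ be the supremum of $d(w,\sigma)$ over all pairs $x,y$ with $d(x,y)\le n$, all geodesics $\sigma$ from $x$ to $y$, and all $w\in A(x,y)$. Step~1 gives $s_n<\infty$.
\item Fix such a $w$. Let $z\in\sigma$ be a nearest point and set $s=d(w,z)$.
\item Cut $\sigma$ at points $x_1,y_1$ at distance $L=s+s_n+2h+1$ from $z$, or at the endpoints of $\sigma$ if these are closer.
\item Two applications of (1) put $w$ within $2h$ of $A(x,x_1)\cup A(x_1,y_1)\cup A(y_1,y)$.
\item Since $d(x,x_1)<d(x,y)\le n$, the outer pieces lie within $s_n$ of $\sigma[x,x_1]$ and $\sigma[y_1,y]$. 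Every point $p$ of these subsegments satisfies $d(w,p)\ge L-s>2h+s_n$, so the outer pieces are excluded.
\item Hence $w$ is within $2h$ of $A(x_1,y_1)$. Step~1 applied to the short path $\sigma[x_1,y_1]$ then gives
\[
s_n\le 3h+h\left\lceil\log_2(4s_n+4h+4)\right\rceil ,
\]
so $s_n$ is bounded by a constant depending only on $h$.
\end{enumerate}

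Feed this uniform bound into your Step~2 in place of the logarithmic one. With $s=\sup_n s_n$, this gives $[a,b]\subseteq N_{3(s+h+1)}(A(a,b))$. Condition (1) then makes every geodesic triangle uniformly slim, so hyperbolicity follows directly, with no isoperimetric argument.
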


We will apply the Guessing Geodesics Lemma with 
\[
A(a,b):=N_{12}(\mathcal B(a,b))
\]
for grand arcs $a,b$. Note that this set contains both $a$ and $b$ and $N_1(\mc{U}(a,b))$ by Lemma~\ref{lem:bicorns_order_doesn't_matter} and Corollary~\ref{cor:general_unicorns_vs_bicorns}, respectively. As $N_1(\mc{U}(a,b))$ is connected \cite[Lemma~5.8]{B-NV}, the subgraph $A(a,b)$ is also connected.  We show  the first condition holds. 

\begin{lem}\label{lem:ApplyGG}
Suppose $\Sigma$ is an infinite-type surface with $|\mc{S}(\Sigma)|\geq 3$. If $a,b,c\in\mathcal{G}\left(\Sigma\right)$, then $A\left(a,b\right)\subseteq N_{38}\left(A\left(a,c\right)\cup A\left(b,c\right)\right)$. 
\end{lem}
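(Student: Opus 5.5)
We reduce the claim to a statement about bicorn sets. Since $A(a,b)=N_{12}(\mc B(a,b))$, it suffices to show that every proper bicorn $x\in\mc B(a,b)$ lies within distance $2$ of $\mc B(a,c)\cup\mc B(c,b)$. Granting this, any point of $A(a,b)$ lies within $12+2$ of $\mc B(a,c)\cup\mc B(c,b)$. Lemma~\ref{lem:bicorns_order_doesn't_matter} gives $\mc B(c,b)\subseteq N_{10}(\mc B(b,c))$. Hence every point of $A(a,b)$ lies within $14+10=24\le 38$ of $\mc B(a,c)\cup\mc B(b,c)$, which is contained in $A(a,c)\cup A(b,c)$. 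The extra room up to $38$ is slack to absorb the case where we first replace $a$, $b$ or $c$ by arcs with finite intersections, as explained below.

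Next we reduce to finite intersections. As in the proof of Lemma~\ref{lem:bicorns_order_doesn't_matter}, we fix a compact witness $W$ in which $a$, $b$ and $c$ each project to a single arc. We then modify $c$ outside $W$ to obtain $c'$ with $d_{\mc G}(c,c')\le 3$ (by Lemma~\ref{lem:agree-inside-witness}) such that $c'$ meets $a\cup b$ only finitely often. Lemma~\ref{lem:bicorn_changing_first_arc}, iterated along a path of length $3$, controls the bicorn sets at the cost of a bounded additive constant. If this cost pushes the bound past $38$, we instead only use that $c\cap x$ is finite near $x$'s compact middle segment; this is all the argument below needs.

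The main step is the following. Let $x=(\dots P)_a\cup(PQ)_b\cup(Q\dots)_a$. The two rays of $x$ lie on $a$ and its middle segment $b_1=(PQ)_b$ is compact, so $c\cap b_1$ is finite, though $c$ may cross the rays of $x$ infinitely often. We split into two cases.

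\emph{Case 1: $|c\cap b_1|\le 1$.} We work with $\mc B(a,c)$ directly. If $c$ misses $a_1\cup a_2$ off some compact set, or meets $x$ at most once, then $a$ or $c$ itself is close to $x$ by Lemma~\ref{lem:grand_arc_dist_2}. Otherwise, orient $c$ and mimic the argument of Lemma~\ref{lem:bicorn_changing_first_arc}, with the roles of the two arcs swapped: take the first and last points $X,Y$ of $c\cap(a_1\cup a_2)$ along $c$, lying on the same side or on different sides. When $X\in a_1$ and $Y\in a_2$, the bicorn $(\dots X)_a\cup(XY)_c\cup(Y\dots)_a\in\mc B(a,c)$ meets $x$ essentially only in its own rays; we then pass to an arc disjoint from $x$ via Lemma~\ref{lem:grand_arc_dist_2}.

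\emph{Case 2: $|c\cap b_1|\ge 2$.} Orient $c$ and take the first and last points $X,Y$ of $c\cap b_1$ along $c$, exactly as in Lemma~\ref{lem:bicorn_changing_first_arc}. Then $q=(\dots X)_c\cup(XY)_b\cup(Y\dots)_c\in\mc B(c,b)$ meets $x$ in at most one point of $b_1$ outside $q$'s rays. This gives $d_{\mc G}(x,q)\le 2$ whenever $c$'s rays avoid $a_1\cup a_2$. When the rays of $c$ do cross $a_1\cup a_2$, we instead build a bicorn in $\mc B(a,c)$ by following $a$ to the first crossing with $c$ and then $c$ up to $b_1$.

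The main obstacle is this mixed situation, where $c$ crosses both the middle segment $b_1$ and the rays of $x$ (possibly infinitely often). Here one must choose the extremal intersection points carefully, so that the resulting bicorn in $\mc B(a,c)$ or $\mc B(c,b)$ meets $x$ in at most one point. We would try first to take innermost intersection points along $x$, as in Lemma~\ref{lem:unicorns_vs_bicorns}. If that fails, the fallback is to compare with unicorns: the unicorn analogue of the inclusion is \cite[Lemma~5.8]{B-NV}-type slimness, and Corollary~\ref{cor:general_unicorns_vs_bicorns} converts back to bicorns at cost $11$ per side, giving a constant of the order of $38$.
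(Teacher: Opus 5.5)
The main step of your proposal, that every bicorn $x\in\mathcal{B}(a,b)$ is within $2$ of $\mathcal{B}(a,c)\cup\mathcal{B}(c,b)$, is never established. The constructions you offer for it do not work, so your main route has a genuine gap.

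The extremal-point trick of Lemma~\ref{lem:bicorn_changing_first_arc} relies on two things. The rays of $q$ lie on $a'$, which is disjoint from the rays of $p$. And $X,Y$ are chosen extremal along the arc that supplies those rays. In your Case~1 the roles are swapped, and this breaks. With $X,Y$ the first and last points of $c\cap(a_1\cup a_2)$ along $c$, every other crossing of $c$ with $a_1\cup a_2$ lies inside the middle segment $(XY)_c$. So $(\dots X)_a\cup(XY)_c\cup(Y\dots)_a$ is not simple as soon as $c$ recrosses $(\dots X)_a$ or $(Y\dots)_a$. Even when it is simple, its middle meets $x$ at every crossing of $c$ with $(XP)_a\cup(QY)_a\subset x$.

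Your preliminary claim in Case~1 is also false: $c\cap(a_1\cup a_2)$ being finite does not put $a$ or $c$ within $2$ of $x$, since Lemma~\ref{lem:grand_arc_dist_2} needs a single intersection point.

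In Case~2, the rays of $q\in\mathcal{B}(c,b)$ lie on $c$, which may cross $a_1\cup a_2$, even infinitely often. So $|q\cap x|\le 1$ fails. Your repair, ``follow $a$ to the first crossing with $c$, then $c$ up to $b_1$'', does not define an element of $\mathcal{B}(a,c)$. Such an element needs a middle segment of $c$ with both endpoints on $a$ and interior disjoint from the chosen rays. Note also that $c$ can cross the unused piece $(PQ)_a$ of $a$. You correctly identify this mixed situation as the main obstacle but leave it unresolved. A direct bicorn slim-triangle statement with constant $2$ would be a nice strengthening, but nothing in the paper asserts or needs it.

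Your fallback is the correct proof, and it is exactly the paper's argument. It should be the proof rather than an afterthought, with the constants checked:
\begin{itemize}
\item For $x\in A(a,b)=N_{12}(\mathcal{B}(a,b))$, Corollary~\ref{cor:general_unicorns_vs_bicorns} gives $x'\in\mathcal{U}(a,b)$ with $d_{\mathcal{G}}(x,x')\le 12+11=23$.
\item \cite[Lemma~5.8]{B-NV} gives $y'\in N_1(\mathcal{U}(a,c))\cup N_1(\mathcal{U}(b,c))$ with $d_{\mathcal{G}}(x',y')\le 3$. Hence there is some $y$ in $\mathcal{U}(a,c)$, say, with $d_{\mathcal{G}}(y,y')\le 1$.
\item Corollary~\ref{cor:general_unicorns_vs_bicorns} again gives $z\in\mathcal{B}(a,c)$ (respectively $\mathcal{B}(b,c)$) with $d_{\mathcal{G}}(y,z)\le 11$.
\end{itemize}
Thus $d_{\mathcal{G}}(x,z)\le 23+3+1+11=38$, with $z\in\mathcal{B}(a,c)\cup\mathcal{B}(b,c)\subseteq A(a,c)\cup A(b,c)$.

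On this route the infinite-intersection difficulties you ran into never arise. They are absorbed into Corollary~\ref{cor:general_unicorns_vs_bicorns}, whose proof already modifies an arc outside a compact witness, and into the unicorn slim-triangle lemma of \cite{B-NV}. No case analysis of how $c$ meets $x$ is needed.
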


\begin{proof}

For any $a,b,c\in\mathcal{G}\left(\Sigma\right)$, let $x\in N_{12}\left(\mathcal{B}\left(a,b\right)\right)$. By Corollary \ref{cor:general_unicorns_vs_bicorns}, there exists $x'\in\mathcal{U}\left(a,b\right)$ with $d_{\mathcal{G}}\left(x,x'\right)\leq11+12=23$. By \cite[Lemma~5.8]{B-NV}, there exists $y'\in N_{1}\left(\mathcal{U}\left(a,c\right)\right)\cup N_{1}\left(\mathcal{U}\left(b,c\right)\right)$ with $d_{\mathcal{G}}\left(x',y'\right)\leq3$. Without loss of generality, suppose $y'\in N_{1}\left(\mathcal{U}\left(a,c\right)\right)$. Then there exists $y\in\mathcal{U}\left(a,c\right)$ disjoint from $y'$, and hence $z\in\mathcal{B}\left(a,c\right)$ with $d_{\mathcal{G}}\left(y,z\right)\leq11$ by Corollary \ref{cor:general_unicorns_vs_bicorns}. By the triangle inequality, we have $d_{\mathcal{G}}\left(x,z\right)\leq23+3+1+11=38$, concluding the proof.
\end{proof}

We now give an alternate proof that the grand arc graph is hyperbolic if the surface has at least three maximal end types; this was originally proven in \cite{B-NV} with neighborhoods of unicorn paths playing the role of the subgraphs $A(a,b)$ in the Guessing Geodesics Lemma.

\begin{cor}\label{cor:unicorn_hausdorff_dist}
If $\Sigma$ is an infinite-type surface with $|\mc{S}(\Sigma)|\geq 3$, then the grand arc graph $\mathcal{G}\left(\Sigma\right)$ is hyperbolic. Moreover, there is a uniform constant $M_0>0$ such that for any $a,b\in\mathcal{G}\left(\Sigma\right)$ and any geodesic $\left[a,b\right]$ between $a$ and $b$ in $\mathcal{G}\left(\Sigma\right)$, we have 
\[
d_{\mathrm{Haus}}^{\mc G}\left(\mathcal{B}\left(a,b\right),\left[a,b\right]\right)\leq M_0.
\] 
\end{cor}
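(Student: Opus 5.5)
We will apply the Guessing Geodesics Lemma to $\mathcal{G}(\Sigma)$ with the family $A(a,b)=N_{12}(\mathcal{B}(a,b))$. Two of the lemma's requirements are already in place. By the remarks preceding Lemma~\ref{lem:ApplyGG}, $A(a,b)$ is a connected subgraph containing $a$ and $b$. Containment of $a$ is immediate since $a\in\mathcal{B}(a,b)$. Containment of $b$ follows from Lemma~\ref{lem:bicorns_order_doesn't_matter}. Connectedness holds because $A(a,b)\supseteq N_1(\mathcal{U}(a,b))$, which is connected, and every point of $A(a,b)$ is joined by a path inside $A(a,b)$ to $\mathcal{B}(a,b)$, hence to $N_1(\mathcal{U}(a,b))$. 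The thin-triangle condition~(1) is Lemma~\ref{lem:ApplyGG}, with constant $38$.

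It remains to verify condition~(2): $\mathrm{diam}_{\mathcal{G}}A(a,b)$ is uniformly bounded whenever $d_{\mathcal{G}}(a,b)\leq 1$.
\begin{itemize}
\item If $a=b$, then $a\cap b$ is not transverse, and we interpret $\mathcal{B}(a,a)=\{a\}$.
\item If $a$ and $b$ are disjoint, then $a\cap b=\emptyset$, so no points $P,Q$ exist and $\mathcal{B}(a,b)=\{a\}$.
\end{itemize}
In both cases $A(a,b)=N_{12}(\{a\})$ has diameter at most $24$. So the Guessing Geodesics Lemma applies with $h=38$. It gives hyperbolicity of $\mathcal{G}(\Sigma)$, together with a uniform $m$ such that $d^{\mathcal{G}}_{\mathrm{Haus}}(A(a,b),[a,b])\leq m$.

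Finally, we pass from $A(a,b)$ to $\mathcal{B}(a,b)$. Since $\mathcal{B}(a,b)\subseteq A(a,b)\subseteq N_{12}(\mathcal{B}(a,b))$, we have $d_{\mathrm{Haus}}(\mathcal{B}(a,b),A(a,b))\leq 12$. The triangle inequality for Hausdorff distance then yields $M_0=m+12$.

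The only point needing care is uniformity. The constants $38$ and $24$ are independent of $\Sigma$ and of $a,b$, so $m$ depends only on them. There is no real obstacle, since all the substantive work was done in Lemma~\ref{lem:ApplyGG}. The one subtlety is confirming that the hypothesis $|\mathcal{S}(\Sigma)|\geq 3$ enters only through Lemma~\ref{lem:ApplyGG} and the cited connectedness of $N_1(\mathcal{U}(a,b))$ from \cite{B-NV}.
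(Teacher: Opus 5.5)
Your proposal is correct and follows the paper's proof: the Guessing Geodesics Lemma with $A(a,b)=N_{12}(\mathcal{B}(a,b))$, condition~(1) from Lemma~\ref{lem:ApplyGG}, and condition~(2) from $\mathcal{B}(a,b)=\{a\}$ when $a\cap b=\emptyset$. Your bookkeeping is slightly cleaner than the paper's: $h=38$ is the value actually needed (the paper's stated $h=24$ does not cover condition~(1)), and your explicit $+12$ step from $A(a,b)$ back to $\mathcal{B}(a,b)$ fills in a step the paper leaves implicit.
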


\begin{proof}
    We will show the Guessing Geodesics Lemma holds with $A(a,b)=N_{12}(\mathcal B(a,b))$ for grand arcs $a,b$ and $h=24$.  The first condition holds by Lemma~\ref{lem:ApplyGG}.  For the second condition, if $a\cap b=\emptyset$, then $\mathcal B(a,b)=\{a\}$, so $A(a,b)=N_{12}(\mathcal B(a,b))$ has diameter 24. 
\end{proof}



\section{Bounded Geodesic Image Theorem} \label{sec:BGI}

In this section, we prove a version of the bounded geodesic image theorem (BGI) for $\mathcal{G}\left(\Sigma\right)$.  Versions of bounded geodesic image theorems exist for the curve graph \cite{MM2}, the fine curve graph \cite{long-tan} and, more generally, hierarchically hyperbolic spaces \cite{Behrstock_2019}.


A bounded geodesic image theorem for the grand arc graph involves projections of grand arcs to  arc graphs of subsurfaces.  Given a fixed subsurface $Y\subseteq \Sigma$, it establishes a uniform upper bound on the diameter in $\mc A(Y)$ of the projection of a geodesic $g\subseteq \mc G(\Sigma)$ to $Y$.  However, 
such a theorem need not hold for \textit{every} subsurface. For example, if $Y$ is a witness, so that every vertex of $\mc G(\Sigma)$ intersects $Y$, then $\mathrm{diam}_{\mathcal{A}(Y)}(\pi_Y(g))$ may be arbitrarily large: a pseudo-Anosov homeomorphism supported on $Y$ is a loxodromic isometry of both on $\mathcal{A}(Y)$ and $\mc G(\Sigma)$. On the other hand, if $Y$ is disjoint from a witness of $G$, then the projections to $Y$ give no control over the distances in $\mc G(\Sigma)$. This motivates the following general definition.

\begin{defn}\label{def:partialwitness}
    Let $S$ be a surface, and let $G$ be a graph whose vertices correspond to isotopy classes of simple closed curves or simple proper arcs on $S$.  A \textit{partial witness} for $G$ is a subsurface $Y$ that is neither a witness for $G$ nor disjoint from a witness.
\end{defn}

\begin{example}
Suppose $S$ is a finite-type surface with complexity at least $5$ and $G=\mathcal{C}(S)$, the curve graph of $S$. The only witness for $\mathcal{C}(S)$ is $S$: every proper subsurface $Y$ has at least one component of $\partial Y$ which is an essential simple closed curve in $S$ that is disjoint from $Y$. Thus every proper subsurface is a partial witness, as each is neither $S$ nor disjoint from $S$.
\end{example}

\begin{example}
Suppose $S$ is a surface with a non-empty finite set $P$ of marked points or boundary components and $G=\mathcal{A}\left(S,P\right)$ is the relative arc graph. The witnesses for $\mathcal{A}\left(S,P\right)$ are exactly the subsurfaces which contain all elements of $P$. Thus the partial witnesses for $\mathcal{A}\left(S,P\right)$ are exactly the subsurfaces which contain a non-empty proper subset $Q\subset P$ of marked points. Indeed, these subsurfaces intersect every witness, as all witnesses contain the marked points in $Q$, but are not witnesses themselves, as they don't contain $P\setminus Q$. See Figure~\ref{fig:sep_curve_graph} (left). 
\end{example}


\begin{example}
Suppose $\Sigma$ is a closed surface of genus $g\geq 3$, and let $\mc{C}_{\mathrm{sep}}(S)$ denote the separating curve graph of $S$. This graph is not hyperbolic; see, for example, \cite{RV}. Figure~\ref{fig:sep_curve_graph} (right) shows two witnesses $W$ and $W'$ for $\mc{C}_{\mathrm{sep}}(S)$ that are disjoint and a partial witness $Y$ for $\mc{C}_{\mathrm{sep}}(S)$. 
\end{example}

\begin{figure}[h]
    \centering
    \begin{subfigure}
    \centering
    \includegraphics[width=6 cm]{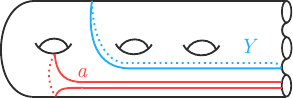}
    \end{subfigure}\hfill
    \begin{subfigure}
    \centering
    \includegraphics[width=6 cm]{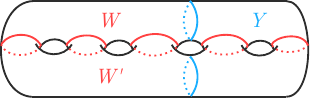}
    \end{subfigure}
    \captionsetup{margin=1.5 cm, justification=justified, singlelinecheck=false}
    \caption{Left: $Y$ is a partial witness for $\mathcal{A}(S,P)$.  Right: $W$ and $W'$ are disjoint subsurfaces of $S$, both of which are witnesses, while $Y$ is a partial witness for $S$.}
    \label{fig:sep_curve_graph}
\end{figure}

\begin{example}
Suppose $S$ is an infinite-type surface with $|\mc{S}(\Sigma)|\geq 4$ and $G=\mathcal{G}\left(\Sigma\right)$ is the grand arc graph. If $\gamma$ is a separating simple closed curve that separates two of the elements of the grand splitting from the rest, then $S\setminus\gamma$ consists of two infinite-type subsurfaces, both of which are partial witnesses. Similarly, if $W$ is a finite-type witness for $\mathcal{G}\left(\Sigma\right)$ on $\Sigma$ containing $\gamma$, then $W\setminus\gamma$ consists of two finite-type subsurfaces of $\Sigma$, both of which are partial witnesses. See Figure \ref{fig:partial_witness_grand}.

In general, a witness of $\mc{G}(\Sigma)$ must separate every element of the grand splitting, and a subsurface is disjoint from a witness if and only if it is contained in a clopen neighbourhood of some maximal end. Therefore a subsurface of $\Sigma$ is a partial witness for $\mc{G}(\Sigma)$ if and only if it there are two elements of the grand splitting that it does not separate and two other elements of the grand splitting that it either separates or limits to. Consequently, $\mc{G}(\Sigma)$ admits a \textit{compact} partial witness only when $|\mc{S}(\Sigma)|\geq 4$, though it may admit non-compact partial witnesses when $|\mc S(\Sigma)|=3$, as in Figure \ref{fig:partial_witness_grand}.
\end{example}

\begin{figure}[h]
    \centering
    \begin{subfigure}
    \centering
    \def\svgwidth{6 cm}
\begingroup%
  \makeatletter%
  \providecommand\color[2][]{%
    \errmessage{(Inkscape) Color is used for the text in Inkscape, but the package 'color.sty' is not loaded}%
    \renewcommand\color[2][]{}%
  }%
  \providecommand\transparent[1]{%
    \errmessage{(Inkscape) Transparency is used (non-zero) for the text in Inkscape, but the package 'transparent.sty' is not loaded}%
    \renewcommand\transparent[1]{}%
  }%
  \providecommand\rotatebox[2]{#2}%
  \newcommand*\fsize{\dimexpr\f@size pt\relax}%
  \newcommand*\lineheight[1]{\fontsize{\fsize}{#1\fsize}\selectfont}%
  \ifx\svgwidth\undefined%
    \setlength{\unitlength}{175.71646743bp}%
    \ifx\svgscale\undefined%
      \relax%
    \else%
      \setlength{\unitlength}{\unitlength * \real{\svgscale}}%
    \fi%
  \else%
    \setlength{\unitlength}{\svgwidth}%
  \fi%
  \global\let\svgwidth\undefined%
  \global\let\svgscale\undefined%
  \makeatother%
  \begin{picture}(1,0.64074248)%
    \lineheight{1}%
    \setlength\tabcolsep{0pt}%
    \put(0,0){\includegraphics[width=\unitlength,page=1]{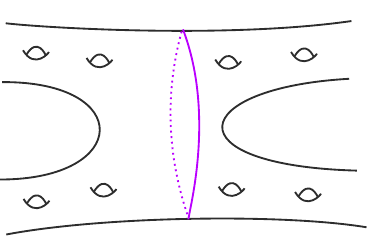}}%
    \put(0.4887255,0.59553553){\color[rgb]{0.72941176,0,1}\makebox(0,0)[lt]{\lineheight{1.25}\smash{\begin{tabular}[t]{l}\textit{$\gamma$}\end{tabular}}}}%
  \end{picture}%
\endgroup%

    \end{subfigure}\hfill
    \begin{subfigure}
    \centering
    \def\svgwidth{7 cm}
        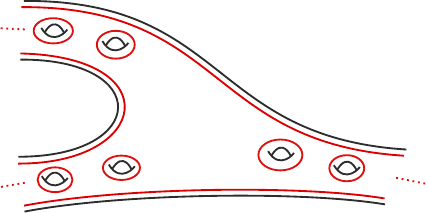
    \end{subfigure}
    \captionsetup{margin=1.5 cm, justification=justified, singlelinecheck=false}
    \caption{Left: $\gamma$ separates the surface into two partial witnesses. Right: cutting the surface along the collection of curves in red separates it into two partial witnesses, both limiting to all three maximal ends.}
    \label{fig:partial_witness_grand}
\end{figure}

We will primarily consider witnesses and partial witnesses for $\mc G(\Sigma)$ or $\mathcal{A}(S,\Gamma)$ for some $S\subseteq \Sigma$. When this is the case and no confusion is possible, we abuse terminology and do not specify ``for $\mc G(\Sigma)$" or ``for  $\mathcal{A}(S,\Gamma)$." 

\begin{thm}[BGI for the grand arc graph]\label{thm:BGI}
Let $\Sigma$ be an infinite-type surface with $3\leq |\mc{S}(\Sigma)|<\infty$.  For any $k\geq 1$ and $c\geq 0$, there exists a constant $B>0$ that satisfies the following. If $Y\subset\Sigma$ is a partial witness for $\mathcal{G}(\Sigma)$ and $g=\left(\gamma_{i}\right)_{i\in I}$ is a $(k,c)$--geodesic path in $\mathcal{G}\left(\Sigma\right)$ such that each $\gamma_{i}$ intersects $Y$ transversely, then 
\[
\mathrm{diam}_{Y}\left(g\right)\leq B.
\]
\end{thm}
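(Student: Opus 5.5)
The plan is to follow the Masur--Minsky / Webb style proof via unicorn/bicorn paths, reducing to a statement about bicorn sets and then upgrading to quasi-geodesics using Corollary~\ref{cor:unicorn_hausdorff_dist}.

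\textbf{Step 1: bicorn sets have bounded projection to $Y$.} I would first prove the following. If $a,b\in\mathcal G(\Sigma)$ and every arc of $\mathcal B(a,b)$ meets $Y$, then
\[
\mathrm{diam}_Y(\mathcal B(a,b))\leq B_0
\]
for a universal $B_0$. The argument goes as follows. Since $Y$ is a partial witness, there are two elements $E_i,E_j$ of the grand splitting that $Y$ does not separate. Hence there is a grand arc $\delta$ from $E_i$ to $E_j$ disjoint from $Y$. Since $Y$ is not disjoint from a witness, it is also not contained in a clopen neighbourhood of one maximal end. Now take a bicorn $x=(\dots P)_a\cup(PQ)_b\cup(Q\dots)_a$. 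Its projection to $Y$ consists of pieces of $a\cap Y$ together with pieces of $b\cap Y$. Any two bicorns $x,x'$ share the pieces of $a$ near the ends, or else one of them is nested (Lemma~\ref{lem:finite_bicorn_subset}). So, as in Pho-On's and Webb's arguments, consecutive bicorns along the natural ordering by the subsegment $(PQ)_b$ either have disjoint projections or agree on an arc of $a\cap Y$ or of $b\cap Y$.

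The key dichotomy concerns where $\pi_Y$ moves. Walk along the family of bicorns, ordered by growing $b$-segment. As long as some essential arc of $a\cap Y$ survives in the bicorn, the projections stay within distance $1$ of $\pi_Y(a)$. Once the $a$-parts no longer meet $Y$ essentially, the bicorn's intersection with $Y$ lies in $(PQ)_b$, which lies in $b$, so the projection is within distance $1$ of $\pi_Y(b)$. The transition happens at a single step, where one arc contains essential pieces of both $a$ and $b$ in $Y$. This gives $\mathrm{diam}_Y\le 2+d_Y(a,b)$-type control. I do not want $d_Y(a,b)$, though.

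\textbf{Step 2: use the hypothesis that every vertex meets $Y$.} Here the partial-witness condition must enter. The idea is the standard one: if $d_Y$ is large along the path, some intermediate bicorn must miss $Y$. Concretely, in the transition step, the bicorn uses a long $b$-subsegment. One can surger it with $\delta$, or with an arc inside a clopen neighbourhood of the non-separated pair $E_i,E_j$. This produces a grand arc disjoint from $Y$ that is within bounded $\mathcal G$-distance of the geodesic. I would instead phrase it directly. Let $x_k$ and $x_{k+1}$ be bicorns whose projections to $Y$ differ. Then Lemma~\ref{lem:grand_arc_dist_2}-type surgeries let one build arcs realizing every intermediate step, and each of these still meets $Y$ by assumption on the neighbourhood. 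Consequently the projection changes by a bounded amount per step.

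The real content is then a Lipschitz-type statement. Adjacent vertices of $\mathcal G$ that both meet $Y$ have projections at distance $\le1$ in $\mathcal A(Y)$. So along any path through vertices meeting $Y$, $d_Y$ grows at most linearly. Boundedness therefore comes from hyperbolicity of $\mathcal G$ plus the existence of a vertex missing $Y$ (namely $\delta$) near the geodesic whenever $d_Y$ is large.

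\textbf{Step 3: main obstacle, and the plan.} The hardest step is a ``Behrstock-type'' inequality. If $d_Y(a,b)$ is large, then every geodesic from $a$ to $b$ passes within bounded distance of $\mathcal G$-vertices disjoint from $Y$. I would try to prove it via bicorns $\mathcal B(a,b)$. Take the first bicorn $x$ whose projection is far from $\pi_Y(a)$ and far from $\pi_Y(b)$; by the transition analysis, $x\cap Y$ consists only of $b$-pieces. Then replace the $a$-tails of $x$ outside $Y$ by tails running to $E_i$ and $E_j$ while avoiding $Y$. This is possible because $Y$ does not separate $E_i$ from $E_j$, so $\Sigma\setminus Y$ has a component reaching both. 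The result is an arc that misses $Y$ and sits within distance $2$ of $x$ (by Lemma~\ref{lem:grand_arc_dist_2} and disjointness arguments).

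Finally, transfer from $\mathcal B(a,b)$ to $g$. Use Corollary~\ref{cor:unicorn_hausdorff_dist} together with the Morse lemma, which places $g$ within $M_0+m_0(k,c)$ of $\mathcal B(a,b)$. A vertex $\gamma_i$ within bounded distance of an arc missing $Y$ meets $Y$ by hypothesis. Running the Lipschitz bound along a short path from that arc back to $\gamma_i$ then bounds $d_Y$ from the subsegment endpoints. Applying this to all pairs $\gamma_i,\gamma_j$ gives the uniform $B$.
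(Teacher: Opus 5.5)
\textbf{There is a genuine gap.} The mechanism you propose for bounding projections cannot give the theorem, and the lemma that makes a Webb-style argument work is missing.

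\textbf{Why your plan does not close.} The pivot of your plan is the ``Behrstock-type'' claim: when $d_Y$ is large, every geodesic passes within bounded $\mathcal{G}$-distance of an arc disjoint from $Y$. Even if proved, this is too weak. The contrapositive of the theorem requires $g$ to pass \emph{through} such an arc, not merely near one.

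Being near an arc $c$ that misses $Y$ imposes nothing on $\pi_Y$. Indeed $\pi_Y(c)$ is undefined, and arcs adjacent to $c$ can have arbitrarily far-apart projections to $Y$. So your transfer step (``running the Lipschitz bound along a short path from that arc back to $\gamma_i$'') has nothing to start from. The Lipschitz estimate needs every vertex of the path to meet $Y$, and this path ends at $c$. Even if it applied, it would not relate $\pi_Y(\gamma_i)$ to $\pi_Y(\gamma_j)$.

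The construction offered for the Behrstock claim also fails. If $x\cap Y$ consists of $b$-pieces, rerouting the $a$-tails of $x$ outside $Y$ leaves $(PQ)_b$ crossing $Y$, so the new arc still meets $Y$.

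Step~1 has two further problems. It only gives a bound in terms of $d_Y(a,b)$, which is circular. And its dichotomy (each bicorn projects near $\pi_Y(a)$ or near $\pi_Y(b)$) rules out the ``first bicorn far from both'' that Step~3 starts from.

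\textbf{The missing idea.} Make the arc disjoint from $Y$ (your $\delta$; call it $a$) the \emph{apex} of both the bicorn sets and the triangles, with the bicorn order reversed.

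If $d_{\mathcal{G}}(a,b)\geq 3$, then $b$ meets $Y$ by Lemma~\ref{lem:partial_witnesses}. A bicorn $x\in\mathcal{B}(b,a)$ follows $b$ at both ends and $a$ only in its middle. Since $a$ misses $Y$, everything $x$ does inside $Y$ comes from $b$, up to the bigons with $\partial Y$ treated in Lemma~\ref{lem:bicorns_project_well}. Hence $d_Y(x,b)\leq 2$.

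Combined with the Hausdorff closeness of $\mathcal{B}(b,a)$ to $[a,b]$ (Corollary~\ref{cor:unicorn_hausdorff_dist}), this gives Webb's lemma, Corollary~\ref{cor:quasigeod_project_for_BGI}: vertices of any geodesic $[a,b]$ at distance at least $3$ from $a$ project uniformly close to $\pi_Y(b)$.

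One then considers the triangle with vertices $a,\gamma_i,\gamma_j$.
\begin{itemize}
\item The part of $g$ in a fixed neighbourhood of $a$ has bounded length, since $g$ is a $(k,c)$-quasi-geodesic. It therefore has bounded projection, because adjacent arcs meeting $Y$ project within distance $1$.
\item Outside that part, thinness gives vertices of $g$ uniformly close to points of $[a,\gamma_i]$ and of $[a,\gamma_j]$ that are far from $a$.
\item The short connecting paths consist of arcs at distance at least $3$ from $a$, so they meet $Y$ and their projections move boundedly.
\item Webb's lemma then carries the bound to $\gamma_i$ and $\gamma_j$.
\end{itemize}
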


Here and henceforth in this section, note that $d_Y$ refers to distance of the projections to the \textit{arc graph} of $Y$.



The following lemma is a key ingredient in the proof of Theorem \ref{thm:BGI} and describes the role of partial witnesses. For the rest of this section, we fix an infinite-type surface $\Sigma$ with $|\mc{S}(\Sigma)|\geq 3$.

\begin{lem}\label{lem:partial_witnesses}
Let $Y\subset \Sigma$ be a partial witness for $\mc{G}(\Sigma)$, and let $b\in \mc G(\Sigma)$. If there exists a grand arc $a$ that is disjoint from $Y$ and $d_{\mc G(\Sigma)}(a,b)\geq 3$, then the arc $b$ intersects $Y$ transversely. 
\end{lem}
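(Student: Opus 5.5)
We argue by contrapositive: suppose $b$ does not intersect $Y$ transversely, so that (after isotopy) $b$ is disjoint from $Y$. We will produce a grand arc $c$ disjoint from both $a$ and $b$. This gives $d_{\mc G}(a,b)\leq 2$, contradicting $d_{\mc G(\Sigma)}(a,b)\geq 3$.

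The key input is the characterization of partial witnesses given in the example above. Since $Y$ is not a witness, there are two elements $E_i\neq E_j$ of the grand splitting that $Y$ does not separate. That is, some component $U$ of $\Sigma\setminus Y$ has ends in both $E_i$ and $E_j$. The idea is to build $c$ inside $Y$ (or inside a small regular neighbourhood of $Y$ pushed slightly into $U$), so that it is automatically disjoint from both $a$ and $b$.

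It is better to use the second half of the partial witness hypothesis: $Y$ is not disjoint from any witness, so $Y$ either separates or limits to two further elements $E_k,E_l$. Concretely, $Y$ must ``reach'' at least two distinct elements of the grand splitting. Otherwise $Y$ would lie in a clopen neighbourhood of the ends of a single element, and hence would be disjoint from some witness. So there are two elements $E_k\neq E_l$ such that each is either an end of $Y$ or lies in a complementary component of $Y$ whose boundary curve is in $\partial Y$.

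We then build $c$ as follows. Choose an arc inside $Y$ joining the two relevant boundary components or ends. If an endpoint is on a boundary curve $\beta$ of $Y$, extend $c$ across $\beta$ into the complementary component $V$ to reach an end of $E_k$ (respectively $E_l$). The resulting arc is grand because $E_k\neq E_l$.

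The main obstacle is keeping $c$ disjoint from $a$ and $b$ inside those complementary components $V$, since $a$ and $b$ may wander through $V$. To handle this, we do not extend $c$ through $V$ freely. Instead, we follow one of $a$ or $b$ itself. If $a$ enters $V$ and converges to an end of $E_k$ there, we can run $c$ parallel to a ray of $a$ (a pushoff). Such a pushoff still meets $b$, though, so more care is needed.

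The cleaner route is to choose the elements to exploit $E_i,E_j$. Since $a$ and $b$ are both disjoint from $Y$, we compare their ends with what $Y$ reaches. We take $c$ to be an arc in a slight enlargement of $Y$ converging to ends in the sets that $Y$ actually contains as ends (limit points inside $Y$). If $Y$ limits to two distinct elements, choose $c\subset Y$ converging to them. Then $c$ is grand and disjoint from $a$ and $b$, giving $d(a,b)\leq 2$.

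The remaining case is when $Y$ separates some element of the grand splitting rather than limiting to it. Here we reduce to the previous case by enlarging $Y$ by the complementary components containing those ends, making sure those components avoid $a$ and $b$. Arranging this is the step I expect to be hardest. The first thing I would try is to replace $a$ by a nearby arc, using Lemma~\ref{lem:grand_arc_dist_2} to absorb one extra unit of distance in the bound.
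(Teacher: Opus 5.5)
Your proposal has a genuine gap, and you identify it yourself: when $Y$ separates (rather than contains ends of) the elements of the grand splitting you want $c$ to reach, you never show that $c$ can be extended through the complementary components of $Y$ while staying disjoint from $a$ and $b$. The suggested patch cannot work. If you replace $a$ by a nearby arc $a'$ and pay for it with Lemma~\ref{lem:grand_arc_dist_2}, you only get $d_{\mc G}(a,b)\le d_{\mc G}(a,a')+2\ge 3$. That does not contradict $d_{\mc G(\Sigma)}(a,b)\geq 3$, so the argument has no slack to absorb an extra unit.

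The missing observation is that $a$ and $b$ cannot "wander." Each is connected and disjoint from $Y$, so each lies in a single component of $\Sigma\setminus Y$. They must lie in the \emph{same} component $C$, since otherwise $a\cap b=\emptyset$ and $d_{\mc G}(a,b)\le 1$. Consequently neither $a$ nor $b$ meets any complementary component other than $C$.

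Now apply the partial-witness hypothesis to $C$ rather than building $c$ inside $Y$. $C$ is disjoint from $Y$, and $Y$ is not disjoint from any witness, so $C$ is not a witness. Hence some grand arc $c$ can be isotoped off $C$, and this $c$ is disjoint from both $a$ and $b$, giving $d_{\mc G}(a,b)\le 2$.

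This is the paper's proof. It needs no explicit construction of $c$ and no case split on whether $Y$ contains or separates the relevant ends. In your language, the fact that $C$ is not a witness is exactly what guarantees two distinct elements of the grand splitting that can be reached (through $Y$ or through components other than $C$) without entering $C$. That is the fact your construction was missing.
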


\begin{proof}
Suppose toward a contradiction that both $a,b\in \mc{G}(\Sigma)$ are disjoint from $Y$.  First, notice that $a$ and $b$ must be in the same component $C$ of $\Sigma \setminus Y$, for if they were in different components, they would be disjoint, contradicting that $d_{\mc G(\Sigma)}(a,b)\geq 3$. Since $C$ is a subsurface disjoint from the partial witness $Y$, the component $C$ cannot be a witness. In particular, there exists $c\in \mc{G}(\Sigma)$ that is disjoint from $C$. Then $a,b$ are both disjoint from $c$, and so $d_{\mc{G}(\Sigma)}\left(a,b\right)\leq2$, which is also a contradiction. 
\end{proof}

The converse of Lemma~\ref{lem:partial_witnesses} does not hold in general.  For example, consider a surface $\Sigma$ and a witness $W$ of sufficiently low complexity so that the subgraph of arcs in $\mc{G}(\Sigma)$ which project to connected arcs in $W$ has diameter at most 2. Then there are no subarcs $a$ as in the statement of the lemma. The following lemma shows that this is the only exceptional case.

\begin{lem}\label{lem:detect_partial_witnesses}
Suppose $Y$ is a subsurface of $\Sigma$ that is disjoint from a witness. Then either there exist grand arcs $a$ and $b$ disjoint from $Y$ with $d_{\mc{G}(\Sigma)}(a,b)\geq 3$, or the only witness disjoint from $Y$ is (homeomorphic to) a pair of pants. 
\end{lem}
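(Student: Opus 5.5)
Since $Y$ is disjoint from a witness, pick a witness $W$ with $W\cap Y=\emptyset$. By Lemma~\ref{lem:single_arc_witness} and the usual enlargement, we may pass to a witness inside $\Sigma\setminus Y$ that is as large as possible: take $W$ to be (a compact exhaustion of) the component of $\Sigma\setminus Y$ that separates the elements of the grand splitting. Every grand arc supported in this component is disjoint from $Y$. So it suffices to produce two grand arcs $a,b\subset \Sigma\setminus Y$ with $d_{\mc G(\Sigma)}(a,b)\geq 3$, unless every witness disjoint from $Y$ is a pair of pants.

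I would split into cases according to the topology of the witnesses $W\subset \Sigma\setminus Y$. If some such $W$ is compact and is not a pair of pants, then the prescribed arc graph $\mathcal{A}(W,\Gamma_W)$ should have large diameter. When $W$ has positive genus, or at least four boundary components, it is non-sporadic, and Kopreski's results quoted after Lemma~\ref{lem:presc_arc_proj} give infinite diameter. In the borderline case, where $W$ has three boundary components but is not a pair of pants, I would instead find a pseudo-Anosov-type partial pseudo-Anosov on $W$ or use a direct construction.

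I would then choose $\alpha,\beta\in\mathcal{A}(W,\Gamma_W)$ with $d_W(\alpha,\beta)\geq 3$. I would extend each by arcs in $\Sigma\setminus(W\cup Y)$ running out to the appropriate maximal ends, as in the proof of Lemma~\ref{lem:presc_arc_proj}, making sure the extensions avoid $Y$. This is possible since each complementary component of $W$ relevant to a partite set contains the corresponding maximal ends. The resulting grand arcs $a,b$ meet $W$ in single arcs, so the first inequality of Lemma~\ref{lem:presc_arc_proj} gives $d_{\mc G}(a,b)\geq d_W(\alpha,\beta)\geq 3$, and they are disjoint from $Y$.

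If instead the witness can be taken non-compact (for instance, $\Sigma\setminus Y$ has a component with infinite type that separates the grand splitting), I would exhaust it by compact witnesses. Any compact witness that is not a pair of pants falls into the previous case, so the only remaining situation is that all witnesses disjoint from $Y$ are pants, which is the alternative in the statement.

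The main obstacle I expect is the low-complexity bookkeeping: showing that a compact witness with three boundary components that is not a pair of pants (e.g. a genus-one witness) still has $\operatorname{diam}\mathcal{A}(W,\Gamma_W)\geq 3$. It also requires checking that when $|\mc S(\Sigma)|\geq 3$ the prescribing graph has the multipartite form needed to apply Kopreski. For this, I would first try an explicit pair of arcs, using Dehn twists about a nonseparating curve in $W$ to push an arc far from another.
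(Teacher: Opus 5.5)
Your strategy matches the paper's. You pass to the witness component $C$ of $\Sigma\setminus Y$, use that a non-pants witness has an infinite-diameter prescribed arc graph, pick two far-apart prescribed arcs, extend them to grand arcs, and conclude with the first inequality of Lemma~\ref{lem:presc_arc_proj}.

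\textbf{The gap is in the extension step.} You take \emph{arbitrary} $\alpha,\beta\in\mathcal{A}(W,\Gamma_W)$ with $d_W(\alpha,\beta)\geq 3$. You then assert they extend to grand arcs avoiding $Y$ because each relevant complementary component of $W$ contains the corresponding maximal ends. Containing the ends is not enough.
\begin{itemize}
\item A grand arc disjoint from $Y$ lies in one component of $\Sigma\setminus Y$, hence in $C$. So it can only leave $W$ through boundary curves whose side \emph{inside $C$} contains maximal ends.
\item For instance, suppose the ends of one element of the grand splitting lie in $\Sigma\setminus C$. Then the boundary curves of $W$ facing $Y$ carry edges of $\Gamma_W$, yet every path from them to those ends crosses $Y$. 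A far-apart pair may well use such curves.
\end{itemize}
This is why the paper restricts to arcs ``connecting boundary components of $W$ that do not separate $Y$''. Once you restrict, infinite diameter of $\mathcal{A}(W,\Gamma_W)$ no longer hands you two far-apart arcs in the restricted family. Moreover, when $|\mathcal{S}(\Sigma)|=3$ the restricted prescribing graph may have only two partite sets, which is outside the setting in which Kopreski's theorem is quoted.

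\textbf{How to repair it.} First, the statement tacitly needs $Y$ connected: collars of all boundary curves of a compact non-pants witness form a disconnected $Y$ that no grand arc avoids. For connected $Y$, the argument goes as follows.
\begin{itemize}
\item $\Sigma\setminus C$ is connected. Since $C$ is a witness, $\Sigma\setminus C$ carries maximal ends of at most one element of the grand splitting, so $C$ has maximal ends of at least two elements.
\item Take a compact non-pants witness $W\subset C$. Let $\alpha\subset W$ be an arc joining two boundary curves that reach, inside $C$, ends of different elements.
\item Set $\beta=\phi^N(\alpha)$ for a pseudo-Anosov $\phi$ of $W$ fixing each boundary curve.
\item The inclusion $\mathcal{A}(W,\Gamma_W)\to\mathcal{A}(W)$ is simplicial, so $d_W(\alpha,\beta)\geq d_{\mathcal{A}(W)}(\alpha,\phi^N\alpha)\to\infty$.
\end{itemize}
The rest of your argument then applies.

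This also removes your low-complexity worry, which was unfounded anyway. A compact witness with three boundary curves that is not a pair of pants has positive genus, and that case is already covered by your first case.
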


\begin{proof}
Since $Y$ is disjoint from a witness, some component, say $W$, of $\Sigma\setminus Y$ is a witness of $\mc{G}(\Sigma)$. Since $\mc{G}(\Sigma)$ is hyperbolic, it has no disjoint witnesses, and so $W$ is the only such component.  
If $W$ is not a pair of pants, then the prescribed arc graph $\mc{A}\left(W,\Gamma_W\right)$, where the prescribing graph $\Gamma_W$ is induced from $\mc{G}(\Sigma)$, has infinite diameter by \cite[Theorem~1.2]{Kop}.  Thus there exist prescribed arcs $a',b'$ connecting boundary components of $W$ that do not separate $Y$ with $d_{\mc{A}\left(W,\Gamma_W\right)}(a',b')\geq 3$. By Lemma \ref{lem:presc_arc_proj}, any extensions $a,b\in \mc{G}(\Sigma)$ of $a',b'$ satisfy $d_{\mc{G}(\Sigma)}(a,b)\geq 3$, and $a,b$ are disjoint from $Y$. This contradicts Lemma \ref{lem:partial_witnesses}.
\end{proof}

The proof of Theorem \ref{cor:BGI_for_presc} here closely follows the proof of the bounded geodesic image theorem for the curve graph in \cite{Webb}. The key ingredient in Webb's proof is to construct a uniform quality quasi-geodesic sequence connecting a boundary curve $\alpha$ of $Y$ with any curve $\beta$ intersecting $Y$, such that $d_{Y}\left(x,\beta\right)$ is uniformly bounded for \textit{each curve} $x$ in this quasi-geodesic except the two closest to $\alpha$. These quasi-geodesics provide shortcuts for the full geodesic in $\mathcal{C}\left(Y\right)$. Webb constructs these quasi-geodesics via certain surgeries on loops; see \cite{Jin} for an analogous proof using bicorns. 

\begin{lem}\label{lem:bicorns_project_well}
Suppose $Y$ is a partial witness for $\mc{G}(\Sigma)$ and $a$ is a grand arc disjoint from $Y$. Let $b\in \mc{G}(\Sigma)$ such that $d_{\mc{G}(\Sigma)}(a,b)\geq 3$. If a bicorn $x\in\mathcal{B}\left(b,a\right)$ intersects $Y$, then $d_{Y}\left(x,b\right)\leq2$.
\end{lem}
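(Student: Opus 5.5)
The plan is to use the structure of a bicorn $x\in\mathcal B(b,a)$: it has the form $x=b_1\cup a_1\cup b_2$, where $b_1,b_2\subset b$ are the two unbounded pieces and $a_1=(PQ)_a$ is a compact segment of $a$. Since $a$ is disjoint from $Y$, the middle segment $a_1$ misses $Y$. Hence
\[
x\cap Y=(b_1\cap Y)\cup(b_2\cap Y)\subseteq b\cap Y,
\]
so every component of $x\cap Y$ is a component of $b\cap Y$, or a subarc of one, cut off where $b$ leaves $Y$. More precisely, $a_1$ lies in $\Sigma\setminus Y$, and the points $P,Q$ lie on $a$, hence outside $Y$. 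So each arc of $x\cap Y$ is a full arc of $b\cap Y$: it is a component of $(b_1\cup b_2)\cap Y$, and $b_1,b_2$ end at $P,Q\notin Y$. (After putting everything in minimal position, we may take $P,Q$ away from $\overline Y$.)

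The key step is to check that $x$ has at least one \emph{essential} arc in $Y$. Once that holds, $\pi_Y(x)\subseteq\pi_Y(b)$. Two elements of $\pi_Y(b)$ are disjoint arcs, so $\mathrm{diam}_Y(\pi_Y(b))\le1$, and therefore $d_Y(x,b)\le 1\le 2$ in the diameter sense. We also need $\pi_Y(b)\neq\emptyset$. This follows from Lemma~\ref{lem:partial_witnesses}: since $a$ misses $Y$ and $d_{\mc G}(a,b)\ge3$, $b$ intersects $Y$ transversely, and essentially so in minimal position.

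The main obstacle is that $x$ might meet $Y$ only in inessential pieces, for instance if $b_1\cap Y$ consists of boundary-parallel arcs in the bicorn after isotopy. Tightening $x$ to its geodesic representative could also in principle change its intersection pattern. My first attempt would be to argue that $x$, $b$ and $a$ can be realized simultaneously in minimal position with respect to $\partial Y$; essentially, all are geodesics, and $x$ is a piecewise-geodesic arc whose corners lie off $Y$. Any bigon between $x$ and $\partial Y$ would then have to contain a corner, since $b$ itself forms no bigon with $\partial Y$. Such a bigon would force a piece of $a_1$ to cross $\partial Y$, which is impossible because $a_1\subset\Sigma\setminus Y$.

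If $x$ nevertheless has no essential arc in $Y$, then $x$ is isotopic off $Y$. I would then apply Lemma~\ref{lem:partial_witnesses} with $x$ in place of $a$ to get $d_{\mc G}(x,b)\le2$, which is compatible with the hypothesis that $x$ "intersects $Y$" only in the inessential sense. In that case I would interpret "intersects $Y$" as "$\pi_Y(x)\neq\emptyset$," so this case is excluded by the hypothesis. The bound $2$ rather than $1$ in the statement leaves slack for the reading of $d_Y$ as the distance between projection sets, or for a case where an arc of $x\cap Y$ is only a subarc of an arc of $b\cap Y$. In that case it is still disjoint from the remaining arcs of $b\cap Y$, so distances stay at most $2$.
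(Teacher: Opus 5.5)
\textbf{There is a genuine gap.} It is in your argument that the concatenation $b_1\cup a_1\cup b_2$ is already in minimal position with $\partial Y$. You are right that any bigon with $\partial Y$ must involve a corner, but such a bigon does not force $a_1$ to cross $\partial Y$.

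Let $R$ be the last point of $b_1\cap\partial Y$ before $P$, and $R'$ the first point of $b_2\cap\partial Y$ after $Q$. The piece $(RP)_b\cup a_1\cup(QR')_b$ of $x$ lies entirely outside $Y$ and contains both corners. It can cobound a disk $D$ with an arc of $\partial Y$. In $\mathbb{H}^2$ this is just a geodesic quadrilateral whose sides lie on a lift of $\partial Y$, a lift of $a$ disjoint from it, and two lifts of $b$ crossing both, and nothing forbids this. It is exactly the situation of Figure~\ref{fig:sub-arc}.

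In that case the minimal-position representative of $x$ contains an arc of $Y$ made of three pieces: the arc of $b_1\cap Y$ ending at $R$, a path parallel to $\partial Y\cap D$, and the arc of $b_2\cap Y$ starting at $R'$. This arc is not an arc of $b\cap Y$. It crosses every arc of $b\cap Y$ that enters $D$ through $\partial Y$. Such arcs exist in general: they must leave $D$ through $a_1$, so they are subarcs of the middle segment $(PQ)_b$, which is not part of $x$ and may cross $a_1$ freely. Hence $\pi_Y(x)\subseteq\pi_Y(b)$ fails, and your bound $d_Y(x,b)\le 1$ is not justified. Your closing remark does not cover this case, since the new arc is neither a subarc of an arc of $b\cap Y$ nor disjoint from $b\cap Y$.

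The missing idea is to control these new crossings, which is what the paper's proof does. An arc $\beta$ of $b\cap Y$ cannot cross the parts of the tightened $x$ that are subarcs of $b$. So $\beta$ meets $x$ only where it enters $D$ through $\partial Y$, that is, near its endpoints. Thus $|\beta\cap x|\in\{0,1,2\}$, with opposite orientations in the last case. The three cases are handled as follows:
\begin{itemize}
\item With no intersection, $\beta$ and $x$ are disjoint in $Y$.
\item With one intersection point, the surgery of Lemma~\ref{lem:grand_arc_dist_2} gives an arc of $Y$ disjoint from both.
\item With two, two consecutive points of $x\cap b$ along $x$ define a bicorn between $x$ and $b$ inside $Y$ that is disjoint from both.
\end{itemize}
This gives $d_Y(x,b)\le 2$, and it is why the statement has $2$ rather than $1$.
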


\begin{proof}
Let $x=b_{1}\cup a_{1}\cup b_{2}$ be a bicorn grand arc between $b$ and $a$ that intersects $Y$. Note that $x\cap b= a_{1}\cap b$. By Lemma \ref{lem:partial_witnesses}, $b$ must intersect $Y$ transversely, so $d_{Y}\left(x,b\right)$ is well-defined. While the arc $a$, and hence $a_1$, is disjoint from $Y$, it is possible that there are intersections of $a_1\cap b$ that can be isotoped into $Y$: this is the case only if there are subsegments of $a_1, b$, and a boundary component of $Y$ that bound a disk; see Figure~\ref{fig:sub-arc}. 

Any arc $\beta\in b\cap Y$ intersecting $x$ must cut through this disk and hence, intersect $\partial Y$ transversely. If there is a second point of $x\cap\beta$, the corresponding sub-arc of $b$ must again cut through this disk and, in particular, $\partial Y$. Thus, a single component of $b\cap Y$ may intersect $x$ in either zero points, one point, or two points with opposite orientations. In the first two cases, $d_{Y}\left(x,b\right)\leq2$, applying Lemma~\ref{lem:grand_arc_dist_2} in the second case. In the third case, there must be two consecutive points of $x\cap b$ along $x$ with opposite orientations along $b$, and these point define a bicorn arc between $x$ and $b$ in $Y$ that is disjoint from both $x$ and $b$.  Thus $d_{Y}\left(x,b\right)\leq2$. 
\end{proof}

\begin{figure}
    \centering
    \def\svgwidth{8 cm}
\begingroup%
  \makeatletter%
  \providecommand\color[2][]{%
    \errmessage{(Inkscape) Color is used for the text in Inkscape, but the package 'color.sty' is not loaded}%
    \renewcommand\color[2][]{}%
  }%
  \providecommand\transparent[1]{%
    \errmessage{(Inkscape) Transparency is used (non-zero) for the text in Inkscape, but the package 'transparent.sty' is not loaded}%
    \renewcommand\transparent[1]{}%
  }%
  \providecommand\rotatebox[2]{#2}%
  \newcommand*\fsize{\dimexpr\f@size pt\relax}%
  \newcommand*\lineheight[1]{\fontsize{\fsize}{#1\fsize}\selectfont}%
  \ifx\svgwidth\undefined%
    \setlength{\unitlength}{224.8012729bp}%
    \ifx\svgscale\undefined%
      \relax%
    \else%
      \setlength{\unitlength}{\unitlength * \real{\svgscale}}%
    \fi%
  \else%
    \setlength{\unitlength}{\svgwidth}%
  \fi%
  \global\let\svgwidth\undefined%
  \global\let\svgscale\undefined%
  \makeatother%
  \begin{picture}(1,0.75927975)%
    \lineheight{1}%
    \setlength\tabcolsep{0pt}%
    \put(0,0){\includegraphics[width=\unitlength,page=1]{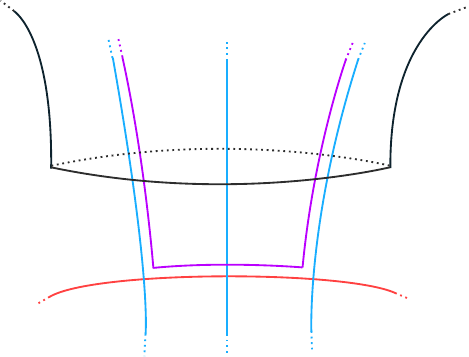}}%
    \put(0.58385898,0.65748428){\color[rgb]{0.05490196,0.1372549,0.18039216}\makebox(0,0)[lt]{\lineheight{1.25}\smash{\begin{tabular}[t]{l}\textit{$Y$}\end{tabular}}}}%
    \put(0.16549873,0.16282727){\color[rgb]{1,0.25490196,0.25490196}\makebox(0,0)[lt]{\lineheight{1.25}\smash{\begin{tabular}[t]{l}\textit{$a$}\end{tabular}}}}%
    \put(0.68940283,0.26837102){\color[rgb]{0.09803922,0.68235294,1}\makebox(0,0)[lt]{\lineheight{1.25}\smash{\begin{tabular}[t]{l}\textit{$b$}\end{tabular}}}}%
    \put(0.39438884,0.20733371){\color[rgb]{0.72941176,0,1}\makebox(0,0)[lt]{\lineheight{1.25}\smash{\begin{tabular}[t]{l}\textit{$x$}\end{tabular}}}}%
  \end{picture}%
\endgroup%

    \captionsetup{margin=1.5 cm, justification=justified, singlelinecheck=false}
    \caption{The bicorn arc $x$ and $\partial Y$ bound a disk, and so $x$ may have essential intersections with arcs of $b\cap Y$.}
    \label{fig:sub-arc}
\end{figure}

\begin{cor}\label{cor:quasigeod_project_for_BGI}
There exists a constant $D>0$, depending only on the hyperbolicity constant $\delta$ of $\mathcal{G}\left(\Sigma\right)$, such that for any partial witness $Y$, any $a\in\mathcal{G}\left(\Sigma\right)$ disjoint from $Y$ and any geodesic $a=\gamma_{0},\gamma_{1},\dots,\gamma_{n}=b$ in $\mathcal{G}\left(\Sigma\right)$ with $n\geq3$, we have $d_{Y}\left(\gamma_{i},b\right)\leq D$ for all $i\geq3$.
\end{cor}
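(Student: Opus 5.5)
The plan is to compare the given geodesic $\gamma_0,\dots,\gamma_n$ with the bicorn set $\mathcal{B}(b,a)$, which by Corollary~\ref{cor:unicorn_hausdorff_dist} and Lemma~\ref{lem:bicorns_order_doesn't_matter} lies within uniformly bounded Hausdorff distance of $[a,b]$. Concretely, $\mathcal{B}(b,a)\subseteq N_{10}(\mathcal{B}(a,b))$ and $d^{\mc G}_{\mathrm{Haus}}(\mathcal{B}(a,b),[a,b])\le M_0$. Since $\mc G(\Sigma)$ is $\delta$-hyperbolic, any two geodesics from $a$ to $b$ are at Hausdorff distance at most a constant depending on $\delta$. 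Putting these together, there is a constant $M$ such that every $\gamma_i$ lies within distance $M$ of some bicorn $x_i\in\mathcal{B}(b,a)$.

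For a bicorn $x_i$ that intersects $Y$, Lemma~\ref{lem:bicorns_project_well} gives $d_Y(x_i,b)\le 2$. This uses $d_{\mc G}(a,b)=n\ge 3$ and that $a$ is disjoint from $Y$. The task is then to pass from $x_i$ to $\gamma_i$ in $d_Y$. Connect $x_i$ to $\gamma_i$ by a geodesic of length at most $M$ in $\mc G(\Sigma)$. If every vertex on this path meets $Y$, then each edge is a pair of disjoint arcs meeting $Y$, so their projections are at distance at most $1$ in $\mathcal{A}(Y)$. Hence $d_Y(x_i,\gamma_i)\le M$, and $d_Y(\gamma_i,b)\le M+2$.

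The main obstacle is guaranteeing that the bicorn $x_i$, and the short path from $x_i$ to $\gamma_i$, avoid arcs disjoint from $Y$. The plan is to use Lemma~\ref{lem:partial_witnesses}: any grand arc at distance at least $3$ from $a$ meets $Y$ transversely. So whenever $d_{\mc G}(a,\gamma_i)=i\ge M+3$, the whole path, including $x_i$, lies at distance at least $3$ from $a$ and hence meets $Y$. The argument above then applies and gives $d_Y(\gamma_i,b)\le M+2$.

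The remaining indices $3\le i< M+3$ must be handled separately, since for them the comparison bicorn might be disjoint from $Y$. For these, the plan is to exploit that $\gamma_3,\gamma_4,\dots,\gamma_{M+3}$ is a path of consecutive disjoint arcs. Every vertex $\gamma_j$ with $j\ge 3$ is at distance $j\ge 3$ from $a$, so by Lemma~\ref{lem:partial_witnesses} each meets $Y$. Therefore $d_Y(\gamma_i,\gamma_{j})\le |i-j|$ for $i,j\ge3$. Combined with the bound for $\gamma_{M+3}$ (or for $\gamma_n=b$ if $n<M+3$), this gives $d_Y(\gamma_i,b)\le 2M+2$ for all $i\ge 3$. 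So $D=2M+2$ works, and it depends only on $\delta$ together with the universal constants $M_0$ and $10$.
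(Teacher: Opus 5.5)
Your argument is correct. It uses the same ingredients as the paper: comparison with $\mathcal{B}(b,a)$, Lemma~\ref{lem:bicorns_project_well}, and Lemma~\ref{lem:partial_witnesses}. The difference is in how you handle the problematic case.

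The paper treats every $i\geq 3$ uniformly. It takes a bicorn $x_i\in\mathcal{B}(b,a)$ within $M_0$ of $\gamma_i$. If $x_i$ misses $Y$ (so $d_{\mathcal{G}}(a,x_i)\le 2$), it uses connectivity of $N_{12}(\mathcal{B}(b,a))$ to replace $x_i$ by a bicorn $x_i'$ with $3\le d_{\mathcal{G}}(a,x_i')\le 12$. This yields $D=M_0+14$.

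You instead split by index:
\begin{itemize}
\item For $i\ge M+3$, the whole $M$-ball about $\gamma_i$ is at distance at least $3$ from $a$. So the comparison bicorn, and every vertex of a short path to it, meets $Y$.
\item For $3\le i<M+3$, you walk along the geodesic itself, all of whose vertices $\gamma_j$ with $j\geq 3$ meet $Y$.
\end{itemize}

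The advantage of your route is that it checks what is needed to turn a bound on $d_{\mathcal{G}}$ into a bound on $d_Y$: every vertex of the connecting path must meet $Y$. The paper's write-up passes from $d_{\mathcal{G}}(\gamma_i,x_i)\le M_0$ (or from the distance to $x_i'$) to a $d_Y$ bound without checking this. For small $i$, a short path from $\gamma_i$ to a bicorn could pass through arcs disjoint from $Y$, where projections give no control. The cost of your route is only a worse uniform constant, which does not matter for Theorem~\ref{thm:BGI}.

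One simplification is available. Corollary~\ref{cor:unicorn_hausdorff_dist} applied to the ordered pair $(b,a)$ already gives $d^{\mathcal{G}}_{\mathrm{Haus}}(\mathcal{B}(b,a),[a,b])\le M_0$ for every geodesic $[a,b]$. So you may take $M=M_0$ and drop both the detour through $\mathcal{B}(a,b)$ and the comparison of different geodesics.
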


\begin{proof}
The proof here is similar to the proof of \cite[Lemma~3.1]{Webb}, with the grand arc $a$ playing the role of ``a component $\alpha$ of $\partial Y$''. By Corollary~\ref{cor:unicorn_hausdorff_dist}, there is a uniform constant $M_0$ such that  
each $\gamma_i$ is at distance at most $M_0$ from some $x_i \in \mc{B}(b,a)$. If $x_i$ intersects $Y$, then Lemma~\ref{lem:bicorns_project_well} implies that $d_Y(x_i,b)\leq 2$, and so $d_Y(\gamma_i,b)\leq 2 + M_0$. On the other hand, if $x_i$ is disjoint from $Y$, then $d_{\mc{G}(\Sigma)}(a,x_i)\leq 2$ by Lemma~\ref{lem:partial_witnesses}. 

By the discussion in Section \ref{sec:finite_bicorns}, the set $N_{12}(\mc{B}(b,a))$ is connected and contains $a$ and $b$. Thus there exists a bicorn arc $x'_i\in\mc{B}(b,a)$ such that $3\leq d_\mc{G}(x_i',a)\leq 12$. Then $d_Y(x_i',b)\leq 2$ by Lemma~\ref{lem:bicorns_project_well}, and  $d_\mc{G}(x_i,x_i')\leq 14$. Therefore, $d_Y(\gamma_i,b)\leq M_0+12+d_Y(x_i',b)\leq M_0+14$. Setting $D=M_0+14$ completes the proof.
\end{proof}

We are now ready to prove the bounded geodesic image theorem. The proof closely follows \cite[Theorem 3.2]{Webb}, modified to handle quasi-geodesic paths.

\begin{proof}[Proof of Theorem \ref{thm:BGI}]
Let $\delta$ be the hyperbolicity constant of $\mc G(\Sigma)$, let $m_0=m_0(\delta, k,c)$ be the constant from Lemma~\ref{lem:Morse_lemma}, and let $D$ be the constant from Corollary~\ref{cor:quasigeod_project_for_BGI}.  Set $B=D+(m_0+\delta)+\left(k(2m_0+2\delta+6)+c\right)+(m_0+\delta)+D.$ Fix $i<j$.  We will show that $d_{Y}\left(\gamma_{i},\gamma_{j}\right)\leq B$. 
Fix a grand arc $a$ disjoint from $Y$, and let $I=N_{m_0+\delta+2}\left(a\right)\cap g$, where $N_{m_0+\delta+2}(a)$ denotes the $\left(m_0+\delta+2\right)$--neighbourhood of $a$ in $\mathcal{G}\left(\Sigma\right)$. Let $\gamma_{i'}$ and $\gamma_{j'}$ be the first and last point, respectively, of $g$ that is contained in $I$, respectively.  Then the subpath  $g'=\left(\gamma_{i'},\dots,\gamma_{j'}\right)$ has length at most $k(2m_0+2\delta+4)+c$. Let $P$ be a geodesic in $\mathcal{G}\left(\Sigma\right)$ from $a$ to $\gamma_{i}$ and $Q$ be a geodesic from $a$ to $\gamma_{j}$. Let $i''=\max\{i,i'-1\}$ and $j''=\min\{j,j'+1\}$. It is immediate that if grand arcs $\alpha,\beta$ both intersect $Y$ transversely and $\alpha\cap \beta=\emptyset$, then $d_Y(\alpha,\beta)\leq 1$.  From this and the fact that $g$ is a path, it follows that $d_Y(\gamma_{i''},\gamma_{j''})\leq k(2m_0+2\delta +6)+c$.

Since (quasi-)geodesic triangles are slim, either $\gamma_{i''}$ is $(m_0+\delta)$-close to $P$ and $\gamma_{j''}$ is $(m_0+\delta)$-close to $Q$, or there exist adjacent vertices of $g-g'$ one of which is $(m_0+\delta)$-close to $P$ and the other is $(m_0+\delta)$-close to $Q$. By the triangle inequality, every grand arc on either of these geodesics of length $m_0+\delta$ is distance at least 3 from $a$, and hence intersects $Y$ transversely by Lemma~\ref{lem:partial_witnesses}.  Thus each of these geodesics projects to a path of length at most $m_0+\delta$ in $\mc A(Y)$.  Combining this with Corollary~\ref{cor:quasigeod_project_for_BGI} and the triangle inequality, we have that \[d_{Y}\left(\gamma_{i},\gamma_{j}\right) \leq D+(m_0+\delta)+\left(k(2m_0+2\delta+6)+c\right)+(m_0+\delta)+D =B.\qedhere\] 
\end{proof}
%


The bounded geodesic image theorem can be used to show that the boundary of the grand arc graph is not compact.
\begin{cor}\label{cor:non-compactness}
If $\Sigma$ is an infinite-type surface with $4\leq |\mc S(\Sigma)|<\infty$, then $\partial\mathcal{G}\left(\Sigma\right)$ is not compact. 
\end{cor}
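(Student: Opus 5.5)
We will show that $\partial\mathcal{G}(\Sigma)$ is not compact by exhibiting a sequence of boundary points with no convergent subsequence. Since $|\mc S(\Sigma)|\geq 4$, there is a compact witness $W$ containing a separating curve $\gamma$ that cuts $W$ into two compact partial witnesses $Y_1, Y_2$. In particular, $Y=Y_1$ is a compact partial witness. The idea is to build boundary points whose "approach" through $Y$ is increasingly twisted, so that the bounded geodesic image theorem (Theorem~\ref{thm:BGI}) prevents any of them from being close to one another.

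Concretely, fix a grand arc $a$ disjoint from $Y$; such an arc exists because $Y$ is a partial witness. Also fix a pseudo-Anosov $\psi$ supported on a compact witness $W\supseteq Y$. Since $\psi$ acts loxodromically on $\mc G(\Sigma)$, it has an attracting fixed point $\xi\in\partial\mc G(\Sigma)$. Next, let $\phi$ be a pseudo-Anosov supported on $Y$, extended by the identity, and set $\xi_n=\phi^n(\xi)$. The map $\phi$ fixes $a$, but $d_Y(\phi^n\beta,\beta)\to\infty$ for every arc $\beta$ crossing $Y$, because $\phi$ acts loxodromically on $\mc A(Y)$.

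Suppose, for contradiction, that some subsequence $\xi_{n}$ converges to a point $\eta\in\partial\mc G(\Sigma)$. By Lemma~\ref{lem:Qgeo_to_boundary}, choose $(1,10\delta)$-quasi-geodesic rays $r_n$ from $a$ to $\xi_n$, with $r_n=\phi^n(r_0)$, and a ray $r$ from $a$ to $\eta$. Convergence gives Gromov products $(\xi_n,\eta)_a\to\infty$. Hence, for any fixed $R$, the rays $r_n$ and $r$ fellow-travel, up to a uniform constant depending only on $\delta$, on the ball of radius $R$ about $a$. Pick a point $z$ on $r$ with $d(a,z)=R$ large, and pick $w_n\in r_n$ with $d(w_n,z)\leq C$.

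The portion of $r_n$ beyond distance $3$ from $a$ consists of arcs meeting $Y$ transversely, by Lemma~\ref{lem:partial_witnesses}. Theorem~\ref{thm:BGI} then bounds the diameter of its projection to $\mc A(Y)$ by $B$. This gives
\[
d_Y(w_n, r_n(4))\leq B.
\]
Here $r_n(4)=\phi^n(r_0(4))$. Similarly, applying Theorem~\ref{thm:BGI} to a geodesic from $w_n$ to $z$ that stays far from $a$, we get that $d_Y(w_n,z)$ is bounded. Combining these bounds, $d_Y(\phi^n(r_0(4)),z)$ is bounded independently of $n$. This contradicts $d_Y(\phi^n(r_0(4)),r_0(4))\to\infty$.

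Hence no subsequence converges. It remains to exclude the alternative that the $\xi_n$ converge in some non-sequential sense. Since the basic neighbourhoods $U_R$ are countable, $\partial\mc G(\Sigma)$ is first countable, so sequential compactness suffices.

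The main obstacle is making the fellow-traveling step rigorous. The rays are only quasi-geodesic, so we need the geodesic between $w_n$ and $z$ to avoid $N_2(a)$, which holds once $R\gg C$. We also need to check that $r_0(4)$ really crosses $Y$ essentially, so that $\phi$ moves it in $\mc A(Y)$. If needed, I would instead replace $r_0(4)$ by any vertex at distance $\geq3$ from $a$ and use Lemma~\ref{lem:partial_witnesses}.
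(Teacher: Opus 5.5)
Your argument is correct and is essentially the paper's: a pseudo-Anosov $\phi$ on a compact partial witness $Y$ missing $a$ fixes $a$, and Theorem~\ref{thm:BGI} then rules out a convergent subsequence of $\phi^{n}(\xi)$. Your bookkeeping (BGI on the far part of rays from $a$, which all pass near a fixed point $z$) is the contrapositive of the paper's step ``large $d_Y$ forces the geodesic within $2$ of $a$''. It is also more careful than the paper's displayed Gromov-product estimate: that estimate uses a fixed arc $x$, and $(\phi^{n_{k_0}}(x),\phi^{n_k}(x))_a\leq d_{\mathcal G}(a,x)$ because $\phi$ fixes $a$, so the estimate cannot hold for large $R$.

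Two small repairs are needed. First, enlarge $W$ so that $Y_1\not\cong S_{0,3}$; otherwise there is no pseudo-Anosov on $Y_1$. Second, start the BGI segment at a parameter $T$ large compared with $\delta$ (e.g.\ $T\geq 4+20\delta$, which accounts for filling $r_n$ in to an edge path) rather than at $r_n(4)$, so that every vertex of the segment is at distance at least $3$ from $a$.
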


\begin{proof}
Let $a$ be a grand arc, let $W$ be a compact witness large enough such that $a\cap W$ is a single arc, and let $Y=W\setminus a$. Such a witness $W$ exists by Lemma~\ref{lem:single_arc_witness}. Up to choosing a larger witness $W$, we may assume that $Y\not\cong S_{0,3}$. By construction, $Y$ is not a witness, as $Y$ is disjoint from the grand arc $a$. If $b$ is any grand arc disjoint from $Y$, then $b\cap W = a\cap W$, and so $d_\mathcal{G}(a,b)\leq 2$ by Lemma \ref{lem:presc_arc_proj}. Thus any grand arc $b$ with $d_\mathcal{G}(a,b)\geq 3$ must intersect $Y$ transversely. Since $|\mc{S}(\Sigma)|\geq 4$, no witness of $\Sigma$ is homeomorphic to a pair of pants. By Lemma~\ref{lem:detect_partial_witnesses}, we conclude that $Y$ is a partial witness. 

Let $p\in\partial\mathcal{G}\left(\Sigma\right)$, fix a grand arc $x\in\mathcal{G}\left(\Sigma\right)$ that intersects $Y$ transversely, and let $\phi\in\mathrm{Mod}\left(Y\right)$ be pseudo-Anosov. We will show that the sequence $\left(\phi^{n}(p)\right)_{n\in\mathbb{N}}\subseteq \partial\mc G(\Sigma)$ does not have a convergent subsequence. 

For contradiction, suppose some subsequence $\left(\phi^{n_{k}}(p)\right)_{k\in\mathbb{N}}$ converges in $\partial\mathcal{G}\left(\Sigma\right)$. Then for any $R>0$, we can find a $k_{0}>0$ such that for every $k>k_{0}$, we have 
\begin{equation}\label{eqn:Gprod}
\left(\phi^{n_{k_{0}}}(x),\phi^{n_{k}}(x)\right)_{a}>R.
\end{equation}
Since $\phi$ is pseudo-Anosov on $Y$, $\phi$ is loxodromic with respect to the action on $\mathcal{A}(Y)$, so we can choose $k\gg k_{0}$ such that 
\[
d_{Y}\left(\pi_{Y}\left(\phi^{n_{k_{0}}}(x)\right),\pi_{Y}\left(\phi^{n_{k}}(x)\right)\right)>B,
\]
where $B$ is the constant from Theorem \ref{thm:BGI}. Then by Theorem \ref{thm:BGI}, every geodesics in $\mathcal{G}\left(\Sigma\right)$ from $\phi^{n_{k_{0}}}(x)$ to $\phi^{n_{k}}(x)$ must contain a grand arc $\alpha$ disjoint from $Y$. But as $W$ is a witness and $Y=W\setminus a$, it must be the case that $\pi_{W}\left(\alpha\right)=a$. There exists a prescribed arc in $W$ disjoint from $\pi_W(a)$ and connecting two boundary components of $W$ that separate elements of the grand splitting of $\Sigma$ to which $a$, and hence $\alpha$, does not converge. Any extension of this prescribed arc to a grand arc is disjoint from both $a$ and $\alpha$. Consequently, $d_{\mathcal{G}}\left(a,\alpha\right)\leq2$.

Since $\alpha$ lies on a geodesic from $\phi^{n_{k_{0}}}(x)$ to $\phi^{n_{k}}(x)$, we have
\begin{align*}
\left(\phi^{n_{k_{0}}}(x),\phi^{n_{k}}(x)\right)_{a} & =\frac{1}{2}\left(d_{\mathcal{G}}\left(\phi^{n_{k_{0}}}(x),a\right)+d_{\mathcal{G}}\left(\phi^{n_{k}}(x),a\right)-d_{\mathcal{G}}\left(\phi^{n_{k_{0}}}(x),\phi^{n_{k}}(x)\right)\right)\\
 & \leq\frac{1}{2}\left(d_{\mathcal{G}}\left(\phi^{n_{k_{0}}}(x),\alpha\right)+d_{\mathcal{G}}\left(\phi^{n_{k}}(x),\alpha\right)-d_{\mathcal{G}}\left(\phi^{n_{k_{0}}}(x),\phi^{n_{k}}(x)\right)\right)+2\\
 & =\frac12 (0) +2 = 2<R,
\end{align*}
which contradicts \eqref{eqn:Gprod}.
\end{proof}

The proof of the corollary follows analogous arguments of Wright \cite{Wright} for the curve graph of a finite-type surface, and Long--Tan \cite{long-tan} for the fine curve graph of a finite-type surface.  Both Wright and Long--Tan also prove that the boundary is \textit{linearly connected}.  It is likely that an analogous result hold for $\partial \mc G(\Sigma)$, though the proofs would require some modification. 

\begin{question}
    If $\Sigma$ is an infinite-type surface with sufficiently many elements in the grand splitting, is $\partial\mc{G}(\Sigma)$ linearly connected?
\end{question}

It will be useful in this paper to have a version of the bounded geodesic image theorem for graphs other than the grand arc graph.  We first give a version for prescribed arc graphs.  

For the remainder of the section, let $\Sigma$ be an infinite surface with at least 4 elements in the grand splitting, let $W$ be a compact witness subsurface of $\Sigma$, and let $\Gamma=\Gamma_W$ be the prescribing graph on $W$ induced by the grand arc graph.

\begin{cor}[BGI for prescribed arc graphs]
\label{cor:BGI_for_presc} There exists $B>0$ such that for any compact partial witness $Y\subset W$ of $\mathcal{A}\left(W,\Gamma_W\right)$ and any geodesic $g=\left(\gamma_{i}\right)_{i\in I}$ in $\mathcal{A}\left(W,\Gamma_W\right)$ such that each $\gamma_{i}$ intersects $Y$ transversely, we have 
\[
\mathrm{diam}_{\mathcal{A}(Y)}\left(g\right)\leq B.
\]
\end{cor}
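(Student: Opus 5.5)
The plan is to deduce Corollary~\ref{cor:BGI_for_presc} from Theorem~\ref{thm:BGI} by passing between $\mathcal{A}(W,\Gamma_W)$ and $\mathcal{G}(\Sigma)$ through extensions, rather than rerunning Webb's argument from scratch.

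First, fix for each component $C$ of $\partial W$ an arc $\alpha_C\subset\Sigma\setminus W$ running from $C$ to the element of the grand splitting cut off by $C$, as in the proof of Lemma~\ref{lem:presc_arc_proj}. Choose these arcs pairwise disjoint, with those based on the same component of $\Sigma\setminus W$ running in parallel. Extend each vertex $\gamma$ of $\mathcal{A}(W,\Gamma_W)$ by concatenating with the two $\alpha_C$'s at its endpoints, obtaining a grand arc $\hat\gamma$. This extension has three useful features:
\begin{itemize}
\item disjoint prescribed arcs extend to disjoint grand arcs, since the parallel copies of $\alpha_C$ can be chosen disjoint;
\item $\hat\gamma\cap W=\gamma$, so $\hat\gamma\cap W$ is connected;
\item $\hat\gamma\cap Y=\gamma\cap Y$ for any $Y\subset W$, so $\pi_Y(\hat\gamma)=\pi_Y(\gamma)$.
\end{itemize}

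By Lemma~\ref{lem:presc_arc_proj}, which is the version with at least $4$ partite sets,
\[
d_W(\gamma,\gamma')\le d_{\mathcal{G}}(\hat\gamma,\hat\gamma')\le d_W(\gamma,\gamma')+2 .
\]
Hence the image $\hat g=(\hat\gamma_i)$ of a geodesic $g$ is a $(1,2)$--quasi-geodesic path in $\mathcal{G}(\Sigma)$, and consecutive vertices remain adjacent. Each $\hat\gamma_i$ meets $Y$ transversely because $\gamma_i$ does.

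Second, verify that a compact partial witness $Y\subset W$ of $\mathcal{A}(W,\Gamma_W)$ is a partial witness of $\mathcal{G}(\Sigma)$. Since $Y$ fails to separate two partite sets of $\Gamma_W$, and these correspond to elements of the grand splitting, $Y$ is not a witness for $\mathcal{G}(\Sigma)$.

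To see that $Y$ is not disjoint from any witness $V$ of $\mathcal{G}(\Sigma)$, argue by contradiction. If it were, then $V\cap W$ would separate the boundary components of $W$ according to the grand splitting. After isotopy, $V\cap W$ would then contain a witness of $\mathcal{A}(W,\Gamma_W)$ disjoint from $Y$, contradicting that $Y$ is a partial witness there. This step needs care: one should use the characterization that a subsurface is disjoint from a $\mathcal{G}$--witness iff it lies in a clopen neighbourhood of some maximal end, and intersect that neighbourhood's complement with $W$.

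Third, apply Theorem~\ref{thm:BGI} with $k=1$, $c=2$ to $\hat g$. This gives $\mathrm{diam}_Y(\hat g)\le B(1,2)$, and since projections to $Y$ agree, $\mathrm{diam}_{\mathcal{A}(Y)}(g)\le B(1,2)$. The constant is uniform in $W$ and $Y$ because $\delta$ is uniform by Theorem~\ref{thm:hyp_grand_arc_graph}.

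The main obstacle is the second step: matching the notion of partial witness for $\mathcal{A}(W,\Gamma_W)$ with that for $\mathcal{G}(\Sigma)$. The delicate case is a witness $V$ of $\mathcal{G}(\Sigma)$ that lies largely outside $W$.

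If that comparison fails, the fallback is to redo the proof of Theorem~\ref{thm:BGI} directly in $\mathcal{A}(W,\Gamma_W)$. This requires analogues of Lemma~\ref{lem:partial_witnesses} and Lemma~\ref{lem:bicorns_project_well}, together with bicorns in the hyperbolic graph $\mathcal{A}(W,\Gamma_W)$ from \cite{Kop}. Those analogues go through verbatim, since they only use disjointness and the existence of an arc avoiding a non-witness component.
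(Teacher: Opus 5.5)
Your Steps 1 and 3 are exactly the paper's proof. Extend each $\gamma_i$ by disjoint arcs outside $W$, get a $(1,2)$--quasi-geodesic from Lemma~\ref{lem:presc_arc_proj}, apply Theorem~\ref{thm:BGI}, and use $\pi_Y(\hat\gamma_i)=\pi_Y(\gamma_i)$. The paper invokes Theorem~\ref{thm:BGI} without checking that $Y$ is a partial witness for $\mathcal{G}(\Sigma)$. You are right that this hypothesis must be checked, but your Step 2 does not establish it.

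The claim that $V\cap W$ separates the curves of $\partial W$ and, after isotopy, contains an $\mathcal{A}(W,\Gamma_W)$--witness is false, because a witness of $\mathcal{G}(\Sigma)$ can do all of its separating outside $W$. Let $\beta_k$ be curves in the components of $\Sigma\setminus W$ that cut off their maximal ends but are not parallel to $\partial W$. A regular neighbourhood of the $\beta_k$ together with arcs through $W$ joining them is a witness of $\mathcal{G}(\Sigma)$ that meets $W$ only in bands.

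The right comparison goes through complementary components of $Y$. If $V\cap Y=\emptyset$, the component $C$ of $\Sigma\setminus Y$ containing $V$ is a witness, i.e.\ at most one element of $\mathcal{S}(\Sigma)$ has ends outside $C$. A component $U$ of $W\setminus Y$ is an $\mathcal{A}(W,\Gamma_W)$--witness iff at most one partite set has curves of $\partial W$ outside $U$. When each component of $\Sigma\setminus W$ meets $W$ in a single curve, $C\cap W$ is one such $U$ and the two conditions coincide. In that case Step 2 holds and your proof is complete.

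Without that condition on $W$, the implication in Step 2 is false. Suppose the component $D$ of $\Sigma\setminus W$ containing $E_1$ meets $W$ along two curves $c_1,c_1'$ (this requires $\Sigma$ to have genus). Both curves lie in the partite set of $E_1$, since arcs from $c_1$ to $c_2$ are projections of grand arcs meeting $W$ once. Let $c_2,c_3,c_4\subset\partial W$ cut off $E_2,E_3,E_4$. Choose $Y\subset W$ with $W\setminus Y=U_1\sqcup U_2$, where $c_1,c_2,c_4\subset\partial U_1$ and $c_1',c_3\subset\partial U_2$.

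The prescribed arcs from $c_2$ to $c_4$ in $U_1$ and from $c_1'$ to $c_3$ in $U_2$ show that none of $Y$, $U_1$, $U_2$ is an $\mathcal{A}(W,\Gamma_W)$--witness. So $Y$ is a compact partial witness of $\mathcal{A}(W,\Gamma_W)$. Yet $U_1\cup U_2\cup N$ is a compact witness of $\mathcal{G}(\Sigma)$ disjoint from $Y$, where $N\subset D$ is a neighbourhood of $c_1\cup c_1'$ and an arc joining them. For such $Y$, Theorem~\ref{thm:BGI} cannot be applied. So the extension argument, yours and the paper's, does not cover the statement as written.

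Your fallback is the right repair. The analogue of Lemma~\ref{lem:partial_witnesses} holds inside $\mathcal{A}(W,\Gamma_W)$ using only that $Y$ is a partial witness there. Two prescribed arcs missing $Y$ lie in one component of $W\setminus Y$; that component is not a witness, so it misses some prescribed arc. Bicorns of prescribed arcs are again prescribed arcs. Arcs in the compact surface $W$ meet finitely often, so Lemma~\ref{lem:unicorns_vs_bicorns} together with the unicorn estimates of \cite{Kop} keeps bicorn sets uniformly close to geodesics. With these inputs, the proofs of Corollary~\ref{cor:quasigeod_project_for_BGI} and Theorem~\ref{thm:BGI} go through in $\mathcal{A}(W,\Gamma_W)$.
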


\begin{proof}
Since $W$ is a witness of $\Sigma$ and $\Sigma$ has at least 4 elements in the grand splitting, $\mc{A}(W,\Gamma_W)$ is infinte-diameter, hyperbolic and admits partial witnesses. Let $g=\left(\gamma_{i}\right)_{i\in I}$ be any geodesic in $\mathcal{A}\left(W,\Gamma_W\right)$. Extend each $\gamma_{i}$ to a grand arc $\gamma_i'$ by choosing arcs in $\Sigma\setminus W$ converging to the corresponding maximal ends that are disjoint from each other. Then $\left(\gamma_{i}'\right)_{i\in I}\subset\mathcal{G}\left(\Sigma\right)$ a $(1,2)$-quasigeodesic by Lemma \ref{lem:presc_arc_proj}. The result then follows from Theorem \ref{thm:BGI} applied to $(1,2)$--quasigeodesics  in $\mc G(\Sigma)$ and the fact that $\pi_{Y}\left(\gamma_{i}'\right)=\pi_{Y}\left(\gamma_{i}\right)$ for every $i\in I$. 
\end{proof}

In Section~\ref{sec:GEL_boundary}, we will also need a version of the bounded geodesic image theorem for the relative arc graph $\mc{A}(W,P)$, where $P$ is the collection of boundary components of $W$ that separate elements of the grand splitting.  It follows from \cite[Corollary 2.2]{kop-asdim} that $\mc{A}(W,P)$ quasi-isometrically embeds into $\mc{A}(W,\Gamma_W)$, so quasi-geodesics in $\mc{A}(W,P)$ are also quasi-geodesics in $\mc{A}(W,\Gamma_W)$. Applying Corollary~\ref{cor:BGI_for_presc}, we obtain:

\begin{cor}[BGI for relative arc graphs]\label{cor:BGI_for_rel_arcs}
    There exists $B>0$ such that for any compact partial witness $Y\subset W$ of $\mc{A}(W,P)$, and any geodesic $g=\left(\gamma_{i}\right)_{i\in I}$ in $\mathcal{A}\left(W,P\right)$ such that each $\gamma_{i}$ intersects $Y$ transversely, we have 
\[
\mathrm{diam}_{\mathcal{A}(Y)}\left(g\right)\leq B.
\]
\end{cor}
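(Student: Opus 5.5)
The plan is to reduce Corollary~\ref{cor:BGI_for_rel_arcs} to Corollary~\ref{cor:BGI_for_presc}, using the quasi-isometric embedding $\iota\colon\mc{A}(W,P)\to\mc{A}(W,\Gamma_W)$ from \cite[Corollary~2.2]{kop-asdim}. Since every vertex of $\mc{A}(W,P)$ is an arc with endpoints on components of $P$, it can be regarded as an arc of $W$ whose endpoints lie on boundary components separating elements of the grand splitting. Such an arc is $\Gamma_W$-allowed exactly when its two endpoints separate different elements, which is not automatic. So I would take $\iota$ to be the map from \cite{kop-asdim}: the identity on allowed arcs, and otherwise a choice of nearby allowed arc, for instance one obtained by surgering the arc with a fixed boundary-to-boundary arc to a third partite set, as in Lemma~\ref{lem:agree-inside-witness}. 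The key feature to preserve is that $\iota(\gamma)$ is supported in a regular neighbourhood of $\gamma\cup\partial W$ together with a fixed finite collection of arcs. Consequently $d_{\mc A(Y)}(\pi_Y(\gamma),\pi_Y(\iota(\gamma)))$ is bounded by a uniform constant $c_0$ whenever both projections are defined.

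Given a geodesic $g=(\gamma_i)$ in $\mc{A}(W,P)$, the first step is to observe that $(\iota(\gamma_i))$ is a uniform $(k,c)$--quasi-geodesic in $\mc{A}(W,\Gamma_W)$. The statement of Corollary~\ref{cor:BGI_for_presc} is only for geodesics, but its proof goes through Theorem~\ref{thm:BGI}, which handles $(k,c)$--quasi-geodesics. I would therefore rerun that proof: extend each $\iota(\gamma_i)$ to a grand arc by disjoint tails outside $W$. By Lemma~\ref{lem:presc_arc_proj} this yields a $(k,c+2)$--quasi-geodesic in $\mc G(\Sigma)$, to which Theorem~\ref{thm:BGI} applies with a constant $B'$ depending only on $(k,c)$.

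Next I need to check the hypotheses. First, $Y$ must be a partial witness for $\mc G(\Sigma)$. The witnesses of $\mc A(W,P)$ are the subsurfaces of $W$ containing all of $P$, which are exactly the witnesses of $\mc A(W,\Gamma_W)$ inside $W$. These in turn correspond to witnesses of $\mc G(\Sigma)$ contained in $W$, and hence the partial-witness notions agree. Second, each $\iota(\gamma_i)$ must meet $Y$ transversely. This is the step I expect to be the main obstacle. If $\gamma_i$ meets $Y$, the modification may push the intersection off. My first attempt would be to choose the surgery arc so that it stays near $\gamma_i$ and still crosses $Y$. Failing that, I would prove a version of Lemma~\ref{lem:partial_witnesses} directly in $\mc A(W,P)$ to handle short exceptional subsegments: any $\iota(\gamma_i)$ disjoint from $Y$ lies within bounded distance of the arcs disjoint from $Y$, and by the quasi-geodesic property only a bounded-length subsegment of $g$ can be near this set. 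On that subsegment the projections move by at most $1$ per edge, exactly as in the proof of Theorem~\ref{thm:BGI}.

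Finally, combining these estimates gives $\mathrm{diam}_{\mc A(Y)}(g)\le B'+2c_0$ plus the bounded contribution from any exceptional subsegment. This constant is uniform in $Y$ and $g$, which is the required $B$.
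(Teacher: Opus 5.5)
Your strategy is the paper's own. Its proof cites \cite[Corollary~2.2]{kop-asdim} for a quasi-isometric embedding $\mc{A}(W,P)\to\mc{A}(W,\Gamma_W)$ and then applies Corollary~\ref{cor:BGI_for_presc}; you fill in what it leaves implicit (the quasi-geodesic version via Theorem~\ref{thm:BGI}, transversality, comparison of projections).

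\textbf{The gap.} You assert that the witnesses of $\mc{A}(W,P)$ and of $\mc{A}(W,\Gamma_W)$ coincide. This is false, and the reduction needs it. The paper's one-line argument relies on it in the same way: it never checks that $Y$ is a partial witness of $\mc{A}(W,\Gamma_W)$. Every witness of $\mc{A}(W,P)$ is a witness of $\mc{A}(W,\Gamma_W)$, but not conversely. A $\Gamma_W$-witness only has to separate the elements of the grand splitting. An $\mc{A}(W,P)$-witness must also meet arcs with both endpoints on a single component of $P$, or on two components cutting off the same element.

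\textbf{A concrete example.} Let $Q\subset W$ be a pair of pants bounded by a curve $\sigma$ and two components $p,p'\in P$ that cut off ends of the same element of $\mc{S}(\Sigma)$. (Alternatively, take a genus-one $Q$ with $\partial Q=p\cup\sigma$.) Let $Y=\overline{W\setminus Q}$.
\begin{itemize}
\item An essential arc $\gamma\subset Q$ with endpoints on $p\cup p'$ is a vertex of $\mc{A}(W,P)$ disjoint from $Y$.
\item Any subsurface disjoint from $Y$ lies in $Q$, so it misses arcs in $Y$ joining two other components of $P$.
\item Hence $Y$ is a compact partial witness of $\mc{A}(W,P)$.
\item But $Y$ separates the elements of the grand splitting, so it is a witness of $\mc{G}(\Sigma)$ and of $\mc{A}(W,\Gamma_W)$. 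Neither Theorem~\ref{thm:BGI} nor Corollary~\ref{cor:BGI_for_presc} applies to it.
\end{itemize}

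\textbf{The first step fails too.} Let $\phi$ be pseudo-Anosov on $Y$ and let $\alpha\subset Y$ be an allowed arc. Since $\gamma$ is disjoint from $\alpha$ and fixed by $\phi$, the sequence $\alpha,\gamma,\phi^n(\alpha)$ is a path in $\mc{A}(W,P)$. On the other hand, $d_{\mc{A}(W,\Gamma_W)}(\alpha,\phi^n(\alpha))\to\infty$, because $Y$ is a $\Gamma_W$-witness and $\phi$ acts loxodromically on $\mc{A}(Y)$. So no map $\mc{A}(W,P)\to\mc{A}(W,\Gamma_W)$ whose $Y$-projections stay uniformly close to those of its input can be coarsely Lipschitz. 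That closeness is exactly what your constant $c_0$ expresses. Consequently, the images $\iota(\gamma_i)$ of an $\mc{A}(W,P)$-geodesic need not form a quasi-geodesic.

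A smaller point: $c_0$ should come from choosing $\iota(\gamma)$ disjoint from $\gamma$. Being supported near $\gamma\cup\partial W$ plus a fixed finite set of arcs gives no control, since a fixed arc can project anywhere in $\mc{A}(Y)$ as $Y$ varies.

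\textbf{A route that works.} Run the argument of Section~\ref{sec:BGI} inside $\mc{A}(W,P)$ itself.
\begin{itemize}
\item Take a relative arc $a$ disjoint from $Y$.
\item Lemma~\ref{lem:partial_witnesses} holds verbatim, since its proof only uses that a complementary component of a partial witness is not a witness.
\item Unicorn and bicorn arcs built from relative arcs are again relative arcs. Unicorn paths in $\mc{A}(W,P)$ are uniform unparametrised quasi-geodesics by the argument of \cite{HPW}.
\item The analogues of Lemma~\ref{lem:bicorns_project_well}, Corollary~\ref{cor:quasigeod_project_for_BGI} and the final argument of Theorem~\ref{thm:BGI} should then go through with $\mc{A}(W,P)$ in place of $\mc{G}(\Sigma)$.
\end{itemize}
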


The bounded geodesic image theorem can also be used to study dead ends in the grand arc graph.  If $X$ is a hyperbolic graph, then a vertex $b\in X$ is a \textit{dead end} with respect to a vertex $a$ if no geodesic in $X$ joining $a$ to $b$ can be extended past $b$ to a longer geodesic.  Birman and Menasco showed that the curve graph of a finite-type surface does contain dead ends, but that no non-separating curve is a dead end with respect to any other vertex of the curve graph \cite{Bir-Men}.  One can prove an analogous result for hyperbolic prescribed arc graphs and relative arc graphs which admit partial witnesses: any dead end is necessarily a separating arc. In particular, since grand arcs are never separating, the bounded geodesic image theorem also implies that the grand arc graph does not contain dead ends.



For the rest of this section, retaining the terminology and assumptions before Corollary~\ref{cor:BGI_for_presc}, let $\Gamma$ be the prescribing graph induced either by the grand arc graph or by the relative arc graph on $W$. 

\begin{lem}[No dead ends]
\label{lem:no-dead-ends}Suppose $W$ has complexity at least $5$. Fix a basepoint $o\in\mathcal{A}\left(W,\Gamma\right)$, and let $a\in\mathcal{A}\left(W,\Gamma\right)$ be a non-separating arc at distance $r$ from $o$. Then there exists $b\in\mathcal{A}\left(W,\Gamma\right)$ disjoint from $a$, at distance $r+1$ from $o$. In particular, any geodesic from $o$ to $a$ can be extended to a geodesic from $o$ to $b$. 
\end{lem}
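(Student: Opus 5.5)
The plan follows Birman--Menasco's argument for the curve graph, with the bounded geodesic image theorem (Corollary~\ref{cor:BGI_for_presc}, or Corollary~\ref{cor:BGI_for_rel_arcs} in the relative case) replacing Masur--Minsky's. Let $Y = W\setminus a$, with $a$ thickened slightly. Since $a$ is non-separating, $Y$ is connected, and the complexity assumption on $W$ makes $Y$ a compact surface that is not a pair of pants, so that $\mathcal{A}(Y)$ has infinite diameter.

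First I would show that $Y$ is a partial witness. It is not a witness because it is disjoint from the prescribed arc $a$. It is also not disjoint from any witness, since every witness of $\mathcal{A}(W,\Gamma)$ must separate the partite sets while $a$ alone separates nothing. Equivalently, as in the proof of Corollary~\ref{cor:non-compactness}, any prescribed arc disjoint from $Y$ agrees with $a$ inside $W$ up to isotopy, and Lemma~\ref{lem:detect_partial_witnesses} then applies. This step needs care at the level of the prescribing graph, and it is the step where non-separation of $a$ is really used.

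Next, fix a geodesic $o=\gamma_0,\dots,\gamma_r=a$. Choose a prescribed arc $c$ disjoint from $a$; such an arc exists because the complexity bound leaves room in $Y$ for an arc between allowed boundary components. Let $\phi$ be a pseudo-Anosov mapping class on $Y$, and set $b=\phi^n(c)$. Each $b$ is still disjoint from $a$, so $d(o,b)\in\{r-1,r,r+1\}$. For $n$ large, $d_Y(o,b)$ exceeds $2B+2$, where $B$ is the constant of Corollary~\ref{cor:BGI_for_presc}. If $o$ happens to be disjoint from $Y$, I would instead work with $\gamma_1$ or pick the basepoint of the $Y$-projection appropriately; this case needs a short separate check.

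Suppose $d(o,b)\le r$, and take a geodesic $g$ from $o$ to $b$. If every vertex of $g$ met $Y$, Corollary~\ref{cor:BGI_for_presc} would bound $d_Y(o,b)$ by $B$, which is a contradiction. So some vertex $v\in g$ misses $Y$, and therefore $v$ is isotopic to $a$ inside $W$, meaning $v=a$. Then $d(o,a)+d(a,b)\le d(o,b)\le r$, which forces $r+1\le r$. This contradiction gives $d(o,b)=r+1$, and appending $b$ extends the chosen geodesic.

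The main obstacle is the partial-witness verification, together with the claim that any arc missing $Y$ equals $a$. Both need the non-separating hypothesis and careful bookkeeping of allowed endpoints under $\Gamma$. This matters especially in the relative-arc case, where $\mathcal{A}(Y)$ projections must be compared with the prescribed structure, and where one has to confirm that the transverse-intersection hypothesis of the BGI corollary holds along all of $g$ except at $a$.
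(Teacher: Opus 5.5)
Your proposal is correct and is essentially the paper's proof: set $Y=W\setminus a$ as a partial witness, pick a prescribed arc $b\subset Y$ with $d_{\mathcal{A}(Y)}(o,b)>B$, and apply the BGI corollary to force every geodesic from $o$ to $b$ through the unique arc missing $Y$, namely $a$, which gives $d(o,b)=r+1$. Your $\phi^n(c)$ is just a concrete way of producing such a $b$ that is still $\Gamma$-allowed, and the case you flag where $o$ misses $Y$ is trivial, since then $o=a$ and $r=0$.
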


\begin{proof}
Since $a$ is non-separating, the compact subsurface $Y=W\setminus a$ is a partial witness, whose complexity is at least $4$, and so the arc graph $\mathcal{A}(Y)$ is infinite-diameter. Pick a prescribed arc $b$ such that $b\cap Y$ is a single arc and $d_{\mathcal{A}(Y)}\left(o,b\right)>B$, where $B$ is the constant in Corollary \ref{cor:BGI_for_presc} or \ref{cor:BGI_for_rel_arcs}, depending on whether $\Gamma$ is the prescribing graph coming from the grand arc graph or the relative arc graph of $W$.  By Corollary \ref{cor:BGI_for_presc} or \ref{cor:BGI_for_rel_arcs}, any geodesic in $\mathcal{A}\left(S,\Gamma\right)$ between $o$ and $b$ must pass through a prescribed arc $a'$ that is disjoint from $Y$. Since the only arc disjoint from $Y$ is $a$, the arc $a$ lies on any geodesic from $o$ to $b$, and the conclusion follows.
\end{proof}

The fact that non-separating arcs are never dead ends can be used to show that quasi-geodesics in either the grand arc or prescribing arc graph can be extended to a uniform quality quasi-geodesic.  

\begin{cor}
\label{cor:extending-quasigeod} Any finite $(k,c)$--quasi-geodesic path $\left(x_{1},\dots,x_{m}\right)\subset \mc{A}(W,\Gamma)$ can be extended to an infinite $(\max\{k,2\},c)$--quasi-geodesic path $\left(x_{n}\right)_{n\in\mathbb{N}}$.
\end{cor}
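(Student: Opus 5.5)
The plan is to extend the path one vertex at a time using Lemma~\ref{lem:no-dead-ends}, with basepoint $o=x_1$. The extension will be built so that, beyond $x_m$, consecutive vertices are disjoint and distances from $x_1$ increase by exactly one at each step. To make Lemma~\ref{lem:no-dead-ends} applicable at every stage, I first arrange that the last vertex is non-separating, and then make every newly added vertex non-separating as well.

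\textbf{Step 1: making the last vertex non-separating.} If $x_m$ is separating in $W$, I append one extra vertex $x_{m+1}$ that is non-separating, disjoint from $x_m$, and satisfies $d(x_1,x_{m+1})=d(x_1,x_m)\pm 1$ or equals it. Such an arc exists because $W$ has complexity at least $5$: a component of $W\setminus x_m$ of positive complexity contains non-separating $\Gamma$-allowed arcs. Appending one adjacent vertex changes the quasi-geodesic constants by at most an additive amount. I will absorb this into the claimed constants by noting that a single extra step of length $1$ costs at most $1\le k$ in the multiplicative comparison. This bookkeeping is the part I would double-check.

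\textbf{Step 2: inductive extension.} Put $r_n=d(x_1,x_n)$. Given a non-separating $x_n$ with $n\ge m$, Lemma~\ref{lem:no-dead-ends} provides $x_{n+1}$ disjoint from $x_n$ with $r_{n+1}=r_n+1$. I need $x_{n+1}$ to be non-separating as well. In the proof of Lemma~\ref{lem:no-dead-ends}, $b$ is chosen as an arc with $b\cap Y$ a single arc that is far out in $\mathcal{A}(Y)$; among such arcs I can pick a non-separating one, for instance by applying a high power of a pseudo-Anosov on $Y$ to a fixed non-separating arc. The inductive hypothesis is therefore preserved. The main obstacle is confirming that this choice is compatible with the requirement $d(o,b)=r+1$. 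I expect it is: the BGI argument forces every geodesic from $o$ to $b$ through $a$, so $d(o,b)=r+1$ no matter which such $b$ is chosen.

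\textbf{Step 3: verifying the quasi-geodesic inequality.} Take $i<j$.
\begin{itemize}
\item If both indices are at most $m$, the original $(k,c)$ bounds apply.
\item If both indices are at least $m$, then $d(x_i,x_j)\le j-i$. Also $d(x_i,x_j)\ge r_j-r_i=j-i$, since $r_n=r_m+(n-m)$. So this portion is geodesic.
\item If $i<m<j$, the upper bound $d(x_i,x_j)\le d(x_i,x_m)+(j-m)\le k(m-i)+c+(j-m)$ holds. For the lower bound, use
\[
d(x_i,x_j)\ge r_j-r_i=(r_m-r_i)+(j-m).
\]
The original path only gives $d(x_i,x_m)\ge (m-i)/k-c$, not a bound on $r_m-r_i$. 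I will instead use $r_j-r_i\ge (j-m)+r_m-r_i$ and bound $r_i\le d(x_1,x_i)\le k(i-1)+c$ together with $r_m\ge (m-1)/k-c$.
\end{itemize}
These bounds combine to give $d(x_i,x_j)\ge (j-i)/\max\{k,2\}-c$ once the two regimes are balanced. Whenever the direct estimate is too weak, I fall back on the alternative bound $d(x_i,x_j)\ge d(x_j,x_m)-d(x_m,x_i)$; taking the better of the two estimates is what produces the factor $\max\{k,2\}$.
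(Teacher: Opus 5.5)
\textbf{Step 3 has a genuine gap in the mixed case.} The construction in Steps 1 and 2 is sound, and Step 2 improves on the paper's argument. The paper detours through a non-separating neighbour $x_n'$ at every stage and applies Lemma~\ref{lem:no-dead-ends} there, which by itself only gives $d(x_1,x_{n+1})\ge d(x_1,x_n)$. Your choice of $b$ avoids this: take a high power of a pseudo-Anosov of $Y$ fixing $\partial Y$ and apply it to a non-separating allowed arc. This keeps every new vertex non-separating, so past the index $m'$ ($m'=m$ or $m+1$ according to Step 1) you get $d(x_1,x_j)=d(x_1,x_{m'})+(j-m')$ exactly.

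The gap is the case $i<m'<j$, which the paper also only calls ``straightforward''. Put $p=m'-i$, $q=j-m'$ and $D=d(x_i,x_{m'})$. Your bound $r_j-r_i=(r_{m'}-r_i)+q$, with $r_{m'}\ge (m'-1)/k-c$ and $r_i\le k(i-1)+c$, gives $d(x_i,x_j)\ge p/k+q-(i-1)(k-1/k)-2c$.
\begin{itemize}
\item For $k=1$ (and $m'=m$) this reads $p+q-2c$, which is at least $(p+q)/2-c$ whenever the latter is nonnegative. This is presumably where the factor $2$ comes from.
\item For $k>1$ the term $(i-1)(k-1/k)$ makes the bound useless once $i$ is large.
\end{itemize}
All that remains are triangle inequalities, i.e.\ $d(x_i,x_j)\ge |D-q|$. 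When $q\approx D$ this is about $0$, while the target $(p+q)/\max\{k,2\}-c$ is of order $p$. No balancing of these two estimates gives the claimed bound; you need genuinely geometric input.

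\textbf{The fix is hyperbolicity of $\mathcal{A}(W,\Gamma)$.} Since $d(x_1,x_j)=d(x_1,x_{m'})+d(x_{m'},x_j)$, a geodesic $[x_1,x_{m'}]$ followed by your tail is a geodesic from $x_1$ to $x_j$. By Lemma~\ref{lem:Morse_lemma} there is $y\in[x_1,x_{m'}]$ with $d(x_i,y)\le m_0$, where $m_0$ depends only on $\delta,k,c$. Hence
\[
d(x_i,x_j)\ \ge\ d(y,x_{m'})+q-m_0\ \ge\ D+q-2m_0\ \ge\ \frac{p+q}{k}-c-2m_0,
\]
while the upper bound $d(x_i,x_j)\le kp+c+q$ is immediate. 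This yields a quasi-geodesic with constants depending only on $k$, $c$ and $\delta$, which is all Theorem~\ref{thm:dense} uses. It does not recover the literal constants $(\max\{k,2\},c)$: the additive loss $2m_0$ cannot be absorbed into the multiplicative constant for small $q$ when $k\ge 2$.

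\textbf{Step 1 has the same problem with the constants.} Nothing in your choice prevents the inserted vertex from being $x_{m-1}$, which can be a non-separating arc disjoint from the separating $x_m$. Then $d(x_{m-1},x_{m+1})=0<2/\max\{k,2\}-c$ whenever $c<2/\max\{k,2\}$, for instance for a geodesic input. Also, $r_n=r_m+(n-m)$ then holds only up to an error of $2$. So either make the inserted and subsequent choices more carefully, or state and prove the extension with uniform constants depending only on $k$, $c$ and $\delta$.
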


\begin{proof}
We prove the result by induction.  Suppose we have extended the finite quasi-geodesic segment $(x_1,\dots, x_m)$ to a quasi-geodesic sequence $(x_1,\dots, x_n)$ for some $n\geq m$.  If $x_n$ is non-separating, let $x_n'=x_n$.  Otherwise, $x_n$ is separating, in which case there exists a non-separating vertex $x_n'$ disjoint from $x_n$. Lemma \ref{lem:no-dead-ends} then produces a prescribed arc $x_{n+1}$ with $d(x_1,x_{n+1})=d(x_1,x_n')+1$, so any geodesic from $x_{1}$ to $x_{n}'$ can be extended to a geodesic from $x_1$ to $x_{n+1}$. It is straightforward to check that $\left(x_{1},\dots,x_n,x_{n+1}\right)$ is a $(\max\{k,2\},c)$--quasi-geodesic. 
\end{proof}

\begin{rem}
It is likely that analogous bounded geodesic image theorems  hold for other examples hyperbolic graphs of arcs or curves. Suppose $G$ is a hyperbolic complex of arcs/curves, where edges are defined by disjointness, which does not have disjoint witnesses,  admits partial witnesses, and has a well-defined projection to a related complex of arcs/curves on subsurfaces. Then any sufficiently long geodesic in $G$ whose vertices all intersect a fixed partial witness should have a uniformly bounded projection to this partial witness. The key ingredient in proving such a theorem would involve constructing suitable quasi-geodesics between vertices of the graph, similar to the $\left(\alpha,\beta\right)$-loops in \cite{Webb} or the bicorn paths above.
\end{rem}

\section{Topology on the space of laminations}\label{sec:topology}



\subsection{Geodesic Laminations}

Fix an infinite-type surface $\Sigma$.  Whenever we consider an infinite-type surface, we will assume that we have fixed a complete, Nielsen-convex hyperbolic metric on $\Sigma$. In particular, we may choose a pants decomposition of the surface and a metric where the length of every seam is 1 and every curve in the pants decomposition has the same length. With respect to this metric, the universal cover of $\Sigma$ is $\mathbb H^2$, and we will also assume we have fixed a universal covering map $\pi\colon\mathbb{H}^2 \to \Sigma$. 

A \emph{geodesic lamination} on $\Sigma$ is a collection of pairwise disjoint geodesics on $\Sigma$ which is closed as a subspace of $\Sigma$.  Each geodesic is called a \textit{leaf} of the lamination.  It is a useful fact that, since we have fixed a Nielsen-convex metric on $\Sigma$, any geodesic lamination on $\Sigma$ is nowhere dense  \cite[Theorem~1.1]{Saric}.
A lamination is \emph{minimal} if it has no proper non-empty sublaminations, or equivalently, if every leaf is dense in the lamination. A leaf $l$ of a lamination $L$ is \textit{isolated} if every point $x\in l$ has a neighborhood $U_x$ such that $U_x\cap L$ contains a single arc, namely an arc of $l$.  A leaf $l$ of a lamination $L$ is a \textit{boundary leaf} if for every point $x\in l$, there exists a half-disk neighborhood $V_x$ of $x$ with diameter lying on $l$ such that $L\cap V_x$ is exactly the diameter of $V_x$. That is to say, there are not leaves in $L$ accumulating to $l$ from both sides of $l$. We write $L\pitchfork L'$ for the set of transverse intersections of laminations $L$ and $L'$. A geodesic $\gamma\colon \mathbb R\to \Sigma$ is \textit{asymptotic} to a lamination $L$ if there exists a half-leaf $l\in L$ such that either $\gamma|_{[0,\infty)}$ or $\gamma|_{(-\infty,0]}$ and $l$ cobound a half-strip. 

For intuition, it is useful to think of a minimal lamination  as a train track with a Cantor set cross-section. To see this, first note that any minimal lamination is either a single geodesic or has uncountably many leaves \cite[Lemma~4.2.2]{CEG06}. Indeed, suppose $L$ is a minimal lamination with least two leaves. Let $l\in L$ be a leaf, and note that $l$ cannot be a proper geodesic, else $\{l\}$ would be a sublamination of $L$. Consider a segment $a$ intersecting $l$ transversely. Since $l$ is not isolated, the points of $a\cap l$ are all accumulation points in $L$. Identifying $a$ with an interval, minimality of $L$ implies that $a\cap\overline{l}$ is a closed perfect subset of $a$.  
Since $L$ is nowhere dense in $\Sigma$, the subset $a\cap \overline l$ is homeomorphic to a Cantor set. Note that $a\cap l$ is countable, but $a\cap \overline{l}$ is the Cantor set. We obtain the train track by tracing $a$ along the leaf $l$.


The following definition was suggested to the authors by Kasra Rafi.

\begin{defn} \label{def:support}
Let $L$ be a geodesic lamination on $\Sigma$. The complement $\Sigma\setminus L$ is open and hence a union of subsurfaces of $\Sigma$. Let $\bigcup_{i\in I}S_{i}$ be the union of the essential subsurfaces in the complement. For each $i\in I$, choose geodesic representatives of $\partial S_{i}$ separating  $S_{i}$ from $L$. Each of these geodesic representatives is either a leaf of $L$ or contained in $S_i$, since the regions $S_i$ are geodesically convex. Cutting along these boundary components yields a subsurface $S_{i}'\subseteq S_{i}$ of $\Sigma$, for each $i\in I$, which is equal to $S_i$ only if its boundary consists of proper geodesics which are also leaves of $L$. The \textit{support} of $L$, denoted $\mathrm{supp}(L)$, is the subsurface $\overline{\Sigma\setminus\left(\bigcup_{i\in I}S_{i}'\right)}$. 
\end{defn}

The following lemma summarizes several basic properties of the support of a lamination.

\begin{lem}\label{lem:witness-filling}
   Let $L$ be a geodesic lamination on an infinite-type surface $\Sigma$. Then: 
        \begin{enumerate}[label=(\arabic*)]
            \item $L\subset\mathrm{supp}(L)$;
            \item if $L$ is contained in a finite-type subsurface of $\Sigma$, then $\mathrm{supp}(L)$ is the subsurface obtained as the closure of a uniform $\epsilon$-neighbourhood of $L$ for some $\epsilon> 0$;
            \item $L$ intersects every simple closed curve and every proper arc in $\mathrm{supp}(L)$; and
            \item $L$ intersects every grand arc if and only if $\mathrm{supp}(L)$ is a witness.
        \end{enumerate}
\end{lem}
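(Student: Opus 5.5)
We take the four items in order; (1) and (2) follow from the definition, and (3) and (4) are where the work lies.

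For (1), each region $S_i'$ is cut out of a complementary region $S_i$ of $L$ along geodesics that are either leaves of $L$ or lie inside $S_i$. So the interior of $S_i'$ misses $L$, and leaves of $L$ can only lie on its boundary. Taking the closure of $\Sigma\setminus\bigcup_i S_i'$ therefore keeps all of $L$, which gives $L\subset\mathrm{supp}(L)$.

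For (2), suppose $L$ lies in a finite-type subsurface. Then $L$ is a lamination on a compact part of the surface, so by the finite-type structure theory (e.g.\ \cite{CEG06}) $\Sigma\setminus L$ has finitely many components. Each such component is either an ideal polygon or crown region, which is inessential and absorbed into the support, or an essential region $S_i$. The geodesic representatives of $\partial S_i$ run at distance at most some uniform $\epsilon_i$ from $L$, since they are freely homotopic to boundary paths made of leaf segments and cusp arcs. With only finitely many regions we may take $\epsilon=\max_i\epsilon_i$. Then the closure of the $\epsilon$-neighbourhood of $L$, with its boundary tightened to geodesics, agrees with $\mathrm{supp}(L)$.

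For (3), let $c$ be an essential simple closed curve or proper arc in $\mathrm{supp}(L)$, realised geodesically, and suppose toward a contradiction that $c\cap L=\emptyset$. Then $c$ lies in a single complementary component $U$ of $L$. If $U$ contains no essential subsurface, for instance if it is a union of ideal polygons or crowns, then it carries no essential simple closed curves or proper arcs, so $U$ must be one of the essential regions $S_i$. Since $c$ is essential and lies in $\mathrm{supp}(L)$, it meets $S_i$ outside the interior of $S_i'$, that is, in the collar $S_i\setminus S_i'$ between $\partial S_i'$ and $L$. This collar is a union of annuli and crown-type pieces, so an essential curve or arc in it is isotopic either into $S_i'$ or onto a boundary geodesic. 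In the first case $c$ is not essential in $\mathrm{supp}(L)$; in the second it is peripheral. Either way we contradict the choice of $c$.

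\textbf{Main obstacle.} Making this collar argument rigorous for infinite-type $S_i$ whose boundary consists of proper geodesics or infinitely many curves. Here I would rely on the Nielsen-convexity of the metric and the result of \cite{Saric} that $L$ is nowhere dense.

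For (4), recall that a witness is exactly a subsurface separating the elements of $\mc S(\Sigma)$. Suppose first that $\mathrm{supp}(L)$ is a witness. Then every grand arc $g$ enters it essentially, and some component of $g\cap\mathrm{supp}(L)$ is an essential arc. By (3) that arc meets $L$, so $g$ does.

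Conversely, suppose $\mathrm{supp}(L)$ is not a witness. Then two elements $E_i,E_j$ of the grand splitting lie in the same component $C$ of $\Sigma\setminus\mathrm{supp}(L)$, up to the boundary. Here $C$ is contained in some $S_k'$, or meets the support only along boundary leaves. Choose a proper arc in $C$ from an end in $E_i$ to an end in $E_j$. This is a grand arc, and it is disjoint from $L$ because $L\subset\mathrm{supp}(L)$ by (1). Hence $L$ fails to intersect every grand arc.
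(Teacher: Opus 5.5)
Your arguments for (1), (3) and (4) match the paper's proof. In (3), the paper's one line ``contained in $S_i$, hence in $S_i'$'' is your collar step, which the paper takes as immediate from the choice of $\partial S_i'$ as the geodesic representatives of $\partial S_i$ separating $S_i$ from $L$. Your exclusion of peripheral curves is in fact necessary, since the geodesics $\partial S_i'$ lie in $\mathrm{supp}(L)$ and miss $L$. In (4), your converse is the contrapositive of the paper's remark that, by (1), a grand arc disjoint from $\mathrm{supp}(L)$ is disjoint from $L$; the detour through the separation characterisation is not needed.

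The step that fails is the choice $\epsilon=\max_i\epsilon_i$ in (2). The neighbourhood has to be taken \emph{small}, as in the paper, which takes $\epsilon$ below the collar widths of the finitely many boundary geodesics. A large $\epsilon$-neighbourhood of $L$ is not topologically controlled. For instance, suppose some point of a boundary geodesic $\gamma_1$ is far from $L$, so $\epsilon$ is large, while another boundary geodesic $\gamma_2$ passes close to $L$. Then $N_\epsilon(L)$ reaches up to distance $\epsilon-d(\gamma_2,L)$ beyond $\gamma_2$ into the neighbouring region $S_2'$. It can engulf topology there (e.g.\ all of a one-holed torus bounded by $\gamma_2$) or overlap itself. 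In that case $\overline{N_\epsilon(L)}$ is strictly larger than $\mathrm{supp}(L)$ up to isotopy, and tightening its frontier to geodesics does not remove the extra piece. The same overshoot happens for a single region whenever $\gamma_1$ is close to $L$ at some points and far at others.

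For small $\epsilon$, by contrast, $N_\epsilon(L)$ only thickens the leaves and fills in the thin tips of the spikes. Its frontier in each crown is a closed curve isotopic to the corresponding component of $\partial S_i'$. The uncovered parts of ideal polygons are (possibly once-punctured) discs. So $\overline{N_\epsilon(L)}$ recovers $\mathrm{supp}(L)$ up to isotopy and filling in those discs. With that replacement your argument for (2) goes through.
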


\begin{proof}
    For (1),  notice that the union $\bigcup_{i\in I}S_{i}'$ has non-empty intersection with $L$ exactly when $L$ has a proper leaf which separates $L$ from $\Sigma\setminus L$. In this case, this proper leaf is a common boundary component of $\mathrm{supp}(L)$ and $\cup_{i\in I}S_{i}'$.

    For (2), note that a lamination supported on a finite-type surface will have at most finitely many complementary subsurfaces $S_i$. The collection of geodesic representatives for their boundaries must also be finite. Choosing $\epsilon >0$ smaller than the smallest collar neighbourhood of these geodesics gives us the support; any leaf of $L$ that enters the collar neighbourhood of this geodesic must remain in this collar neighbourhood, and hence is asymptotic to the proper geodesic.

    For (3), suppose there is a proper geodesic, that is, a simple closed curve or a proper arc, contained in $\mathrm{supp}(L)$ that is disjoint from $L$. Then this proper geodesic is contained in $S_{i}$ for some $i\in I$. Hence, it must also be contained in $S_{i}'$, and thus cannot be contained in $\mathrm{supp}(L)$, which is a contradiction.

    For (4), If $L$ intersects every grand arc, then so does $\mathrm{supp}(L)$ by (1), and so $\mathrm{supp}(L)$ is a witness. Conversely, if $\mathrm{supp}(L)$ is a witness, then every grand arc must intersect $\mathrm{supp}(L)$ in a non-empty collection of proper geodesics, and $L$ must intersect each of these by (3). 
\end{proof}

In light of Lemma~\ref{lem:witness-filling} (3), we say that $L$ \textit{fills} a subsurface $Y$ if $Y=\mathrm{supp}(L)$. We say a lamination $L$ is \textit{witness-filling} if $\mathrm{supp}(L)$ is a witness and \textit{connected} if $\mathrm{supp}(L)$ is connected. The support of $L$ being connected does not imply that $L$ is connected as a subset of $\Sigma$: for example, consider a lamination formed by two proper arcs converging to the same end.

\subsection{Convergence as geodesics}

Let $\Gamma\left(\mathbb{H}^{2}\right)$ denote the space of geodesics in $\mathbb{H}^{2}$, where every geodesic is identified with a pair of distinct points on $\partial\mathbb{H}^2\cong S^1$. A sequence of geodesics $\left\{ \gamma_{n}\right\} _{n\in\mathbb{N}}$ in $\mathbb{H}^{2}$ \textit{geodesically converges} to a geodesic $\gamma$ if 
the corresponding end points of $\gamma_{n}$ converge to the end points of $\gamma$ in the topology of $\mathbb S^{1}$. We denote this convergence by $\gamma_{n}\xrightarrow{\mathrm{geod}}\gamma$. With the topology this induces, note that $\Gamma\left(\mathbb{H}^{2}\right)$ is homeomorphic to an open Möbius band. 

Let $\Gamma\left(\Sigma\right)$ denote the space of simple geodesics on $\Sigma$. This space can be realised as the quotient of $\Gamma\left(\mathbb{H}^2\right)$ by the $\pi_1(\Sigma)$-action via the fixed universal cover $\pi\colon\mathbb{H}^2\to\Sigma$ defined at the start of this section, and we endow $\Gamma(\Sigma)$ with the quotient topology. The lemma below gives an alternate description of convergence in this topology, which we continue to call \textit{geodesic convergence} and denote by $\xrightarrow{\text{geod}}$.

\begin{lem}[Convergence in $\Gamma(\Sigma)$]\label{lem:convg_as_geod}
A sequence $\left\{ \gamma_{n}\right\} _{n\in\mathbb{N}}$  in $\Gamma\left(\Sigma\right)$ geodesically converges to some $\gamma_{0}\in \Gamma\left(\Sigma\right)$ if and only if for any unit-speed parametrization of $\gamma_{0}$, there exist unit-speed parametrizations of $\left\{ \gamma_{n}\right\} _{n\in\mathbb{N}}$ such that for any $R,\epsilon>0$, there exists some $N\in\mathbb{N}$ such that for any $n\geq N$ and every   $t\in\left[-R,R\right]$,
\[
d_{\Sigma}\left(\gamma_{n}(t),\gamma_{0}(t)\right)<\epsilon.
\]
\end{lem}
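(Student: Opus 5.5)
The strategy is to lift everything to $\mathbb H^2$ and compare the two notions of closeness: endpoints on $\partial\mathbb H^2$ versus uniform proximity on compact parameter intervals. Geodesic convergence in $\Gamma(\Sigma)$ means, by the quotient topology, that there are lifts $\tilde\gamma_n,\tilde\gamma_0\subset\mathbb H^2$ with $\tilde\gamma_n\xrightarrow{\mathrm{geod}}\tilde\gamma_0$ in $\Gamma(\mathbb H^2)$. To see this, note that the quotient map $\Gamma(\mathbb H^2)\to\Gamma(\Sigma)$ is open, because it is a quotient by a group action. A neighbourhood basis of $\gamma_0$ is therefore given by images of neighbourhoods of one fixed lift $\tilde\gamma_0$. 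It follows that $\gamma_n\to\gamma_0$ if and only if, for $n$ large, some lift of $\gamma_n$ lies in any prescribed neighbourhood of $\tilde\gamma_0$. Choosing for each $n$ the lift whose endpoints are closest to those of $\tilde\gamma_0$ then gives a sequence of lifts converging in $\Gamma(\mathbb H^2)$. Since $\pi$ is $1$-Lipschitz and a local isometry, both directions reduce to the corresponding statement in $\mathbb H^2$ for these lifts.

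For the forward direction, I fix a unit-speed parametrization of $\gamma_0$ and the corresponding lift $\tilde\gamma_0$, and set $p=\tilde\gamma_0(0)$. I parametrize $\tilde\gamma_n$ with the orientation matching the endpoint convergence (this handles the ambiguity in the Möbius band) and with $\tilde\gamma_n(0)$ equal to the nearest-point projection of $p$ onto $\tilde\gamma_n$. As the endpoints converge, $d(p,\tilde\gamma_n)\to 0$. Next I need a standard hyperbolic estimate: if the endpoints of $\tilde\gamma_n$ are within $\eta$ of those of $\tilde\gamma_0$ in the visual metric from $p$, then $\tilde\gamma_n$ and $\tilde\gamma_0$ stay $\epsilon$-close on the ball of radius $R+1$ about $p$, with $\eta$ depending only on $R$ and $\epsilon$. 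One way to prove this is to use compactness of the space of geodesics meeting a closed ball, or to apply the isometry mapping $\tilde\gamma_0$ to a fixed diameter of the disc model and use continuity of $(\xi,\zeta,t)\mapsto$ (point at signed distance $t$ from the foot of the perpendicular) on the compact set $t\in[-R,R]$. With this estimate, $d_{\mathbb H^2}(\tilde\gamma_n(t),\tilde\gamma_0(t))<\epsilon$ for $|t|\le R$, and projecting by $\pi$ gives the same bound in $\Sigma$.

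For the converse, suppose we have parametrizations with $d_\Sigma(\gamma_n(t),\gamma_0(t))<\epsilon$ on $[-R,R]$. Taking $\epsilon$ below the injectivity radius at $\gamma_0(0)$, I lift each $\gamma_n$ starting near $\tilde\gamma_0(0)$. Path-lifting along $[-R,R]$ gives lifts $\tilde\gamma_n$ that remain $\epsilon$-close to $\tilde\gamma_0$ on $[-R,R]$. Here I use that the function $t\mapsto d(\gamma_n(t),\gamma_0(t))$ stays small, so the short connecting segments lift continuously; this requires uniform injectivity radius along the compact set $\gamma_0([-R,R])$, which holds by compactness. Two geodesics in $\mathbb H^2$ that are $\epsilon$-close on a segment of length $2R$ have endpoints within a visual angle that tends to $0$ as $R\to\infty$ and $\epsilon\to0$. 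Hence $\tilde\gamma_n\xrightarrow{\mathrm{geod}}\tilde\gamma_0$, and so $\gamma_n\xrightarrow{\mathrm{geod}}\gamma_0$ in the quotient.

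I expect the main obstacle to be the bookkeeping with lifts, specifically in the converse. The lift of $\gamma_n$ must be chosen coherently for each $R$ and must not depend on $R$: one lift has to work for all $R$ simultaneously. The fix is to choose each lift by its point closest to $\tilde\gamma_0(0)$, which is independent of $R$, and then to note that the $\epsilon$-closeness on $[-R,R]$ is inherited by that lift by uniqueness of path-lifting. The hyperbolic-geometry estimates themselves are routine.
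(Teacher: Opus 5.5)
Your proposal is correct and follows the same route as the paper's proof. Both lift to $\mathbb H^2$, use that convergence of endpoints in $\Gamma(\mathbb H^2)$ is equivalent to uniform closeness of suitably parametrized lifts on compact parameter intervals, and then project (forward direction) or path-lift (converse); you also supply details the paper glosses over, notably why converging lifts exist and why the lifts in the converse can be chosen independently of $R$ and $\epsilon$.

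One small correction: a lift of $\gamma_n$ whose endpoints are \emph{closest} to those of $\tilde\gamma_0$ need not exist. The lifts of a recurrent non-closed geodesic accumulate in $\Gamma(\mathbb H^2)$; for example, take $\gamma_n\equiv\ell$ and $\gamma_0=\ell'$ to be distinct leaves of a minimal lamination. Choose instead a lift within $1/n$ of the infimum, which still tends to $0$.
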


\begin{proof}
Choose a unit-speed parametrization of $\gamma_{0}$, or, equivalently, fix a point $P=\gamma_{0}(0)$ on $\gamma_{0}$. Fix a lift $\tilde{\gamma}_{0}$ of $\gamma_{0}$. If $ \gamma_{n}\geodto \gamma_0$, then there exists a sequence of lifts $\tilde{\gamma}_{n}$ of $\gamma_{n}$ such that $\tilde{\gamma}_{n}\geodto\tilde{\gamma}_{0}$ in $\Gamma\left(\mathbb{H}^{2}\right)$. Thus, we can parametrize $\left\{ \tilde{\gamma}_{n}\right\} _{n\in\mathbb{N}}$ appropriately so that $\tilde{\gamma}_{n}(0)\to\tilde{\gamma}_{0}(0)$  and $\tilde{\gamma}_{n}(t)\to\tilde{\gamma}_{0}(t)$ pointwise for any $t\in\mathbb{R}$. The conclusion follows.

Conversely, suppose that a sequence of geodesics $\left\{ \gamma_{n}\right\} _{n\in\mathbb{N}}$ converges to $\gamma_{0}$ as geodesics. Pick a lift $\tilde{\gamma}_{0}$ of $\gamma_{0}$, parametrized as $\gamma_{0}$. Then we can find parametrized lifts $\tilde{\gamma}_{n}$ of $\gamma_{n}$, such that for $t\in\left[-R,R\right]$, 
\[
d_{\mathbb{H}^{2}}\left(\tilde{\gamma}_{n}(t),\tilde{\gamma}_{0}(t)\right)<\epsilon.
\]
 It follows that this sequence of lifts $\left\{ \tilde{\gamma}_{n}\right\} _{n\in\mathbb{N}}$ indeed converges to $\tilde{\gamma}_{0}$ in $\Gamma\left(\mathbb{H}^{2}\right)$.
\end{proof}

Note that $\Gamma(\Sigma)$ is not $T_1$, as a non-proper geodesic in $\Sigma$ may limit to multiple geodesics not including itself.

\subsection{The (coarse) Chabauty topology} \label{subsec:chabauty}
Given a space $X$, let $C(X)$ denote the space of closed subsets of a space $X$.  The space $C(X)$ can be endowed with the \textit{Chabauty topology} (also called the \textit{geometric topology}).  We will not use the definition of this topology in this level of generality; see \cite{CEG06} for a full discussion.  Instead, we will define the topology in the context of laminations by describing convergence; see Lemma~\ref{lem:geometric_convergence}.

Let $\Lambda\left(\Sigma\right)$ denote the space of geodesic laminations on $\Sigma$. By convention, we allow the empty set to be a lamination, as well.  The lifts of these laminations are closed subsets of $\Gamma\left(\mathbb{H}^{2}\right)$ that are invariant under the $\pi_{1}\left(\Sigma\right)$-action, and we can consider the topology induced from the Chabauty topology on $C\left(\Gamma\left(\mathbb{H}^2\right)\right)$. 

Alternatively, we can consider laminations as closed subsets of the space of geodesics in $\Sigma$, that is, as elements of $C(\Gamma(\Sigma))$, and endow $\Lambda(\Sigma)$ with the subspace topology of the Chabauty topology on $C(\Gamma(\Sigma))$.  These two topologies on $\Lambda(\Sigma)$ agree:

\begin{prop}[{\cite[Proposition~4.1.7]{CEG06}}]\label{prop:Chabauty_tops_coincide}
Let $C\left(\Sigma\right)$ be the space of closed subsets of $\Sigma$ with the Chabauty topology, and let $\mathcal{L}\subset C\left(\Sigma\right)$ be defined by 
\[
\mathcal{L}=\left\{ X:X=\left|\lambda\right|\text{ for some lamination }\lambda\subset\Sigma\right\} .
\]
Then $\mathcal{L}$ is a closed subset of $C(\Sigma)$, and the map $\Lambda\left(\Sigma\right)\to \mathcal{L}$ defined by $\lambda\mapsto\left|\lambda\right|$ is a homeomorphism. In particular, the topologies induced by regarding $\Lambda\left(\Sigma\right)$ as a subset of $C\left(\Gamma\left(\mathbb{H}^{2}\right)\right)$ and $C\left(\Sigma\right)$ agree.
\end{prop}

Here, $\left|\lambda\right|$, which is notation borrowed from \cite{CEG06}, is the lamination $\lambda$ thought of as a subspace of $\Sigma$, that is,  as points in $\Sigma$, instead of geodesics on $\Sigma$.

In \cite[Proposition~4.1.6]{CEG06}, this was proven under the assumption that the surface is compact.  However, since $\Gamma\left(\mathbb{H}^2\right)$ is Hausdorff, locally compact and second countable, the space of geodesic laminations $\Lambda\left(\Sigma\right)$ with the Chabauty topology is compact, metrizable and separable \cite[Proposition~4.1.7]{CEG06}. Moreover, as noted above, any geodesic lamination on $\Sigma$ is nowhere dense. Thus, the proof of \cite[Proposition~4.1.6]{CEG06} carries over verbatim.


We now describe convergence in the Chabauty topology on $\Lambda(\Sigma)$, which we denote by $\Cto$; this can be taken as the definition of the topology.  
\begin{lem}[{Chabauty convergence, \cite[Lemma~3.1.3]{CEG06}}] \label{lem:geometric_convergence}
A sequence of geodesic laminations $\left\{ L_{n}\right\} _{n\in\mathbb{N}}$ on $\Sigma$ converges to a lamination $L$ in $\Lambda(\Sigma)$ with the Chabauty topology if and only if 
\begin{enumerate}[label=(\arabic*)]
\item for all $\gamma\in L$, there is a sequence $\gamma_{i}\in L_{i}$ such that $\gamma_{i}\xrightarrow[]{\mathrm{geod}}\gamma$, and
\item for every subsequence $\left\{ L_{n_{k}}\right\} _{k\in\mathbb{N}}$, if $\gamma_{n_{k}}\in L_{n_{k}}$ and $\gamma_{n_{k}}\xrightarrow[]{\mathrm{geod}}\gamma$ for some $\gamma\in\Gamma\left(\Sigma\right)$, then $\gamma\in L$. 
\end{enumerate}
\end{lem}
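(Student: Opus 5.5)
The plan is to reduce the statement to the standard characterization of Chabauty convergence for closed subsets of a locally compact, Hausdorff, second countable space, and then to translate that characterization through the quotient map $\Gamma(\mathbb{H}^2)\to\Gamma(\Sigma)$.

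First I recall the general fact. In $C(X)$, for $X$ locally compact, Hausdorff and second countable, a sequence $A_n$ converges to $A$ in the Chabauty topology if and only if two conditions hold. (a) Every $x\in A$ is a limit of points $x_n\in A_n$. (b) Whenever $x_{n_k}\in A_{n_k}$ converge to some $x$, we have $x\in A$. I would apply this with $X=\Gamma(\mathbb{H}^2)$, which is an open Möbius band and hence satisfies the hypotheses. Each lamination $L$ is identified with its lift $\tilde L\subset\Gamma(\mathbb{H}^2)$, a closed $\pi_1(\Sigma)$-invariant set. This identification is legitimate by Proposition~\ref{prop:Chabauty_tops_coincide}, together with the paper's convention that the topology on $\Lambda(\Sigma)$ is the one induced from $C(\Gamma(\mathbb{H}^2))$.

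For the forward direction, assume $\tilde L_n\to\tilde L$.
\begin{itemize}
\item For condition (1), take $\gamma\in L$ and pick a lift $\tilde\gamma\in\tilde L$. Condition (a) gives $\tilde\gamma_n\in\tilde L_n$ with $\tilde\gamma_n\to\tilde\gamma$. Projecting to $\Sigma$, the geodesics $\gamma_n=\pi(\tilde\gamma_n)\in L_n$ satisfy $\gamma_n\geodto\gamma$ by the definition of the quotient topology on $\Gamma(\Sigma)$. Equivalently, one can use Lemma~\ref{lem:convg_as_geod}.
\item For condition (2), suppose $\gamma_{n_k}\in L_{n_k}$ and $\gamma_{n_k}\geodto\gamma$. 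By the proof of Lemma~\ref{lem:convg_as_geod}, fix a lift $\tilde\gamma$. There are lifts $\tilde\gamma_{n_k}$ converging to $\tilde\gamma$ in $\Gamma(\mathbb{H}^2)$. Since the full preimage is $\pi_1$-invariant, these lifts lie in $\tilde L_{n_k}$. Condition (b) then gives $\tilde\gamma\in\tilde L$, and so $\gamma\in L$.
\end{itemize}

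For the converse, assume (1) and (2). I verify (a) and (b) upstairs.
\begin{itemize}
\item For (a), take $\tilde\gamma\in\tilde L$ and apply (1) to $\gamma=\pi(\tilde\gamma)$. Lemma~\ref{lem:convg_as_geod} lets the lifts be chosen converging to the given lift $\tilde\gamma$: fix a parametrization of $\tilde\gamma$ and lift the nearby $\gamma_n$ so that their basepoints approach $\tilde\gamma(0)$.
\item For (b), suppose $\tilde\gamma_{n_k}\in\tilde L_{n_k}$ converge to some $\tilde\gamma$. Their projections then geodesically converge to $\pi(\tilde\gamma)$, so (2) gives $\pi(\tilde\gamma)\in L$. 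By invariance, $\tilde\gamma\in\tilde L$.
\end{itemize}

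The main obstacle is a subtlety not covered by the general fact: the limit geodesic $\pi(\tilde\gamma)$ in (b) must be a simple geodesic, so that it lies in $\Gamma(\Sigma)$. This follows because a limit of pairwise non-crossing lifts is non-crossing, since transverse intersection is an open condition on endpoint pairs in $S^1$.

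A second issue is that $\Gamma(\Sigma)$ is not Hausdorff, so convergent sequences there do not have unique limits. I avoid this entirely by always working with a chosen lift. This is exactly the lifting argument in the proof of Lemma~\ref{lem:convg_as_geod}, which picks a lift first and then lifts the approximating geodesics compatibly.
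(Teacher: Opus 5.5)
Your proof is correct and follows the paper's approach. The paper gives no argument of its own: it cites the general characterization of Chabauty convergence of closed subsets in \cite[Lemma~3.1.3]{CEG06} and remarks that the lemma can be taken as the definition of the topology on $\Lambda(\Sigma)$. Your lift-and-project translation through $\Gamma(\mathbb{H}^2)\to\Gamma(\Sigma)$ fills in the details behind that citation, including the check, via openness of crossing, that a limit of lifts of simple geodesics projects to a simple geodesic.
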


When $\Sigma$ is a compact surface, the Chabauty topology on $C(\Sigma)$, and hence, the topology on $\Lambda(\Sigma)$, corresponds to the topology induced by the Hausdorff metric. However, for infinite-type surfaces, this is not the case, as illustrated by the following example.

\begin{example}
Let $\left\{ \gamma_{i}\right\} _{i\in\mathbb{N}}$ be a sequence of essential simple closed curves on $\Sigma$ which converge to an end $e$ of $\Sigma$. For example, choose a pants decomposition for $\Sigma$, and let $\left\{ \gamma_{i}\right\} _{i\in\mathbb{N}}$ be a sequence of curves bounding the pairs of pants which converge to $e$. Then $\left\{ \gamma_{i}\right\} \xrightarrow{C}\emptyset$, the empty lamination. To see this, note that the first condition of Lemma \ref{lem:geometric_convergence} is trivially true, and the second condition is true as no subsequence of $\left\{ \gamma_{i}\right\} _{i\in\mathbb{N}}$ may converge to a geodesic. On the other hand, assuming the length of the seams of every pair of pants is bounded below, the sequence of laminations $\left\{ \left\{ \gamma_{i}\right\} \right\} _{i\in\mathbb{N}}$ does not Hausdorff-converge to any lamination, as the length of the seam provides a lower bound for the Hausdorff distance between $\left\{ \gamma_{i}\right\} $ and $\left\{ \gamma_{i+1}\right\}$. Thus, the topology on $\Lambda(\Sigma)$ induced by the Hausdorff metric is not compact, even though the Chabauty topology is compact.
\end{example}

The following lemma will be useful in the later sections.
\begin{lem}\label{lem:disjt_convg}
Suppose that $\left\{ L_{n}\right\} _{n\in\mathbb{N}}$ and $\left\{ M_{n}\right\} _{n\in\mathbb{N}}$ are sequences in $\Lambda\left(\Sigma\right)$ such that $L_{n}\xrightarrow[]{C} L$ and $M_{n} \xrightarrow[]{C} M$. If $L_{n}\pitchfork M_{n}=\emptyset$ for all $n\in\mathbb{N}$, then $L\pitchfork M=\emptyset$. 
\end{lem}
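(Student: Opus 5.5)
We argue by contradiction, using the description of Chabauty convergence in Lemma~\ref{lem:geometric_convergence} together with the description of geodesic convergence in Lemma~\ref{lem:convg_as_geod}. Suppose $L\pitchfork M\neq\emptyset$. Then there are leaves $\lambda\in L$ and $\mu\in M$ crossing transversely at some point $P\in\Sigma$, say at angle $\theta\in(0,\pi)$.

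By condition (1) of Lemma~\ref{lem:geometric_convergence}, we can choose leaves $\lambda_n\in L_n$ with $\lambda_n\geodto\lambda$ and leaves $\mu_n\in M_n$ with $\mu_n\geodto\mu$. The plan is to lift to the universal cover and work in $\mathbb{H}^2$, where geodesic convergence is just convergence of endpoint pairs on $\partial\mathbb{H}^2$.

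\begin{enumerate}
\item Fix a lift $\tilde P$ of $P$, and let $\tilde\lambda,\tilde\mu$ be the lifts of $\lambda,\mu$ through $\tilde P$.
\item Since $\lambda$ and $\mu$ cross transversely, $\tilde\lambda$ and $\tilde\mu$ cross. Equivalently, their endpoint pairs $\{\tilde\lambda^{\pm}\}$ and $\{\tilde\mu^{\pm}\}$ are linked on $S^1$: the four points are distinct and alternate.
\item As in the proof of Lemma~\ref{lem:convg_as_geod}, choose lifts $\tilde\lambda_n\to\tilde\lambda$ and $\tilde\mu_n\to\tilde\mu$ in $\Gamma(\mathbb{H}^2)$.
\item Linking of endpoint pairs is an open condition in $\Gamma(\mathbb{H}^2)\times\Gamma(\mathbb{H}^2)$. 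Hence for all large $n$ the pairs of $\tilde\lambda_n$ and $\tilde\mu_n$ are linked, so $\tilde\lambda_n$ and $\tilde\mu_n$ intersect transversely in $\mathbb{H}^2$.
\item Projecting by $\pi$, the leaves $\lambda_n$ and $\mu_n$ intersect transversely in $\Sigma$. This gives $L_n\pitchfork M_n\neq\emptyset$, contradicting the hypothesis.
\end{enumerate}

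Two points need care. First, the projection in step~5 must not create a tangency or a coincidence rather than a transverse crossing. This cannot happen: two distinct complete geodesics in $\mathbb{H}^2$ that meet do so in exactly one point, transversely. Moreover $\lambda_n\neq\mu_n$, since their lifts have different endpoints near $\tilde\lambda^\pm$ and $\tilde\mu^\pm$ respectively.

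Second, step~3 needs lifts converging in $\Gamma(\mathbb{H}^2)$ itself, not merely in the quotient topology on $\Gamma(\Sigma)$. This holds because $\pi_1(\Sigma)$ acts properly discontinuously on $\Gamma(\mathbb{H}^2)$, so convergence in the quotient lifts to convergence of suitable representatives. I expect the main obstacle to be this lifting step: the quotient $\Gamma(\Sigma)$ is not even $T_1$, so the argument should be phrased entirely in terms of lifts. The local uniform convergence of Lemma~\ref{lem:convg_as_geod} provides them, by taking lifts near $\tilde P$.

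An alternative for step~4 avoids endpoints altogether. Take a small disk $D$ around $\tilde P$. For large $n$, $\tilde\lambda_n$ and $\tilde\mu_n$ cross $D$ $C^0$-close to $\tilde\lambda\cap D$ and $\tilde\mu\cap D$. Arcs that are close to two transverse diameters must intersect, by a simple intermediate value argument.
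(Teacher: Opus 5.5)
Your proposal is correct and follows the paper's proof. The paper likewise takes transversely crossing leaves of $L$ and $M$, approximates them by leaves of $L_n$ and $M_n$ via condition (1) of Lemma~\ref{lem:geometric_convergence}, and asserts that the crossing persists for large $n$; your linked-endpoints argument in $\Gamma(\mathbb{H}^2)$ simply makes that last step explicit. Two side justifications are wrong, but harmless: $\pi_1(\Sigma)$ does not act properly discontinuously on $\Gamma(\mathbb{H}^2)$ (the axis of a hyperbolic element has infinite stabilizer, and the quotient $\Gamma(\Sigma)$ is not even $T_1$), so converging lifts come instead from openness of the quotient map or, as you also note, from Lemma~\ref{lem:convg_as_geod}; and $\lambda_n\neq\mu_n$ does not follow from the lifts having different endpoints (distinct lifts of one geodesic do too), but from the fact that two linked lifts of a single leaf would force a transverse self-intersection of a simple geodesic.
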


\begin{proof}
Suppose for contradiction that there exists a point point $P\in L\pitchfork M$. Then there are leaves $l\in L$ and $m\in M$ that intersect transversely such that $P\in l\cap m$. By Lemma~\ref{lem:geometric_convergence}, there are sequences of leaves $l_{i}\in L_{i}$ and $m_{i}\in M_{i}$ such that $l_{i}\to l$ and $m_{i}\to m$ in $\Gamma\left(\Sigma\right)$. But then $l_{i}\pitchfork m_{i}\neq\emptyset$ for any sufficiently large $i$, contradicting the assumption.
\end{proof}

We shall also consider the \textit{coarse Chabauty topology} on $\Lambda\left(\Sigma\right)$; see \cite[Definition 4.1.10]{CEG06}, where it is called the \textit{Thurston topology}. This topology is given by the following notion of convergence. 
\begin{defn}[Coarse Chabauty convergence]\label{def:CCconvergence}
A sequence of laminations $\left\{ L_{n}\right\} _{n\in\mathbb{N}}$ \textit{coarse Chabauty converges} to $L\in\Lambda\left(\Sigma\right)$, denoted by $L_{n}\xrightarrow{CC}L$, if for any subsequence $\left\{ L_{n_{k}}\right\} _{k\in\mathbb{N}}$ of $\left\{ L_{n}\right\} _{n\in\mathbb{N}}$ that converges in the Chabauty topology to some $L_{0}\in\Lambda\left(\Sigma\right)$, we have $L\subset L_{0}$. Equivalently, for all $\gamma\in L$, there is a sequence $\gamma_{i}\in L_{i}$ such that $\gamma_{i}\xrightarrow[]{\mathrm{geod}}\gamma$.
\end{defn}

Note that $L_n \CCto L$ if and only if the first condition in Lemma \ref{lem:geometric_convergence} holds. As its name indicates, the coarse Chabauty topology on $\Lambda\left(\Sigma\right)$ is coarser than the Chabauty topology and is not Hausdorff, since a sequence of laminations can coarsely converge to different limits.




\subsection{Grand ending laminations}
Let $\Sigma$ be an infinite-type surface with $|\mc{S}(\Sigma)|\geq 3$, and let $\Gamma_{\text{grand}}\left(\Sigma\right)\subseteq \Gamma(\Sigma)$ be the subspace of grand arcs on $\Sigma$.  We emphasize the distinction from  $\mathcal{G}\left(\Sigma\right)$, which is the grand arc graph: both have the same underlying set, but different topologies.

\begin{defn}
A connected lamination $\Sigma$ is a \textit{grand lamination} if it transversely intersects every witness subsurface in $\Sigma$.
\end{defn}

The following lemma says that any Chabauty limit of a sequence of grand arcs must be a lamination which intersects every witness of $\mc{G}(\Sigma)$, and, in particular, be non-empty.  

\begin{lem}\label{lem:grandarctograndlam}
Let $\left(x_{n}\right)_{n\in\mathbb{N}}\subset\Gamma_{\text{grand}}(\Sigma)$ such that $x_{n}\xrightarrow{C}L$ for some connected lamination $L$. Then $L$ is a grand lamination.
\end{lem}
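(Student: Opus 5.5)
We have to show that $L$ transversely intersects every witness $W$ of $\mathcal{G}(\Sigma)$. We argue by contradiction and use the Chabauty characterisation of convergence, Lemma~\ref{lem:geometric_convergence}, applied to leaves passing through a compact witness.

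\emph{Reduction to compact witnesses.} By Lemma~\ref{lem:single_arc_witness} and the fact that witnesses are characterised by separating the elements of the grand splitting, every witness contains a compact witness. Indeed, one may take a finite-type subsurface that still separates the finitely many elements of $\mathcal{S}(\Sigma)$. It therefore suffices to treat a compact witness $W$, which we may take with geodesic boundary. Since $W$ is a witness, every grand arc $x_n$ meets $W$ essentially. So each $x_n$ contains a geodesic segment $\sigma_n \subset W$ that crosses $W$ from one boundary component to another, and in particular meets a fixed compact set $K\subset W$. For $K$ we use a thickened core of $W$ meeting every essential arc of $W$, for instance a fixed pants/seam decomposition of $W$.

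\emph{Extracting a limit leaf in $W$.} Pick points $p_n \in x_n \cap K$ and unit tangent vectors $v_n$ of $x_n$ at $p_n$. By compactness of the unit tangent bundle over $K$, after passing to a subsequence $(p_n,v_n)\to(p,v)$. Let $\gamma$ be the complete geodesic through $(p,v)$. Then $x_n \geodto \gamma$ by Lemma~\ref{lem:convg_as_geod}, since geodesics depend continuously on initial data on compact time intervals. The subsequence $x_{n_k}$ still Chabauty converges to $L$, so condition (2) of Lemma~\ref{lem:geometric_convergence} gives $\gamma\in L$. Thus $L$ is non-empty and has a leaf through $p\in K\subset W$.

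\emph{Transversality.} It remains to see that $L$ meets $W$ essentially, and does not merely touch it or lie in a boundary collar. Suppose first that $\gamma$ were contained in $\partial W$ or isotopic into it. We can avoid this by choosing $p_n$ at the midpoint, with respect to a fixed metric on the compact set $W$, of an essential crossing segment $\sigma_n$. The segments $\sigma_n$ have length bounded below, and the limit segment then runs through the interior of $W$ between two boundary components, so $\gamma$ crosses $W$ essentially. If instead $\sigma_n$ had unbounded length, it would spiral; even then the limit is a leaf in the interior of $W$ which is not a boundary curve, and it therefore intersects $W$ essentially. Hence every witness is essentially intersected by $L$. Since $L$ is connected by hypothesis, $L$ is a grand lamination.

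The main obstacle is the last step: ruling out that the limiting leaf degenerates into $\partial W$ or a peripheral region. The way around it is the choice of the basepoints $p_n$ deep inside $W$ (the midpoints above), which pins the limit to the interior.
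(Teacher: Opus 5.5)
Your core step is the paper's argument. The paper assumes a compact witness $W$ with $L\cap W=\emptyset$, notes that $x_n\cap W\neq\emptyset$, passes to a Hausdorff-convergent subsequence of $x_n\cap W$ in $C(W)$, and uses Lemma~\ref{lem:geometric_convergence}(2) to put the limit points in $L$. Your extraction of $(p_n,v_n)\to(p,v)$ in the unit tangent bundle over a compact $K\subset W$ is what actually justifies that last step, since (2) speaks about geodesically convergent leaves. Note that the paper is content with $L\cap W\neq\emptyset$ and never argues essentiality separately. (Also, Lemma~\ref{lem:single_arc_witness} does not give the reduction to compact witnesses; it concerns a single arc meeting a witness in a connected set. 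The paper uses that reduction without comment.)

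The genuine problem is your ``Transversality'' paragraph, in the unbounded-length case. Choosing $p_n$ as the midpoint of $\sigma_n$ does not keep the limit away from $\partial W$. Suppose $\sigma_n$ enters through a boundary geodesic $c$ at angle $\alpha_n\to 0$ and leaves through another component at an angle bounded below. This happens, for instance, for $x_n=T_c^n(x_0)$ with $x_0\cap W$ a single arc starting on $c$. Let $d(t)$ be the distance to $c$ at arc length $t$ from the entry point; then $\sinh d(t)=\sin\alpha_n\sinh t$. Hence $\sigma_n$ spends time about $\log(1/\alpha_n)$ in the collar of $c$ and only $O(1)$ elsewhere. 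Its midpoint therefore lies at distance $O(\sqrt{\alpha_n})$ from $c$, with tangent direction nearly parallel to $c$. So $(p_n,v_n)$ converges to a unit tangent vector of $c$, and your $\gamma$ is the boundary curve $c$ itself. The sentence ``even then the limit is a leaf in the interior of $W$ which is not a boundary curve'' is false for this choice of basepoints.

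The fix is the choice your first paragraph almost makes. Fix disjoint embedded half-collars of width $w>0$ about the components of $\partial W$ inside $W$, and let $K$ be $W$ minus the open collars. Every crossing arc $\sigma_n$ meets $K$. Otherwise it would lie in a single collar with both endpoints on the same boundary geodesic, and lifting to the universal cover of that collar would give two distinct geodesics of $\mathbb{H}^2$ meeting twice. Taking $p_n\in\sigma_n\cap K$ puts $p$ at distance at least $w$ from $\partial W$, so $\gamma\not\subset\partial W$. Since $\partial W$ is geodesic, the component of $\gamma\cap W$ through $p$ cannot be homotoped off $W$. With this choice, no case distinction on the lengths of $\sigma_n$ is needed.
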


\begin{proof}
    If $L$ is not a grand lamination, then there exists a witness, and in particular, a compact witness $W\subseteq \Sigma$ such that $L\cap W=\emptyset$. However, since each $x_n$ is a grand arc, $x_n \cap W\neq \emptyset$. By the compactness of the Hausdorff topology on $C(W)$, the sequence $(x_n \cap W)_{n\in \mathbb N}$ must converge up to subsequence to a non-empty closed set. Points in this set must be in $L$ by Lemma \ref{lem:geometric_convergence} (2), which is a contradiction.
\end{proof}

\begin{defn}\label{def:GEL}
    A \textit{grand ending lamination} on $\Sigma$ is a minimal lamination on $\Sigma$ that is witness-filling. The space of grand ending laminations, denoted $\mathcal{GEL}\left(\Sigma\right)$ and considered as a subspace of $\Lambda\left(\Sigma\right)$, is equipped with the coarse Chabauty topology.
\end{defn}

\begin{example}\label{ex:cpct_pA}
Fix $a\in \mc G(\Sigma)$, and suppose $W\subset\Sigma$ is a compact witness for $\mathcal{G}\left(\Sigma\right)$ such that $a\cap W$ is connected. Let $\varphi\in\mathrm{MCG}\left(\Sigma\right)$ be supported on $W$ and be pseudo-Anosov on $W$. Then $\varphi$ acts loxodromically on $\mathcal{G}\left(\Sigma\right)$ by \cite[Theorem 1.5]{B-NV}. Thus $\varphi$ has two fixed points $\varphi^{+}$ and $\varphi^{-}$ in $\partial\mathcal{G}\left(\Sigma\right)$, and $\varphi^{n}(a)\to\varphi^{+}$ and $\varphi^{-n}(a)\to\varphi^{-}$ in $\mc G(\Sigma)\cup \partial\mc G(\Sigma)$. 

On the other hand, $\varphi|_{W}$ is also a loxodromic isometry of $\mathcal{A}\left(W\right)$. Let $a_{W}:=a\cap W$ and $\varphi_{W}:=\varphi|_{W}\in\mathrm{MCG}(W)$. Then the sequences $\left(\varphi_{W}^{n}\left(a_{W}\right)\right)_{n\in\mathbb{N}}$ and $\left(\varphi_{W}^{-n}\left(a_{W}\right)\right)_{n\in\mathbb{N}}$ also converge to points in $\partial\mathcal{A}(W)$. Since there is a $\mathrm{MCG}\left(W\right)$-equivariant homeomorphism $G\colon\mathcal{EL}_{0}(W)\to\partial\mathcal{A}\left(W\right)$, there are two ending geodesic laminations $L^{+}$ and $L^{-}$ supported on $W$ such that $\varphi_{W}^{\pm n}\left(a_{W}\right)\to L^{\pm}$ in the coarse Hausdorff topology on $\mc{EL}_0$ (which is the same as coarse Chabauty topology since $W$ is compact). Since $\varphi$ is identity outside $W$, it follows that $\varphi^{\pm n}(a)\xrightarrow{CC}L^{\pm}$. 
In Example~\ref{ex:agrees_with_F}, we show that the map from Theorem~\ref{thm_main} sends $L^\pm$ to $\varphi^\pm$.
\end{example}

 We now construct a basis for the coarse Chabauty topology on $\mathcal{GEL}(\Sigma)$, similar to \cite{Pho-On}. For $\epsilon>0$ and $L_{0}\in\mathcal{GEL}\left(\Sigma\right)$, define 
\[
U_{\epsilon}\left(L_{0}\right)=\left\{ L\in\mathcal{GEL}\left(\Sigma\right)\mid L_{0}\subset N_{\epsilon}\left(L\right)\right\} ,
\]
where $N_{\epsilon}\left(L\right)$ is the $\epsilon$-neighbourhood around $L$ in $\Lambda\left(\Sigma\right)$, rather then the $\epsilon$-Hausdorff neighbourhood of $L$ on $\Sigma$.  Note that $N_{\epsilon}\left(L\right)$  is well-defined as $\Lambda\left(\Sigma\right)$ is metrizable. The proof of \cite[Lemma~2.2]{Pho-On} holds verbatim in this setting to show that 
\[
\mathcal{B}=\left\{ U_{\epsilon}\left(L_{0}\right)\mid L_{0}\in\mathcal{GEL}\left(\Sigma\right)\right\} 
\] is a basis for a topology on $\mathcal{GEL}\left(\Sigma\right)$. Furthermore, the proof of  \cite[Proposition~2.3]{Pho-On} also carries over, with the Hausdorff distance between laminations being replaced by distance in $\Lambda\left(\Sigma\right)$, to show that the topology on $\mathcal{GEL}\left(\Sigma\right)$ induced by $\mathcal{B}$ corresponds to the coarse Chabauty topology. In particular, this implies that $\mathcal{GEL}(\Sigma)$ is first-countable. Moreover, if  the grand arc graph of $\Sigma$ is  $\delta$-hyperbolic, then $\mathcal{GEL}\left(\Sigma\right)$ is Hausdorff. To see this, suppose a sequence $\left(L_{i}\right)_{i\in\mathbb{N}}\subset\mathcal{GEL}\left(\Sigma\right)$ coarsely converges to two different limits $M_{1}, M_{2} \in \mathcal{GEL}(\Sigma)$. By Lemma~\ref{lem:disjt_convg}, $M_{1}$ and $M_{2}$ must be disjoint minimal laminations, so the witnesses filled by $M_{1}$ and $M_{2}$ must also be disjoint. This contradicts the hyperbolicity of $\mathcal{G}\left(\Sigma\right)$.

\begin{cor}\label{cor:seq_continuity}
Let $Y$ be any topological space. A map $f\colon \mathcal{GEL}\left(\Sigma\right)\to Y$ is continuous if and only if for every sequence $\left\{ L_{n}\right\} _{n\in\mathbb{N}}\subset\mathcal{GEL}\left(\Sigma\right)$ such that $L_{n}\xrightarrow{CC}L_{0}$, we have $f\left(L_{n}\right)\to f\left(L_{0}\right)$ in $Y$.
\end{cor}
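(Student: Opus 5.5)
The plan is to reduce the statement to first countability of $\mathcal{GEL}(\Sigma)$. The forward direction is immediate. Continuous maps preserve convergent sequences, and convergence $L_n\CCto L_0$ in the sense of Definition~\ref{def:CCconvergence} is exactly convergence in the coarse Chabauty topology on $\mathcal{GEL}(\Sigma)$. The one point to check is that the sequential notion agrees with the topology generated by the basis $\mathcal{B}=\{U_\epsilon(L_0)\}$. This is the content of the adaptation of \cite[Proposition~2.3]{Pho-On} recorded above: $L_n\CCto L_0$ if and only if for every $\epsilon>0$ we eventually have $L_0\subset N_\epsilon(L_n)$, that is, $L_n\in U_\epsilon(L_0)$. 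So a sequence converges in the topology exactly when it coarse Chabauty converges.

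For the converse, assume $f$ is sequentially continuous in the stated sense, and let $V\subseteq Y$ be open. I would show that $f^{-1}(V)$ is open. Suppose not. Then there is $L_0\in f^{-1}(V)$ such that no basic set $U_{1/n}(L_0)$ is contained in $f^{-1}(V)$. Choose $L_n\in U_{1/n}(L_0)\setminus f^{-1}(V)$ for each $n$.

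The $U_{1/n}(L_0)$ form a countable neighbourhood base at $L_0$. This uses the observation that $U_\epsilon(L_0)\supseteq U_{\epsilon'}(L_0)$ for $\epsilon'<\epsilon$, together with the fact that each basic open set containing $L_0$ contains some $U_{1/n}(L_0)$. The latter follows, via the triangle inequality in the metric on $\Lambda(\Sigma)$, from the basis argument of \cite[Lemma~2.2]{Pho-On}. Hence $L_n\CCto L_0$. By hypothesis $f(L_n)\to f(L_0)\in V$, so $f(L_n)\in V$ for large $n$. This contradicts the choice of $L_n$.

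I expect the only real obstacle to be making sure that the countable family $U_{1/n}(L_0)$ really is a neighbourhood base at $L_0$, rather than merely a subfamily of the basis. This is precisely the first countability asserted above, so I would cite it and, if needed, spell out the triangle-inequality step. No Hausdorff or metrizability property of $Y$ is needed. Nor is Hausdorffness of $\mathcal{GEL}(\Sigma)$, since we only use sequences converging to the prescribed point $L_0$.
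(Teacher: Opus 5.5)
Your proposal is correct and follows the paper's approach. The paper's proof is one line: continuity and sequential continuity agree for maps out of a first-countable space, and $\mathcal{GEL}(\Sigma)$ is first-countable via the basis $\{U_\epsilon(L_0)\}$. You have unpacked that standard fact explicitly, and the neighbourhood-base property of the sets $U_{1/n}(L_0)$ is what makes both of your directions work.
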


\begin{proof}
    A map from a first-countable space is continuous if and only if it is sequentially continuous, and we have shown that $\mathcal{GEL}(\Sigma)$ is first-countable.
\end{proof}

We end by showing that minimal, witness-filling lamination that does not transversely intersect a grand lamination must be a sublamination.
\begin{lem}\label{lem:sublamination}
    If $L$ is a minimal, witness-filling lamination and $M$ is a grand lamination such that $L\pitchfork M=\emptyset$, then $L$ is a sublamination of $M$.
\end{lem}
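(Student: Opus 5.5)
The plan is to show that every leaf of $L$ is contained in $M$, and then use minimality of $L$ to conclude. Since $L\pitchfork M=\emptyset$, each leaf $l$ of $L$ either coincides with a leaf of $M$ or is disjoint from $M$. Leaves cannot meet tangentially without coinciding, so the first step is to show that some leaf of $L$ meets, and hence equals, a leaf of $M$. Once one leaf $l\in L$ lies in $M$, minimality of $L$ gives $\overline{l}=L$. Since $M$ is closed, this yields $L\subseteq M$. Because $L$ is a lamination, it is then a sublamination of $M$.

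The main work is to rule out the case $L\cap M=\emptyset$. Suppose this holds. Since $M$ is closed and disjoint from $L$, $M$ lies in the open set $\Sigma\setminus L$, which is a union of complementary regions $S_i$ of $L$. $M$ is connected in the sense of having connected support, and its leaves avoid $L$. So I would argue that $M$ sits inside the closure of a single region $S_i$. More precisely, I expect each leaf of $M$ to lie in one $S_i$. Connectedness of $\supp(M)$ should force all leaves into the same $S_i$: if leaves of $M$ lay in different complementary regions, these would be separated by leaves of $L$, and $\supp(M)$ would be disconnected.

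Next, I would compare $S_i$ with $S_i'$ from Definition~\ref{def:support}. $S_i\setminus S_i'$ consists only of collar or half-strip regions between a geodesic boundary representative and $L$, and any leaf of $M$ entering such a region must be asymptotic to leaves of $L$ or of $\partial S_i'$. After handling this, $M$ is contained, up to these peripheral pieces, in $\overline{S_i'}$, which is disjoint from the interior of $\supp(L)$. Since $\supp(L)$ is a witness by hypothesis, I would take a compact witness inside it, or $\supp(L)$ itself. $M$ then fails to transversely intersect this witness. This contradicts the definition of a grand lamination, which must transversely intersect every witness.

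I expect the main obstacle to be this last containment step. The concern is leaves of $M$ that spiral into or are asymptotic to leaves of $L$. Such leaves lie in the crown or collar regions $S_i\setminus S_i'$, and these regions sit inside $\supp(L)$. So I must check that such leaves still do not meet a witness essentially. My first approach would be to choose the compact witness inside $\supp(L)$ by slightly shrinking it, for example by taking the complement of an $\epsilon$-neighbourhood of $\partial\supp(L)$ as in Lemma~\ref{lem:witness-filling}(2). I would also use Lemma~\ref{lem:witness-filling}(3): any arc of $M$ crossing the witness essentially would be a proper arc in $\supp(L)$ disjoint from $L$, which is impossible.
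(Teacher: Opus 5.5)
\textbf{Gap: leaves of $M$ asymptotic to $L$.} Your first step is sound. Two distinct geodesics that meet do so transversally, so a leaf $l$ of $L$ meeting $M$ is a leaf of $M$, and then $L=\overline{l}\subseteq M$. The gap is in the case $L\cap M=\emptyset$, in how you treat leaves of $M$ that are asymptotic to (spiral onto) leaves of $L$.

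Shrinking $\supp(L)$ to a compact witness $W'$ does not remove these leaves. Such a leaf $m$ fellow-travels a half-leaf of $L$. When that half-leaf accumulates (which is exactly the spiralling you are worried about), its accumulation set is all of $L$ by minimality. So $m$ returns to $W'$ infinitely often and crosses it in essential arcs parallel to arcs of $L\cap W'$. These arcs are not proper arcs of $\supp(L)$, since in that direction $m$ never leaves $\supp(L)$. Hence Lemma~\ref{lem:witness-filling}(3) says nothing about them, $M$ really does meet $W'$ essentially, and no contradiction with grandness arises. The same problem occurs for leaves of $M$ in non-essential complementary regions of $L$, such as diagonals of ideal polygons. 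These lie in no $S_i$ at all, so your containment of $M$ in a single $\overline{S_i}$ also fails there.

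\textbf{The missing idea.} This is the whole content of the paper's proof: these leaves are not an obstacle but give the conclusion directly. If $m\in M$ is asymptotic to $L$, its limit set is a sublamination of $L$, hence equal to $L$ by minimality. It is contained in $M$ because $M$ is closed. So such leaves cannot occur when $L\cap M=\emptyset$, and in fact no case split is needed. Grandness gives a leaf $m$ of $M$ meeting $W=\supp(L)$ essentially. Either $m$ is a leaf of $L$, or, by Lemma~\ref{lem:witness-filling}(3), no component of $m\cap W$ is a closed curve or proper arc of $W$, so $m$ must be asymptotic to $L$. Either way $L\subseteq \overline{m}\subseteq M$.

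This also makes your single-region step unnecessary. That is just as well, since your claim that leaves of $M$ in different complementary regions of $L$ would disconnect $\supp(M)$ is only heuristic as stated.
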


\begin{proof}
Since $M$ intersects every witness transversely, it must intersect $W=\mathrm{supp}(L)$ transversely. Since $L$ intersects every simple closed curve and proper arc in $W$ by Lemma~\ref{lem:witness-filling} (3) and $M$ is disjoint from $L$, it follows from Lemma~\ref{lem:witness-filling} (4) that no component of $M\cap W$ can be a proper geodesic. In particular, every leaf  $m\in M$ intersecting $W$ must either be a leaf of $L$ or asymptotic to $L$. The limit set of $m$, which is equal to $\overline{m}$ if $m$ is a leaf of $L$ or to $\overline{m}\setminus m$ if $m$ is asymptotic to $L$, determines a sublamination of both $L$ and $M$. By the minimality of $L$, this must be equal to $L$. Thus, $L\subset M$.
\end{proof}

\begin{cor}\label{cor:disjointarcTOsublamination}
    Let $(x_n)_{n\in \mathbb N}$ and $(y_n)_{n\in \mathbb N}$ be sequences of grand arcs with $x_n\cap y_n=\emptyset$.  If $x_n$ Chabauty converges to a lamination $M$ and $y_n$ Chabauty converges to a minimal, witness-filling lamination $L$, then $L\subseteq M$.
\end{cor}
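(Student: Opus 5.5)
The plan is to reduce the corollary to Lemma~\ref{lem:disjt_convg} and then to Lemma~\ref{lem:sublamination}.

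First, treat each grand arc as a lamination with a single leaf, so that $x_n,y_n\in\Lambda(\Sigma)$. The arcs are disjoint, so $x_n\pitchfork y_n=\emptyset$ for every $n$. Since $x_n\Cto M$ and $y_n\Cto L$, Lemma~\ref{lem:disjt_convg} gives $L\pitchfork M=\emptyset$.

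Second, to invoke Lemma~\ref{lem:sublamination}, we need $M$ to be a grand lamination, i.e.\ a connected lamination meeting every witness transversely. Lemma~\ref{lem:grandarctograndlam} is stated only for connected limits, so connectedness of $M$ is the main obstacle. I would sidestep it by reading the proof of Lemma~\ref{lem:sublamination}: connectedness is never used there. The only input is that $M$ intersects $W=\mathrm{supp}(L)$. That holds by the argument of Lemma~\ref{lem:grandarctograndlam}, which also needs no connectedness. Indeed, $W$ is a witness, so we may pass to a compact witness $W'\subseteq W$, which every $x_n$ meets. The sets $x_n\cap W'$ then subconverge in the Hausdorff topology on $C(W')$ to a nonempty set. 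By Lemma~\ref{lem:geometric_convergence}~(2), the points of this limit lie in leaves of $M$, so $M\cap W\neq\emptyset$.

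Third, rerun the argument of Lemma~\ref{lem:sublamination}. Take a leaf $m\in M$ meeting $W$. Because $m$ does not cross $L$ transversely, and $L$ meets every proper arc and every simple closed curve in $W$ (Lemma~\ref{lem:witness-filling}~(3)), the part of $m$ inside $W$ cannot be a proper geodesic of $W$. Hence $m$ is either a leaf of $L$ or asymptotic to $L$. Its limit set is then a nonempty sublamination of $L$, and it lies in $M$ because $M$ is closed. Minimality of $L$ forces this limit set to be all of $L$, so $L\subseteq M$.

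The delicate point in this step is the claim that a leaf of $M$ entering $W$ but not crossing $L$ must spiral onto $L$ rather than exit $W$ through its boundary. A leaf that exited would contain a proper subarc in $W$ disjoint from $L$, contradicting Lemma~\ref{lem:witness-filling}~(3).
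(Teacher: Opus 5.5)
Your proposal is correct and is essentially the paper's proof, which chains Lemma~\ref{lem:grandarctograndlam} (so $M$ is a grand lamination), Lemma~\ref{lem:disjt_convg} (so $L\pitchfork M=\emptyset$) and Lemma~\ref{lem:sublamination}. The one difference is that you notice the paper applies Lemma~\ref{lem:grandarctograndlam} without checking that $M$ is connected, and you patch this correctly: neither that lemma's proof nor that of Lemma~\ref{lem:sublamination} uses connectedness, only that $M$ meets $\mathrm{supp}(L)$. To make that step airtight, choose your compact witness $W'$ in the interior of $W$, so that the leaf $m$ genuinely enters $W$ rather than running along $\partial W$.
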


\begin{proof}
    By Lemma~\ref{lem:grandarctograndlam}, $M$ is a grand lamination.  Since $x_n\cap y_n=\emptyset$, we have $M\pitchfork L=\emptyset$ by Lemma~\ref{lem:disjt_convg}. The result then follows by Lemma~\ref{lem:sublamination}.
\end{proof}

\section{Bicorns with generic geodesics}\label{sec:bicorns_with_generic_geod}

In this section, we construct an explicit sequence of grand arcs that converges to a grand lamination in the coarse Chabauty topology. The idea is essentially similar to the  infinite unicorn paths constructed by Pho-On \cite{Pho-On}. However, since the grand laminations could potentially have limit points in every maximal end type, there is no canonical ``starting point'' one can use to define unicorn arcs between a grand arc and a leaf of a grand lamination. Using bicorn arcs allows us to circumvent this.

\begin{defn}\label{def:bicorn_with_geodesic}
Let $a\in\mathcal{G}(\Sigma)$ and $l\in\Gamma\left(\Sigma\right)$. A \textit{bicorn arc between $a$ and $l$} is a grand arc of the form $x=a_1\cup l_1 \cup a_2$, where $a_{1},a_{2}\subset a$ are the closures of the unbounded components of $a\setminus\{P,Q\}$ and $l_{1}\subset l$ is the closure of the bounded component of $l\setminus \{P,Q\}$, where $P,Q\in a\cap l$.  We allow any of $a_i,l_1$ to be empty.  Denote the collection of bicorns between $a$ and $l$ by $\mathcal{B}\left(a,l\right)$. 
\end{defn}

If $l$ is a grand arc, then this definition agrees with the definition of a bicorn arc in Section \ref{sec:finite_bicorns}. In particular, $l\notin\mathcal{B}\left(a,l\right)$ for any $l\in\Gamma(\Sigma)$. 

\begin{lem}\label{lem:bicorn_subsets}
For any $a\in\mathcal{G}\left(\Sigma\right)$,  any $l\in\Gamma\left(\Sigma\right)$, and any $b\in\mathcal{B}\left(a,l\right)$, we have $\mathcal{B}\left(a,b\right)\subset\mathcal{B}\left(a,l\right)$.  
\end{lem}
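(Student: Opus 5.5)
The plan is to mirror the proof of Lemma~\ref{lem:finite_bicorn_subset}, now with $l$ an arbitrary simple geodesic. Write $b=a_1\cup l_1\cup a_2$, where $a_1=(\dots P)_a$, $l_1=(PQ)_l$ and $a_2=(Q\dots)_a$ for some $P,Q\in a\cap l$. Simplicity of $b$ means $(a_1\cup a_2)\cap l_1=\{P,Q\}$. If $l_1$ is empty, so that $b=a$, then $\mathcal{B}(a,b)=\{a\}$, and $a\in\mathcal{B}(a,l)$ by taking $P=Q$ (or empty $l_1$, as Definition~\ref{def:bicorn_with_geodesic} allows). So we may assume $l_1$ is a nondegenerate compact segment.

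\textbf{Step 1: locate $a\cap b$.} We need to understand $a\cap b$, after passing to the geodesic representative of $b$ or at least working with the given piecewise representative, where intersections are counted as points of the concrete arc. We have $a\cap b=(a_1\cup a_2)\cup(a\cap l_1)$. The points of $a_1\cup a_2$ lie on $a$ itself, so they are not transverse crossings. The crossing points relevant for building bicorns are therefore the points of $a\cap l_1\subseteq a\cap l$.

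\textbf{Step 2: identify the bicorn.} Take $y\in\mathcal{B}(a,b)$, so $y=(\dots R)_a\cup(RS)_b\cup(S\dots)_a$ with $R,S$ among the crossing points of $a$ and $b$. By Step~1, $R,S\in a\cap l_1$. The bounded segment $(RS)_b$ lies between two points of $l_1$ along $b$. Since $a_1$ and $a_2$ are the unbounded tails of $b$, and $R,S$ lie on the middle segment $l_1$, the portion of $b$ between $R$ and $S$ is contained in $l_1$, hence in $l$. Therefore $(RS)_b=(RS)_l$, and $y=(\dots R)_a\cup(RS)_l\cup(S\dots)_a$.

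\textbf{Step 3: check simplicity and the grand condition.} Simplicity of $y$ is inherited, because it is the same subset of $\Sigma$. The grand condition also holds, since the tails of $y$ are tails of $a$. Hence $y\in\mathcal{B}(a,l)$.

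\textbf{Main obstacle.} The one subtle point is isotopy versus actual arcs. The bicorn $b$ is a broken arc rather than a geodesic, and $\mathcal{B}(a,b)$ should be computed using the concrete representative $b$, not its geodesic straightening. Otherwise straightening could create or destroy intersections with $a$. I would state explicitly that bicorns between $a$ and $b$ are built from this concrete representative, as in Lemma~\ref{lem:finite_bicorn_subset}. I would also handle degenerate endpoints, namely $R=S$ or $R,S\in\{P,Q\}$, by the same identification.
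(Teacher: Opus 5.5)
Your proposal is correct and takes essentially the same route as the paper, which just observes (as in Lemma~\ref{lem:finite_bicorn_subset}) that every point of $a\cap b$ is a point of $a\cap l_1$, so any bicorn between $a$ and $b$ is already a bicorn between $a$ and $l$. Your Steps 2--3 (the identification $(RS)_b=(RS)_l$ and inherited simplicity) and your remark about using the concrete representative of $b$ spell out details that the paper leaves implicit.
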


\begin{proof}
If $b=a_{1}\cup l_{1}\cup a_{2}\in\mathcal{B}\left(a,l\right)$ is a bicorn arc, where $a_{1},a_{2}\subset a$ and $l_{1}\subset l$, then as in the proof of Lemma \ref{lem:finite_bicorn_subset}, any point of $a\cap b$ is a point of $a\cap l_{1}$. Thus, any bicorn between $a$ and $b$ is also be a bicorn between $a$ and $l$.  
\end{proof}
The proof of Lemma~\ref{lem:bicorn_changing_first_arc} carries over verbatim by replacing $b$ with $l$, as the proof does not use the assumption that $b$ is a grand arc.  Therefore, we have:

\begin{lem} \label{lem:bicorns_change_base_pt}
Let $a,a'\in\mathcal{G}\left(\Sigma\right)$ be disjoint grand arcs, and let $l\in\Gamma\left(\Sigma\right)$ be a geodesic. Then 
\[
d^{\mc{G}}_{\mathrm{Haus}}\left(\mathcal{B}\left(a,l\right),\mathcal{B}\left(a',l\right)\right)\leq2.
\]
\end{lem}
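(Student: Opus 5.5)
Given $p\in\mathcal B(a,l)$, the plan is to find $q\in\mathcal B(a',l)$ with $d_{\mathcal G}(p,q)\le 2$. The symmetric construction, with the roles of $a$ and $a'$ swapped, gives the other inclusion. As in the proof of Lemma~\ref{lem:bicorn_changing_first_arc}, write $p=a_1\cup l_1\cup a_2$, where $a_1,a_2\subset a$ are the unbounded pieces and $l_1\subset l$ is the compact segment between $P,Q\in a\cap l$. Only one geometric fact is used: since $a\cap a'=\emptyset$, every point of $a'\cap p$ lies on $l_1$. Because $l_1$ is a compact segment of a geodesic and $a'$ is a proper arc, the set $a'\cap l_1$ is finite. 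This finiteness is exactly where the earlier argument used that $b$ was an arc, and here it holds for a general $l\in\Gamma(\Sigma)$ for the same reason.

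There are two cases. If $|a'\cap l_1|\le 1$, set $q=a'$, which lies in $\mathcal B(a',l)$ with all segments of $l$ empty. Then $|p\cap q|\le 1$, so $d_{\mathcal G}(p,q)\le 2$ by Lemma~\ref{lem:grand_arc_dist_2}, or $d_{\mathcal G}(p,q)\le 1$ if they are disjoint. Otherwise, orient $a'$ and let $X$ and $Y$ be the first and last points of $a'\cap l_1$ along $a'$. Define
\[q=(\dots X)_{a'}\cup (XY)_{l_1}\cup (Y\dots)_{a'}.\]
The segment $(XY)_{l_1}\subset l_1\subset l$ meets $a'$ only at points of $a'\cap l_1$. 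The two rays of $a'$ avoid $l_1$ by the choice of $X,Y$, so $q$ is simple. Its two ends are those of $a'$, so $q$ is a grand arc, and hence $q\in\mathcal B(a',l)$.

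Next I check that $|p\cap q|\le 1$ after a small isotopy. The rays of $q$ lie in $a'$, which misses $a_1\cup a_2$, and they miss $l_1$ by construction. So the only overlap is the shared segment $(XY)_{l_1}\subset l_1$. Pushing $(XY)_{l}$ slightly off $l_1$ to one side, as in Figure~\ref{fig:bicorns_first_change}, leaves at most one crossing near an endpoint. Lemma~\ref{lem:grand_arc_dist_2} then gives $d_{\mathcal G}(p,q)\le 2$. The conclusion is $\mathcal B(a,l)\subseteq N_2(\mathcal B(a',l))$, and by symmetry the Hausdorff bound follows.

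The main point needing care is the last step, controlling $p\cap q$ along the shared segment of $l$ when $l$ is an arbitrary, possibly non-proper, geodesic. Since only the compact segment $l_1$ is involved, the perturbation is local and the argument is identical to the grand-arc case.
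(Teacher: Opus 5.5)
Your proposal is correct and is the paper's proof: the paper simply observes that the argument of Lemma~\ref{lem:bicorn_changing_first_arc} carries over verbatim with $b$ replaced by $l$. That argument takes $q=a'$ if $|a'\cap l_1|\le 1$, and otherwise splices $a'$ along $l_1$ between the first and last points $X,Y$ of $a'\cap l_1$ along $a'$, exactly as you do. You also spell out why $a'\cap l_1$ is finite (compact segment, proper arc) and why the one-sided push-off gives $|p\cap q|\le 1$, details the paper leaves implicit; your aside that finiteness is where the earlier proof ``used that $b$ was an arc'' is slightly off, since that step never needed it, but this does not affect the argument.
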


We next give a relationship between bicorn arcs with geodesics in a convergent sequence and bicorn arcs with the limiting geodesic.  

\begin{lem}\label{lem:bicorns_for_converging_leaves}
Let $a\in\mathcal{G}\left(\Sigma\right)$ and $l\in\Gamma\left(\Sigma\right)$. If $\left\{ l_{n}\right\} _{n\in\mathbb{N}}\subset\Gamma\left(\Sigma\right)$ is such that $l_{n}\geodto l$, then for any $b\in\mathcal{B}\left(a,l\right)$, there exists $N>0$ such that $b\in\mathcal{B}\left(a,l_{n}\right)$ for every $n\geq N$.
\end{lem}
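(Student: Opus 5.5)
The plan is to show that for large $n$ the arc $l_n$ meets $a$ in points $P_n,Q_n$ close to $P,Q$, and that the bicorn $b_n$ built from $P_n,Q_n$ is isotopic to $b$. Write $b=a_1\cup l_1\cup a_2$ with $l_1=(PQ)_l$ and $P,Q\in a\cap l$. If $l_1$ is empty (i.e.\ $b=a$), then $a\in\mathcal B(a,l_n)$ for all $n$ by convention and there is nothing to prove, so assume $P\neq Q$. The argument is local around the compact segment $l_1$ together with the compact segment $(PQ)_a$.

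\textbf{Step 1 (uniform closeness).} Parametrize $l$ by unit speed with $l_1=l([0,T])$. By Lemma~\ref{lem:convg_as_geod}, we may parametrize the $l_n$ so that $l_n\to l$ uniformly on $[-1,T+1]$; lifting to $\mathbb H^2$ shows that the derivatives converge as well. Since $a$ is a proper simple geodesic, $a\cap l([-1,T+1])$ is a finite set of transverse intersections (transverse because $a\neq l$, as $P\neq Q$). Among these are $P$, $Q$, and finitely many interior points of $l_1$, all of which lie on the middle segment $(PQ)_a$, since $b$ is simple.

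\textbf{Step 2 (persistence of intersections).} Choose $\epsilon>0$ smaller than half the minimal distance between these intersection points, and small enough that each point has an embedded disk $D$ of radius $\epsilon$ in which $a$ and $l$ are single geodesic chords crossing once. Away from these disks, $l([-1,T+1])$ is at positive distance from $a$. By $C^1$-closeness and the fact that two geodesic chords in a small convex disk cross at most once, for large $n$ the curve $l_n([-1,T+1])$ meets $a$ exactly once in each such disk and nowhere else. Let $P_n,Q_n$ be the crossings in the disks around $P$ and $Q$. Then the segment $(P_nQ_n)_{l_n}$ meets $a$ only at $P_n$, $Q_n$, and points of the middle segment of $a$ between $P_n$ and $Q_n$. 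Hence $b_n:=(\dots P_n)_a\cup(P_nQ_n)_{l_n}\cup(Q_n\dots)_a$ is simple and lies in $\mathcal B(a,l_n)$.

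\textbf{Step 3 (isotopy).} The arcs $b_n$ and $b$ agree outside the $\epsilon$-neighbourhood of $l_1$ and the two disks around $P$ and $Q$. Inside, the compact segment $(P_nQ_n)_{l_n}$ is $C^1$-close to $l_1$ and runs alongside it in a thin tubular neighbourhood, in which it crosses the same strands of $a$ in the same order. A straight-line homotopy in the tubular neighbourhood (pulled back to a lift in $\mathbb H^2$), followed by sliding the endpoints $P_n\to P$ and $Q_n\to Q$ along $a$ inside the disks, is an isotopy supported in a compact set. Therefore $b_n$ and $b$ determine the same vertex of $\mathcal G(\Sigma)$, and in particular $b_n$ is grand with the same ends as $a$.

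The main obstacle is Step 2: guaranteeing that $l_n$ acquires no spurious intersections with the unbounded pieces $a_1,a_2$ near the endpoints, and does not leave $P_n,Q_n$ on the wrong side. This is exactly where transversality and the at-most-one-crossing property of geodesic chords in a small convex disk are used. Note also that Step 3 needs only isotopy, not literal equality, which is how vertices of $\mathcal G(\Sigma)$ are identified, so membership $b\in\mathcal B(a,l_n)$ should be read up to isotopy.
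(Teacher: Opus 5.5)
Your proposal is correct and follows the paper's argument. Like the paper, you use Lemma~\ref{lem:convg_as_geod} to make a compact piece of $l_n$ uniformly close to the $l$-segment of $b$, take the nearby crossings with $a$, and observe that the resulting bicorn is isotopic to $b$. Your version is more careful than the paper's: enlarging the parameter window to $[-1,T+1]$ and ruling out spurious crossings with the unbounded pieces of $a$ fills in what the paper leaves to ``choosing $\epsilon$ sufficiently small''.
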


\begin{proof}
Fix a unit-speed parametrization on $l$, and appropriately parametrize each $l_{n}$ at unit-speed such that $l_{n}\geodto l$ as parametrized geodesics.
Let $b=a_{1}\cup\lambda_{1}\cup a_{2}\in \mc B(a,l)$, where $a_{1},a_{2}\subset a$ and $\lambda_{1}=l\left(\left[t_{1},t_{2}\right]\right)\subset l$ for some $t_{1},t_{2}\in\mathbb{R}$.  Choose $\epsilon>0$ small enough so that the $\epsilon$--neighbourhood of $\lambda_1$ is an embedded disk. By Lemma \ref{lem:convg_as_geod}, there exists $N>0$ such that $d_{\mathrm{Haus}}^\Sigma\left(l_{n}\left(\left[t_{1},t_{2}\right]\right),l\left(t_{1},t_{2}\right)\right)<\epsilon$ for every $n\geq N$, where $d_{\mathrm{Haus}}^\Sigma$ denotes the Hausdorff distance in the fixed hyperbolic metric on $\Sigma$.  By choosing $\epsilon$ sufficiently small, we can ensure that there are intersections $l_n([t_1,t_2])\cap a$ at distance at most $\varepsilon$ on $\Sigma$ from the intersections of $\lambda_1\cap a$.  Using these intersections, we see that $l_{n}\left(\left[t_{1},t_{2}\right]\right)$ forms a bicorn with $a$ that is isotopic to $b$. Thus, $b\in\mathcal{B}\left(a,l_{n}\right)$.
\end{proof}

\subsection{Ending bicorn sequences for minimal laminations}

In this subsection, given a minimal lamination $L$, we will construct a sequence of grand arcs that coarse Chabauty converges to $L$. To do this, we construct a sequence of grand arcs that geodesically converges to a leaf of $L$, so that the sequence coarse Chabauty converges to $\overline{l}=L$. First, we show that the choice of leaf doesn't affect the set of bicorn arcs. 

\begin{lem} \label{lem:bicorns_with_lamination}
If $a\in\mathcal{G}\left(\Sigma\right)$ and $l,l'\in L$ are two leaves of a minimal lamination $L$, then $\mc{B}(a,l)=\mc{B}(a,l')$.
\end{lem}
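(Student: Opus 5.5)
We will show $\mathcal{B}(a,l)\subseteq\mathcal{B}(a,l')$; the reverse inclusion follows by swapping the roles of $l$ and $l'$. The idea is to combine minimality of $L$ with Lemma~\ref{lem:bicorns_for_converging_leaves}. Since $L$ is minimal, every leaf is dense in $L$.

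\textbf{Case 1: $L$ has at least two leaves.} Then $l'$ is not a proper geodesic, and $\overline{l'}=L\ni l$. We first produce a sequence of geodesics geodesically converging to $l$, each of which is a subsegment-approximation by $l'$. Fix a unit-speed parametrization of $l$. By density of $l'$ in $L$, for each $n$ there is a point of $l'$ within $1/n$ of $l(0)$. Since $l$ and $l'$ are disjoint (or equal) leaves of a lamination, the tangent direction of $l'$ there is close to that of $l$, by the usual fact that nearby leaves of a lamination are nearly parallel. Parametrizing $l'$ from that point gives parametrizations $l'_n$ of the single geodesic $l'$ such that $l'_n$ converges to $l$ uniformly on compact intervals. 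Lifting to $\mathbb{H}^2$, these are lifts of $l'$ whose endpoints converge to those of a lift of $l$. By Lemma~\ref{lem:convg_as_geod}, the constant sequence $l'$ therefore geodesically converges to $l$ in $\Gamma(\Sigma)$; here we use that $\Gamma(\Sigma)$ is not $T_1$.

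Now apply the proof of Lemma~\ref{lem:bicorns_for_converging_leaves} to this sequence. Given $b=a_1\cup\lambda_1\cup a_2\in\mathcal{B}(a,l)$ with $\lambda_1=l([t_1,t_2])$, for $n$ large the segment $l'_n([t_1,t_2])$ lies in an embedded $\epsilon$-neighbourhood of $\lambda_1$. It meets $a$ transversely near the endpoints $P,Q$ of $\lambda_1$. The resulting bicorn of $a$ with $l'$ is isotopic to $b$, so $b\in\mathcal{B}(a,l')$.

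\textbf{Case 2: $L$ is a single leaf.} Then $l=l'$ and there is nothing to prove.

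\textbf{Main obstacle.} The delicate step is the isotopy claim in the application of Lemma~\ref{lem:bicorns_for_converging_leaves}. Two issues arise. First, the nearby segment of $l'$ must cross $a$ exactly at points near $P$ and $Q$. Second, the new arc must be simple: its interior must avoid $a_1\cup a_2$, and $l'$ must not create extra crossings with the unbounded pieces of $a$.

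We handle both using transversality of $l$ and $a$ at $P$ and $Q$, together with compactness of $\lambda_1$. We choose $\epsilon$ so that the $\epsilon$-neighbourhood of $\lambda_1$ meets $a$ only in short transverse subarcs near $a\cap\lambda_1$. Then any geodesic segment $\epsilon$-close to $\lambda_1$ in the $C^1$ sense crosses each such subarc exactly once. Simplicity of the resulting arc then follows from the simplicity of $b$, and the embedded disk provides the isotopy.
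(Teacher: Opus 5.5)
Your proof is correct and follows essentially the same approach as the paper. The paper also uses density of $l'$ in the minimal lamination to get lifts of $l'$ converging to a lift of $l$, takes a segment of $l'$ Hausdorff-close to $\lambda_1$ that crosses $a$ near $P$ and $Q$, and observes that it bounds a rectangle with $\lambda_1$, so the resulting bicorn is isotopic to $b$. Your packaging, via the constant sequence $l'\geodto l$ and Lemma~\ref{lem:bicorns_for_converging_leaves}, invokes that same approximation more cleanly, and your simplicity discussion (crossings with the unused middle segment of $a$) fills in a detail the paper leaves implicit.
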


\begin{proof}
Let $b=a_{1}\cup l_{1}\cup a_{2}\in\mathcal{B}\left(a,l\right)$, so that $a_{1},a_{2}\subset a$, with $a \cap l_1=\{P,Q\}$. We will find $b'\in\mathcal{B}\left(a,l'\right)$ homotopic to $b$.  The same argument, switching the roles of $l$ and $l'$ shows that the two sets are equal.

Since $L$ is minimal, every leaf is dense, and hence for any lift $\hat{l}$ of $l$ to the universal cover $\mathbb{H}^{2}$ of $\Sigma$, there is a sequence of lifts of $l'$ that converges to $\hat{l}$. Let $\hat{l}_{1}$ denote a (possibly unbounded) segment of $\hat{l}$ that projects to $l_{1}$. For any $\epsilon>0$, we can choose a lift of $l'$ close enough to $\hat{l}$ so that there is a segment $l'_{1}\subset l'$ such that $d_{\mathrm{Haus}}\left(l_{1},l_{1}'\right)<\epsilon$ as closed segments in $\Sigma$. In particular, by choosing $\epsilon$ small enough, there are intersections $P',Q'\in a \cap l_1'$ , each at distance at most $\epsilon$ on $\Sigma$ from $P$ and $Q$, respectively.  Moreover, we can choose $l_1'$ so that $l_{1},l_{1}'$ bound a rectangular region in $\Sigma$ with vertices $P,Q,P',Q'$. It follows that $l_{1}'$ determines a bicorn arc $b'=a_{1}'\cup l_{1}'\cup a_{2}'$ homotopic to $b$. See Figure \ref{fig:bicorns_different_leaves}.
\end{proof}

\begin{figure}[h]
    \centering
    \includegraphics[width=10 cm]{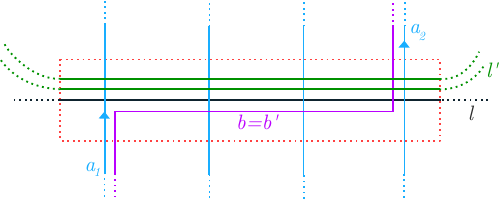}
    \captionsetup{margin=1.5 cm, justification=justified, singlelinecheck=false}
    \caption{If $l'$ is sufficienlty close to $l$, then bicorns between $a$ and $l$ are also bicorns between $a$ and $l'$.}
    \label{fig:bicorns_different_leaves}
\end{figure}

In general, if $l$ and $l'$ are disjoint geodesics, the sets $\mc{B}(a,l)$ and $\mc{B}(a,l')$ may be far apart in $\mc{G}(\Sigma)$.  For example, suppose $l$ is the leaf of a minimal lamination filling a compact witness $W$ and $l'$ is a geodesic disjoint from $W$. If $a$ is a grand arc that intersects $W$ in a single component, then every bicorn arc in $\mathcal{B}\left(a,l'\right)$ would agree with $a$ in $W$, so Lemma \ref{lem:agree-inside-witness} implies that this set is a bounded subset of $\mathcal{G}(\Sigma)$. On the other hand, we will prove in  Lemma~\ref{lem:grand_bicorns_go_to_infty} that $\mathcal{B}\left(a,l\right)$ is not bounded.

Given any lamination $L$, let 
\[
\mathcal{B}\left(a,L\right):=\underset{l\in L}{\bigcup}\mathcal{B}\left(a,l\right).
\]
If $L$ is minimal, then this set is equal to $\mathcal{B}\left(a,l\right)$ for any $l\in L$.

\begin{defn}\label{def:ending}
    Suppose $a\in \mathcal{G}(\Sigma)$, $L$ is a minimal lamination on $\Sigma$, and $l\in L$. A sequence $\left(b_n\right)_{n\in \mathbb{N}}\subset \mathcal{B}(a,l)$ 
    is an \textit{ending bicorn sequence between $a$ and $L$} if there exists $R\in \mathbb N$ such that:
        \begin{enumerate}[label=(\arabic*)]
            \item $b_0=a$,
            \item $b_n \geodto l$, 
            \item $\left(b_n\right)_{n\in \mathbb{N}}$ is an $R$--quasi-path, i.e., $d_\mathcal{G}\left(b_n , b_{n+1}\right)<R$  for all $n\in \mathbb N$, and 
            \item the $l$-part of $b_i$ is contained in the $l$-part of $b_{i+1}$.
        \end{enumerate} 
\end{defn}

In Proposition \ref{prop:bicorns_geodesic_conv} below, we shall explicitly construct ending bicorn sequences between a grand arc and a minimal lamination. To do so, we first need to choose an appropriately generic leaf of the lamination. 

The statement and proof for the lemma below were communicated to the authors by Mladen Bestvina, Federica Fanoni, Kasra Rafi, and Jing Tao.

\begin{lem}\label{lem:both_half_leaves_dense}
    Suppose $L$ is a minimal lamination on $\Sigma$ which is not a single proper geodesic. Then for any compact segment $T$ transverse to $L$, the set of points in $T\cap L$ corresponding to leaves both of whose half-leaves are dense is a dense $G_\delta$ subset of $T\cap L$. In particular, $L$ contains a non-boundary leaf both of whose half-leaves are dense in $L$.
\end{lem}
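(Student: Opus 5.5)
The plan is a Baire category argument on $K:=T\cap L$. First, $K$ is a closed subset of the compact segment $T$, hence compact. By the discussion preceding Definition~\ref{def:support}, $L$ is minimal and not a single proper geodesic, so it has no isolated leaves, and $K$ is a Cantor set. In particular $K$ is a Baire space.

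For each point $x\in K$, let $l_x$ be the leaf through $x$, oriented so that it crosses $T$ in a fixed direction. Write $l_x^{+}$ and $l_x^{-}$ for the two half-leaves starting at $x$. Since $L$ is second countable, fix a countable basis $\{V_j\}_{j\in\mathbb N}$ of open subsets of $\Sigma$ with $V_j\cap L\neq\emptyset$; we may take each $V_j$ to be a small flow-box meeting $L$. Define
\[
U_j^{+}=\{x\in K : l_x^{+}\ \text{meets}\ V_j\},
\]
and define $U_j^{-}$ analogously. Then $l_x^{+}$ is dense in $L$ exactly when $x\in\bigcap_j U_j^{+}$, and the same holds for $l_x^{-}$. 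The target set is therefore $\bigcap_j (U_j^{+}\cap U_j^{-})$, and it suffices to show that each $U_j^{\pm}$ is open and dense in $K$.

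\emph{Openness.} If $l_x^{+}$ meets $V_j$ after time $t$, then by continuous dependence of leaves on the transversal point (the local product structure of $L$), every leaf through a nearby $y\in K$ fellow-travels $l_x^{+}$ for time $t+1$. Hence $l_y^{+}$ also meets the open set $V_j$.

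\emph{Density.} This is the main step. Let $x\in K$ and let $I\subset T$ be a small open interval around $x$. Since $L$ is minimal, the leaf $l$ through any point of $V_j\cap L$ is dense in $L$; the point is to get density in forward time from a point of $I$. I would take a point $z\in V_j\cap L$ and a leaf $m$ through $z$. By minimality, $m$ is dense, so at least one of its half-leaves from $z$ returns to $I\cap K$. If the backward half-leaf of $m$ from $z$ hits $I$ at a point $y$, then, with orientations chosen compatibly, $l_y^{+}$ passes through $z\in V_j$, so $y\in U_j^{+}\cap I$. The obstacle is that we may have no control over which half-leaf returns. To handle this, I would argue that the set of points whose backward half-leaf avoids $I$ forms a closed sublamination-type set, so one half of every leaf must be dense. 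More carefully: the closure of each half-leaf is a sublamination (its $\omega$-limit set together with the half-leaf accumulates to a closed invariant set). The limit set of a half-leaf is itself a nonempty sublamination provided the half-leaf is not proper, and by minimality it equals $L$. A half-leaf that is proper would force $L$ to contain a leaf escaping to an end, which can be excluded using closedness and minimality (the leaf would then be a proper geodesic and be all of $L$). Hence every half-leaf is dense, and density of $U_j^{\pm}$ follows by the argument above with either half-leaf.

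Baire's theorem then shows that $\bigcap_j(U_j^{+}\cap U_j^{-})$ is a dense $G_\delta$ subset of $K$. For the ``in particular'' statement, note that boundary leaves are countable in number: each boundary leaf borders a complementary region of $\Sigma\setminus L$, there are countably many such regions, and each has countably many boundary leaves. Each leaf meets the compact segment $T$ in countably many points, so the boundary leaves contribute only a countable subset of $K$. A countable subset of a perfect Polish space is meagre, so removing it leaves a nonempty (indeed comeagre) set. Any point of this set lies on a non-boundary leaf both of whose half-leaves are dense, as required.
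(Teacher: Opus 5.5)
Your openness argument, the reduction to $\bigcap_j(U_j^{+}\cap U_j^{-})$, and the countability argument for boundary leaves are fine. The density step, however, has a genuine gap, and it is the heart of the lemma.

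You conclude that \emph{every} half-leaf of $L$ is dense by claiming that a proper half-leaf forces its leaf to be a proper geodesic. That does not follow. If $l_x^{+}$ is proper, the opposite half-leaf $l_x^{-}$ can still be non-proper. In that case, by your own limit-set observation, $l_x^{-}$ is dense, so the leaf $l_x$ is dense and minimality is not violated, even though $l_x^{+}$ runs off to an end. Nothing in closedness or minimality rules out nearby leaves fellow-travelling $l_x^{+}$ far out toward a non-cusp end of the infinite-type surface and then returning. This is exactly the infinite-type phenomenon the lemma must handle, and the paper later explicitly allows leaves with a non-dense half-leaf. There is also a warning sign in your own argument: if every half-leaf were dense, the set in question would be all of $K$ and the Baire argument would be pointless. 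Your first attempt, via a leaf through $z\in V_j$, only produces a point of $(U_j^{+}\cup U_j^{-})\cap I$, as you note. So density of $U_j^{+}$ remains unproved.

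What your limit-set observation does give is this: a non-dense half-leaf is proper, hence meets $T$ in only finitely many points. The missing idea is how to exploit that.
\begin{enumerate}
\item Suppose $U_j^{+}$ misses a nonempty relatively open set $B\subseteq K$. Then for every $x\in B$, $l_x^{+}$ meets $T$ finitely often and $l_x^{-}$ is dense.
\item Let $A=\{x\in B: l_x^{+}\text{ has no return to }B\}$. Following each $l_x^{+}$ to its last return to $B$ shows that $B$ is covered by the preimages of $A$ under the identity and the countably many positive holonomy maps.
\item By Baire, $A$ is dense in some nonempty open $B_0\subseteq B$. Then no $l_x^{+}$ with $x\in B_0$ returns to $B_0$; otherwise a nearby point of $A$ would return as well.
\item Now take $x\in B_0$. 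The dense half-leaf $l_x^{-}$ meets $B_0$ at some $y\neq x$. Either $x\in l_y^{+}$, contradicting the choice of $B_0$, or $l_y^{+}$ is a tail of $l_x^{-}$, hence dense, contradicting $y\in B$.
\end{enumerate}
This is essentially the argument the paper uses to prove density of its sets $E_n^{\pm}$.
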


\begin{proof}
Let $T$ be a compact transversal to $L$, and let $K=T\cap L$. Note that $K$ does not have isolated points, as every leaf of $L$ is dense in $L$. Choose local orientations of leaves of $L$ at points on $K$. For each point $x\in K$, denote by $r_{x}^{+},r_{x}^{-}$  the two half-leaves based at $x$, with respect to the local orientation. 

Let $\left\{ U_{n}\right\} _{n=1}^{\infty}$ be a countable basis for the topology of $L$. Define 
\[
E_{n}^{\pm}:=\left\{ x\in K:r_{x}^{\pm}\cap U_{n}\neq\emptyset\right\} .
\]
Then each $E_{n}^{\pm}$ is relatively open in $K$: hitting a fixed relatively open set of $L$ after traveling along a leaf for a finite length is stable under small transverse perturbations in $T$. Observe that for any $x\in K$, 
\begin{enumerate}
\item since the leaf of $L$ through $x$ is dense in $L$, either $x\in E_{n}^{+}$ or $x\in E_{n}^{-}$ (possibly both);
\item $x\in E_{n}^{+}\cap E_{n}^{-}$ for every $n\in\mathbb{N}$ if and only if both half-leaves $r_{x}^{\pm}$ are dense in $L$; and
\item $x\in E_{n}^{+}$ for all $n\in\mathbb{N}$ if and only if $r_{x}^{+}$ is dense in $L$. Similarly true for $r_{x}^{-}$.
\end{enumerate}
We wish to show that 
\[
G=\bigcap_{n=1}^{\infty}\left(E_{n}^{+}\cap E_{n}^{-}\right)
\]
is a dense $G_{\delta}$ subset of the compact Baire space $K$, which, in particular, implies that $G\neq\emptyset$. To show this, it suffices to show that each $E_{n}^{\pm}$ is dense in $K$ for every $n\in\mathbb{N}$.  We will show this holds for $E_n^+$; an analogous argument holds for $E_n^-$.

Suppose for the sake of contradiction that $E_{n}^{+}$ is not dense in $K$ for some $n\in\mathbb{N}$, so that there is a non-empty relatively open set $B\subset K$ disjoint from $E_{n}^{+}$. That is to say, for every $x\in B$, we have $r_{x}^{+}\cap U_{n}\neq\emptyset$; in particular, $r_{x}^{+}$ is not dense in $L$. Since the leaf of $L$ through $x$ is dense in $L$, this implies that $r_{x}^{-}$ is dense in $L$. Moreover, since $L$ is minimal and $r_{x}^{+}$ is not dense in $L$, this implies that $r_{x}^{+}$ is isolated in $L$, so that $r_{x}^{+}\cap K$ is finite.
\begin{claim}
\label{claim:open_subset_inside_B}There is a non-empty relatively open set $B_{0}\subset B$ such that  $r_{x}^{+}\cap B_{0}=\emptyset$ for any $x\in B_{0}$.
\end{claim}

\begin{proof}[Proof of Claim \ref{claim:open_subset_inside_B}]
Let $A=\left\{ x\in B\mid r_{x}^{+}\cap B=\emptyset\right\} $. Since $r_{x}^{+}\cap B$ is finite for every $x\in B$, we can follow along $r_{x}^{+}$ to reach the last point $y$ of $r_{x}^{+}\cap B$ along $r_{x}^{+}$. Then $r_{y}^{+}\cap B=\emptyset$. So, if $\mathcal{H}$ is the countable family of positive holonomy maps from open subsets of $K$ to $K$, we get that 
\begin{equation}\label{eqn:countUnion}
B\subset\bigcup_{h\in\mathcal{H}\cup\left\{ \mathrm{id}\right\} }h^{-1}(A).
\end{equation}

If $A$ were meager in $B$, then each $h^{-1}(A)$ would be meager in $B$, as each $h\in\mathcal{H}$ is a local homeomorphism. But then \eqref{eqn:countUnion} implies that $B$ is meager in itself, which is a contradiction. 

Thus, $A$ is not meager in $B$, and so the closure $\mathrm{cl}_{B}(A)$ of $A$ in $B$ has non-empty interior. Let 
\[
B_{0}\subset\mathrm{int}_{B}\left(\mathrm{cl}_{B}(A)\right)
\]
be a non-empty relatively open set. Then $A$ is dense in $B_{0}$. We claim that $B_{0}$ satisfies the hypothesis of Claim \ref{claim:open_subset_inside_B}.

Otherwise, for some $x\in B_{0}$, a positive holonomy map $h$ would be defined near $x$ and satisfy $h(x)\in B_{0}$. Since $h^{-1}\left(B_{0}\right)$ is an open neighbourhood of $x$ in its domain and $A$ is dense in $B_{0}$, we may choose $a\in A\cap B_{0}\cap h^{-1}\left(B_{0}\right)$. Then $r_{a}^{+}$ has a later intersection with $B_{0}\subset B$, contradicting the definition of $A$.
\end{proof}
Let $B_{0}\subset B$ be the non-empty relatively open subset constructed in Claim \ref{claim:open_subset_inside_B}. Choose $x\in B_{0}$. Since $r_{x}^{-}$ is dense, there exists some $y\in r_{x}^{-}\cap B_{0}$. Since $r_{x}^{-}$ is dense in $K$, so is $r_{x'}^{-}$ for each $x'\in r_{x}^{-}$. Thus, we can further assume $y\neq x$ by choosing $y$ sufficiently far along $r_{x}^{-}$. But in that case, $x\in r_{y}^{+}$, so $r_{y}^{+}\cap B_{0}\neq\emptyset$. This contradicts the definition of $B_{0}$. Thus, $E_n ^{\pm}$ is dense in $K$ for all $n\in\mathbb{N}$, and $G$, the set of points of $T\cap L$ on leaves both of whose half-leaves are dense, is a dense $G_\delta$ subset of $K$.

In particular, for any compact transversal $T$, the collection of boundary leaves in $L$ intersects $T$ in a countable collection of points. Since a countable set cannot be a dense $G_\delta$ subset of $T\cap L$ (which is homeomorphic to a Cantor set), there exists a non-boundary leaf of $L$ with both half-leaves dense.
\end{proof}

 We will use this in the proposition below to show that there is an ending bicorn sequence which coarse Chabauty converges to a given minimal lamination.

\begin{prop} \label{prop:bicorns_geodesic_conv}
 Let $L$ be a minimal lamination on $\Sigma$ which is not a single proper geodesic, and let $l\in L$ be a non-boundary leaf such that both half-leaves are dense. If $a\in \mc G(\Sigma)$ intersects $\supp(L)$ transversely, then there exists an ending bicorn sequence sequence $\left\{ b_{n}\right\} _{n\in\mathbb{N}}\subset\mathcal{B}\left(a,l\right)$.  In particular, $b_n\CCto L$. 
\end{prop}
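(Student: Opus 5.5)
We build the sequence by exhausting $l$ with an increasing family of compact segments. Fix a unit-speed parametrization of $l$ and a point $l(0)$ lying in $\supp(L)$. Because $a$ meets $\supp(L)$ transversely and $L$ meets every proper arc in its support (Lemma~\ref{lem:witness-filling}(3)), $a$ intersects $L$. Since both half-leaves of $l$ are dense in $L$, the rays $l([0,\infty))$ and $l((-\infty,0])$ therefore each cross $a$ infinitely often in both directions; this is a local product structure argument near a point of $a\pitchfork L$.

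For each $n$, take the segment $l([-T_n,T_n])$ with $T_n\to\infty$. On it, choose the intersection points with $a$ that are ``outermost'' relative to $a$: orient $a$, and let $P_n,Q_n$ be the first and last points of $a\cap l([-T_n,T_n])$ along $a$. Then define
\[
b_n=(\dots P_n)_a\cup (P_nQ_n)_l\cup(Q_n\dots)_a .
\]
This is the same construction as in the proof of Lemma~\ref{lem:bicorn_changing_first_arc}. It is simple, and it is grand because its ends are the ends of $a$. Set $b_0=a$.

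\textbf{Monotonicity (condition (4)).} To ensure that the $l$-part of $b_n$ is contained in that of $b_{n+1}$, we need more care. A cleaner choice is to grow the $l$-segment directly. Start from one intersection point $P\in a\cap l$. Inductively extend the segment $\lambda_n=l([s_n,t_n])$ to the next intersection point with $a$ on one side, alternating sides so that $s_n\to-\infty$ and $t_n\to\infty$. Whenever $\lambda_n\cup a$ contains a simple bicorn with $l$-part exactly $\lambda_n$, that bicorn becomes $b_n$.

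For the bicorn to be simple, the endpoints of $\lambda_n$ must be extreme along $a$ among the points of $\lambda_n\cap a$. We therefore pass to the subsequence of times at which the newly added endpoint is a new extreme (first or last) point along $a$. Infinitely many such times should exist, because $l$ is dense and nowhere-contained, so it hits $a$ arbitrarily far out along both ends within the compact part $a\cap\supp(L)$. Handling this selection, and ensuring both endpoints are extreme simultaneously, is the technical heart of the construction. It is where the non-boundary hypothesis enters: it guarantees approach from both sides, so the endpoints can be taken extreme along $a$. Once monotone segments $\lambda_n\nearrow l$ are in hand, $b_n\geodto l$ follows from Lemma~\ref{lem:convg_as_geod}, since $b_n$ agrees with $l$ on $[s_n,t_n]$.

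\textbf{Quasi-path property (condition (3)).} This is the main obstacle. Consecutive bicorns $b_n,b_{n+1}$ share the segment $\lambda_n\subset\lambda_{n+1}$, and by Lemma~\ref{lem:bicorn_subsets} we have $b_n\in\mathcal B(a,b_{n+1})$. First I would try to show that $b_n$ and $b_{n+1}$ intersect in at most one or two points, or that there is an explicit bicorn disjoint from both. The idea is that $b_{n+1}$ differs from $b_n$ only by trading a subarc of $a$ for a piece of $l$ between consecutive new intersections. Lemma~\ref{lem:grand_arc_dist_2} then gives $d_{\mathcal G}(b_n,b_{n+1})\le 2$ in the case of one intersection point. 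If the extreme-point subsequence skips many steps, I would instead bound the distance by inserting the intermediate bicorns of $\mathcal B(a,b_{n+1})$ (the unicorn/bicorn surgery as in Lemma~\ref{lem:unicorns_vs_bicorns}) and using only extreme-point steps, which each change the arc in a controlled way.

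\textbf{Conclusion.} Finally, $b_n\geodto l$ gives $b_n\CCto\overline l=L$ by Definition~\ref{def:CCconvergence} and minimality, because every leaf of $L$ is a geodesic limit of leaves approximating $l$.
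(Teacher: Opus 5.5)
\textbf{There is a genuine gap, and it lies in the step you label the ``technical heart''.} You grow $\lambda_n$ on a fixed schedule (one new intersection point at a time, alternating sides) and then pass to the times at which both endpoints are the first and last points of $\lambda_n\cap a$ along $a$. Nothing forces infinitely many such times. New extremes along $a$ occur at sparse, unrelated times on the two half-leaves. Moreover, the points you pass over on one side can fall outside the range of the opposite endpoint and destroy its extremality. Even when such windows exist, consecutive selected bicorns can cross many times, so your plan for condition (3) does not yet give an argument.

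The missing idea is to grow the $l$-segment \emph{adaptively}. Given $b_i=a_i\cup l_i\cup a_i'$ with $X=a_i\cap l_i$ and $Y=a_i'\cap l_i$, let $X',Y'$ be the first points beyond $X$, resp.\ $Y$, along $l$ that lie on $a_i\cup a_i'$. These are exactly the next points outside the current range along $a$.
\begin{itemize}
\item If $X'$ and $Y'$ lie on different rays, take $l_{i+1}=(X'Y')_l$.
\item If both lie on $a_i$, take $(X'Y)_l$; if both lie on $a_i'$, take $(XY')_l$.
\end{itemize}
By the choice of $X',Y'$, the new bicorn is simple, its $l$-part contains $l_i$, and it meets $b_i$ at most once. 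Lemma~\ref{lem:grand_arc_dist_2} then gives (3) with $R=2$ immediately. The non-boundary hypothesis and density of both half-leaves ensure that $a_i$ and $a_i'$ each meet both components of $l\setminus l_i$. Hence $X',Y'$ always exist, the one-sided case cannot persist forever, and the segments exhaust $l$.

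\textbf{A second gap is the geodesic convergence.} Lemma~\ref{lem:convg_as_geod} is about geodesics, and $b_n\in\Gamma(\Sigma)$ means the geodesic representative, not the concatenation that agrees with $l$ on $[s_n,t_n]$. The straightened lift of $b_n$ has as endpoints at infinity the endpoints of the two lifts of $a$ through the ends of $\tilde\lambda_n$. That the crossing points run off to $\tilde l(\pm\infty)$ does not by itself put these endpoints near $\tilde l(\pm\infty)$. You must rule out lifts of $a$ that cross $\tilde l$ at angles tending to $0$ and keep an endpoint away; this is exactly the scenario in the paper's Figure~\ref{fig:bad_angle_bicorns}. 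The paper handles this as follows: at a point $P_0\in a\cap l$ accumulated from both sides along $a$, nearby intersections have angles close to a fixed $\theta>0$. These produce lifts of $a$ crossing $\tilde l$ at angles bounded below, arbitrarily far toward both ends. An alternative is that lifts of $a$ crossing $\tilde l$ are pairwise disjoint and locally finite (since $a$ has an embedded strip neighbourhood), hence nested, so they must shrink to the endpoints of $\tilde l$. Your last step, from $b_n\geodto l$ to $b_n\CCto L$ via density of $l$ in $L$, is fine.
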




The choice of a non-boundary leaf with both half-leaves dense in Proposition~\ref{prop:bicorns_geodesic_conv} is important: if $l$ is a boundary leaf of the lamination, then there is a first point of $a\cap l$ along $a$. Similarly, if $l$ has a half-leaf that is not dense in $L$, then there may be a first point of intersection of $a$ and $l$ along $l$. In either case, no sequence in $\mc{B}(a,l)$ can converge to $l$.

\begin{proof}[Proof of Proposition \ref{prop:bicorns_geodesic_conv}]
Since $a$ is a grand arc and the metric on $\Sigma$ is Nielsen-convex, there exists $\epsilon>0$ such that $N_{\epsilon}\left(a\right)\subset\Sigma$ is an embedded infinite rectangular strip. It follows that any two points of $a\cap l$ are distance at least $2\epsilon$ along $l$. 

Since $l$ is not a boundary leaf, there exists a point $P_{0} \in a\cap l$ such that points of $a\cap l$ accumulate to $P_{0}$ from both sides along $a$. Label the points of intersection of $a\cap l$ along $l$ as $\left\{ P_{n}\right\} _{n\in\mathbb{\mathbb{Z}}}$ by picking an orientation on $l$. The segment $l_{1}=\left(P_{0}P_{1}\right)_{l}$ defines a bicorn arc $b_{1}=a_{1}\cup l_{1}\cup a_{1}'$, as $a\cap\left(P_{0}P_{1}\right)_{l}=\emptyset$ by construction. Thus, $|a\cap b_1 |\leq 1$.

Inductively, suppose $b_{i}=a_{i}\cup l_{i}\cup a_{i}'$ is defined, with $l_{1}\subset l_{2}\subset\dots\subset l_{i}$ and $X=a_i\cap l_i$ and $Y=a_i'\cap l_i$. Since $l$ is not a boundary leaf, both $a_{i}$ and $a_{i}'$ intersect $l\setminus l_{i}$, as $a$ does not have a first point of intersection with $l$ along $a$ by assumption. Since both the components of $l\setminus l_i$ (that is, the two half-leaves of $l$) are dense in $L$, $a_{i}$ and $a_{i}'$ cannot both intersect just one of the components of $l\setminus l_{i}$. Let $X',Y'$ be the first points of $b_{i}\cap l$ along $l$ on either side of $X$ and $Y$, so that the points appear in the order $X'-X-Y-Y'$ along $l$. There are two cases to consider, depending on where $X',Y'$ lie.
\begin{casenv}
\item Suppose $X'\in a_{i}$ and $Y'\in a_{i}'$ or $X'\in a_{i}'$ and $Y'\in a_{i}$. Then $l_{i}=\left(X'Y'\right)_{l}$ defines a bicorn arc $b_{i+1}$ that intersects $b_{i}$ at most once. Moreover, $l_{i}\subset l_{i+1}$ by construction. See Figure \ref{fig:bicorns_algo} (left).
\item Suppose $X',Y'\in a_{i}$ or $X',Y'\in a_i'$.  Then $l_{i}=\left(X'Y\right)_{l}$ or $l_{i}=\left(XY'\right)_{l}$, respectively, defines a bicorn arc $b_{i+1}$ that intersects $b_{i}$ at most once. Moreover, $l_{i}\subset l_{i+1}$ by construction. See Figure \ref{fig:bicorns_algo} (right). 
\end{casenv}

In either case, we have constructed a sequence $b_0,\dots, b_{i+1}$ that satisfies Definition~\ref{def:ending}(1) and (4).  Moreover, since $|b_i\cap b_{i+1}|\leq 1$, we have $d_{\mc G}(b_i,b_{i+1})\leq 2$ by Lemma~\ref{lem:grand_arc_dist_2}, and hence Definition~\ref{def:ending}(3) holds with $R=2$.

Since for each $i$, the subpaths $a_i$ and $a_i'$ must both intersect both components of $l\setminus l_i$, the induction step cannot always land in Case 2.  In particular, the segments $l_i$ eventually cover all of $l$.

\begin{figure}[h]
    \centering
    \begin{subfigure}
    \centering
    \def\svgwidth{6.5 cm}
        \subimport{Pictures}{bicorns_algo_case_2.pdf_tex}
    \end{subfigure}\hfill
    \begin{subfigure}
    \centering
    \def\svgwidth{6.5 cm}
        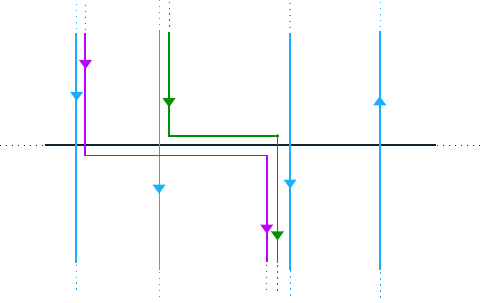
    \end{subfigure}
    
    \caption{Constructing the next bicorn $b_{i+1}$ in Case 1 (left) and Case 2 (right).}
    \label{fig:bicorns_algo}
\end{figure}

It remains to show that this sequence $(b_i)_{i\in \mathbb N}$ geodesically converges to $l$. Pick a lift $\tilde{l}$ of $l$ in the universal cover $\mathbb{H}^{2}$. Each point of $a\cap l$ corresponds to a lift of $a$ intersecting $\tilde{l}$ in $\mathbb{H}^{2}$. The segments $l_{i}$ defined above correspond to a nested sequence of segments $\tilde{l}_{i}$ along $\tilde{l}$, and the bicorn arc $b_{i}$ is obtained by joining $\tilde{l}_{i}$ to the corresponding lifts of $a$; see Figure \ref{fig:bad_angle_bicorns}.  If the lifts $\tilde{b}_{i}$ of the bicorns $b_{i}$ do not converge to $\tilde{l}$, then it must be the case that the angles of intersection of $a$ and $l$ go to $0$ along both sides of $l$.

\begin{figure}[h]
    \centering
    \def\svgwidth{8 cm}
    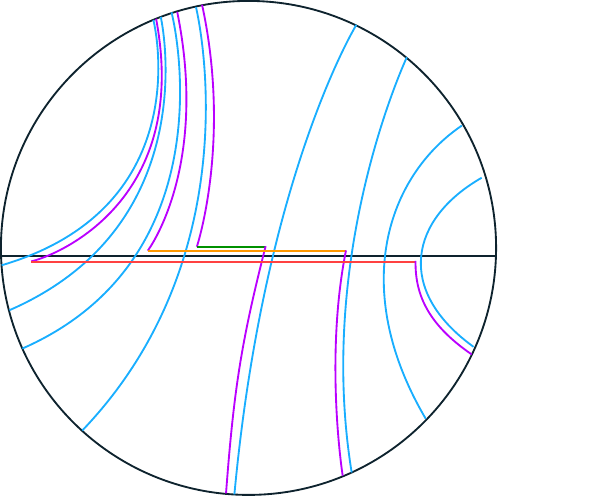
    \captionsetup{margin=1.5 cm, justification=justified, singlelinecheck=false}
    \caption{A sequence of bicorn arcs defined by lifts $\tilde{l}_i\subset \tilde{l}$ which do not converge to $\tilde{l}$, because the angles between lifts of $a$ and $\tilde{l}$ go to $0$ towards the left.}
    \label{fig:bad_angle_bicorns}
\end{figure}

Let $\theta$ be the angle of intersection of $a$ and $l$ at $P_{0}$. In a small neighbourhood of $P_{0}$ along $a$, there are infinitely many segments of $l$ that intersect $a$ at an angle at least $\theta-\kappa>0$, for some small $\kappa>0$.  Along $l$, these intersection points must limit to both endpoints of $l$, as $a\cap l$ is discrete along $l$. In particular, there are infinitely many lifts of $a$ that intersect $\tilde{l}$ at an angle at least $\theta-\epsilon>0$, and these intersections limit to both endpoints of $\tilde l$. See Figure~\ref{fig:angles_with_arc}.

\begin{figure}[h]
    \centering
    \def\svgwidth{8 cm}
    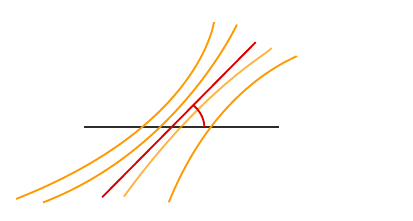
    \captionsetup{margin=1.5 cm, justification=justified, singlelinecheck=false}
    \caption{Since $l$ limits onto itself, the angles of $a\cap l$ near $P_0$ along $a$ would be very close to $\theta$.}
    \label{fig:angles_with_arc}
\end{figure}

Therefore, the angles of intersection of $a$ and $l$ cannot converge to $0$ along either side of $\tilde{l}$, and hence $\tilde b_i \geodto \tilde l$.  It follows that $b_i \geodto l$, completing the proof.
\end{proof}

\begin{rmk}
    In the construction above, if $l$ were either a boundary leaf or a leaf with a half-leaf that is not dense, then the resulting sequence of bicorn arcs would converge to a geodesic asymptotic to $l$ (and hence, $L$), but not necessarily to $l$ itself. In particular, we would still get a sequence of bicorn arcs which converges to $L$ in the coarse Chabauty topology.
\end{rmk}

\subsection{Ending bicorn sequences with grand ending laminations}

In this subsection, we show that an ending bicorn sequence between a grand arc $a$ and a leaf of a grand ending lamination $L$ is at finite Hausdorff distance from the full set of bicorns between $a$ and that leaf; see Proposition \ref{prop:map_well_def}. 

Our first goal is to show that such ending bicorn sequences are not bounded in $\mathcal{G}\left(\Sigma\right)$. The proof of is similar to \cite[Lemma~3.5]{Pho-On}.

\begin{lem}\label{lem:grand_bicorns_go_to_infty}
Suppose $a\in\mathcal{G}\left(\Sigma\right)$, $L\in\mathcal{GEL}\left(\Sigma\right)$ and $\left(b_{n}\right)_{n\in\mathbb{N}}$ is an ending bicorn sequence between $a$ and $L$. Then $\lim_{n\to\infty}d_{\mathcal{G}}\left(a,b_{n}\right)=\infty$.

\end{lem}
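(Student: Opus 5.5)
I would argue by contradiction, following Pho-On's argument for peripherally ending laminations. Suppose $d_{\mathcal G}(a,b_n)\not\to\infty$. Then there is a constant $K$ and a subsequence $(b_{n_k})$ with $d_{\mathcal G}(a,b_{n_k})\leq K$ for all $k$. The aim is to show that a bounded sequence of grand arcs converging to the leaf $l$ forces a grand arc disjoint from $L$, contradicting that $L$ is witness-filling.

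\textbf{Step 1: bounded-length paths.} For each $k$ choose a geodesic path
\[
b_{n_k}=y^k_0,\,y^k_1,\,\dots,\,y^k_{m_k}=a,\qquad m_k\leq K,
\]
in $\mathcal G(\Sigma)$. After passing to a subsequence we may take $m_k=m$ constant. By compactness of $\Lambda(\Sigma)$ in the Chabauty topology, we may further pass to a subsequence so that each $y^k_j$ Chabauty converges to a lamination $M_j$. Here $M_m=\{a\}$, while $M_0$ contains $\overline{l}=L$, because $b_{n_k}\geodto l$ and Lemma~\ref{lem:geometric_convergence}(2) applies.

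\textbf{Step 2: induct down the path.} Consecutive arcs $y^k_j,y^k_{j+1}$ are disjoint for every $k$. I would prove by induction on $j$ that $L\subseteq M_j$, with $j=0$ as the base case. For the inductive step, suppose $L\subseteq M_j$. Lemma~\ref{lem:disjt_convg} gives $M_j\pitchfork M_{j+1}=\emptyset$, so in particular $L\pitchfork M_{j+1}=\emptyset$. The limit $M_{j+1}$ need not be connected, but Lemma~\ref{lem:grandarctograndlam} (applied, after modification, to the relevant component) shows that it meets every witness. Lemma~\ref{lem:sublamination}, or directly Corollary~\ref{cor:disjointarcTOsublamination}, then gives $L\subseteq M_{j+1}$.

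\textbf{Step 3: conclude.} The induction yields $L\subseteq M_m=\{a\}$, which is impossible: $L$ is minimal and witness-filling, so it is not the single proper geodesic $a$. Indeed $a$ fills no witness, since $a$ is disjoint from other grand arcs.

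\textbf{Main obstacle.} The delicate point is Step 2. Corollary~\ref{cor:disjointarcTOsublamination} is stated with the roles reversed: it requires the sequence converging to the minimal lamination to be one of the two sequences, whereas here we only know $L\subseteq M_j$. I would therefore reprove the needed statement directly, in the following form.

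\emph{If $M$ contains $L$ and $M'$ is a Chabauty limit of grand arcs with $M\pitchfork M'=\emptyset$, then $L\subseteq M'$.}

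The plan for this:
\begin{enumerate}
\item Since $M'\pitchfork L=\emptyset$ and each $y^k_{j+1}$ is a grand arc meeting the witness $\mathrm{supp}(L)$, the argument in the proof of Lemma~\ref{lem:grandarctograndlam} shows that $M'$ meets $\mathrm{supp}(L)$ nontrivially. I would restrict to a compact witness $W\subseteq\mathrm{supp}(L)$ and pass to a Hausdorff limit of $y^k_{j+1}\cap W$.
\item By Lemma~\ref{lem:witness-filling}(3), a leaf of $M'$ that enters $\mathrm{supp}(L)$ and is disjoint from $L$ cannot be a proper arc there. It must therefore spiral onto $L$, so its closure contains $L$ by minimality, exactly as in the proof of Lemma~\ref{lem:sublamination}.
\end{enumerate}

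This is where I expect the real work to lie. The connectedness hypotheses in the definition of grand lamination, and the possibility of leaves of $M'$ escaping to ends, must be handled using the compact witness $W$.
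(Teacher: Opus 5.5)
Your proof is correct and is essentially the paper's argument. You descend along a bounded-length geodesic from $b_n$ to $a$, take Chabauty limits $M_j$, propagate $L\subseteq M_j$ via Lemma~\ref{lem:disjt_convg} and Lemma~\ref{lem:sublamination}, and reach the contradiction $L\subseteq\{a\}$. The differences are in your favour: you skip the paper's preliminary use of Corollary~\ref{cor:unicorn_hausdorff_dist} and Lemma~\ref{lem:bicorn_subsets} to get $\sup_n d_{\mathcal G}(a,b_n)<\infty$ (a bounded subsequence suffices), and your Step~2 already dissolves the ``obstacle'' you raise, since $L\subseteq M_j$ and $M_j\pitchfork M_{j+1}=\emptyset$ give $L\pitchfork M_{j+1}=\emptyset$, while the proof of Lemma~\ref{lem:sublamination} only needs $M_{j+1}$ to meet $\mathrm{supp}(L)$ (which the proof of Lemma~\ref{lem:grandarctograndlam} supplies without using connectedness); this is precisely the form needed where the paper cites Corollary~\ref{cor:disjointarcTOsublamination} somewhat loosely, so no separate reproof is required.
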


\begin{proof}
Suppose toward a contradiction that $\lim_{n\to\infty}d\left(a,b_{n}\right)\neq\infty$. Let $M_0>0$ be the constant in Corollary \ref{cor:unicorn_hausdorff_dist}. Corollary \ref{cor:unicorn_hausdorff_dist} and Lemma \ref{lem:bicorn_subsets} imply that $d_{\mathcal{G}}\left(a,b_{i}\right)\leq d_{\mathcal{G}}\left(a,b_{j}\right)+M_0$ for all $j\geq i$, so that $\sup d_{\mathcal{G}}\left(a,b_{n}\right)<\infty$.  In particular, there is no unbounded subsequence of $\left\{ b_{n}\right\} _{n\in\mathbb{N}}$. Therefore there exists some $N>0$ and an infinite subsequence $\left\{ b_{n}'\right\} _{n\in\mathbb{N}}$ of $\left\{ b_{n}\right\} _{n\in\mathbb{N}}$ such that $d_{\mathcal{G}}\left(a,b_{n}'\right)=N$. For each $n\in\mathbb{N}$, let $b_{n}^{1}\in\mathcal{G}\left(\Sigma\right)$ be a point on the geodesic between $a$ and $b_{n}'$ in $\mathcal{G}\left(\Sigma\right)$, with $d\left(a,b_{n}^{1}\right)=N-1$. By sequential compactness of $\Lambda\left(\Sigma\right)$, a subsequence of $\left\{ b_{n}^{1}\right\} _{n\in\mathbb{N}}$ converges to a lamination $L_{1}$.  By Corollary~\ref{cor:disjointarcTOsublamination}, we have $L\subseteq L_1$.

Repeating this process, for each $0<i<N$, we obtain a sequence of grand arcs $\left\{ b_{n}^{i}\right\} _{n\in\mathbb{N}}$ with $d_{\mathcal{G}}\left(a,b_{n}^{i}\right)=N-i$. As in the previous paragraph, a subsequence must converge to a lamination $L_{i}$ with $L\subset L_i$. However, in the final step, we have that $b_{n}^{N}=a$ for all $n\in\mathbb{N}$, which converges to the lamination $L_{N}=\left\{ a\right\} $. Clearly $L\nsubseteq\left\{ a\right\} $, which is a contradiction.
\end{proof}

We now show that ending bicorn sequences converge to a point in $\partial \mc G(\Sigma)$.
\begin{lem}\label{lem:defines_a_point}
Let $a\in\mathcal{G}\left(\Sigma\right)$, $L\in\mathcal{GEL}\left(\Sigma\right)$ and $l\in L$ be a non-boundary leaf with both half-leaves dense. An ending bicorn sequence $\left(b_{n}\right)_{n\in\mathbb{N}}\subset\mathcal{B}\left(a,l\right)$  converges to a point in $\partial\mathcal{G}\left(\Sigma\right)$.  
\end{lem}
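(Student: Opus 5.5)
The plan is to show directly that $\liminf_{i,j\to\infty}(b_i,b_j)_a=\infty$, using only two inputs: the bicorn sets are uniformly close to geodesics, and the sequence is unbounded.

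First, fix $i\le j$. By Definition~\ref{def:ending}(4), the $l$-part of $b_i$ is contained in the $l$-part of $b_j$. So $b_i$ is of the form $(\dots P)_a\cup(PQ)_{b_j}\cup(Q\dots)_a$ with $P,Q\in a\cap b_j$, and hence $b_i\in\mathcal B(a,b_j)$. Strictly, $(PQ)_l\subseteq(P'Q')_l$, and the points of $a\cap b_i$ lie in $a\cap (PQ)_l$, so $b_i$ really is a bicorn between $a$ and $b_j$; this is the same observation underlying Lemmas~\ref{lem:finite_bicorn_subset} and \ref{lem:bicorn_subsets}. Corollary~\ref{cor:unicorn_hausdorff_dist} then says that for any geodesic $[a,b_j]$ we have $d_{\mathcal G}(b_i,[a,b_j])\le M_0$.

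Next, pick $z\in[a,b_j]$ with $d(b_i,z)\le M_0$. Since $z$ lies on the geodesic, $d(a,b_j)=d(a,z)+d(z,b_j)$. The triangle inequality gives $d(a,b_i)\ge d(a,z)-M_0$ and $d(b_i,b_j)\le d(z,b_j)+M_0$. Substituting into the definition of the Gromov product,
\[
(b_i,b_j)_a=\tfrac12\bigl(d(a,b_i)+d(a,b_j)-d(b_i,b_j)\bigr)\ge d(a,b_i)-M_0 .
\]
So for $i\le j$ the product $(b_i,b_j)_a$ is bounded below by $d(a,b_i)-M_0$, and symmetrically $(b_i,b_j)_a\ge d(a,b_{\min(i,j)})-M_0$ in general.

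Finally, Lemma~\ref{lem:grand_bicorns_go_to_infty} gives $d(a,b_n)\to\infty$. Therefore $\liminf_{i,j}(b_i,b_j)_a\ge\liminf_n d(a,b_n)-M_0=\infty$, and $(b_n)$ converges to infinity, defining a point of $\partial\mathcal G(\Sigma)$.

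The only delicate step is checking that $b_i\in\mathcal B(a,b_j)$ follows from the nesting condition (4). In particular, the endpoints of the $l$-part of $b_i$ must lie on the $l$-part of $b_j$ and be intersection points with $a$, and the unbounded pieces must be the complementary rays of $a$. This follows from the definition of bicorns together with the nesting, and everything else is formal hyperbolic geometry.
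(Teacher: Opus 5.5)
Your proof is correct, but it takes a different and leaner route than the paper's.

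The paper's argument runs through the segments $S_{i,j}=\{b_i,\dots,b_j\}$:
\begin{itemize}
\item it notes that $S_{i,j}\subseteq\mathcal{B}(b_i,b_j)$, which requires checking that each intermediate $b_k$ is a bicorn between $b_i$ and $b_j$;
\item it uses the quasi-path condition (3) of Definition~\ref{def:ending}, Corollary~\ref{cor:unicorn_hausdorff_dist} and the Morse Lemma~\ref{lem:Morse_lemma} to conclude that $S_{i,j}$ is Hausdorff-close to $[b_i,b_j]$;
\item it then combines $d(a,S_{i,j})>R$ for $i,j\ge N$ with Lemma~\ref{lem:Gromov_product_vs_dist_to_geod} to bound $(b_i,b_j)_a$ from below.
\end{itemize}

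You need only the one-sided containment $b_i\in\mathcal{B}(a,b_j)\subseteq N_{M_0}([a,b_j])$, which comes from the nesting condition (4). From it you get the elementary estimate $(b_i,b_j)_a\ge d(a,b_i)-M_0$, valid in any geodesic space once $b_i$ is $M_0$-close to $[a,b_j]$. Your argument therefore does not use the quasi-path condition (3), the Morse lemma, the comparison between Gromov products and distances to geodesics, or the claim that a coarse path inside $\mathcal{B}(b_i,b_j)$ fills out $[b_i,b_j]$. In effect it applies directly the containment $\{b_0,\dots,b_n\}\subseteq N_{M_0}([a,b_n])$, which the paper proves only later, in the first part of Proposition~\ref{prop:map_well_def}.

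What the paper's route gives in return is a stronger picture: entire geodesics $[b_i,b_j]$ stay far from $a$, so the ending bicorn sequence tracks geodesics. That is the kind of control later used, via Proposition~\ref{prop:map_well_def} and Lemma~\ref{lem:QpathNearQgeo}, for well-definedness and injectivity of $F$. For the present lemma, however, your estimate suffices. Both proofs rest on the same essential input, Lemma~\ref{lem:grand_bicorns_go_to_infty}.
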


\begin{proof}

By Lemma \ref{lem:grand_bicorns_go_to_infty}, for any $R>0$, there exists some $N>0$ such that for all $n\geq N$, $d_{\mathcal{G}}\left(a,b_{n}\right)>R$.  Let $S_{i,j}=\left\{ b_{i},b_{i+1},\dots,b_{j}\right\} \subset\left\{ b_{n}\right\} _{n\in\mathbb{N}}$. Note that $S_{i,j}\subset\mathcal{B}\left(b_{i},b_{j}\right)$ is a coarse path in $\mathcal{B}\left(b_{i},b_{j}\right)$ between $b_{i}$ and $b_{j}$. Since $\mathcal{B}\left(b_{i},b_{j}\right)$ is an unparametrized $(\lambda,\varepsilon)$--quasi-geodesic by Corollary \ref{cor:unicorn_hausdorff_dist}, so is $S_{i,j}$. Let $\delta$ be the hyperbolicity constant of $\mc G(\Sigma)$.  By  Lemma~\ref{lem:Morse_lemma}, there exists a constant $M=M\left(\delta, \lambda, \varepsilon\right)>0$ such that 
\[
d_{\mathrm{Haus}}\left(\left[b_{i},b_{j}\right],S_{i,j}\right)\leq M,
\]
where $\left[x,y\right]$ denotes the geodesic in $\mathcal{G}\left(\Sigma\right)$ between $x$ and $y$. Lemma \ref{lem:Gromov_product_vs_dist_to_geod} then implies that for all $i,j\geq N$, we have 
\begin{align*}
\left(b_{i},b_{j}\right)_{a} & \geq d_{\mathrm{Haus}}\left(a,\left[b_{i},b_{j}\right]\right)-2\delta\\
 & \geq d_{\mathrm{Haus}}\left(a,S_{i,j}\right)-M-2\delta\\
 & \geq R-M-2\delta.
\end{align*}
Since $R$ was arbitrary, $\liminf_{i,j\to\infty}(b_i,b_j)_a =\infty$, and so $\left( b_{n}\right) _{n\in\mathbb{N}}$ defines a point in $\partial\mathcal{G}\left(\Sigma\right)$.
\end{proof}



The following Proposition shows that an ending bicorn sequence coarsely describes the entire bicorn set. Here, $[x,y]$ denotes a geodesic between $x,y\in \mc{G}(\Sigma)$. 

\begin{prop}\label{prop:map_well_def}
Let $a\in\mathcal{G}\left(\Sigma\right)$, $L\in\mathcal{GEL}\left(\Sigma\right)$, $l\in L$ be a non-boundary leaf with both half-leaves dense and $\left(b_{n}\right)_{n\in\mathbb{N}}\subset\mathcal{B}\left(a,l\right)$ be an ending bicorn sequence. There exists a constant $K_0>0$, independent of $a,L$ and the ending bicorn sequence, such that for any $n\in \mathbb N$,
\[
d_{\mathrm{Haus}}^{\mc G}(\{b_0,\dots, b_n\},[a,b_n])\leq K_0.
\]
and
\[
d_{\mathrm{Haus}}^{\mathcal{G}}\left(\left(b_{n}\right)_{n\in\mathbb{N}},\mathcal{B}\left(a,l\right)\right)\leq K_0
\]
\end{prop}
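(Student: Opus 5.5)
Both estimates come from the finite bicorn machinery of Section~\ref{sec:finite_bicorns}, applied to the pair $(a,b_n)$, together with Lemma~\ref{lem:bicorn_subsets}.

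\emph{First inequality.} By Definition~\ref{def:ending}(4) the $l$-parts of the $b_i$ are nested. Hence for $i\le n$ every point of $a\cap b_i$ lies on the $l$-part of $b_n$, and $b_i$ is a bicorn between $a$ and $b_n$; that is, $\{b_0,\dots,b_n\}\subseteq\mathcal B(a,b_n)$. Consecutive terms are at distance less than $R$, with $R=2$ for the sequences of Proposition~\ref{prop:bicorns_geodesic_conv}; I will treat $R$ as a uniform constant. So $\{b_0,\dots,b_n\}$ is an $R$--coarse path from $a$ to $b_n$ inside $\mathcal B(a,b_n)$, and $\mathcal B(a,b_n)$ lies within $M_0$ of $[a,b_n]$ by Corollary~\ref{cor:unicorn_hausdorff_dist}. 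This gives $\{b_0,\dots,b_n\}\subseteq N_{M_0}([a,b_n])$. For the reverse inclusion, project each $b_i$ to a nearest point of $[a,b_n]$. Consecutive projections are at distance at most $R+2M_0$, the first projection is $a$ and the last is $b_n$, so by a discrete intermediate value argument every point of $[a,b_n]$ lies within $(R+2M_0)/2+M_0$ of some $b_i$. This bound is uniform in $a$, $L$ and $n$.

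\emph{Second inequality, containment of the sequence.} By definition $(b_n)\subseteq\mathcal B(a,l)$, so this direction is trivial.

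\emph{Second inequality, the converse.} Take $x\in\mathcal B(a,l)$, say $x=a_1\cup(PQ)_l\cup a_2$. The $l$-parts of the $b_n$ are nested and eventually cover all of $l$; this is proved inside Proposition~\ref{prop:bicorns_geodesic_conv}, and I expect it can be extracted from Definition~\ref{def:ending}(2) in general. So for $n$ large the $l$-part of $b_n$ contains $(PQ)_l$. Then $x$ is a bicorn between $a$ and $b_n$, because $a\cap(PQ)_l$ is exactly the set of points used to form $x$, and simplicity of $x$ is unchanged. By Lemma~\ref{lem:bicorn_subsets} in reverse, $x\in\mathcal B(a,b_n)$. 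Corollary~\ref{cor:unicorn_hausdorff_dist} then gives $d(x,[a,b_n])\le M_0$, and the first inequality gives some $b_i$ with $i\le n$ and $d(x,b_i)\le M_0+K_0'$. Setting $K_0$ to be the maximum of the constants appearing above gives the result.

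\emph{Main obstacle.} The delicate step is the claim that every finite segment $(PQ)_l$ eventually lies in the $l$-part of some $b_n$, for an arbitrary ending bicorn sequence rather than the one constructed in Proposition~\ref{prop:bicorns_geodesic_conv}. I would argue this using nestedness together with $b_n\geodto l$. If the nested $l$-parts stayed inside a half-infinite piece of $l$, then the bicorns would have to leave along $a$ at a bounded point on one side, and geodesic convergence to $l$ would fail there.
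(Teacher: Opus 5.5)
Your first inequality is correct and is essentially the paper's argument. Nestedness gives $\{b_0,\dots,b_n\}\subseteq\mathcal B(a,b_n)$, and Corollary~\ref{cor:unicorn_hausdorff_dist} does the rest. Your discrete intermediate-value argument replaces the paper's Lipschitz nearest-point projection, and in both versions the constant depends on the quasi-path constant.

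The gap is in the converse half of the second inequality. Your argument needs the $l$-parts of the $b_n$ to exhaust $l$. The reason you give, that otherwise $b_n\geodto l$ would fail, is not valid. Geodesic convergence in $\Gamma(\Sigma)$ allows arbitrary reparametrizations of the $b_n$ (Lemma~\ref{lem:convg_as_geod}), and $l$ is recurrent. Since its half-leaves are dense in $L$, a long one-sided piece $l([0,t])$ contains, for $t$ large, segments that are $\epsilon$-close (up to orientation) to $l([-R,R])$. So a hypothetical sequence $(\dots P)_a\cup l([0,t_n])\cup(Q_n\dots)_a$ with $P$ fixed and $t_n\to\infty$ can still geodesically converge to $l$. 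Definition~\ref{def:ending}(2) alone therefore does not rule out one-sided growth.

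The missing ingredient is Lemma~\ref{lem:bicorns_for_converging_leaves}, which is what the paper uses. Apply it with $l_n=b_n$. It shows directly from $b_n\geodto l$ that a fixed $x\in\mathcal B(a,l)$ lies in $\mathcal B(a,b_N)$ for all large $N$. The reason is that a segment of $b_N$ shadows $(PQ)_l$ and crosses $a$ near $P$ and $Q$, giving a bicorn isotopic to $x$. This needs neither nestedness nor exhaustion, and after it your estimate $d(x,b_i)\le M_0+K_0'$ goes through unchanged.

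If you want to keep your route, exhaustion is true under the proposition's hypotheses, but it follows from \emph{simplicity} of the bicorns, not from convergence.
\begin{enumerate}
\item Suppose one endpoint of the nested $l$-parts stabilized at $P$. Because $l$ is non-boundary, whichever ray of $a$ is attached at $P$ immediately crosses leaves of $L$ accumulating on $l$.
\item The half-leaf of $l$ from $P$ in the growing direction is dense, so it passes close to and nearly parallel to such a leaf at its crossing. Hence it crosses the ray at some $z'\neq P$.
\item For $n$ large, $z'$ lies in the $l$-part of $b_n$, contradicting simplicity of $b_n$.
\item If both endpoints stabilized, $b_n$ would be eventually constant, and a proper arc cannot geodesically converge to the non-proper leaf $l$.
\end{enumerate}
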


\begin{proof}
Since $\mc G(\Sigma)$ is $\delta$--hyperbolic, there exists a constant $\kappa\geq 1$ such that the nearest point projection $\pi_\gamma$ to a geodesic $\gamma$ is $\kappa$--Lipschitz. 
Let $M_0$ be the constant from Corollary~\ref{cor:unicorn_hausdorff_dist}.

For the first statement, fix $n\in \mathbb N$. By Definition~\ref{def:ending}(4), we see that $\{b_0,\dots, b_n\}\subseteq \mc B(a,b_n)$.  Moreover, $d_{\text{Haus}}^{\mc G}(\mc B(a,b_n), [a,b_n])\leq M_0$ by Corollary~\ref{cor:unicorn_hausdorff_dist}.  Thus $\{b_0,\dots, b_n\}\subseteq N_{M_0}([a,b_n])$. 

Since $\{b_0,\dots, b_n\}$ is a $K$--quasi-path by Definition~\ref{def:ending}(3), every point on $[a,b_n]$ is at distance at most $\kappa K$ from a point in $\pi_{[a,b_n]}(\{b_0,\dots, b_n\})$.  In particular, if $z\in [a,b_n]$, then $d_{\mc G}(z,\{b_0,\dots, b_n\}\leq \kappa K + M_0$.  It follows that $d_{\text{Haus}}^{\mc G}(\{b_0,\dots, b_N\},[a,b_n])\leq \kappa K + M_0$, as desired.

For the second statement, let $x\in\mathcal{B}\left(a,l\right)$. Since $b_{n}\xrightarrow{\mathrm{geod}}l$, there exists $N>0$ such that $x\in\mathcal{B}\left(a,b_{N}\right)$ by Lemma \ref{lem:bicorns_for_converging_leaves}.  Combining the first statement with the fact that $d_{\text{Haus}}^{\mc G}(\mc B(a,b_N),[a,b_N])\leq M_0$ shows that there exists $z\in [a,b_N]$ such that 
\[
d_{\mc G}(x,(b_n)_{n\in \mathbb N})\leq d_{\mc G}(x,z)+d_{\mc G}(z,(b_n)_{n\in \mathbb N}) \leq M_0+\kappa K + M_0.
\]
Setting $K_0 = 2M_0+\kappa K$ completes the proof.
\end{proof}



\section{Grand ending laminations and the Gromov boundary}\label{sec:GEL_boundary}

The goal of this section is to prove Theorem~\ref{thm_main}. Let $\Sigma$ be an infinite-type surface with $3\leq |\mc{S}(\Sigma)|<\infty$, so that $\mc{G}(\Sigma)$ is an infinite-diameter hyperbolic graph. Define a map
 \[F\colon \mathcal{GEL}\left(\Sigma\right)\to\partial\mathcal{G}\left(\Sigma\right)\]  by 
\[
F\left(L\right)=\left[(b_n)_{n\in \mathbb N}\right]
\]
for some ending bicorn sequence $(b_n)_{n\in\mathbb{N}}$ between some $a\in \mc G(\Sigma)$ and $L$. We will show that $F$ is  a homeomorphism onto its image and that the image of $F$ is dense in $\partial\mc{G}(\Sigma)$.  We begin by showing well-definedness.

\begin{lem}
    The map $F$ is well-defined.
\end{lem}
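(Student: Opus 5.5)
To show $F$ is well-defined, I would check that the equivalence class $[(b_n)]$ does not depend on the three choices made in its definition: the ending bicorn sequence itself, the leaf $l\in L$, and the base arc $a$. Existence is not an issue. Since $L$ is witness-filling and not a single proper geodesic, Lemma~\ref{lem:both_half_leaves_dense} supplies a non-boundary leaf $l$ with both half-leaves dense. Every grand arc meets $\supp(L)$ transversely by Lemma~\ref{lem:witness-filling}(4), so Proposition~\ref{prop:bicorns_geodesic_conv} gives an ending bicorn sequence, and Lemma~\ref{lem:defines_a_point} says it converges to a point of $\partial\mc G(\Sigma)$.

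\emph{Independence of the sequence and the leaf, for fixed $a$.} Let $(b_n)\subset\mc B(a,l)$ and $(b'_n)\subset\mc B(a,l')$ be ending bicorn sequences, where $l,l'$ are admissible leaves of $L$. Lemma~\ref{lem:bicorns_with_lamination} gives $\mc B(a,l)=\mc B(a,l')$. Proposition~\ref{prop:map_well_def} then yields
\[
d^{\mc G}_{\mathrm{Haus}}\bigl((b_n),(b'_n)\bigr)\leq 2K_0.
\]
Both sequences go to infinity by Lemma~\ref{lem:grand_bicorns_go_to_infty}. Two sequences converging to infinity at finite Hausdorff distance are equivalent. To see this, for each $i$ pick $j(i)$ with $d(b_i,b'_{j(i)})\leq 2K_0$. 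Then $(b_i,b'_{j})_a \geq (b'_{j(i)},b'_j)_a - 2K_0$, which tends to infinity, and $j(i)\to\infty$ because $d(a,b_i)\to\infty$. A cleaner alternative is Lemma~\ref{lem:QpathNearQgeo}, whose hypotheses are exactly the two estimates of Proposition~\ref{prop:map_well_def}: each sequence is within bounded Hausdorff distance of a $(1,10\delta)$ quasi-geodesic ray representing its own limit point, and these rays are then at bounded Hausdorff distance from each other.

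\emph{Independence of the base arc.} This is the step I expect to need the most care, because bicorn sets with different base arcs are not nested. Since $\mc G(\Sigma)$ is connected, it suffices to treat disjoint $a,a'$ and then induct along a path. By Lemma~\ref{lem:bicorns_change_base_pt},
\[
d^{\mc G}_{\mathrm{Haus}}\bigl(\mc B(a,l),\mc B(a',l)\bigr)\leq 2,
\]
using the same admissible leaf $l$ for both base arcs. Proposition~\ref{prop:map_well_def} gives
\[
d^{\mc G}_{\mathrm{Haus}}\bigl((b_n),\mc B(a,l)\bigr)\leq K_0 \quad\text{and}\quad d^{\mc G}_{\mathrm{Haus}}\bigl((b'_n),\mc B(a',l)\bigr)\leq K_0,
\]
so $d^{\mc G}_{\mathrm{Haus}}\bigl((b_n),(b'_n)\bigr)\leq 2K_0+2$. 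One subtlety is that Proposition~\ref{prop:bicorns_geodesic_conv} requires $a'$ to meet $\supp(L)$, but this holds automatically by Lemma~\ref{lem:witness-filling}(4). Changing the basepoint of the Gromov product changes it by at most $d(a,a')$, so the argument of the previous paragraph again gives equivalence. Hence $F(L)$ depends only on $L$.
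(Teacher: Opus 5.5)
Your proof is correct and follows the paper's argument. Existence comes from Lemma~\ref{lem:both_half_leaves_dense}, Proposition~\ref{prop:bicorns_geodesic_conv} and Lemma~\ref{lem:defines_a_point}; independence of the sequence and leaf from Lemma~\ref{lem:bicorns_with_lamination} with Proposition~\ref{prop:map_well_def}; and independence of the base arc from Lemma~\ref{lem:bicorns_change_base_pt} with Proposition~\ref{prop:map_well_def}. You also make explicit two points the paper leaves implicit: that two sequences going to infinity at bounded Hausdorff distance are equivalent, and that Lemma~\ref{lem:bicorns_change_base_pt} only covers disjoint base arcs, so one must chain along a path in $\mc G(\Sigma)$.
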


\begin{proof}
    Let $a\in \mc G(\Sigma)$, and let $l\in L$ be a non-boundary leaf with both half-leaves dense.  Such a leaf exists by Lemma~\ref{lem:both_half_leaves_dense}. Let $(b_n)_{n\in \mathbb N}\subseteq \mc B(a,l)$ be an ending bicorn sequence. Since $L$ is minimal and witness-filling, Lemma~\ref{lem:defines_a_point} shows that  $(b_n)$ defines a point in $\partial\mc G(\Sigma)$. Therefore $F(L)\in \partial\mc G(\Sigma)$. 

    If $\left(b_{n}'\right)_{n\in\mathbb{N}}$ is a different ending bicorn sequence between $a$ and $L$, then using Proposition~\ref{prop:map_well_def}, the triangle inequality, and possibly Lemma \ref{lem:bicorns_with_lamination} if $\left(b_{n}\right)_{n\in\mathbb{N}}$ and $\left(b_{n}'\right)_{n\in\mathbb{N}}$ converge to different leaves, we obtain
\[
d_{\mathrm{Haus}}^{\mathcal{G}}\left(\left(b_{n}\right)_{n\in\mathbb{N}},\left(b_{n}'\right)_{n\in\mathbb{N}}\right)\leq2K+2.
\] 
Thus, any two ending bicorn sequences between $a$ and $L$ define the same point in $\partial\mathcal{G}\left(\Sigma\right)$.

Finally, Lemma \ref{lem:bicorns_change_base_pt} combined with Proposition \ref{prop:map_well_def} shows that starting with a different grand arc $a'$ changes the ending bicorn sequence by finite Hausdorff distance in $\mc{G}(\Sigma)$, so the point $F(L)$ is independent of the choice of $a\in\mc{G}(\Sigma)$.
\end{proof}

The following lemma shows that any other  sequence of grand arcs that converges to the boundary of the grand arc graph and Chabauty-converges to a given grand ending lamination will converge to the same point in $\partial\mc{G}(\Sigma)$ as defined by $F$.

\begin{lem}
\label{lem:other_convg_sequences}Suppose $L\in\mathcal{GEL}\left(\Sigma\right)$, $a\in\mathcal{G}\left(\Sigma\right)$. Suppose $\left(x_{n}\right)_{n\in\mathbb{N}}$ is a quasi-geodesic sequence with $x_0 = a$ such that:
\begin{enumerate}
\item as points of $\mathcal{G}\left(\Sigma\right)$, $x_{n}\to p$ for some $p\in\partial\mathcal{G}\left(\Sigma\right)$, and
\item as laminations, $x_{n}\xrightarrow{CC}L$. 
\end{enumerate}
Then $p=F(L)$.
\end{lem}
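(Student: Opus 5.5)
We argue by contradiction, following the strategy of Lemma~\ref{lem:grand_bicorns_go_to_infty} and the corresponding step in Pho-On. Fix a non-boundary leaf $l\in L$ with both half-leaves dense (Lemma~\ref{lem:both_half_leaves_dense}) and an ending bicorn sequence $(b_n)\subset\mathcal B(a,l)$ (Proposition~\ref{prop:bicorns_geodesic_conv}), so $F(L)=[(b_n)]$. Suppose $p\neq F(L)$. Since $(x_n)$ is a quasi-geodesic sequence from $a$, by Lemma~\ref{lem:QpathNearQgeo} and Proposition~\ref{prop:map_well_def} both $(x_n)$ and $(b_n)$ lie within uniformly bounded Hausdorff distance of $(1,10\delta)$--quasi-geodesic rays from $a$ to $p$ and to $F(L)$ respectively. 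Distinct boundary points give bounded Gromov product $(p,F(L))_a\leq C_0$. Hence there is a constant $N_0$ such that for all large $n,m$, every geodesic $[x_n,b_m]$ passes within $N_0$ of $a$.

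The key step is to use this to produce disjointness. By Lemma~\ref{lem:bicorns_order_doesn't_matter} and Corollary~\ref{cor:unicorn_hausdorff_dist}, $\mathcal B(x_n,b_m)$ is uniformly Hausdorff close to $[x_n,b_m]$. But the bicorn set relative to $l$ also interacts with $x_n$: since $x_n\CCto L$, some leaves of $x_n$ geodesically converge to $l$. So I would replace the base arc. By Lemma~\ref{lem:bicorns_change_base_pt} iterated along a path, together with Lemma~\ref{lem:bicorns_for_converging_leaves}, a fixed bicorn $b_m\in\mathcal B(a,l)$ is, for $n$ large, close to the set $\mathcal B(a,x_n)$ (with $x_n$ playing the role of $l_n\geodto l$). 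Hence $b_m$ lies within $M_0$ plus a constant of $[a,x_n]$ for $n\gg m$. Choose $m$ with $d(a,b_m)>C_0+N_0+M_0+10\delta+K_0$, which is possible by Lemma~\ref{lem:grand_bicorns_go_to_infty}. Then $b_m$ lies close to the quasi-geodesic toward $p$ at distance greater than $C_0+\text{const}$ from $a$, while $b_m$ also lies on the ray toward $F(L)$. This forces $(p,F(L))_a$ to be large, contradicting the bound $C_0$.

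The main obstacle is the assertion that $b_m\in\mathcal B(a,x_n)$ for large $n$. Lemma~\ref{lem:bicorns_for_converging_leaves} requires a sequence of leaves $\gamma_n\geodto l$ with $\gamma_n\in x_n$, and coarse Chabauty convergence gives exactly this, because $x_n$ is a single leaf and $l\in L$. The condition does only hold along the full sequence, though, and not necessarily along a uniform parametrization; I would check this carefully, passing to subsequences where necessary, since a subsequence of $(x_n)$ still converges to $p$. With $b_m\in\mathcal B(a,x_n)$, Corollary~\ref{cor:unicorn_hausdorff_dist} puts $b_m$ within $M_0$ of $[a,x_n]$, and Lemma~\ref{lem:Gromov_product_vs_dist_to_geod} then gives $(x_n,b_m)_a\geq d(a,b_m)-M_0-2\delta$. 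Letting $m\to\infty$ and then $n\to\infty$, we get $\liminf(x_n,b_m)_a=\infty$, so $(x_n)$ and $(b_n)$ are equivalent. That yields $p=F(L)$ directly, without needing the contradiction.
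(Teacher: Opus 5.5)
Your final paragraph is correct and is essentially the paper's argument: since each $x_n$ is a single leaf, $x_n\CCto L$ gives $x_n\geodto l$, so Lemma~\ref{lem:bicorns_for_converging_leaves} puts each $b_m$ in $\mathcal{B}(a,x_n)$ for $n\geq N_m$, and Corollary~\ref{cor:unicorn_hausdorff_dist} then places $b_m$ within $M_0$ of $[a,x_n]$. The contradiction setup and base-change step in your first two paragraphs are not needed. The paper finishes by showing $(b_n)$ lies near a $(1,10\delta)$--quasi-geodesic ray to $p$, and hence near $(x_n)$. You instead use $(x_n,b_m)_a\geq d(a,b_m)-M_0$ for $n\geq N_m$. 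This is an iterated limit (fix $m$ first, then take $n$ large), not the double $\liminf$ you wrote. It becomes the required double $\liminf$ via $(x_i,b_j)_a\geq\min\{(x_i,x_n)_a,(x_n,b_m)_a,(b_m,b_j)_a\}-O(\delta)$, together with Lemmas~\ref{lem:grand_bicorns_go_to_infty} and~\ref{lem:defines_a_point}.
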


\begin{proof}
Let $\delta$ be the hyperbolicity constant of $\mc G(\Sigma)$. Let $\gamma$ denote the $(1,10\delta)$-quasigeodesic in $\mc{G}(\Sigma)$ starting at $x_0$ and converging to $p$, as described in Lemma \ref{lem:Qgeo_to_boundary}. Since $x_n\to p$ in $\mc G(\Sigma)\cup\partial\mc G(\Sigma)$ and since $(x_n)_{n\in\mathbb{N}}$ is a quasi-geodesic sequence, Lemma \ref{lem:QpathNearQgeo} implies that $d_\mathrm{Haus}^\mc{G}((x_n)_{n\in\mathbb{N}},\gamma)\leq C$ for some fixed $C>0$. Since finite bicorn sets are unparametrised quasi-geodesics by Corollary \ref{cor:unicorn_hausdorff_dist}, it follows from Lemma \ref{lem:QpathNearQgeo} that   $$d_{\mathrm{Haus}}^{\mc{G}}\left(\bigcup_{i\in\mathbb{N}}\mathcal{B}\left(a,x_{i}\right), \gamma\right)\leq C'.$$

On the other hand, let $l\in L$ be a non-boundary leaf with both half-leaves dense, and let $\left(b_{n}\right)_{n\in\mathbb{N}}\subset\mathcal{B}\left(a,l\right)$ be an ending bicorn sequence. Since $x_{n}\xrightarrow{CC}L$, then $x_{n}\xrightarrow{\mathrm{geod}}l$. By Lemma~\ref{lem:bicorns_for_converging_leaves}, for every $b\in\mathcal{B}\left(a,l\right)$, there exists some $N>0$ such that $b\in\mathcal{B}\left(a,x_{n}\right)$ for every $n\geq N$. In particular, $\left(b_{n}\right)_{n\in\mathbb{N}}\subset\bigcup_{i\in\mathbb{N}}\mathcal{B}\left(a,x_{i}\right)$. Thus, $\left(b_{n}\right)_{n\in\mathbb{N}}\subset N_{C''}((x_n)_{n\in\mathbb{N}})$ for some fixed $C''>0$. Since both $(b_n)$ and $(x_n)$ are quasi-geodesic sequences, with $x_n \to p$ and $b_n \to F(L)$ in $\mc G(\Sigma)\cup\partial\mc G(\Sigma)$, we have $p=F(L)$.
\end{proof}

\begin{example}\label{ex:agrees_with_F}
In Example \ref{ex:cpct_pA}, we defined points $\varphi^{\pm}\in\partial\mathcal{G}\left(\Sigma\right)$ corresponding to a pseudo-Anosov map supported on a compact witness $W$. We also described laminations $L^{\pm}\in\mathcal{GEL}\left(\Sigma\right)$ that depend only on $\varphi$, such that for any base arc $a\in\mathcal{G}\left(\Sigma\right)$, the sequences $\varphi^{\pm n}(a)\xrightarrow{CC}L^{\pm}$. Lemma \ref{lem:other_convg_sequences} then implies that $F\left(L^{\pm}\right)=\varphi^{\pm}$.
\end{example}

We now turn our attention to continuity.
\begin{lem}\label{lem:continuity}
The map $F$ is continuous.
\end{lem}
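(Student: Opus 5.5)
By Corollary~\ref{cor:seq_continuity}, it suffices to prove sequential continuity. So suppose $L_n\CCto L_0$ in $\mc{GEL}(\Sigma)$; we must show $F(L_n)\to F(L_0)$ in $\partial\mc G(\Sigma)$. Fix a base grand arc $a$. Choose a non-boundary leaf $l_0\in L_0$ with both half-leaves dense, as provided by Lemma~\ref{lem:both_half_leaves_dense}, and an ending bicorn sequence $(b_k)\subset\mc B(a,l_0)$. Similarly choose leaves $l_n\in L_n$ and ending bicorn sequences $(b^n_k)\subset\mc B(a,l_n)$. By Proposition~\ref{prop:map_well_def}, each $\mc B(a,l_n)$ lies within uniform Hausdorff distance $K_0$ of a sequence representing $F(L_n)$. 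Also, the initial segments $\{b^n_0,\dots,b^n_k\}$ are uniformly close to geodesics $[a,b^n_k]$. Thus the boundary point $F(L_n)$ is represented by a uniform quasi-geodesic ray from $a$, coarsely equal to $\mc B(a,l_n)$, and likewise for $F(L_0)$.

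The key step is to show that for every $R>0$ the Gromov product $(F(L_n),F(L_0))_a$ exceeds $R$ for all large $n$. Since $L_n\CCto L_0$, there are leaves $\lambda_n\in L_n$ with $\lambda_n\geodto l_0$. By minimality of $L_n$ and Lemma~\ref{lem:bicorns_with_lamination}, we have $\mc B(a,\lambda_n)=\mc B(a,l_n)$, so we may compute with $\lambda_n$. Fix $R$. By Lemma~\ref{lem:grand_bicorns_go_to_infty}, there is an index $k$ with $d_{\mc G}(a,b_k)>R+C$, where $C$ is a constant to be fixed later depending only on $\delta$, $K_0$ and $M_0$. By Lemma~\ref{lem:bicorns_for_converging_leaves}, applied to the finitely many bicorns $b_0,\dots,b_k$, there is $N$ such that $b_0,\dots,b_k\in\mc B(a,\lambda_n)$ for all $n\ge N$. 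Hence the initial segment $\{b_0,\dots,b_k\}$, which is $K_0$-close to $[a,b_k]$, lies in the $K_0$-neighbourhood of a uniform quasi-geodesic ray representing $F(L_n)$, and also in the corresponding ray representing $F(L_0)$. Two uniform quasi-geodesic rays from $a$ that fellow-travel within bounded distance for length more than $R+C$ have Gromov product at least $R$ (up to constants) at any pair of far-out points. This follows from Lemma~\ref{lem:Gromov_product_vs_dist_to_geod} together with the Morse lemma, arguing as in Lemma~\ref{lem:defines_a_point}. Therefore $F(L_n)\in U_R(F(L_0))$ for $n\ge N$, and continuity follows.

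The main obstacle is the transfer from ``$\mc B(a,\lambda_n)$ contains $b_0,\dots,b_k$'' to ``the ray for $F(L_n)$ passes near $b_k$''. Proposition~\ref{prop:map_well_def} only says that $\mc B(a,l_n)$ is Hausdorff-close to the whole sequence $(b^n_j)$; it does not say that $b_k$ is near an \emph{initial} portion of that sequence at the right distance from $a$. To handle this, I would first use that $b_k$ is within $K_0$ of some $b^n_j$. Next, since $(b^n_j)$ is a uniform quasi-geodesic ray from $a$ (by the first estimate of Proposition~\ref{prop:map_well_def}), the point $b^n_j$ lies within bounded distance of a geodesic from $a$ to any far point of the ray. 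This yields $d(a,[b^n_m,b_{m'}])\gtrsim d(a,b_k)-C$ for large $m,m'$, using the standard estimate that the Gromov product is at least $\min$ of the products through a common near point, minus $\delta$. I would also check that $\lambda_n$ being merely some leaf (rather than one with dense half-leaves) is harmless, which is exactly where Lemma~\ref{lem:bicorns_with_lamination} enters.
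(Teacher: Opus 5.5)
Your proposal is correct and follows the paper's proof: both use Lemma~\ref{lem:bicorns_for_converging_leaves} to place an initial segment $b_0,\dots,b_k$ of the ending bicorn sequence for $L_0$ inside $\mc B(a,\lambda_n)$ for large $n$. Both then use Proposition~\ref{prop:map_well_def} to put this segment in a uniform neighbourhood of the ending bicorn sequence for $L_n$, and conclude via Gromov products.

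Your extra steps fill in details the paper's short proof leaves implicit. Invoking Lemma~\ref{lem:bicorns_with_lamination} lets you replace the leaf supplied by coarse Chabauty convergence with one whose half-leaves are both dense. Your explicit estimate $(b^n_m,b_{m'})_a\ge d(a,b_k)-2K_0-O(\delta)$ supplies the Gromov-product step. The ``obstacle'' you flag is resolved exactly as you describe: any $b^n_j$ within $K_0$ of $b_k$ is automatically at distance roughly $d(a,b_k)$ from $a$.
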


\begin{proof}
Suppose $\left( L_{i}\right) _{i\in\mathbb{N}}\subset\mathcal{GEL}\left(\Sigma\right)$ is a sequence of grand ending laminations such that $L_{i}\xrightarrow{CC}L$ for some $L\in\mathcal{GEL}\left(\Sigma\right)$, and let $a\in\mathcal{G}\left(\Sigma\right)$. By Corollary \ref{cor:seq_continuity}, it suffices to show that $F\left(L_{i}\right)\to F(L)$ in $\partial\mc G(\Sigma)$. Fix a leaf $l\in L$ and leaves $l_{i}\in L_{i}$ such that $l_{i}\geodto l$; this is possible by the definition of coarse Chabauty convergence; see Definition~\ref{def:CCconvergence}.

Let $\left(b_{n}^{i}\right)_{n\in\mathbb{N}}$ be an ending bicorn sequence between $a$ and $l_{i}\in L_{i}$, so that $F\left(L_{i}\right)=\left[\left(b_{n}^{i}\right)_{n\in\mathbb{N}}\right]$.  Similarly, let $(b_n)_{n\in \mathbb N}$ be an ending bicorn sequence between $a$ and $l\in L$, so that $F(L)=[(b_n)]$.  Since $l_{i}\to l$, for any $m$, there exists $R\geq0$ such that $\left\{ a,b_{1},\dots,b_{m}\right\} \subset\mathcal{B}\left(a,l_{r}\right)$ for every $r\geq R$ by Lemma \ref{lem:bicorns_for_converging_leaves}. It then follows from Proposition \ref{prop:map_well_def} that 
\[
\left\{ a,b_{1},\dots,b_{m}\right\} \subset N_{K}\left(\left(b_{n}^{r}\right)_{n\in\mathbb{N}}\right),
\]
where $K>0$ is a constant independent of $m$ and $R$. Since all ending bicorn sequences define a point in $\partial G(\Sigma)$ by Lemma \ref{lem:defines_a_point}, it follows that $F\left(L_{i}\right)\to F\left(L\right)$.
\end{proof}

\begin{lem}\label{lem:injective}
The map $F$ is injective.
\end{lem}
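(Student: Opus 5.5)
We argue by contradiction, in the style of the proof of Lemma~\ref{lem:grand_bicorns_go_to_infty}. Suppose $L,L'\in\mathcal{GEL}(\Sigma)$ satisfy $F(L)=F(L')$. Fix a grand arc $a$. By Lemma~\ref{lem:both_half_leaves_dense}, choose non-boundary leaves $l\in L$ and $l'\in L'$ with both half-leaves dense. Proposition~\ref{prop:bicorns_geodesic_conv} then gives ending bicorn sequences $(b_n)\subset\mathcal{B}(a,l)$ and $(b'_n)\subset\mathcal{B}(a,l')$ with $b_n\geodto l$ and $b'_n\geodto l'$. Both sequences are uniform quasi-paths that lie uniformly close to geodesics from $a$, by Proposition~\ref{prop:map_well_def}. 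They converge to the same boundary point, so by Lemma~\ref{lem:QpathNearQgeo} both are within a uniform Hausdorff distance $C$ of a common $(1,10\delta)$-quasi-geodesic $\gamma$ from $a$. Hence for every $n$ there is an index $m(n)$ with $d_{\mathcal G}(b_n,b'_{m(n)})\le C$. Since $d_{\mathcal G}(a,b_n)\to\infty$ by Lemma~\ref{lem:grand_bicorns_go_to_infty}, we also have $m(n)\to\infty$. The first goal is to use these bounded-distance pairs to show that $L$ and $L'$ do not intersect transversely.

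To do this, fix $n$ and a geodesic $b_n=c_n^0,c_n^1,\dots,c_n^{k}=b'_{m(n)}$ with $k\le C$. Passing to a subsequence, we may assume $k$ is constant. Using compactness of $\Lambda(\Sigma)$ and passing to further subsequences, we may assume $c_n^j\Cto M_j$ for each $j$. Here $M_0\supseteq L$, because the sequence geodesically converges to a leaf of $L$ and Chabauty limits are closed and contain that leaf's closure. Similarly $M_k\supseteq L'$, since $m(n)\to\infty$ and $b'_{m(n)}\geodto l'$. We then proceed inductively with Corollary~\ref{cor:disjointarcTOsublamination}. Consecutive arcs $c_n^j,c_n^{j+1}$ are disjoint, and $c_n^{0}$ Chabauty converges to a lamination containing the minimal witness-filling lamination $L$, so Lemma~\ref{lem:disjt_convg} gives $M_1\pitchfork L=\emptyset$. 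Since $M_1$ is a grand lamination (Lemma~\ref{lem:grandarctograndlam}, applied componentwise), Lemma~\ref{lem:sublamination} gives $L\subseteq M_1$. Repeating this $k$ times yields $L\subseteq M_k$. Symmetrically $L'\subseteq M_k$, so $L\pitchfork L'=\emptyset$.

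With disjointness established, we finish as follows. Because $L'$ is a grand lamination that does not intersect $L$ transversely, Lemma~\ref{lem:sublamination} gives $L\subseteq L'$. Minimality of $L'$ then forces $L=L'$, which proves injectivity.

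The main obstacle is the inductive step. Corollary~\ref{cor:disjointarcTOsublamination} is stated for sequences that Chabauty converge exactly to a minimal lamination, but we only know that $L$ is contained in the limit $M_j$. The plan is to work directly with Lemma~\ref{lem:disjt_convg} applied to $M_j\supseteq L$ and $M_{j+1}$, and then invoke Lemma~\ref{lem:sublamination}. This requires $M_{j+1}$ to satisfy the hypotheses of that lemma, which means handling connectedness. Lemma~\ref{lem:grandarctograndlam} assumes a connected limit, so I would check that the component of $M_{j+1}$ meeting $\operatorname{supp}(L)$ transversally intersects every witness. If that fails, the fallback is to argue directly from Lemma~\ref{lem:witness-filling}(3): a leaf of $M_{j+1}$ entering $\operatorname{supp}(L)$ cannot be proper there, so it accumulates on $L$, and minimality then gives $L\subseteq M_{j+1}$.
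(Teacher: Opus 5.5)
Your proposal is correct and follows the same strategy as the paper's proof. Both show that the two ending bicorn sequences are at bounded Hausdorff distance via Lemma~\ref{lem:QpathNearQgeo}, take Chabauty limits of the vertices of short paths joining nearby terms, and propagate the sublamination property along the path using Lemma~\ref{lem:disjt_convg} and Lemma~\ref{lem:sublamination}; the paper runs the chain from $L'$ to $L$ rather than from $L$ to $L'$. Your bookkeeping is slightly more careful than the paper's: you pair $b_n$ with $b'_{m(n)}$, you only claim that the limits at the two ends of the path contain $L$ and $L'$ rather than equal them, and you finish with $L,L'\subseteq M_k$ followed by Lemma~\ref{lem:sublamination} and minimality, which is exactly what is needed to justify the paper's closing step ``$L'\subset L_0=L$''.
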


\begin{proof}
    Suppose $L,L'\in\mathcal{GEL}\left(\Sigma\right)$ are grand ending laminations such that $F\left(L\right)=F\left(L'\right)$. Let $\left( b_{n}\right) _{n\in\mathbb{N}}\subset \mathcal{B}\left(a,L\right)$ and $\left( b_{n}'\right) _{n\in\mathbb{N}}\subset \mathcal{B}\left(a,L'\right)$ be ending bicorn sequences such that $\left[\left( b_{n}\right) _{n\in\mathbb{N}}\right]=F\left(L\right)=F\left(L'\right)=\left[\left( b_{n}'\right) _{n\in\mathbb{N}}\right]$.   By Definition~\ref{def:ending} and Proposition~\ref{prop:map_well_def}, there exists a constant $K\geq 0$ such that $(b_n)$ and $(b_n')$ are $K$--quasi-paths and $\{b_0,\dots, b_k\}$ and $[a,b_k]$ are at Hausdorff distance at most $K$ for all $k\in \mathbb N$.  Let $\delta$ be the hyperbolicity constant of $\mc G(\Sigma)$, and let $C=C(K,\delta)$ be the constant provided by Lemma~\ref{lem:QpathNearQgeo}.

    Let $\gamma,\gamma'$ be $(1,10\delta)$--quasi-geodesic rays based at $a$ such that $[\gamma(m)_{m\in \mathbb N}]=[(b_n)_{n\in \mathbb N}]$ and $[\gamma'(m)_{m\in \mathbb N}]=[(b_n')_{n\in \mathbb N}]$, which exist by Lemma~\ref{lem:Qgeo_to_boundary}.  By Lemma~\ref{lem:QpathNearQgeo}, we have $d_{\text{Haus}}^{\mc G}((b_n),\gamma)\leq C$ and $d_{\text{Haus}}^{\mc G}((b_n'),\gamma')\leq C$.  By Lemma~\ref{lem:Morse_lemma} and the fact that $[\gamma(m)]=[\gamma'(m)]$, we see that $\gamma$ and $\gamma'$ are at Hausdorff distance at most $m_0(\delta, 1,10\delta)$.  Hence the triangle inequality shows that the Hausdorff distance between $(b_n)_{n\in \mathbb N}$ and $(b_n')_{n\in \mathbb N}$ is at most $C_0:=2C+m_0(\delta,1,10\delta)$.

    We proceed in a manner similar to the proof of Lemma~\ref{lem:grand_bicorns_go_to_infty}. For every $n>0$, consider the path in $\mathcal{G}\left(\Sigma\right)$ given by
\[
b_{n}=x_{n}^{0}-x_{n}^{1}-\dots-x_{n}^{C_0}=b_{n}'.
\]
A subsequence of $\left( x_{n}^{C_0-1}\right) _{n\in\mathbb{N}}$ converges to a lamination $L_{C_0-1}$, and $L'\subset L_{C_0-1}$ by Corollary~\ref{cor:disjointarcTOsublamination}. 
Since $L_{C_0-2}$ is disjoint from $L_{C_0-1}$, and hence $L'$, we can repeat the same argument to obtain that $L'\subset L_{C_0-2}$. Proceeding in this way, we find that $L'\subset L_{C_0-i}$ for all $i>0$. In particular, $L'\subset L_{0}=L$, but this can only happen if $L=L'$, since $L$ is also minimal. Therefore, $F$ is injective.
\end{proof}


By Lemma~\ref{lem:injective}, the map $F$ is a bijection onto its image and thus has an inverse $F^{-1}$ defined on the image of $F$.  The next lemma will be useful in proving that $F^{-1}$ is continuous.

Suppose some subsequence of $\left(L_{n}\right)_{n\in\mathbb{N}}$ converges in the Chabauty topology to a lamination $L$. Let $l_{n}\in L_{n}$ be leaves, and, up to choosing a subsequence, assume that $l_{n}\to l$ for some geodesic $l$. Recall that we chose a metric on $\Sigma$ by picking a pants decomposition of $\Sigma$ and setting the length of every seam of the pairs of pants to be $1$. Let $\left\{ W_{i}\right\} _{i\in\mathbb{N}}$ be an exhaustion of $\Sigma$ by compact witnesses, such that $W_{0}$ is an $n$-holed sphere (where $n=|\mc{S}(\Sigma)|$) with $\partial W_{0}$ a part of the pants decomposition, and $W_{i}$ is obtained from $W_{i+1}$ by attaching the pair of pants from the pants decomposition adjacent to each component of $\partial W_{i}$. Parametrize the leaves $l_{n}$ such that $l_{n}(0)\in W_{0}$. Then note that $l_{i}\left(\left[-n,n\right]\right)\subset W_{n}$, for each $i>0$.

\begin{lem}\label{lem:angles}
    There exists a grand arc $a$ and a constant $\theta\geq 0$ such that:
\begin{enumerate}
\item $a\cap W_{i}$ is a single arc for each $i\geq0$, and
\item the angles of intersection of $a\cap l_{n}\left[-n,n\right]$ are at least $\theta$ for each $n\geq1$. 
\end{enumerate}
\end{lem}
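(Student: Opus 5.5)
Since each $W_n$ is compact and, by the parametrization, $l_k([-n,n])\subset W_n$ for every $k$, only finitely many ``levels'' of the exhaustion matter for each $n$. The plan is to first build $a$ with property (1), and then to adjust $a$ inside $W_0$ so that it meets the relevant segments at angles bounded away from zero.

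For (1), I would use the structure of the exhaustion. $W_0$ is an $n$-holed sphere whose boundary components separate the elements of the grand splitting, and $W_{i+1}$ is obtained from $W_i$ by gluing pairs of pants along $\partial W_i$. I would choose $a$ inside $W_0$ as a single essential arc joining two boundary components of $W_0$ that cut off distinct elements $E_j,E_k$ of $\mc S(\Sigma)$. Outside $W_0$, $a$ follows seams of the pants decomposition monotonically outward, passing through each added pair of pants exactly once and always staying in the complementary component containing $E_j$ (resp.\ $E_k$), and converging to a maximal end. Because each piece enters a new pair of pants through one cuff and leaves through another, never returning, $a\cap W_i$ is connected for every $i$. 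Taking the geodesic representative preserves this, since $\partial W_i$ are geodesics in the pants decomposition and bigon removal does not disconnect the intersection. This is the same mechanism as Lemma~\ref{lem:single_arc_witness}, but done uniformly for the whole exhaustion.

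For (2), the segments $l_k([-n,n])$ all lie in the compact set $W_n$, so for fixed $n$ the unit tangent vectors along them form a relatively compact family. Since $a\cap W_n$ is a compact geodesic segment, a small angle at an intersection point forces $l_k$ to fellow-travel $a$ for a long time; within $W_n$ this happens only if $l_k$ nearly contains a long subsegment of $a$. The key step is therefore a uniform statement. Since $l_k\to l$ geodesically and $l$ is not a leaf parallel to $a$ (we may perturb $a$ by a twist in $W_0$ so that $a$ is not a leaf of, or asymptotic to, $L$), there is no limiting tangency. I would argue by contradiction: if angles $\theta_{k}\to0$ occur at points $a\cap l_{k}([-n_k,n_k])$, pass to a subsequence of tangent vectors. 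If these points stay in a fixed compact $W_N$, the limit gives a tangency between $a$ and a leaf of the Chabauty limit $L$. Two distinct simple geodesics tangent at a point coincide, so $a\subset L$, which our generic choice of $a$ excludes.

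The main obstacle is the case where the bad intersection points escape every compact set, i.e.\ they lie in $W_{n_k}\setminus W_{N}$ with $n_k\to\infty$, where compactness is lost. Here I would exploit the uniform geometry of the chosen metric: all pants have seams of length $1$ and equal cuff lengths, so there are only finitely many pants up to isometry. Since $a$ runs along seams in each pair of pants identically, the local picture near any intersection point of $a$ with some $l_k$ can be translated, by a local isometry, into a fixed model pair of pants. Pulling the bad configurations back to this model recovers compactness, and a small-angle limit there would give a simple geodesic tangent to the seam arc, hence equal to it locally. That geodesic would be a limit of leaves running along seams, and I would rule this out by choosing $a$ to use seams that the leaves cannot follow. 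This choice of $a$ to avoid asymptotic tangency uniformly across levels is the delicate point. If this fails, the fallback is to weaken (2) to a level-dependent bound $\theta_n$, which presumably suffices downstream.
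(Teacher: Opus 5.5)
Your part (1) is fine and is essentially the paper's construction: concatenate seams outward, one pair of pants per level. Your Case A is also correct, and it is a legitimate compactness alternative on any fixed compact region. If small-angle intersections accumulate in $W_N$, you get a geodesic limit of the $l_{n_j}$ that shares a tangent vector with $a$. Then $a$ would be a leaf of the Chabauty limit $L$, and a twist of $a$ in $W_0$ rules this out, since distinct twists of $a$ intersect. (Taken literally, with $\theta\geq 0$, part (2) holds trivially with $\theta=0$. The real content of the lemma, and the way it is used in Proposition~\ref{prop:inverse-cont}, is a single uniform $\theta>0$.)

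The genuine gap is Case B. That is the entire difficulty, and the mechanism you sketch cannot close it. Translating the bad configurations into a model pair of pants does restore compactness. But the limit you get is a geodesic that locally coincides with a seam of $a$, and that is not a contradiction. The translated pieces of $l_{n_j}$ are not leaves of any lamination you control. The hypotheses ($l_n\to l$ geodesically, $L_n\to L$ in the Chabauty topology) constrain the $l_n$ only on compact sets. At level $k$, infinitely many segments $l_n([-n,n])$ with $n>k$ can pass through. Nothing prevents their far portions from running alongside whatever seam path $a$ uses there, at an arbitrarily small angle.

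So ``choose seams the leaves cannot follow'' is exactly what has to be proved. It needs a level-independent quantitative criterion together with an explicit choice of $a$ in each pair of pants, and your proposal has no substitute for either. The paper gets its uniformity this way rather than by compactness:
\begin{itemize}
\item Since all pants are isometric, it extracts one constant $\theta_0$ such that two geodesic arcs meeting at angle at most $\theta_0$ inside a pair of pants lie in a common $\varepsilon$-thin rectangle, and hence wrap the same number of times about the cuffs.
\item It Dehn twists $a'$ about pants curves so that its wrapping differs from that of $l$, which forces angle at least $\theta_0$ with $l$.
\item It uses $l_n\to l$ to pass to $l_n$ with an error $\beta_1$, setting $\theta=\theta_0-\beta_1$.
\end{itemize}
That last transfer is only explicit on the parts of $l_n$ that track $l$, so the far portions you isolate in Case B are where care is needed on any route.

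Your fallback does not rescue the statement. A bound depending on the leaf index $n$ is automatic, since $l_n([-n,n])$ is compact and meets $a$ in finitely many transverse points. A bound depending on the level of the exhaustion is what Case A already gives. Neither is the lemma. In Claim~\ref{claim:lnbnclose}, the relevant intersection points march out along the leaves, so a constant that degenerates with the level would not give the contradiction used there.
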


\begin{proof}
    By concatenating seams, there exists a grand arc $a'$ such that $a'\cap W_i$ is a single arc for each $i\geq 0$.  We will modify $a'$ to find the grand arc $a$ by Dehn twisting $a'$ along various boundary components of the pairs of pants, as described below.  The resulting grand arc will still satisfy statement (1) of the claim.

    Let $D$ be the diameter of each pair of pants, and fix some $0<\varepsilon < 1/4$.  There exists a constant $\theta_0$ such that the following holds.  Fix some $n$.  If $p$ is a transverse intersection of $a$ and $l$ inside a pair of pants $P$ and the angle of intersection at $p$ is at most $\theta_0$, then there is an rectangular strip of width $\varepsilon$ embedded in $P$ containing $a\cap P$ and $l\cap P$; see Figure \ref{fig:angle_for_F_inverse}.  In particular, $a$ and $l$ must wrap the same number of times about the boundary components of $P$.  Modify $a'$ by Dehn twisting finitely many times about the boundary components of the pairs of pants inside $W_n$ to form a new grand arc $a$, we can ensure that all transverse intersections of $a$ and $l$ have angle at least $\theta_0$.

    \begin{figure}[h]
    \centering
    \def\svgwidth{10 cm}
\begingroup%
  \makeatletter%
  \providecommand\color[2][]{%
    \errmessage{(Inkscape) Color is used for the text in Inkscape, but the package 'color.sty' is not loaded}%
    \renewcommand\color[2][]{}%
  }%
  \providecommand\transparent[1]{%
    \errmessage{(Inkscape) Transparency is used (non-zero) for the text in Inkscape, but the package 'transparent.sty' is not loaded}%
    \renewcommand\transparent[1]{}%
  }%
  \providecommand\rotatebox[2]{#2}%
  \newcommand*\fsize{\dimexpr\f@size pt\relax}%
  \newcommand*\lineheight[1]{\fontsize{\fsize}{#1\fsize}\selectfont}%
  \ifx\svgwidth\undefined%
    \setlength{\unitlength}{245.55115948bp}%
    \ifx\svgscale\undefined%
      \relax%
    \else%
      \setlength{\unitlength}{\unitlength * \real{\svgscale}}%
    \fi%
  \else%
    \setlength{\unitlength}{\svgwidth}%
  \fi%
  \global\let\svgwidth\undefined%
  \global\let\svgscale\undefined%
  \makeatother%
  \begin{picture}(1,0.16250682)%
    \lineheight{1}%
    \setlength\tabcolsep{0pt}%
    \put(0,0){\includegraphics[width=\unitlength,page=1]{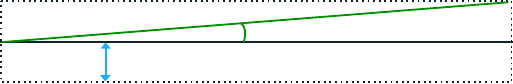}}%
    \put(0.5070826,0.0923197){\color[rgb]{0,0.56862745,0}\makebox(0,0)[lt]{\lineheight{1.25}\smash{\begin{tabular}[t]{l}\textit{$\theta_0$}\end{tabular}}}}%
    \put(0.20916454,0.03254009){\color[rgb]{0.09803922,0.68235294,1}\makebox(0,0)[lt]{\lineheight{1.25}\smash{\begin{tabular}[t]{l}\textit{$\epsilon$}\end{tabular}}}}%
  \end{picture}%
\endgroup%

    \captionsetup{margin=1.5 cm, justification=justified, singlelinecheck=false}
    \caption{Since the length of every pants seam is $1$, we can find $\theta_0$ such that for any geodesic intersecting $l$ inside the pair of pants which makes an angle at least $\theta_0$, the intersection will leave the rectangle.}
    \label{fig:angle_for_F_inverse}
\end{figure}

    Since $l_n$ geodesically converges to $l$ as $n\to\infty$, for all $t>0$, there exists $N\in \mathbb N$ such that $l_n[-t,t]$ and $l[-t,t]$ are at Hausdorff distance at most $\varepsilon/2$ for all $n\geq N$. For each $t$, there exists a constant $\beta_t>0$ such that any angle of intersection of $a$ and $l[-t,t]$ differs by at most $\beta_t$ from the corresponding angle of intersection of $a$ and $l_n[-t,t]$.  Moreover, $\beta_t\to 0$ monotonically as $t\to \infty$; see Figure \ref{fig:angles_for_converging_leaves}.

    \begin{figure}[h]
        \centering
    \def\svgwidth{10 cm}
\begingroup%
  \makeatletter%
  \providecommand\color[2][]{%
    \errmessage{(Inkscape) Color is used for the text in Inkscape, but the package 'color.sty' is not loaded}%
    \renewcommand\color[2][]{}%
  }%
  \providecommand\transparent[1]{%
    \errmessage{(Inkscape) Transparency is used (non-zero) for the text in Inkscape, but the package 'transparent.sty' is not loaded}%
    \renewcommand\transparent[1]{}%
  }%
  \providecommand\rotatebox[2]{#2}%
  \newcommand*\fsize{\dimexpr\f@size pt\relax}%
  \newcommand*\lineheight[1]{\fontsize{\fsize}{#1\fsize}\selectfont}%
  \ifx\svgwidth\undefined%
    \setlength{\unitlength}{250.10164036bp}%
    \ifx\svgscale\undefined%
      \relax%
    \else%
      \setlength{\unitlength}{\unitlength * \real{\svgscale}}%
    \fi%
  \else%
    \setlength{\unitlength}{\svgwidth}%
  \fi%
  \global\let\svgwidth\undefined%
  \global\let\svgscale\undefined%
  \makeatother%
  \begin{picture}(1,0.3263631)%
    \lineheight{1}%
    \setlength\tabcolsep{0pt}%
    \put(0,0){\includegraphics[width=\unitlength,page=1]{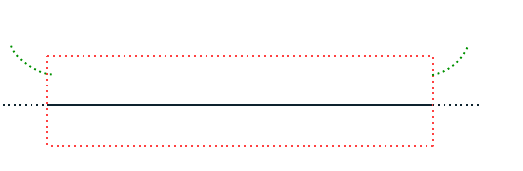}}%
    \put(0.87690993,0.08639579){\color[rgb]{0.17647059,0.17647059,0.17647059}\makebox(0,0)[lt]{\lineheight{1.25}\smash{\begin{tabular}[t]{l}\textit{$l$}\end{tabular}}}}%
    \put(0.89230949,0.17009483){\color[rgb]{0,0.56862745,0}\makebox(0,0)[lt]{\lineheight{1.25}\smash{\begin{tabular}[t]{l}\textit{$l_n$}\end{tabular}}}}%
    \put(0,0){\includegraphics[width=\unitlength,page=2]{angles_for_converging_leaves.pdf}}%
    \put(0.56020272,0.13505427){\color[rgb]{1,0.25490196,0.25490196}\makebox(0,0)[lt]{\lineheight{1.25}\smash{\begin{tabular}[t]{l}\textit{$\theta$}\end{tabular}}}}%
    \put(0.62369826,0.18519999){\color[rgb]{1,0.25490196,0.25490196}\makebox(0,0)[lt]{\lineheight{1.25}\smash{\begin{tabular}[t]{l}\textit{$\theta - \beta_t$}\end{tabular}}}}%
    \put(0.62378698,0.24893138){\color[rgb]{0.72941176,0,1}\makebox(0,0)[lt]{\lineheight{1.25}\smash{\begin{tabular}[t]{l}\textit{$a$}\end{tabular}}}}%
    \put(0,0){\includegraphics[width=\unitlength,page=3]{angles_for_converging_leaves.pdf}}%
  \end{picture}%
\endgroup%

    \captionsetup{margin=1.5 cm, justification=justified, singlelinecheck=false}
    \caption{Since $l_n[-t,t]$ and $l[-t,t]$ are close for $n\gg 0$, the angle of intersection between $a$ and $l_n$ is close to the angle of intersection between $a$ and $l$.}
    \label{fig:angles_for_converging_leaves}
    \end{figure}

 Therefore, the angles of transverse intersections of $a$ and $l_n[-t,t]$ are bounded below by $\theta:=\theta_0-\beta_1$ for all $t\geq 1$, and so (2) holds.
\end{proof}

\begin{prop} \label{prop:inverse-cont}
The map $F^{-1}$ is continuous.
\end{prop}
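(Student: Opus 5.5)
To show $F^{-1}$ is continuous, I will use sequential continuity. The target $\mathcal{GEL}(\Sigma)$ is first countable and Hausdorff, and the boundary points involved are handled through sequences. So it suffices to prove: if $L_n, L \in \mathcal{GEL}(\Sigma)$ with $F(L_n)\to F(L)$ in $\partial\mathcal G(\Sigma)$, then $L_n\CCto L$. Suppose not. Then after passing to a subsequence there is a leaf $l\in L$ and a neighbourhood of $l$ in $\Gamma(\Sigma)$ that no leaf of any $L_n$ enters. By compactness of $\Lambda(\Sigma)$ in the Chabauty topology, a further subsequence satisfies $L_n \Cto L_\infty$, and then $L\not\subseteq L_\infty$. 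The goal is to contradict this by showing that every Chabauty limit must contain $L$.

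The argument transfers to finite bicorn arcs. Fix the exhaustion $\{W_i\}$ and choose leaves $l_n\in L_n$ with $l_n\to l_\infty\in L_\infty$ (after passing to a subsequence). Let $a$ be the grand arc from Lemma~\ref{lem:angles}. For each $n$, take an ending bicorn sequence $(b^n_k)_k\subset\mathcal B(a,l_n)$. Since $F(L_n)\to F(L)$, the Gromov products $(F(L_n),F(L))_a\to\infty$. By Proposition~\ref{prop:map_well_def} and Lemma~\ref{lem:QpathNearQgeo}, the sequences $(b^n_k)_k$ and an ending bicorn sequence $(b_k)$ for $L$ fellow travel, within uniform distance $C$, up to distance $R_n\to\infty$ from $a$. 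Choose $k_n$ so that $d_{\mathcal G}(a,b_{k_n})\to\infty$ and some $b^n_{j_n}$ satisfies $d_{\mathcal G}(b_{k_n},b^n_{j_n})\le C$.

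Next I will control the Chabauty limit of the arcs $b^n_{j_n}$. Each $b^n_{j_n}$ consists of a segment of $l_n$ together with two rays of $a$. The angle bound in Lemma~\ref{lem:angles} keeps the corners of these bicorns from degenerating. Using this, I expect to show that any Chabauty limit $M$ of $(b^n_{j_n})$ is contained in $L_\infty$ together with pieces of $a$ and leaves asymptotic to these: the $l_n$-segments converge into $L_\infty$, while the $a$-parts converge into $a$ or escape to infinity. On the other hand, $b_{k_n}\to l$ geodesically, so $b_{k_n}\CCto L$. Now connect $b_{k_n}$ to $b^n_{j_n}$ by a path of length at most $C$ in $\mathcal G(\Sigma)$. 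Applying Corollary~\ref{cor:disjointarcTOsublamination} iteratively along this path, exactly as in Lemma~\ref{lem:injective}, gives $L\subseteq M$.

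Since $L$ is minimal and witness-filling, it cannot lie inside $a$, and Lemma~\ref{lem:sublamination}-type reasoning excludes leaves merely asymptotic to $a$. Hence $L\subseteq L_\infty$, which is the desired contradiction. The main obstacle is the middle step: showing that the limit of the bicorns $b^n_{j_n}$ adds only $a$-material to $L_\infty$, and in particular that the corners do not produce new limit geodesics crossing $\supp(L)$. This is exactly where the uniform angle bound $\theta$ from Lemma~\ref{lem:angles} is needed: it ensures that geodesic representatives of the bicorns stay uniformly close to the concatenations of their pieces.
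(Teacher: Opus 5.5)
Your proposal is correct in outline, and its skeleton matches the paper's. Both proofs use three ingredients:
\begin{itemize}
\item the diagonal choice of bicorns $b^n_{j_n}$ that fellow-travel an ending bicorn sequence of $L$ (the paper's $b_{n_r,i_r}$);
\item the iterated use of Corollary~\ref{cor:disjointarcTOsublamination} along paths of bounded length, to force $L$ into every Chabauty limit of these bicorns;
\item the angle bound of Lemma~\ref{lem:angles}, to control how the bicorns straighten.
\end{itemize}

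The genuine difference is the last step, transferring information from the bicorns back to the $L_n$. The paper shows that on windows $[-R,R]$ the geodesic $b_{n_r,i_r}$ is $\epsilon$-close to the leaf $l_{n_r}$ (Claim~\ref{claim:lnbnclose}), and reads off $l_{n_r}\geodto l_0$ directly. This is constructive: it produces explicit leaves of the $L_n$ converging to each leaf of $L_0$. You instead argue by contradiction against a Chabauty limit $L_\infty$ of (a subsequence of) the $L_n$ that omits a leaf of $L$, and you classify the Chabauty limit $M$ of the bicorns. Your route buys two things. First, the contradiction set-up gives coarse Chabauty convergence of the full sequence rather than of a subsequence. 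Second, you never need to know which portion of the bicorn accounts for its convergence to a given leaf of $L$. A priori that portion could be a corner region rather than the leaf segment, a case the paper's windowed estimate passes over; in your argument, minimality of $L$ absorbs finitely many exceptional leaves.

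There are a few places to tighten.
\begin{itemize}
\item Make the classification of $M$ explicit. Since $d_{\mathcal G}(a,b^n_{j_n})\to\infty$ and consecutive terms are at bounded distance, $j_n\to\infty$. So the $l_n$-part of $b^n_{j_n}$ contains at least $j_n$ points of $a\cap l_n$, spaced at least $2\eta$ apart, and its length tends to infinity. With corner angles bounded below, the straightened geodesic is exponentially close to the concatenation away from its two corners. Hence every leaf of $M$ is a leaf of $L_\infty$, or $a$, or one of at most two corner leaves (in $\mathbb{H}^2$, joining an endpoint of a lift of $a$ to an endpoint of a lift of a leaf of $L_\infty$).
\item The ending is then simpler than ``Lemma~\ref{lem:sublamination}-type reasoning''. $L$ has uncountably many leaves, all of them leaves of $M$, so one of them lies in $L_\infty$. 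That leaf is dense in $L$ and $L_\infty$ is closed, so $L\subseteq L_\infty$.
\item Like the paper, you need the angle bound at both corners, including the far one. The far corner typically lies outside $l_n[-n,n]$, which is the only range where Lemma~\ref{lem:angles} literally applies.
\item The leaves $l_n\to l_\infty$ need not be generic enough to carry ending bicorn sequences. This is harmless for you, since the middle-segment limits land in $L_\infty$ whichever leaf of $L_n$ is used.
\item For the sequential reduction, what matters is first countability of the domain $F(\mathcal{GEL}(\Sigma))\subseteq\partial\mathcal G(\Sigma)$, not of $\mathcal{GEL}(\Sigma)$.
\end{itemize}
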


\begin{proof}
It suffices to show that if $\left( L_{n}\right) _{n\in\mathbb{N}}\subset\mathcal{GEL}\left(\Sigma\right)$ and $L_{0}\in\mathcal{GEL}\left(\Sigma\right)$ are such that $F\left(L_{n}\right)\to F\left(L_{0}\right)$ in $\partial\mathcal{G}\left(\Sigma\right)$, then $L_{n}\xrightarrow{CC}L_{0}$.

Let the grand arc $a$ and constant $\theta>0$ be those provided by Lemma~\ref{lem:angles}, and let $\delta$ be the hyperbolicity constant of $\mc G(\Sigma)$.
For each $n$, let $\left(b_{n,i}\right)_{n\in\mathbb{N}}\subset\mathcal{B}\left(a,L_{n}\right)$ be an ending bicorn sequence such that $F\left(L_{n}\right)=[(b_{n,i})_{i\in \mathbb N}]$. 


Since $F(L_n)\to F(L_0)$, for each $r>0$ there exists $n_r>0$ such that $(F(L_n), F(L_0))_a\geq r$ for all $n>n_r$.  Fix $\varepsilon>0$, and let $\delta$ be the hyperbolicity constant of $\mc G(\Sigma)$.  There exist indices $I,J$ such that $(b_{n_r,J},b_{0,I})_a\geq r-\varepsilon$.  Then the points $x\in [a,b_{n_r,J}]$ and $y\in [a,b_{0,I}]$ with $d_{\mc G}(a,x)=d_{\mc G}(a,y)=\lfloor r-\varepsilon\rfloor$ satisfy $d_{\mc G}(x,y)\leq 2\delta$.  Since $\{b_{n_r,1},\dots, b_{n_r,J}\}\subseteq N_{K}([a,b_{n_r,J}])$ and $\{b_{0,1},\dots, b_{0,I}\}\subseteq N_K([a,b_{0,I}])$ by Proposition~\ref{prop:map_well_def}, there exists $i_r>0$ such $b_{n_r,i_r}\in N_{2K+2\delta}((b_{0,i})_{i\in \mathbb N})$.  Moreover, $r-\varepsilon -K \leq d_{\mc G}(a,b_{n_r,i_r})\leq r-\varepsilon+K$.  In particular, the sequence $(b_{n_r,i_r})_{r\in \mathbb N}$ is is unbounded in $\mathcal{G}\left(\Sigma\right)$ and is a bounded distance from $\left(b_{0,i}\right)_{i\in\mathbb{N}}$, so it converges to the point $\left[\left(b_{n_{r},i_{r}}\right)_{r\in\mathbb{N}}\right]=\left[\left(b_{0,i}\right)_{i\in\mathbb{N}}\right]=F\left(L_{0}\right)$ in $\partial \mc{G}(\Sigma)$.

\begin{claim}
    $\left(b_{n_{r},i_{r}}\right)\xrightarrow{CC}L_{0}$ as $r\to \infty$.  
\end{claim}

\begin{proof}
    We have shown that for each $r$ there exists $m_r$ such that $d_{\mc G}(b_{n_r,i_r},b_{0,m_r})\leq C':=2K+2\delta$. Let 
\[
b_{n_r,i_r}=x_{r,0}-x_{r,1}-\dots-x_{r,C'}=b_{m_r}
\]
be a path between $b_{n_r,i_r}$ and $b_{m_r}$ in $\mathcal{G}\left(\Sigma\right)$, for each $r$. Up to choosing subsequences, we can assume that each $\left(x_{r,i}\right)_{n\in\mathbb{N}}$ geodesically converges to a lamination $L_{i}$, with $L_{C'}=L$. By Corollary~\ref{cor:disjointarcTOsublamination}, $L\subset L_{C'-1}$.
 Repeating this argument, we see that $L\subset L_{i}$ for each $i$. Thus, $L\subset M=L_{0}$, as desired.
\end{proof}

Since $a$ is a grand arc, there is an $\eta>0$ such that $N_{\eta}(a)$ is an embedded strip in $\Sigma$.  It follows that consecutive intersections of $l_n$ and $a$ must be at distance at least $2\eta$ along $l_n$.  Since $b_{n,i}$ is formed using at least the first $i$ interesections of $a$ and $l_n$, it follows that the $l_{n}$ part of $b_{n,i}$ contains at least $l_{n}\left(\left[-i\eta,i\eta\right]\right)$.  By Lemma~\ref{lem:angles}(2), the angles of intersection are bounded below by $\theta$. 

\begin{claim}\label{claim:lnbnclose}
    For any $\epsilon>0$ and $R>0$, we can choose $r>0$ large enough such that
\begin{equation}\label{eqn:lnbnclose}
d_{\Sigma}\left(b_{n_{r},i_{r}}(t),l_{n_{r}}(t)\right)<\epsilon
\end{equation}
for $t\in\left[-R,R\right]$. 
\end{claim}

\begin{proof}
    Fix a lift $\tilde l$ in $\mathbb H^2$ and lifts $\tilde l_n$ geodesically converging to $\tilde l$.  Since $b_{n,i}$ converges to $l_n$, there are lifts $\tilde a_j$ of $a$ whose intersections with $\tilde l_n$ converge to both endpoints of $\tilde l_n$.  Since $\tilde l_n$ converges to $\tilde l$, the intersection points $\tilde a_j\cap \tilde l$ also converge to both ends of $\tilde l$.  The geodesic $\tilde l$ divides $\partial\mathbb H^2$ into two components, $U^+$ and $U^-$.  Denote by $\tilde a_j^{\pm \infty}$ the endpoints of $\tilde a_j$ in $U^{\pm}$, respectively.

    Suppose $b_{n_r,i_r}$ does not converge to $\ell$.  Then it must be the case that, without loss of generality, $\tilde a_j^{+\infty}$ converges to one endpoint of $\tilde\ell$ while $\tilde a_j^{-\infty}$ does not; for example, see Figure~\ref{fig:bad_angle_bicorns}. However, this implies that the angles of intersection of $\tilde a_j$ with $\tilde l$ must go to 0 as $j\to\infty$.  This contradicts our choice of $a$.
\end{proof}

Since $\left(b_{n_{r},i_{r}}\right)_{r\in\mathbb{N}}$ coarse Chabauty converges to $L_{0}$, we have that $b_{n_{r},i_{r}}\geodto l_{0}$ for every leaf $l_{0}\in L_{0}$. By  Claim~\ref{claim:lnbnclose}, it follows that $l_{n_{r}}\geodto l_{0}$, as well, proving that $\left(L_{n}\right)_{n\in\mathbb{N}}$ coarse Chabauty converges to $L_{0}$, as desired.
\end{proof}

We have shown that $F$ is a homeomorphism onto its image.  We next prove that the image of $F$ is dense. In fact, letting $\mathcal{GEL}_{\mathrm{cpct}}\left(\Sigma\right)$ denote the subspace of $\mathcal{GEL}\left(\Sigma\right)$ consisting of minimal laminations that fill a \textit{compact} witness, we will show that $F\left(\mathcal{GEL}_{\mathrm{cpct}}\left(\Sigma\right)\right)$ is dense in $\partial\mathcal{G}\left(\Sigma\right)$.

\begin{thm}\label{thm:dense}
    The image of $F$ is dense in $\partial\mc G(\Sigma)$.
\end{thm}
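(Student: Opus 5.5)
We show that the image of $\mathcal{GEL}_{\mathrm{cpct}}(\Sigma)$ under $F$ is dense, by approximating each boundary point with attracting fixed points of pseudo-Anosovs supported on compact witnesses. The intended statement is: for every $p\in\partial\mathcal G(\Sigma)$ and every $R>0$ there is a compact witness $W$, a map $\varphi$ pseudo-Anosov on $W$, and a base arc $a$ with $F(L^+)=\varphi^+\in U_R(p)$. By Example~\ref{ex:agrees_with_F} we already have $\varphi^+=F(L^+)$ with $L^+\in\mathcal{GEL}_{\mathrm{cpct}}(\Sigma)$. So the work is entirely the construction of $\varphi$ with $(\varphi^+,p)_a>R$.

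First, fix a base grand arc $a$ and, by Lemma~\ref{lem:Qgeo_to_boundary}, a $(1,10\delta)$--quasi-geodesic ray $\gamma$ from $a$ to $p$. Pick a vertex $x=\gamma(N)$ with $N\gg R$. By Lemma~\ref{lem:single_arc_witness} and exhaustion, choose a compact witness $W$ so large that $a\cap W$ and $x\cap W$ are single arcs. Also require that all vertices of a geodesic $[a,x]$ meet $W$ in single arcs, which is possible since the geodesic is finite.

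By Lemma~\ref{lem:presc_arc_proj}, distances between these arcs in $\mathcal G(\Sigma)$ agree up to $+3$ with distances of their projections in $\mathcal A(W,\Gamma_W)$. This graph is hyperbolic and infinite-diameter, and the mapping class group of $W$ acts on it with pseudo-Anosovs acting loxodromically.

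Next, choose a pseudo-Anosov $\psi$ on $W$. Conjugate by some $h\in\mathrm{MCG}(W)$ so that the quasi-axis of $\varphi=h\psi h^{-1}$ in $\mathcal A(W,\Gamma_W)$ passes near $\pi_W(x)$ and travels away from $\pi_W(a)$. Concretely, take $h$ with $h(\pi_W(y))=\pi_W(x)$ for some point $y$ on the axis of $\psi$, chosen far from $h^{-1}\pi_W(a)$ in the direction of $\psi^+$; one can use a ping-pong/north–south argument in the hyperbolic graph $\mathcal A(W,\Gamma_W)$. Then the sequence $\varphi^n(x)$, extended by the identity outside $W$, forms a quasi-geodesic starting near $x$. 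By Lemma~\ref{lem:presc_arc_proj} and Corollary~\ref{cor:quasi_geod_projection}, the concatenation of $[a,x]$ with $(\varphi^n(x))$ is a uniform quasi-geodesic in $\mathcal G(\Sigma)$, provided the Gromov product at $x$ between $a$ and $\varphi^n(x)$ is uniformly bounded. By the Morse lemma, this concatenation passes within a uniform distance of $x$. Hence $(\varphi^+,p)_a\gtrsim d(a,x)-C\geq N-C>R$, because both $\varphi^+$ and $p$ are represented by sequences whose geodesics from $a$ fellow-travel $[a,x]$.

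The main obstacle is arranging the bounded Gromov product at $x$, namely a "turning" estimate ensuring the axis of $\varphi$ leaves $x$ in a direction away from $a$. This needs uniform control in $\mathcal A(W,\Gamma_W)$, with constants independent of $W$, transferred back to $\mathcal G(\Sigma)$ via Lemma~\ref{lem:presc_arc_proj}. I would handle this by replacing $\varphi$ with a high power $\varphi^k$. Its translation length then dominates the fixed bounded quantities (projection discrepancies, $\delta$, Morse constants). Whichever of $\varphi^{\pm k}$ moves $x$ away from $a$ (one of them must, by the thinness of triangles) then gives the required bounded Gromov product. Conclude by replacing $\varphi$ with $\varphi^{-1}$ if needed, and use $F(L^{\pm})=\varphi^{\pm}$.
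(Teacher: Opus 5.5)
Your argument is correct in outline, but it follows a genuinely different route from the paper.

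\textbf{The paper's route.} It fixes a $(1,10\delta)$--quasi-geodesic $(x_j)$ converging to $p$. It projects the first $i$ terms to a large compact witness $W_i$ and extends this finite quasi-geodesic to an infinite one in $\mathcal A(W_i,P_i)$ using the no-dead-ends Corollary~\ref{cor:extending-quasigeod}. It then invokes Pho-On's description of $\partial\mathcal A(W_i,P_i)$ to obtain a limiting minimal filling lamination $L_i$. Finally it uses bicorns (Lemma~\ref{lem:bicorns_for_converging_leaves} and Corollary~\ref{cor:unicorn_hausdorff_dist}) to check that $F(L_i)=q_i\to p$.

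\textbf{Your route.} You prove a stronger statement: attracting fixed points of pseudo-Anosovs on compact witnesses are dense. You identify these fixed points with $F(L^+)$ via Example~\ref{ex:agrees_with_F}. The rest is the standard hyperbolic-geometry mechanism: conjugate a loxodromic so its quasi-axis passes through a far point of the ray, then choose the sign that points away from $a$.

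\textbf{What each buys.} Your route bypasses both Pho-On's theorem and the no-dead-ends extension. The latter is proved in the paper under the standing assumption of Section~4 that $|\mathcal S(\Sigma)|\ge 4$, so your argument covers $|\mathcal S(\Sigma)|=3$ without extra work. The price is uniformity in $W$, which you should make explicit in three places.
\begin{enumerate}
\item $\mathcal A(W,\Gamma_W)$ is uniformly hyperbolic. This holds because Lemma~\ref{lem:presc_arc_proj} makes the extension map a $(1,3)$--quasi-isometric embedding into $\mathcal G(\Sigma)$. Alternatively, work directly in $\mathcal G(\Sigma)$.
\item The pure mapping class group of $W$ acts uniformly coboundedly. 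Arcs joining the same two distinct boundary components form a single orbit. Moreover, every vertex of $\mathcal A(W,\Gamma_W)$ is disjoint from some arc joining any prescribed pair of distinct boundary components. This is what makes ``take $h$ with $h(y)=\pi_W(x)$ for $y$ on the axis'' legitimate, up to distance $1$.
\item The roles of the steps should be separated. The \emph{high power} is what gives a quasi-axis of uniform quality through a nearly minimally displaced point; the conjugation does not do this. The \emph{conjugation} puts $\pi_W(x)$ near that axis. The \emph{sign choice} gives the bounded turning: $(\varphi^+,\varphi^-)_x$ is uniformly bounded, so the $\delta$-inequality bounds $\min\{(a,\varphi^+)_x,(a,\varphi^-)_x\}$.
\end{enumerate}

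As written, choosing $y$ ``far from $h^{-1}\pi_W(a)$'' is circular, since $h$ depends on $y$. Your final sign-choice argument makes that requirement unnecessary, so you can simply drop it.
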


\begin{proof}
    Let $\delta$ be the hyperbolicity constant of $\mc G(\Sigma)$, and let $p=[\left(x_{j}\right)_{j\in\mathbb{N}}]$ be a point in $\partial\mathcal{G}\left(\Sigma\right)$, where $\left(x_{j}\right)_{j\in\mathbb{N}}$ are the vertices on a $(1,10\delta)$--quasi-geodesic path. For each $i\in\mathbb{N}$, let $W_{i}$ be a compact witness large enough such that $x_{j}\cap W_{i}$ is connected for each $1\leq j\leq i$; such a witness exists by Lemma~\ref{lem:single_arc_witness}. Let $x_{j}^{i}=x_{j}\cap W_{i}$ for $1\leq j\leq i$. By Corollary \ref{cor:quasi_geod_projection}, $\left(x_{j}^{i}\right)_{j=1,\dots,i}$ is a finite $(k,c)$--quasi-geodesic in $\mathcal{A}\left(W_{i},\Gamma_{W_{i}}\right)$, where $k$ and $c$ are independent of $i$. 

    Let $A(W_i,P_i)$ denote the relative arc graph of $W_i$, where $P_i$ is the collection of boundary components of $W_i$ that separate elements of the grand splitting. By Corollary \ref{cor:extending-quasigeod}, we can extend $\left(x_{j}^{i}\right)_{j=1,\dots,i}$ to an infinite quasi-geodesic  $\left(x_{j}^{i}\right)_{j\in\mathbb{N}}$ in $\mathcal{A}\left(W_{i},P_i\right)$, which necessarily converges to $\partial \mc A(W_i,P_i)$. Let $q_i := [\left(x^i_j\right)_{n\in\mathbb{N}}]$. By \cite[Corollary 2.2]{kop-asdim}, we have that 
    \[\partial\mathcal{A}(W_i,P_i) \subset \partial\mathcal{A}(W_i,\Gamma_{W_i}).\] 
    Up to choosing a different sequence at Hausdorff distance 1  from $\left(x_{j}^{i}\right)_{i\in\mathbb{N}}$, we may assume that each $x_{j}^{i}$ is in $\mc{A}\left(W_i,\Gamma_{W_i}\right)$, that is, we may assume that no $x_j^i$ is a loop: if some $x_j^i$ is a loop, build a prescribed arc disjoint from $x_j^i$ by starting at a different boundary component, following any path disjoint from $x_j^i$ from this boundary component to a point on $x_j^i$, and continuing along $x_j^i$ in either direction. We can then extend each $x_{j}^{i}$ to a grand arc $y_{j}^{i}$, with $y_{j}^{i}=x_{j}$ for $j\leq i$ such that the arcs $y_j^i$ are disjoint outside $W_{i}$. Thus we can assume that $\left(x_{j}^{i}\right)_{j\in\mathbb{N}}$ is a  sequence of grand arcs whose projections to $W_{i}$ converge to a point in $\partial\mathcal{A}\left(W_{i},P_{i}\right)$. It follows from Lemma \ref{lem:presc_arc_proj} that this sequence is a $(k',c')$--quasi-geodesic in $\mc G(\Sigma)$, where $k',c'$ depend only on $k,c$, and hence converges to a point in $\partial\mathcal{G}\left(\Sigma\right)$. Since, for each $i$, the quasigeodesic sequence $(x^i_j)_{j\in \mathbb N}$ converging to $q_i$ agrees with $[\left(x_{i}\right)_{i\in\mathbb{N}}]$ for the first $i$ terms, it follows that $q_i \to p$ in $\partial \mc G(\Sigma)$ as $i\to\infty$. 

   For each $i$, it follows from \cite[Theorem~1.2]{Pho-On} that $\left(x_{j}^{i}\right)_{j\in\mathbb{N}}$ converges in the coarse Chabauty topology to a minimal lamination $L_{i}$ that fills a witness subsurface of $\mathcal{A}\left(W_{i},P_{i}\right)$ in $W_{i}$. Such a witness must contain, and hence separate, every element of $P_{i}$, and so must also be a witness of $\mathcal{G}\left(\Sigma\right)$. It follows that  $L_{i}\in\mathcal{GEL}_{\mathrm{cpct}}\left(\Sigma\right)$, and $x_{j}^{i}\xrightarrow{CC}L_{i}$ as $j\to\infty$. 
   
   \begin{claim}
       $q_{i}=F\left(L_{i}\right)$ for each $i$.
   \end{claim}

   \begin{proof}
       Let $(b_m)_{m\in \mathbb N}$ be an ending bicorn sequence in $\mc B(a,L_i)$, so that $F(L_i)=[(b_m)_{m\in \mathbb N}]$.  Pick a leaf $l\in L_i$.  By the definition of coarse Chabauty convergence, we have $x_j^i\geodto l$ as $j\to \infty$.  Thus,  Lemma~\ref{lem:bicorns_for_converging_leaves} ensures that for each $m$ there exists $n_m$ such that $b_m\in \mc B(a,x^i_{n_m})$.  By Corollary~\ref{cor:unicorn_hausdorff_dist} and the fact that $(x^i_j)_{j\in \mathbb N}$ is a $(k',c')$--quasi-geodesic with $k',c'$ independent of $i$, the sets $\mc B(a,x^i_{n_m})$ are contained in the $(M_0+m_0(\delta, k',c'))$--neighborhood of the sequence $\{x^i_j\}_{j\in \mathbb N}$, where $M_0$ is the constant from Corollary~\ref{cor:unicorn_hausdorff_dist} and $m_0(\delta,k',c')$ is the constant provided by Lemma~\ref{lem:Morse_lemma}.  Thus each $b_m$ is contained in a uniform neighborhood of $\{x^i_j\}_{j\in \mathbb N}$. 
       Since the sequence $\{b_m\}_{m\in \mathbb N}$ converges to infinity in $\mc G(\Sigma)$, it follows that $[\{b_m\}_{m\in \mathbb N}]=[\{x^i_j\}_{j\in \mathbb N}]=q_i$ in $\partial\mc G(\Sigma)$.  Therefore, $q_i=F(L_i)$.
   \end{proof}

For each $p\in \partial\mc G(\Sigma)$, we have constructed a sequence of laminations $L_{i}\in\mathcal{GEL}_{\mathrm{cpct}}\left(\Sigma\right)$ such that $F\left(L_{i}\right)\to p$ in $\partial \mc G(\Sigma)$. Therefore, the image of $F$ is dense in $\partial\mathcal{G}\left(\Sigma\right)$. 
\end{proof}

\subsection{Equivariance of $F$}

In this subsection, we show that the map $F$ is equivariant.  To do so, we first describe the action of $\mathrm{MCG}(\Sigma)$ on $\mc{GEL}(\Sigma)$.

Let $\tau\in\mathrm{MCG}\left(\Sigma\right)$, and let $\varphi\in\tau$ be a homeomorphism $\Sigma\to\Sigma$. Since the metric on $\Sigma$ is Nielsen-convex, we can lift $\varphi$ to a map $\hat{\varphi}\colon \mathbb{H}^{2}\to\mathbb{H}^{2}$ that extends to a homeomorphism $\partial\hat{\varphi}\colon \partial\mathbb{H}^{2}\to\partial\mathbb{H}^{2}$ that is invariant under homotopy and equivariant under the deck group action; see, for example, \cite[Proposition~6]{Tap}.  In particular, $\partial\hat\varphi$ depends only on the mapping class $\tau$. Thus, $\tau$ induces a continuous map on $\Gamma\left(\Sigma\right)$, and hence a continuous map on $\Lambda\left(\Sigma\right)$. If $L$ is a minimal lamination, then so is $\tau(L)$, as these two laminations are homeomorphic. Similarly, if $\mathrm{supp}(L)$ is a witness, then so is $\mathrm{supp}\left(\tau(L)\right)=\tau\left(\mathrm{supp}(L)\right)$. Thus if $L\in\mathcal{GEL}\left(\Sigma\right)$, then $\tau(L)\in\mathrm{MCG}\left(\Sigma\right)$, and so  $\mathrm{MCG}\left(\Sigma\right)$ admits a well-defined action on $\mathcal{GEL}\left(\Sigma\right)$.  On the other hand, the action of $\mathrm{MCG}\left(\Sigma\right)$ on $\mathcal{G}\left(\Sigma\right)$ by isometries extends to a continuous action on $\partial\mathcal{G}\left(\Sigma\right)$; see \cite[Theorem 1.3]{B-NV}. The following lemma shows that the map $F$ agrees with these actions.

\begin{lem}\label{lem:bicorns_preserved_homeo}
If $a\in\mathcal{G}\left(\Sigma\right)$, $l\in\Gamma\left(\Sigma\right)$ and $\tau\in\mathrm{MCG}\left(\Sigma\right)$, then  $\tau\left(\mathcal{B}\left(a,l\right)\right)=\mathcal{B}\left(\tau(a),\tau(l)\right).$
\end{lem}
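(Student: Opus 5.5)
The plan is to prove the two inclusions $\tau(\mathcal{B}(a,l))\subseteq\mathcal{B}(\tau(a),\tau(l))$ and $\tau^{-1}(\mathcal{B}(\tau(a),\tau(l)))\subseteq\mathcal{B}(a,l)$. The second is the first one applied to $\tau^{-1}$, with $\tau(a)$ and $\tau(l)$ in place of $a$ and $l$. So it suffices to show the first inclusion for an arbitrary mapping class $\tau$.

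The key point is that the bicorn construction is purely topological in terms of intersection points. Before applying $\tau$, we need to agree on what $\tau(a)$ and $\tau(l)$ mean. Fix a homeomorphism $\varphi\in\tau$. Then $\varphi(a)$ is an embedded arc isotopic to the geodesic representative $\tau(a)$. As recalled above, the boundary map $\partial\hat\varphi$ sends the endpoints of lifts of $l$ to those of $\tau(l)$, so $\varphi(l)$ is a (possibly non-proper) simple path homotopic to the geodesic $\tau(l)$ with the same lifted endpoints.

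Now let $b=(\dots P)_a\cup(PQ)_l\cup(Q\dots)_a$ be a bicorn, where $(PQ)_l\cap a=\{P,Q\}$. Applying $\varphi$ gives an embedded arc
\[
\varphi(b)=(\dots \varphi(P))_{\varphi(a)}\cup(\varphi(P)\varphi(Q))_{\varphi(l)}\cup(\varphi(Q)\dots)_{\varphi(a)}.
\]
This is a bicorn built from the curves $\varphi(a)$ and $\varphi(l)$, and its isotopy class is $\tau(b)$.

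It remains to pass from the topological pair $(\varphi(a),\varphi(l))$ to the geodesic pair $(\tau(a),\tau(l))$. I would work in $\mathbb H^2$. Lift $b$ so that its middle segment $\tilde\lambda$ lies on a lift $\tilde l$, with endpoints on two lifts $\tilde a_P,\tilde a_Q$ of $a$. These three lifts determine endpoint pairs on $\partial\mathbb H^2$, and the two outer rays end at one endpoint of $\tilde a_P$ and one of $\tilde a_Q$. Apply $\partial\hat\varphi$ and let $\tilde a'_P,\tilde a'_Q,\tilde l'$ be the geodesics with the image endpoints. Since $\partial\hat\varphi$ is a homeomorphism of the circle, it preserves linking of endpoint pairs. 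Hence $\tilde l'$ crosses $\tilde a'_P$ and $\tilde a'_Q$, at points $P',Q'$. The segment $(P'Q')$ of $\tilde l'$ meets no other lift of $\tau(a)$ in its interior, because an interior crossing is equivalent to a linking condition on endpoints that fails for $\tilde\lambda$. The same linking argument shows that the rays of $\tilde a'_P,\tilde a'_Q$ beyond $P',Q'$ meet the projection of $(P'Q')$ only at $P',Q'$. This is exactly the simplicity condition, so the projection is a bicorn $b'\in\mathcal B(\tau(a),\tau(l))$. Moreover $b'$ has the same circle-endpoint data as the lift of $\varphi(b)$, so $b'$ is homotopic, hence isotopic, to $\tau(b)$.

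The main obstacle is this last step: minimality of intersections does not hold automatically for non-geodesic representatives, so a bicorn of the topological curves might not correspond to one of the geodesics. The linking-number reformulation handles this. Intersections of geodesics in $\mathbb H^2$ are determined by interleaving of endpoints, and bicorn-ness is a finite list of interleaving and non-interleaving conditions among countably many lifts. All of these conditions are preserved by the circle homeomorphism $\partial\hat\varphi$ and are equivariant under the deck group. I would write that list out carefully, and in particular check that $\partial\hat\varphi$ being equivariant ensures distinct lifts go to distinct lifts, so that $P\neq Q$ maps to $P'\neq Q'$.
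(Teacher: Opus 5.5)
\textbf{The gap: you use a stronger bicorn condition than the paper's.} You assume a bicorn satisfies $(PQ)_l\cap a=\{P,Q\}$. The condition in Definitions~\ref{def:bicorn} and~\ref{def:bicorn_with_geodesic} is only $(a_1\cup a_2)\cap l_1=\{P,Q\}$. So the segment $l_1$ is allowed to cross the middle portion $(PQ)_a$ of $a$, which is discarded.

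Such bicorns do occur. In Case~2 of the proof of Proposition~\ref{prop:bicorns_geodesic_conv}, the new $l$-segment $(X'Y)_l$ contains the point $X\in a$ in its interior. Also, Lemma~\ref{lem:bicorn_subsets} would be vacuous if $a\cap l_1=\{P,Q\}$ always held.

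For these bicorns, your claim that the segment $(P'Q')$ of $\tilde l'$ meets no other lift of $\tau(a)$ is false. The interleaving condition does hold for $\tilde\lambda$ and the lifts of $a$ passing through its interior crossings. As written, you prove the inclusion only for a subset of $\mathcal B(a,l)$. Your next sentence, about the outer rays, states the correct simplicity condition, but it does not follow from the claim before it.

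\textbf{The repair stays inside your framework.} Write $\tilde a:=\tilde a_P$ and $\tilde a_Q=h\tilde a$ for a deck transformation $h$. Consider a lift $g\tilde a\neq\tilde a,h\tilde a$.
\begin{itemize}
\item $g\tilde a$ crosses $\tilde\lambda$ exactly when its endpoints interleave with those of $\tilde l$ and it lies in the strip between $\tilde a$ and $h\tilde a$.
\item Its crossing point projects into $(PQ)_a$ exactly when $g^{-1}\tilde l$ meets $\tilde a$ between the points $\tilde a\cap\tilde l$ and $\tilde a\cap h^{-1}\tilde l$.
\end{itemize}
Both are cyclic-order conditions on the endpoints of $\tilde a, h\tilde a, g\tilde a,\tilde l,h^{-1}\tilde l,g^{-1}\tilde l$. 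The deck-equivariant circle homeomorphism $\partial\hat\varphi$ preserves them. Hence it matches the interior crossings of $\tilde\lambda$ bijectively with those of $\tilde\lambda'$, and each still lies over the middle portion of $\tau(a)$. This says exactly that the two unbounded components of $\tau(a)\setminus\{P',Q'\}$ meet $(P'Q')_{\tau(l)}$ only in $P',Q'$.

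\textbf{Comparison with the paper.} With this fix, your proof is the paper's argument made rigorous: push $b$ forward by $\varphi\in\tau$, and get the reverse inclusion from $\tau^{-1}$. The paper simply writes $\varphi(b)$ as a bicorn of $\varphi(a),\varphi(l)$ and declares it an element of $\mathcal B(\tau(a),\tau(l))$. In doing so it tacitly identifies $\varphi(a),\varphi(l)$ with their geodesic representatives. Your boundary-at-infinity argument is what justifies that identification when $l$ is non-proper and $a\cap l$ is infinite.
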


\begin{proof}
Let $\varphi\in\tau$ be a homeomorphism $\Sigma\to\Sigma$. Assume that $a$ and $l$ are geodesics on $\Sigma$, so that they are in minimal position. Let $b\in\mathcal{B}\left(a,l\right)$ be a bicorn arc, so that $b=\left(\dots P\right)_{a}\cup\left(PQ\right)_{l}\cup\left(Q\dots\right)_{a}$ for some points $P,Q\in a\cap l$. Since $\varphi$ is a homeomorphism, it preserves topological information about intersections between $a$ and $l$, and the ends that $a$ converges to. Thus $\varphi(b)=\left(\dots\varphi(P)\right)_{\varphi(a)}\cup\left(\varphi(P)\varphi(Q)\right)_{\varphi(l)}\cup\left(\varphi(Q)\dots\right)_{\varphi(a)}$, and in particular, $\varphi(b)\in\mathcal{B}\left(\tau(a),\tau(l)\right)$. Thus, $\tau\left(\mathcal{B}\left(a,l\right)\right)\subset\mathcal{B}\left(\tau(a),\tau(l)\right)$. Conversely, for any bicorn arc $b'=\left(\dots P'\right)_{\varphi(a)}\cup\left(P'Q'\right)_{\varphi(l)}\cup\left(Q'\dots\right)_{\varphi(a)}$ for some points $P',Q'\in\varphi\left(a\right)\cap\varphi(l)$, we see that $\varphi^{-1}(b')\in\mathcal{B}\left(a,l\right)$. Thus $\mathcal{B}\left(\tau(a),\tau(l)\right)\subset\tau\left(\mathcal{B}\left(a,l\right)\right)$ as well. 

\end{proof}
%
\begin{cor}
The map $F\colon \mathcal{GEL}\left(\Sigma\right)\to\partial\mathcal{G}\left(\Sigma\right)$ is equivariant with respect to the action of $\mathrm{MCG}\left(\Sigma\right)$. 
\end{cor}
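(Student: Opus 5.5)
We show directly that $F(\tau(L))=\tau(F(L))$ for every $\tau\in\mathrm{MCG}(\Sigma)$ and $L\in\mathcal{GEL}(\Sigma)$. Fix a grand arc $a$ and a non-boundary leaf $l\in L$ both of whose half-leaves are dense; such a leaf exists by Lemma~\ref{lem:both_half_leaves_dense}. Let $(b_n)_{n\in\mathbb N}\subseteq\mathcal B(a,l)$ be an ending bicorn sequence given by Proposition~\ref{prop:bicorns_geodesic_conv}, so that $F(L)=[(b_n)]$. Since the action of $\mathrm{MCG}(\Sigma)$ on $\mathcal G(\Sigma)$ is by isometries and extends continuously to $\partial\mathcal G(\Sigma)$, we have $\tau(F(L))=[(\tau(b_n))_{n\in\mathbb N}]$. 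It therefore suffices to show that $(\tau(b_n))$ is an ending bicorn sequence between the grand arc $\tau(a)$ and $\tau(L)$, with respect to the leaf $\tau(l)$. The definition of $F$ is independent of the chosen base arc and of the chosen ending bicorn sequence, as shown in the proof of well-definedness, so this gives $F(\tau(L))=[(\tau(b_n))]$.

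We verify the four conditions of Definition~\ref{def:ending}. By Lemma~\ref{lem:bicorns_preserved_homeo}, $\tau(b_n)\in\mathcal B(\tau(a),\tau(l))$. Condition (1) is immediate, since $\tau(b_0)=\tau(a)$. Condition (3) holds with the same constant $R$, because $\tau$ is an isometry of $\mathcal G(\Sigma)$. For condition (4), take a representative homeomorphism $\varphi$ as in the proof of Lemma~\ref{lem:bicorns_preserved_homeo}. It carries the $l$-part $(PQ)_l$ of $b_n$ to the $\varphi(l)$-part $(\varphi(P)\varphi(Q))_{\varphi(l)}$ of $\varphi(b_n)$, and it preserves inclusions of these subsegments. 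After straightening to geodesics, the nesting of the $\tau(l)$-parts persists, since it is determined by the combinatorial order of the intersection points along $\tau(l)$.

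For condition (2), recall that $\tau$ acts continuously on $\Gamma(\Sigma)$ via the boundary extension $\partial\hat\varphi$. Since $b_n\geodto l$, it follows that $\tau(b_n)\geodto\tau(l)$. Finally, $\tau(L)$ is minimal and witness-filling, as observed above, and $\tau(l)$ is again a non-boundary leaf with both half-leaves dense, because $\varphi$ restricts to a homeomorphism of laminations. Hence Lemma~\ref{lem:defines_a_point} applies, and $(\tau(b_n))$ defines the point $F(\tau(L))$.

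The main subtlety is the passage between the topological image $\varphi(b_n)$ and its geodesic representative, which is needed in conditions (2) and (4). If the nesting in (4) is hard to transport directly, we fall back on Proposition~\ref{prop:map_well_def}. Any ending bicorn sequence between $\tau(a)$ and $\tau(L)$ lies within Hausdorff distance $K_0$ of $\mathcal B(\tau(a),\tau(l))=\tau(\mathcal B(a,l))$. In turn, $\tau(\mathcal B(a,l))$ lies within Hausdorff distance $K_0$ of $(\tau(b_n))$. Since $(b_n)$ is unbounded, so is $(\tau(b_n))$, and both sequences converge to infinity by Lemma~\ref{lem:defines_a_point}. Sequences at bounded Hausdorff distance that converge to infinity define the same boundary point, so $F(\tau(L))=\tau(F(L))$ without needing condition (4) for $(\tau(b_n))$.
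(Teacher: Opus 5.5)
Your proposal is correct and is essentially the paper's proof: push an ending bicorn sequence $(b_n)\subseteq\mathcal B(a,l)$ forward by $\tau$, check Definition~\ref{def:ending}(1)--(4) for $(\tau(b_n))$ using Lemma~\ref{lem:bicorns_preserved_homeo}, continuity of $\tau$ on $\Gamma(\Sigma)$, and the isometric action on $\mathcal G(\Sigma)$, and conclude $F(\tau(L))=[(\tau(b_n))]=\tau(F(L))$. Your fallback via Proposition~\ref{prop:map_well_def} and Lemma~\ref{lem:bicorns_with_lamination}, which avoids condition (4), is also valid, though the paper does not need it; note that there, convergence of $(\tau(b_n))$ to infinity follows from $\tau$ being an isometry, not from Lemma~\ref{lem:defines_a_point}.
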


\begin{proof}
Let $l\in L$ be a non-boundary leaf with both half-leaves dense, and let $a\in\mathcal{G}\left(\Sigma\right)$. Let $\left(b_{n}\right)_{n\in\mathbb{N}}\subset\mathcal{B}\left(a,l\right)$ be an ending bicorn sequence. Then $b_{n}\xrightarrow{CC}L$ and $b_{n}\xrightarrow{\mathrm{Gromov}}F(L)$. Since $\mathrm{MCG}\left(\Sigma\right)$ acts on $\mathcal{G}\left(\Sigma\right)$ by automorphisms, and this action extends continuously to $\partial\mathcal{G}\left(\Sigma\right)$, we see that $\tau\left(b_{n}\right)\to\tau\left(F(L)\right)$ in $\mc G(\Sigma)\cup\partial \mc G(\Sigma)$. On the other hand, $b_{n}\xrightarrow{\mathrm{geod}}l$ by the definition of coarse Chabauty convergence. Moreover, $\tau\left(b_{n}\right)\xrightarrow{CC}\tau(L)$, as $\tau$ induces a continuous map on $\Gamma\left(\Sigma\right)$ and $\Lambda\left(\Sigma\right)$, as described at the beginning of this subsection. Further note that $\tau$ sends non-boundary leaves of $l$ to other non-boundary leaves of $l$, and if both half-leaves of $l$ are dense in $L$, then the same is true for $\tau(l)$. It follows that $\left(\tau\left(b_{n}\right)\right)_{n\in\mathbb{N}}$ is a sequence of grand arcs in $\mathcal{B}\left(\tau(a),\tau(l)\right)$ by Lemma \ref{lem:bicorns_preserved_homeo}.  In fact, $\left(\tau\left(b_{n}\right)\right)_{n\in\mathbb{N}}$ is an ending bicorn sequence between $\tau(a)$ and $\tau(l)$: Definition~\ref{def:ending}(1) is immediate, (2) was shown above, (3) holds because it holds for $(b_n)_{n\in \mathbb N}$ and $\tau$ is an automorphism of $\mc G(\Sigma)$, and (4) holds because it holds for $(b_n)_{n\in \mathbb N}$ and this property is preserved under homeomorphisms.  
The conclusion follows.
\end{proof}

\subsection{WWPD elements of $\mathrm{MCG}(\Sigma)$}\label{sec:WWPD}

The description of a dense subset of point in $\partial\mc{G}(\Sigma)$ as a space of geodesic laminations allows us to describe the WWPD elements $\mathrm{MCG}(\Sigma)$ in the action on $\mc{G}(\Sigma)$.

\begin{defn}\label{def:WWPD}
Suppose a group $G$ acts on a hyperbolic metric space $X$ by isometries. We say that $g\in G$ is WWPD if
\begin{enumerate}
\item $g$ is loxodromic, with fixed points $g^{+},g^{-}\in\partial X$, and 
\item for any sequence $h_{n}\in G$ with $h_{n}\left(g^{+}\right)\to g^{+}$ and $h_{n}\left(g^{-}\right)\to g^{-}$ in $\partial X$, there exists $N>0$ such that $h_{n}\left(g^{+}\right)=g^{+}$ and $h_{n}\left(g^{-}\right)=g^{-}$ for any $n\geq N$.
\end{enumerate}
\end{defn}

WWPD elements were first defined in \cite{HM}, and existence of WWPD elements can be used to show that the second bounded cohomology of the group is uncountably infinite dimensional. 

When $\Sigma$ is the once-punctured Cantor tree, the grand arc graph is quasi-isometric to the \textit{loop graph}, which is the graph whose vertices correspond to isotopy classes of arcs converging to the puncture at both ends and whose edges correspond to disjointness.  More generally, when $\Sigma$ has an isolated puncture, then $\mathrm{MCG}(\Sigma)$ acts on the loop graph, which is defined analogously, but this hyperbolic graph is no longer quasi-isometric to the grand arc graph. Rasmussen showed that when $\Sigma$ has an isolated puncture, the WWPD elements of $\mathrm{MCG}(\Sigma)$ in the action  on the loop graph are exactly the ones which are supported on a compact witness and are pseudo-Anosov on the witness \cite[Theorem 1.1]{RasWWPD}. This generalizes the fact that for finite-type surfaces $S$, the WWPD elements of $\mathrm{MCG}(S)$ in the action on the curve graph are exactly the pseudo-Anosov mapping classes; in fact, in this context, pseudo-Anosov mapping classes satisfy the stronger WPD property, but that is not relevant for this paper.  Bavard and Walker describe the boundary of the loop graph as a space of cliques of \textit{high-filling rays}. The proof of \cite[Theorem 1.1]{RasWWPD} involves associating to each clique of high-filling rays the geodesic lamination that is its limit set; see \cite[Section 3]{RasWWPD}. Moreover, if the rays in the clique are all supported on a finite-type witness, then its limit set is a minimal lamination filling the witness.  

The arguments in \cite[Sections 4 \& 5]{RasWWPD} generalise verbatim to $\mc{G}(\Sigma)$ when $3 \leq |\mc{S}(\Sigma)|<\infty$, using grand ending laminations instead of limit sets of cliques of high-filling rays.  In light of this, we include only an outline of the proof.

\begin{thm}\label{thm:WWPD_classif}
Let $\Sigma$ be an infinite-type surface with $3\leq |\mc S(\Sigma)|<\infty$.  Suppose $\varphi\in\mathrm{MCG}(\Sigma)$ is loxodromic in the action on $\mc G(\Sigma)$ and the fixed points of $\varphi$ in $\partial\mc{G}(\Sigma)$ are in the image $F(\mc{GEL}(\Sigma))$. Then $\varphi$ is WWPD if and only if $\varphi$ is a pseudo-Anosov on a compact witness.
\end{thm}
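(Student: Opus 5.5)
The plan is to transfer the WWPD question from $\partial\mc G(\Sigma)$ to $\mc{GEL}(\Sigma)$ via the equivariant homeomorphism $F$ of Theorem~\ref{thm_main}, and then follow Rasmussen's argument \cite[Sections 4--5]{RasWWPD}. Write $\varphi^\pm=F(L^\pm)$ with $L^\pm\in\mc{GEL}(\Sigma)$. By equivariance and injectivity of $F$, $\varphi(L^\pm)=L^\pm$. Moreover, for any $h\in\mathrm{MCG}(\Sigma)$ we have $h(\varphi^\pm)=\varphi^\pm$ if and only if $h(L^\pm)=L^\pm$. Since $F$ is a homeomorphism onto its image, $h_n(\varphi^\pm)\to\varphi^\pm$ if and only if $h_n(L^\pm)\CCto L^\pm$. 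Thus the WWPD condition becomes a statement about sequences $h_n$ whose images of $L^\pm$ coarse Chabauty converge to $L^\pm$.

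For the direction ``pseudo-Anosov on a compact witness $\Rightarrow$ WWPD'', let $W=\supp(L^+)$. This is a compact witness, and by Example~\ref{ex:agrees_with_F}, $L^\pm$ are the stable and unstable laminations of $\varphi|_W$. Suppose $h_n(L^\pm)\CCto L^\pm$. Because $L^+$ is compactly supported, geodesic convergence of leaves of $h_n(L^+)$ to the leaves of $L^+$ forces $h_n(W)$ to contain $W$ minus small collars for large $n$. Since both are compact subsurfaces with the same topology, $h_n(W)=W$ up to isotopy for large $n$. Then $h_n|_W$ lies in $\mathrm{MCG}(W)$, a finitely generated group acting on $\mathcal{C}(W)$, where $\varphi|_W$ is WPD, and in particular WWPD. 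On a compact surface, coarse Chabauty convergence of $h_n(L^\pm)$ to $L^\pm$ is coarse Hausdorff convergence, which by Klarreich's theorem is convergence in $\partial\mathcal C(W)$. WWPD in $\mathrm{MCG}(W)$ then gives $h_n|_W(L^\pm)=L^\pm$ eventually, so $h_n(L^\pm)=L^\pm$ as laminations on $\Sigma$.

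For the converse, suppose $\varphi$ is WWPD. I would first show that $W=\supp(L^+)$ is compact. The leading idea is that if $W$ is non-compact, it contains an infinite-type end neighbourhood $U$ not separated from $L^+$. Self-similarity of the maximal end classes then produces a sequence of homeomorphisms $h_n$ supported in increasingly deep neighbourhoods of ends. These $h_n$ converge to the identity in the compact-open sense, so $h_n(L^\pm)\CCto L^\pm$, since geodesic convergence only sees compact pieces (Lemma~\ref{lem:convg_as_geod}). However, they move some leaf near infinity, so $h_n(L^+)\neq L^+$, contradicting WWPD. Once $W$ is compact, $\varphi(W)=W$ because $\varphi$ fixes $L^+$. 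Nielsen--Thurston on $\varphi|_W$ then shows it is pseudo-Anosov, since a reducible or periodic restriction could not fix a minimal filling lamination of $W$ while acting loxodromically. For the loxodromic step, restrict to $W$ and use Lemma~\ref{lem:presc_arc_proj}.

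The main obstacle is the non-compact support case. One must build the $h_n$ concretely, for instance as handle or end shifts or twists deep in $\Sigma\setminus W_n$ for a compact exhaustion $W_n$, and verify $h_n(L^+)\neq L^+$. For that I would use minimality of $L^+$ together with Lemma~\ref{lem:witness-filling}(3): $L^+$ meets every essential curve in the far region, so a twist about such a curve changes it. This is exactly Rasmussen's construction, and I would adapt it there.
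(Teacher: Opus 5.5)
Your overall route matches the paper's. You transport the WWPD condition to $\mc{GEL}(\Sigma)$ through the equivariant embedding $F$ and then run Rasmussen's argument. In one direction you stabilize $W=\supp(L^+)$ and use WPD of pseudo-Anosovs on $\mc C(W)$. In the other you use twists deep in a non-compact support, plus Nielsen--Thurston. The converse direction is fine; your handling of the reducible case is more careful than the paper's ``some power acts as the identity on $W$.''

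\textbf{The gap is the stabilization step in the forward direction.} You claim that $h_n(L^+)\CCto L^+$ by itself forces $h_n(W)$ to contain $W$ minus collars. This is false.
\begin{itemize}
\item \emph{Counterexample.} Take any $g$ with $g(W)\neq W$, e.g.\ a Dehn twist about a curve crossing $\partial W$, and set $h_n=\varphi^n g$. Since $\varphi(W)=W$, we get $h_n(W)=\varphi^n(g(W))\neq W$ for every $n$.
\item \emph{Yet the hypothesis holds.} Leaves of $g(L^+)$ cross $W$ in essential arcs, and $\varphi^n$ stretches these arcs along $L^+$. So every leaf of $L^+$ is a geodesic limit of leaves of $h_n(L^+)$, i.e.\ $h_n(L^+)\CCto L^+$.
\item \emph{Why your inference fails.} $L^+$ is nowhere dense. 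The curve $h_n(\partial W)$ can cut across $W$ inside thin complementary regions of $h_n(L^+)$ that run parallel to $L^+$. Being near every point of $L^+$ therefore says nothing about containing $W$.
\item This sequence does not contradict WWPD, because $h_n(L^-)\CCto L^+$ as well.
\end{itemize}
The paper delegates this step to \cite[Lemma~4.2]{RasWWPD}. Its one-lamination paraphrase in the outline is subject to the same example, so the step must use $\varphi^+$ and $\varphi^-$ simultaneously.

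\textbf{One way to repair it.} Suppose $h_n(W)\neq W$.
\begin{enumerate}
\item Witnesses cannot be disjoint, and homeomorphic compact subsurfaces cannot be properly nested. Hence $h_n(\partial W)$ cuts through $W$.
\item Since $h_n(\partial W)=\partial\,\supp(h_n(L^\pm))$, the arcs of $h_n(\partial W)\cap W$ are disjoint from the arcs of both $h_n(L^+)\cap W$ and $h_n(L^-)\cap W$.
\item Consequently $\pi_W(h_n(L^+))$ and $\pi_W(h_n(L^-))$ lie within uniformly bounded distance of each other in $\mc C(W)$.
\item On the other hand, $h_n(L^\pm)\CCto L^\pm$ lets you choose arcs of $h_n(L^\pm)\cap W$ that fellow-travel longer and longer segments of a leaf of $L^\pm$. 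By Klarreich's theorem (or Pho-On's version for arcs), these projections converge to the boundary points $L^+$ and $L^-$ respectively.
\item Since $L^+\neq L^-$, this is a contradiction.
\end{enumerate}
Once $h_n(W)=W$ for $n\gg0$ is obtained this way, the rest of your forward direction goes through as written: restrict to $\mathrm{MCG}(W)$, use Klarreich, and apply WPD on $\mc C(W)$.
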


\begin{proof}[Outline of Proof]
    First, suppose $\varphi\in\mathrm{MCG}\left(\Sigma\right)$ is pseudo-Anosov on a compact witness. Note that the fixed points $\varphi^{\pm}$ of $\varphi$ are in $ F\left(\mathcal{GEL}\left(\Sigma\right)\right)$; see Example \ref{ex:agrees_with_F}.  Since $F$ is a $\mathrm{MCG}\left(\Sigma\right)$-equivariant embedding of $\mathcal{GEL}\left(\Sigma\right)\to\partial\mathcal{G}\left(\Sigma\right)$, for any sequence $\tau_n\in\mathrm{MCG}(\Sigma)$, we have $\tau_{n}\left(\varphi^{\pm}\right)\to\text{\ensuremath{\varphi}}^{\pm}$ in $\partial \mc G(\Sigma)$ if and only if $\tau_{n}\left(F\left(\varphi^{\pm}\right)\right)\xrightarrow{CC}F\left(\varphi^{\pm}\right)$.  By \cite[Lemma 4.2]{RasWWPD}, which holds verbatim, it follows that if $\tau_{n}(L)\xrightarrow{CC}L$ for some $L\in\mathcal{GEL}\left(\Sigma\right)$, then $\tau_{n}\left(\mathrm{supp}(L)\right)=\mathrm{supp}(L)$ for all $n\gg0$. Thus we can consider the maps $\tau_n$ and $\varphi$ restricted to $W$ as elements of $\mathrm{MCG}(W)$.  As in \cite[Theorem 4.1]{RasWWPD}, which also holds verbatim, we use the fact that pseudo-Anosov mapping classes on compact surfaces are WWPD for the action on the curve graph to conclude the same about their action on $\mc{G}(\Sigma)$.

    Now, suppose $\varphi$ is a WWPD element of the $\mathrm{MCG}\left(\Sigma\right)$-action on $\mathcal{G}\left(\Sigma\right)$, and suppose that the fixed points $\varphi^{\pm}\in\partial\mathcal{G}\left(\Sigma\right)$ are in $F\left(\mathcal{GEL}\left(\Sigma\right)\right)$. Say $\varphi^{\pm}=F\left(L^{\pm}\right)$ for some $L^{+},L^{-}\in\mathcal{GEL}\left(\Sigma\right)$.  Let $L=L^+$.  Following the proof of \cite[Theorem 5.2]{RasWWPD}, which holds with a leaf of the lamination $L$ in place of the ray $l$, the support $\mathrm{supp}(L)$ must be finite-type. Since $\varphi^{\pm}$ are fixed by $\varphi$ and $F$ is equivariant, it follows that $\varphi(L)=L$ and, in particular, $\varphi\left(\mathrm{supp}(L)\right)=\mathrm{supp}(L)$. Let $W=\mathrm{supp}(L)$. Then $\phi$ stabilizes $\mc A(W)\subseteq \mc G(\Sigma)$, and since $\mc A(W)$ quasi-isometrically embeds into $\mc G(\Sigma)$, we see that $\phi|_W\in \mathrm{MCG}(W)$ must be loxodromic with respect to the action on  $\mathcal{A}(W)$.  Thus $\varphi^{\pm n}\left(\alpha\right)\xrightarrow{CC}L^{\pm}$ for any arc $\alpha\in\mathcal{A}\left(W\right)$. If $\varphi$ were not pseudo-Anosov on $W$, then some power acts as the identity on $W$, and so there is some $n$ such that $\varphi^n$ fixes an arc on $W$, contradicting that $\varphi|_W$ is a loxodromic isometry of $\mc A(W)$.  Thus $\varphi$ is pseudo-Anosov on $W$, with attracting and repelling laminations $L^{\pm}$.
\end{proof}

Combining Theorem~\ref{thm:WWPD_classif} with \cite[Theorem~2.10]{HM}, we obtain a new proof of the following result, which is originally due to Horbez, Qing and Rafi \cite{HQR}. 

\begin{cor}\label{cor:BddCohom}
    Let $\Sigma$ be an infinite-type surface with $3\leq |\mc S(\Sigma)|<\infty$. Suppose  $H\leq \mathrm{MCG}(\Sigma)$ is a subgroup such that:
    \begin{enumerate}
        \item $H$ contains a pair of independent loxodromic elements in the action on $\mc{G}(\Sigma)$ and
        \item there exists $\varphi\in H$ which is pseudo-Anosov on a compact witness.
    \end{enumerate}
    Then $H_b ^2 (H;\mathbb{R})$ is uncountably infinite dimensional.
\end{cor}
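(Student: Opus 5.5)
The plan is to apply \cite[Theorem~2.10]{HM}. It says that if a group $H$ acts by isometries on a hyperbolic space, $H$ is not virtually cyclic (more precisely, contains two independent loxodromics), and some element of $H$ is WWPD for the action, then $H^2_b(H;\mathbb{R})$ is uncountably infinite dimensional. So the work is to check the hypotheses of that theorem for the restriction of the $\mathrm{MCG}(\Sigma)$-action on $\mathcal{G}(\Sigma)$ to $H$.

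First, $\mathcal{G}(\Sigma)$ is a hyperbolic graph by Theorem~\ref{thm:hyp_grand_arc_graph}, since $3\leq|\mathcal{S}(\Sigma)|<\infty$, and $H$ acts on it by isometries. Hypothesis (1) supplies the pair of independent loxodromic elements, so the action of $H$ is non-elementary.

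Next, take $\varphi\in H$ pseudo-Anosov on a compact witness $W$. By Example~\ref{ex:cpct_pA} (using \cite[Theorem~1.5]{B-NV}), $\varphi$ is loxodromic on $\mathcal{G}(\Sigma)$. By Example~\ref{ex:agrees_with_F}, its fixed points are $\varphi^{\pm}=F(L^{\pm})$, so both lie in $F(\mathcal{GEL}(\Sigma))$. Theorem~\ref{thm:WWPD_classif} then shows that $\varphi$ is WWPD for the action of $\mathrm{MCG}(\Sigma)$.

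The one point needing care is that WWPD is required for the action of $H$, not of $\mathrm{MCG}(\Sigma)$. This is immediate from Definition~\ref{def:WWPD}: the condition quantifies over sequences $h_n$ in the acting group, and every sequence in $H$ is also a sequence in $\mathrm{MCG}(\Sigma)$. So WWPD passes to any subgroup containing $\varphi$, and all hypotheses of \cite[Theorem~2.10]{HM} hold.

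The main obstacle, if any, is matching the exact form of the hypotheses in \cite[Theorem~2.10]{HM}. That theorem may require the WWPD element to be part of a non-elementary action, or require that $\varphi$ not be conjugate to its inverse in a controlled way. Hypothesis (1) is included precisely to guarantee non-elementarity. If the precise statement needs more, I would fall back on the Bestvina--Fujiwara-style criterion that Handel--Mosher derive from WWPD, which is used the same way.
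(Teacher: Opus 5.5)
Your proposal is correct and follows the paper's route: it combines Theorem~\ref{thm:WWPD_classif} (with Examples~\ref{ex:cpct_pA} and~\ref{ex:agrees_with_F} supplying loxodromicity and $\varphi^{\pm}\in F(\mathcal{GEL}(\Sigma))$) with \cite[Theorem~2.10]{HM}, and uses hypothesis (1) for non-elementarity. Your observation that the WWPD property for the $\mathrm{MCG}(\Sigma)$-action passes to any subgroup $H$ containing $\varphi$ is a correct detail that the paper leaves implicit.
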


\pagebreak

\appendix
\section{Notes on the Gromov boundary of the ray graph} \label{sec:appendix}
\appauthor{Lvzhou Chen}

The goal of this appendix is to prove Proposition~\ref{prop: main}. 
This is one step in the proof of \cite[Theorem~5.1.1]{B-W}: Bavard and Walker start from a boundary point of the ray graph, choose a quasi-geodesic representative, pass to a cover-convergent subsequence, and then need to know that the limiting long ray is high-filling. 
The original proof of this fact appears to have a small gap, so we give a detailed proof.

We follow the notation and conventions of \cite{B-W}.
Thus $S$ is the plane minus a Cantor set, with the isolated planar end denoted by
$\infty$.
The completed ray graph $\Rcal$ has as vertices
short rays, loops (based at $\infty$), and long rays, with edges given by
disjointness. 
It has a main component $\Rcal_0$, which contains all loops and all rays that are not high-filling. We know $\Rcal_0$ is quasi-isometric to the ray graph, so they have the same Gromov boundary.

Also recall that cover-convergence is convergence in the conical cover: Each oriented loop or ray has a unique lift starting at the preferred lift $\widetilde{\infty}$ of $\infty$, and a
sequence cover-converges when the endpoints of these preferred lifts converge
in the boundary circle of the conical cover. 

\begin{prop}\label{prop: main}
	Let $(x_i)$ be a quasi-geodesic representing a point $p$ in the Gromov boundary of $\Rcal_0$. Up to taking a subsequence, we suppose that $(x_i)$ cover-converges to an element $\ell$ in the conical circle. Then $\ell$ is high-filling.
\end{prop}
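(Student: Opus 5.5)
We argue by contradiction: suppose $\ell$ is not high-filling. By the Bavard--Walker classification of points of the conical circle, $\ell$ is then either (a) a short ray or loop, or (b) a long ray that is not high-filling. In case (b), either $\ell$ has finite distance in $\Rcal$ to some loop, or it is disjoint from some loop or short ray $y\in\Rcal_0$. In every case, we produce a fixed vertex $y\in\Rcal_0$ such that $\ell\cap y=\emptyset$, or $\ell$ and $y$ have only finitely many intersections that persist under the limit. The goal is then to show that infinitely many $x_i$ lie within uniformly bounded distance of $y$ in $\Rcal_0$. This contradicts $d(x_0,x_i)\to\infty$, which holds because $(x_i)$ is a quasi-geodesic converging to $p\in\partial\Rcal_0$.

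The key step is to transfer disjointness (or bounded intersection) from the limit $\ell$ to the approximants $x_i$. Cover-convergence means that the endpoints of the preferred lifts $\tilde x_i$ from $\winf$ converge to the endpoint of $\tilde\ell$ on the conical circle. Consequently, for each compact initial segment of $\ell$, and for $i$ large, $x_i$ fellow-travels that segment. We would fix $y$ and choose a long initial segment $\ell_R$ of $\ell$ that already ``sees'' all the relevant intersections with $y$, meaning the finitely many in case (a), or none in the disjoint case. For $i\gg0$, $x_i$ begins with an arc isotopic to $\ell_R$. Following the unicorn construction of \cite{B-W} (as in \cite{Pho-On}), we form the unicorn (or bicorn) arc from the initial piece of $x_i$ up to its first intersection with $y$, and continue along $y$. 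This yields a loop or ray $z_i$ disjoint from both $x_i$ and $y$, up to a bounded number of surgery steps. Hence $d(x_i,y)\le D$ for a uniform $D$, which gives the contradiction.

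The main obstacle is the case where $\ell$ is a long ray that is not high-filling but whose intersection pattern with $y$ is not captured by any finite initial segment, for example when $\ell$ accumulates on a loop or on a ray disjoint from $y$. Cover-convergence controls only initial segments, so $x_i$ may cross $y$ far out in ways invisible to $\ell_R$. To handle this, we would use the structure theory of non-high-filling long rays from \cite{B-W}: each such ray is within distance $2$ of some loop or short ray $y'$, and its $\omega$-limit set avoids $y'$. We would then choose $y'$ so that $\ell$ eventually stays in a complementary region of $y'$, which means the far end of $\ell$ never crosses $y'$. Because $x_i$ agrees with $\ell$ for longer and longer initial segments, the first point where $x_i$ meets $y'$ recedes to infinity along $x_i$. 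The unicorn built from this first crossing then lies within bounded distance of $y'$ and, at the same time, within bounded distance of the geodesic $[x_0,x_i]$. This puts points of the quasi-geodesic at distance bounded independently of $i$ from $x_0$ in infinitely many Gromov products that should diverge, and the Gromov products $(x_i,x_j)_{x_0}$ therefore stay bounded, which is a contradiction. Making this ``first crossing recedes'' statement rigorous is where the gap in the original argument presumably lies, and it is the step we expect to require the most care.
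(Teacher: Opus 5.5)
\paragraph{The gap.} It sits at the step you call key: transferring disjointness, or bounded distance, from the limit $\ell$ (or from a vertex $y$ near it) to the approximants $x_i$. Cover-convergence controls only an initial segment of $x_i$. Beyond that segment, $x_i$ can cross $y$ arbitrarily often, and distance in $\Rcal_0$ depends on the whole arc.

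Concretely, let $z_i$ follow $x_i$ up to its first crossing with $y$ and then follow $y$. After a push-off, $z_i$ is disjoint from $y$, but not from $x_i$: the piece of $y$ it uses can meet the uncontrolled tail of $x_i$ any number of times. So nothing bounds $d(x_i,z_i)$, and ``a bounded number of surgery steps'' has no justification. What you would need is an upper bound $\limsup_i d(x_i,y)\le d(\ell,y)+D$, and cover-convergence does not provide it. Since disjointness is a closed condition, passing to cover-limits yields only the reverse inequality $d(\ell,y)\le\liminf_i d(x_i,y)$.

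Your third paragraph does not repair this.
\begin{itemize}
\item By \cite[Lemma~4.3.1]{B-W}, the limit $\ell$ is a loop-filling long ray, so your case (a) does not occur.
\item $\ell$ meets every loop infinitely often; otherwise the finite unicorn path from that loop to $\ell$ would have a penultimate term that is a loop disjoint from $\ell$. Hence for a loop $y'$, the first crossing of $x_i$ with $y'$ converges to that of $\ell$ rather than receding.
\item More fundamentally, any unicorn adjacent to a fixed $y'$ lies within $d(x_0,y')+1$ of $x_0$. Locating it near $[x_0,x_i]$ says nothing about the Gromov products $(x_i,x_j)_{x_0}$, so the claimed contradiction does not follow.
\end{itemize}

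\paragraph{How the paper argues.} The proof runs in the opposite direction, and uses closedness of disjointness only where it is valid, namely on limits. It relies on two facts:
\begin{itemize}
\item $\ell$ is not high-filling if and only if the infinite unicorn path $P(x_0,\ell)$ is bounded in $\Rcal_0$ \cite[Lemma~3.5.4]{B-W};
\item along a quasi-geodesic, $P(x_0,x_k)\subset N_C(P(x_0,x_n))$ for all $n>k$ \cite[Lemma~3.2.6]{B-W}.
\end{itemize}
Fix $k$ with $d(x_k,\ell)>C$, and pick $y_n\in P(x_0,x_n)$ with $d(x_k,y_n)\le C$. Passing the connecting paths of length $C$ to cover-limits gives $d(x_k,\lim_n y_n)\le C$. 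Then split on the index $i_n$ of $y_n$ in $P(x_0,x_n)$.
\begin{itemize}
\item If $i_n$ is unbounded, then along a subsequence $y_n$ cover-converges to $\ell$, by the second part of Lemma~\ref{lemma: key}; this uses that $\ell$ meets $x_0$ infinitely often. This contradicts $d(x_k,\ell)>C$.
\item If $i_n$ is bounded, then by the first part $y_n$ is eventually a fixed term of $P(x_0,\ell)$. So every large $x_k$ lies in the bounded set $N_C(P(x_0,\ell))$, contradicting $d(x_0,x_k)\to\infty$.
\end{itemize}
The idea you are missing is to trap the escaping points $x_k$ near the bounded set $P(x_0,\ell)$, rather than to bound $d(x_i,y)$ using cover-convergence.
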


\subsection{About (infinite) unicorn paths}
The proof relies on some geometric understanding of finite and infinite unicorn paths in the sense of \cite[Section~3]{B-W}.
There are a few equivalent ways to define the unicorn path $P(a,b)$ between two oriented loops $a$ and $b$. The path is simply $a,b$ if they are disjoint, so we assume that $a$ and $b$ intersect and are in minimal position.
There are finitely many intersections between $a$ and $b$, and each intersection $x$ uniquely determines a (not necessarily simple) loop that is made of an initial segment $(\infty x)_b$ of $b$ and a terminal segment $(x\infty)_a$ of $a$. We only look at the simple ones, and order them (as we progress from $a$ to $b$) so that the element has more portion on $b$ and less portion on $a$. The key fact is that, given that we are looking for simple loops, taking more from $b$ is equivalent to taking less from $a$. This already yields two equivalent definitions: 
\begin{enumerate}
	\item List the intersections in the order along $b$, and collect the simple loops of the form above; or
	\item List the intersections in the order along $a$, and collect the simple loops of the form above.	
\end{enumerate}
The first description also leads to a third equivalent definition that inductively constructs the loops in the unicorn path as follows. Start with $a_0=a$, and then for each $k\ge0$, let $x$ be the first intersection along $b$ with $a_k$ and let $a_{k+1}$ be the concatenation of $(\infty x)_b$ and $(x\infty)_{a_k}$\footnote{The original definition in \cite[Definition~3.2.1]{B-W} appears to have a typo; this should be the correct version.}, which we set to be $b$ and complete the process if $a_k$ and $b$ are already disjoint. It is easy to note by induction that:
\begin{enumerate}
	\item Perturbing $(\infty x)_b$ slightly in the direction of $a_k$ at $x$ gives a representative of $a_{k+1}$ in minimal position with $b$,
	\item each $a_k$ starts with a segment on $b$ and then a segment on $a$.
\end{enumerate} 
From this, it should be clear why this is equivalent to the earlier definitions.

Now when $b$ is a (long) ray, since the intersections with $a$ are discrete along $b$, the first and third definitions still work out well, and they remain equivalent. This is the definition of the infinite unicorn path $P(a,b)$; the inductive description agrees with \cite[Definition~3.3.1]{B-W}. The following lemma can be observed from the first definition.
\begin{lem}\label{lemma: key}
	Let $a$ be a loop. If a sequence of loops or rays $\ell_i$ cover-converges to a long ray $\ell$, then for any $n$, there is $K>0$ such that for all $i\ge K$, the first $n$ elements in the unicorn path $P(a,\ell_i)$ agree with those in $P(a,\ell)$.
	
	Moreover, if $a$ and $\ell$ have infinitely many intersections, then for any (two independent) sequences of integers $i_k,j_k\to\infty$, the $j_k$-th term in the unicorn path $P(a,\ell_{i_k})$ cover-converges to $\ell$ as $k\to\infty$.
\end{lem}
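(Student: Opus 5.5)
The plan is to use the first description of the unicorn path: list the points of $a\cap \ell$ in their order along $\ell$, starting from $\winf$, and keep exactly those intersection points $x$ for which $(\infty x)_\ell\cup(x\infty)_a$ is simple. The $k$-th element of $P(a,\ell)$ is therefore determined by an initial segment $(\infty x_k)_\ell$ of $\ell$ together with the combinatorics of how $a$ meets that segment.

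\textbf{First statement.} Fix $n$, and let $x_1,\dots,x_n$ be the intersection points defining the first $n$ elements of $P(a,\ell)$. If $P(a,\ell)$ has fewer than $n$ elements, replace $n$ by its length. Let $\sigma=(\infty x_n)_\ell$, extended slightly past $x_n$. Since intersections along $\ell$ are discrete, $\sigma$ is a compact initial segment meeting $a$ in finitely many points, all transverse.

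Cover-convergence $\ell_i\to\ell$ means the endpoints of the preferred lifts converge in the conical circle. Hence the preferred lifts $\tilde\ell_i$ fellow-travel $\tilde\ell$ along the lift of $\sigma$ for all large $i$. Projecting down, this gives an initial segment $\sigma_i\subset\ell_i$ isotopic to $\sigma$ relative to $a$: it meets $a$ in the same number of points, in the same order along $\sigma_i$ and along $a$. The isotopy is obtained by the same strip argument as in Lemma~\ref{lem:bicorns_for_converging_leaves}, working in the conical cover near $\tilde\ell$.

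Simplicity of the loop $(\infty x)_{\ell_i}\cup(x\infty)_a$ depends only on the order of the intersection points of $a$ with $(\infty x)_{\ell_i}$, both along $a$ and along $\ell_i$. So the same indices are selected, and the resulting loops are isotopic to those coming from $\ell$. The first $n$ elements of $P(a,\ell_i)$ and $P(a,\ell)$ therefore agree for $i\ge K$.

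One technical point needs care: minimal position. A priori $\ell_i$ might intersect $a$ nonminimally, creating bigons near $\sigma$. Because $\ell$ and $a$ are geodesic, or in minimal position, the transversality of the finitely many crossings on $\sigma$ is stable under small perturbation, so no extra crossings appear along $\sigma_i$.

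\textbf{Second statement.} Suppose $|a\cap\ell|=\infty$. I first check that $P(a,\ell)$ is infinite. Its elements are built inductively via the third description, and each $a_k$ still meets $\ell$ beyond its $\ell$-segment, so the process never terminates. It also follows that the $\ell$-portion $(\infty x_k)_\ell$ of the $k$-th element has length tending to infinity with $k$, since the $x_k$ are distinct discrete points along $\ell$.

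Now let $i_k,j_k\to\infty$. Fix a target accuracy $m$, meaning agreement of the preferred lift along the first $m$ intersections. By the monotonicity visible in the inductive description, every element of $P(a,\ell_{i_k})$ of index $j\ge j_0(m)$ contains $(\infty x_{j_0})_{\ell_{i_k}}$ as an initial segment, because each step only lengthens the $b$-part. By the first statement, this initial segment is a copy of $(\infty x_{j_0})_\ell$ once $i_k$ is large.

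The $j_k$-th term therefore starts at $\winf$ and follows a long initial segment of $\ell$, of length going to infinity in $k$. Its preferred lift accordingly fellow-travels $\tilde\ell$ for a length tending to infinity, so its endpoint in the conical circle converges to that of $\ell$.

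\textbf{Main obstacle.} The hardest step is justifying that the $\ell_i$-portion of every later element of $P(a,\ell_i)$ extends the earlier ones, even though $P(a,\ell_i)$ may be finite or shorter than $j_k$. I would handle this with the third, inductive description. For $j_k$ beyond the length of $P(a,\ell_{i_k})$, interpret the term as $\ell_{i_k}$ itself, which cover-converges to $\ell$ by hypothesis.
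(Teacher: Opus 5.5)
Your argument for the first assertion is fine and is essentially the paper's, which simply notes that it is obvious from the first description because $\ell_i$ starts almost the same as $\ell$.

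The gap is in the last step of the second assertion. You argue that the $j_k$-th term follows a long initial segment of $\ell$, so its preferred lift fellow-travels $\tilde\ell$, so its endpoint converges. That inference is not valid. The $j_k$-th term is $(\infty x)_{\ell_{i_k}}\cup(x\infty)_a$. The endpoint of its preferred lift is an endpoint of the lift of $a$ through $\tilde x$, so it is determined by the $a$-tail, not by the length of the $\ell$-head.

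A geodesic can cross $\tilde\ell$ arbitrarily far out and still have an endpoint a definite distance from the endpoint of $\tilde\ell$; this is the small-angle scenario of Figure~\ref{fig:bad_angle_bicorns}. Concretely, put $\winf$ at $\infty$ in the upper half-plane and let $\tilde\ell$ be the vertical line over $\xi$. A semicircle from $u_j$ to a fixed $v>\xi$, with $u_j\to\xi^-$, meets $\tilde\ell$ at height $\sqrt{(\xi-u_j)(v-\xi)}\to 0$, yet its endpoint $v$ never moves. Saying that the straightened lift fellow-travels $\tilde\ell$ for a long time is equivalent to its endpoint being close to that of $\tilde\ell$. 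So, as written, the step assumes what it is meant to prove.

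The missing idea is the one the paper uses. Let $\tilde a_1,\tilde a_2,\dots$ be the lifts of $a$ through the successive intersection points along $\tilde\ell$.
\begin{itemize}
\item Since $a$ is simple, these lifts are pairwise disjoint. Each $\tilde a_j$ therefore cuts off a closed interval $I_j$ of the conical circle containing the endpoint of $\tilde\ell$, and $I_{j+1}\subset I_j$.
\item Since $a$ is proper, its lifts form a locally finite family. They therefore cannot accumulate on a geodesic asymptotic to $\tilde\ell$, which is exactly what the example above requires. Hence $\bigcap_j I_j$ is the endpoint of $\tilde\ell$.
\item By the first assertion, for $k$ large the first $n$ crossings of $\tilde\ell_{i_k}$ with lifts of $a$ are with $\tilde a_1,\dots,\tilde a_n$. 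If also $j_k\ge n$, the $j_k$-th term leaves $\tilde\ell_{i_k}$ at or beyond $\tilde a_n$, onto a lift of $a$ lying in the closed half-plane cut off by $\tilde a_n$. Its endpoint therefore lies in $I_n$.
\end{itemize}

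Two smaller points.
\begin{itemize}
\item You should index by intersection points rather than by terms of $P(a,\ell)$. Your claim that $|a\cap\ell|=\infty$ forces $P(a,\ell)$ to be infinite is not justified. The terms correspond to intersection points lying farther along $a$ than all earlier ones, and there may be only finitely many of these; at the last one, the inductive construction produces a loop disjoint from $\ell$.
\item The step you flag as the main obstacle, that later terms extend the $\ell_i$-part of earlier ones, is immediate from the first description. The real work is the endpoint control above.
\end{itemize}
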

\begin{proof}
	The first assertion is obvious from the first definition, as $\ell_i$ starts almost the same as $\ell$ for all $i\ge K$ with $K$ sufficiently large. 
	
	The second assertion follows from Figure~\ref{fig: unicorn}
	and the explanation below.
	Let $\winf$ be the preferred lift of the planar end $\infty$. Let $\tilde{\ell}$ be the lift of $\ell$ that starts at $\winf$. Let $\{\tilde{a}_i\}_{i=1}^\infty$ be the lifts of $a$ based at lifts of intersections along $\tilde{\ell}$. These intersections are discrete since $a$ is proper. Thus the lifts shrink down to the endpoint of $\tilde{\ell}$. So for any neighborhood $V$ of $\ell$ in the conical circle, there is $n$ such that for all $j\ge n$ the endpoints of $\tilde{a}_j$ determine a closed interval neighborhood $I_j\subset V$. For any ray or loop $\gamma$ in $I_j$, realized as a geodesic from $\winf$ to a point in $I_j$ in the upper half plane model, there is a unique intersection $x$ with $\tilde{a}_j$, and the concatenation $(\infty x)_{\gamma}\cup(x \infty)_a$ lies in $I_j$ and thus in $V$. For any $k$ large such that $j_k\ge n$ and $i_k$ is large enough so that $\ell_{i_k}\in I_n$, the $j_k$-th term in the unicorn path is constructed using the $j$-th intersection along $\ell_{i_k}$ with $a$ for some $j\ge j_k\ge n$ and hence its lift starting at $\winf$ has endpoint in $I_n\subset V$. This completes the proof.
\end{proof}

\begin{figure}
	\labellist
	\small 
	\pinlabel $\widetilde{\infty}$ at 166 227
	\pinlabel $\tilde{\ell}$ at 170 205
	\pinlabel $\tilde{\ell}_i$ at 157 205
	\pinlabel $\tilde{\ell}_2$ at 142 205
	\pinlabel $\tilde{\ell}_1$ at 112 205
	\pinlabel $\gamma$ at 205 205
	\pinlabel $\tau$ at 220 205
	\pinlabel $\bar{\tau}$ at 260 205
	\pinlabel $\tilde{a}_1$ at 200 160
	\pinlabel $\tilde{a}_2$ at 190 140
	\pinlabel $\tilde{a}_j$ at 180 100
	\pinlabel $\tilde{a}_{j'}$ at 180 40
	\pinlabel $I_j$ at 190 10
	\pinlabel $V$ at 193 -2
	\endlabellist
	\centering
	\includegraphics[scale=1]{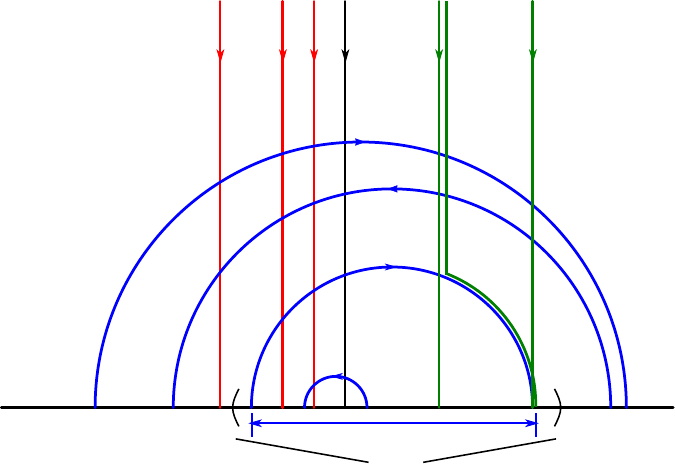}
	\vspace{5pt}
	\caption{The lifts $\tilde{a}_j$'s of $a$ shrink to the endpoint of $\tilde{\ell}$. The curve $\tau$ is the concatenation of $\gamma$ and $\tilde{a}_j$ at their unique intersection $x$. Its straightening $\bar{\tau}$ is the lift of the geodesic representative of $(\infty x)_\gamma\cup (x\infty)_a$, and it lands in $I_j\subset V$.}\label{fig: unicorn}
\end{figure}

\subsection{Proof of the proposition}
Now we give a proof of Proposition~\ref{prop: main} by contradiction. 

\begin{proof}[Proof of Proposition~\ref{prop: main}]
	Suppose $\ell$ is not high-filling, or equivalently by the characterization in \cite[Lemma~3.5.4]{B-W}, 
	the infinite unicorn path $P(a,\ell)$ is bounded in $\Rcal_0$ for any oriented loop $a$.
	
	Up to adding an element to the quasi-geodesic, we assume that $x_0$ is a loop and work with $a=x_0$. It is already shown in \cite[Lemma~4.3.1]{B-W} that $\ell$ is a loop-filling ray. In particular, $\ell$ must intersect $x_0$ infinitely many times: If not, the unicorn path $P(x_0,\ell)$ is finite, and the second to last term would give rise to a loop disjoint from $\ell$.
	
	Since $(x_i)$ is a quasi-geodesic, by \cite[Lemma~3.2.6]{B-W}, there is $C\in\Z_{\ge0}$ (independent of $k$ and $n$) such that for all $n>k$, the unicorn path $P(x_0,x_k)$ is contained in the $C$-neighborhood of $P(x_0,x_n)$. This is the only property in the proof that uses the fact that $(x_i)$ is a quasi-geodesic, apart from the fact that $d(\alpha,x_i)\to\infty$ for any $\alpha\in \Rcal_0$.
	
	As we assumed $\ell$ to be not high-filling, the distance $d(\ell,x_k)$ is finite and goes to $\infty$ as $k\to\infty$.
	Fix any $k$ such that $d(\ell,x_k)> C$.
	Then for any $n>k$, there is $y_n$ on the unicorn path $P(x_0,x_n)$ with $d(x_k,y_n)\le C$. We suppress the dependence on $k$ as we will keep it fixed until the end of the argument.
	Now there is a path $z_{n,i}$ in $\Rcal_0$ with $z_{n,1}=x_k$ and $z_{n,C}=y_n$. By taking a subsequence of $n$ we may assume that each $z_{n,i}$ cover-converges to some $z_{\infty,i}$ as $n\to \infty$ fixing $1\le i\le C$. In particular, $y_n$ cover-converges to $y_\infty\defeq z_{\infty,C}$.
	Disjointness is preserved under cover-convergence, thus these limits form a path in $\Rcal_0$, and we have $d(x_k,y_\infty)\le C$.
	
	Suppose $y_n$ is the $i_n$-th element on the unicorn path $P(x_0,x_n)$. If $i_n$ is unbounded, then up to taking a subsequence, it goes to infinity. By the second assertion in Lemma~\ref{lemma: key}, we must have that $y_n$ cover-converges to $\ell=y_\infty$, which gives $d(x_k,\ell)\le C$ and we get a contradiction.
	
	Therefore, the sequence $i_n$ must be bounded. Then by taking a subsequence, it is a constant $i$, and $y_n$ is the $i$-th term on the unicorn path $P(x_0,\ell)$ for all $n$ large by the first assertion in Lemma~\ref{lemma: key}, as $x_n$ cover-converges to $\ell$. This shows that $x_k$ is in the $C$-neighborhood of $P(x_0,\ell)$, which holds for all $k$ large. However, our assumption implies that the path $P(x_0,\ell)$ is bounded and so is its $C$-neighborhood. This contradicts the fact that $x_k$ escapes every bounded subset of $\Rcal_0$. Thus $\ell$ must be high-filling as desired.
\end{proof}

\begingroup
\DeclareEmphSequence{\itshape} 
\bibliographystyle{alpha}
\bibliography{Bibliography}
\endgroup

\end{document}